\setlist[enumerate]{label={\upshape(\roman*)}}
\definecolor{bred}{rgb}{0.8,0,0}
\newtheorem{theorem}{Theorem}[section]
\newtheorem{proposition}[theorem]{Proposition}
\newtheorem{lemma}[theorem]{Lemma}
\newtheorem{corollary}[theorem]{Corollary}
\newtheorem{remark}[theorem]{Remark}
\newtheorem{definition}[theorem]{Definition}
\newtheorem{assumption}{Assumption}
\def\cF{\mathsf{F}}
\def\cHOLA{\mathsf{aHOLA}}
\def\cHOLLA{\mathsf{aHOLLA}}
\def\cK{\mathsf{K}}
\def\ca{\mathsf{a}}
\def\cb{\mathsf{b}}
\def\cD{\mathsf{D}}
\def\cL{\mathsf{L}}
\def\cR{\mathsf{R}}
\def\cOS{\mathsf{OS}}
\def\cc{\mathsf{c}}
\def\cC{\mathsf{C}}
\def\cM{\mathsf{M}}
\def\ce{\mathsf{e}}
\def\cI{\mathsf{I}}
\def\cT{\mathsf{T}}
\def\cS{\mathsf{S}}
\def\cKl{\overline{\mathsf{K}}}
\def\ccl{\overline{\mathsf{c}}}
\def\Lin{\mathsf{Lin}}
\def\Deltal{\overline{\Delta}}
\def\Xil{\overline{\Xi}}
\def\cSl{\overline{\mathsf{S}}}
\def\cMl{\overline{\mathsf{M}}}
\def\Jl{\overline{\mathfrak{J}}}
\def\Lin{\mathsf{Lin}}
\def\rmd{\mathrm{d}}
\newcommand{\E}{\mathbb{E}}
\newcommand{\N}{\mathbb{N}}
\newcommand{\R}{\mathbb{R}}
\newcommand{\lfrf}[1]{\left\lfloor #1 \right\rfloor}
\newcommand{\lcrc}[1]{\left\lceil #1 \right\rceil}
\begin{document}

\title[]{Non-asymptotic estimates for accelerated high order Langevin Monte Carlo algorithms}

\author[A. Neufeld]{Ariel Neufeld}
\author[Y. Zhang]{Ying Zhang}

\address{Division of Mathematical Sciences, Nanyang Technological University, 21 Nanyang Link, 637371 Singapore}
\email{ariel.neufeld@ntu.edu.sg}

\address{Financial Technology Thrust, Society Hub, The Hong Kong University of Science and Technology (Guangzhou), 
Guangzhou, China}
\email{yingzhang@hkust-gz.edu.cn}

\date{}
\thanks{
Financial support by the MOE AcRF Tier 2 Grant MOE-T2EP20222-0013 and the Guangzhou-HKUST(GZ) Joint Funding Programs (No.\ 2024A03J0630 and No.\ 2025A03J3322) is gratefully acknowledged.}
\keywords{Sampling problem, non-asymptotic estimates, taming technique, super-linearly growing coefficients, high order algorithm}

\begin{abstract}
In this paper, we propose two new algorithms, namely, aHOLA and aHOLLA, to sample from high-dimensional target distributions with possibly super-linearly growing potentials. We establish non-asymptotic convergence bounds for aHOLA in Wasserstein-1 and Wasserstein-2 distances with rates of convergence equal to $1+q/2$ and $1/2+q/4$, respectively, under a local H\"{o}lder condition with exponent $q\in(0,1]$ and a convexity at infinity condition on the potential of the target distribution. Similar results are obtained for aHOLLA under certain global continuity conditions and a dissipativity condition. Crucially, we achieve state-of-the-art rates of convergence of the proposed algorithms in the non-convex setting which are higher than those of the existing algorithms. Examples from high-dimensional sampling and logistic regression are presented, and numerical results support our main findings.
\end{abstract}
\maketitle

\section{Introduction}
We consider the problem of sampling from a high-dimensional target distribution 
\begin{equation}\label{eq:targdist}
\pi_{\beta}(\rmd \theta)\propto \exp(-\beta U(\theta))\,\rmd \theta,
\end{equation}
where $\theta\in\R^d$, $\beta>0$ is the so-called inverse temperature parameter, and $U:\R^d \to \R$ is some (smooth enough) function. The distribution $\pi_{\beta}$ in \eqref{eq:targdist} can be viewed as the invariant measure of the Langevin stochastic differential equation (SDE) given by
\begin{equation} \label{eq:sdeintro}
Z_0 := \theta_0, \quad \rmd Z_t=-h\left(Z_t\right) \rmd t+ \sqrt{2\beta^{-1}} \rmd B_t, \quad t \geq 0,
\end{equation}
where $\theta_0$ is an $\R^d$-valued random variable, $h:= \nabla U$,  and $(B_t)_{t \geq 0}$ is a $d$-dimensional Brownian motion. Thus, to sample from $\pi_{\beta}$, one may consider using algorithms that track \eqref{eq:sdeintro}. Widely used algorithms of this type include the unadjusted Langevin algorithm (ULA) (or the Langevin Monte Carlo (LMC) algorithm) and the stochastic gradient Langevin dynamics (SGLD) algorithm, which are the Euler discretization of \eqref{eq:sdeintro}. Theoretical guarantees for ULA and SGLD to sample approximately from $\pi_{\beta}$ have been well established in the literature under the conditions that the (stochastic) gradient of the potential of $\pi_{\beta}$ is globally Lipschitz continuous and is strongly convex, see \cite{convex,ppbdm,dalalyan,dk,DM16,durmus2019analysis}. Recent research focuses on the analysis of ULA and SGLD under weaker conditions so as to accommodate a variety of distributions. To relax the strong convexity condition of $h$, \textcolor{red}{\cite{nonconvex,berkeley,mou2022improved,neufeld2024robust,raginsky,sglddiscont,xu,sgldloc}} considered replacing it with certain (local) dissipativity or convexity at infinity condition, and obtain convergence results using techniques developed in \cite{eberleold,eberle2019quantitative}. To relax the global Lipschitz condition (in $\theta$) and replace it with a local Lipschitz (or H\"{o}lder) condition, certain techniques need to be applied to modify the algorithms (see, e.g., \cite{hutzenthaler2012,liu2013strong,mao2015truncated,eulerscheme} and references therein). This is due to the fact that the absolute moments of the aforementioned algorithms could diverge to infinity at a finite time point \cite{hutzenthaler2011}. In \cite{tula}, a taming technique proposed in \cite{hutzenthaler2012,eulerscheme} is applied to obtain the tamed ULA algorithm (TULA) and convergence results are obtained under a local Lipschitz condition. Several variants of ULA and SGLD have been developed by applying the taming techniques while their convergence results are established under relaxed conditions to accommodate $\pi_{\beta}$ with super-linearly growing potentials \cite{lim2021polygonal,lim2022langevin,lim2021nonasymptotic,lovas2023taming,mtula}. We also refer to \cite{akyildiz2020nonasymptotic,chau2022stochastic,cheng2018underdamped,dalalyan2020sampling,gao2022,liang2024non,mou2021high} for high-order Langevin algorithms, to \cite{chewi2024analysis,erdogdu2021convergence,mou2022improved,nguyen2022unadjusted,vempala2019rapid} for sampling guarantees under functional inequalities, and to \cite{orieux2012sampling,vono2022high} for inverse problem sampling.

The algorithms mentioned above are first-order methods which make use of the (stochastic) gradient of the potential of $\pi_{\beta}$. The state-of-the-art rates of convergence of these algorithms in Wasserstein-2 distances are shown to be 1 in the convex case while they are 1/2 in the non-convex case, which are obtained under certain conditions imposed on the Hessian of the potential. In \cite{dk}, an LMC algorithm with Ozaki discretization (LMCO) and its variant LMCO' are proposed, which are second-order methods making use of the Hessian of the potential. It is shown in \cite{dk} that second-order methods improve on the first-order methods in ill-conditioned cases. However, no improvements are made on the rates of convergences of these methods in Wasserstein distances. In \cite{hola}, a high order LMC algorithm (HOLA) is developed by applying a taming technique to an order 1.5 numerical scheme introduced in \cite{kloeden2013numerical}. HOLA makes use of the third derivative of the potential of $\pi_{\beta}$, and \cite{hola} shows that the rate of convergence of HOLA in Wasserstein-2 distance in the convex setting is 3/2, which is higher than that of the first and second-order methods.

In this paper, we mainly consider an algorithm that can be used to sample from $\pi_{\beta}$ with a super-linearly growing potential. To this end, we propose an accelerated HOLA algorithm (aHOLA), which is a variant of HOLA in \cite{hola} obtained using a new taming factor. For completeness, we also propose an accelerated high order linear LMC algorithm (aHOLLA) which is the counterpart of aHOLA in the linear case (i.e., in the case where the derivatives of the potential of $\pi_{\beta}$ are growing at most linearly). Crucially, we obtain non-asymptotic error bounds for aHOLA in Wasserstein distances under a local H\"{o}lder condition with exponent $q\in(0,1]$ and a convexity at infinity condition. Our results are applicable to various distributions including, e.g., the double-well potential distribution, which cannot be covered by the corresponding results in Wasserstein distances for HOLA in \cite{hola}. In addition, in the non-convex setting, we obtain state-of-the-art rates of convergence of aHOLA in Wasserstein-1 and Wasserstein-2 distances equal to $1+q/2$ and $1/2+q/4$, respectively, which improve the rates of convergence of the first and second-order algorithms in the existing literature. We achieve similar convergence results for aHOLLA under certain global H\"{o}lder and Lipschitz conditions and a dissipativity condition. To illustrate the applicability of our results, we use aHOLA and aHOLLA to sample from high-dimensional distributions including a multivariate standard Gaussian distribution, a multivariate Gaussian mixture distribution, and a double-well potential distribution. Numerical results confirm the theoretical improvement of our proposed algorithms in terms of the rate of convergence in Wasserstein distances. Moreover, we present a binary logistic regression example to show that aHOLLA can be used to sample from the associated posterior distribution.

The rest of the paper is organised as follows. Section \ref{sec:main_superlinear} presents the setting and assumptions together with the main results for aHOLA where the target distributions have super-linearly growing potentials. Section \ref{sec:main_linear} presents the setting and assumptions together with the main results for aHOLLA where the target distributions have at most linearly growing potentials. Section \ref{sec:literaturereview} discusses the related results in the literature in comparison with our work to highlight our contributions. Section \ref{sec:app} illustrates numerical results which support our main findings. Section \ref{sec:mtslproofs} contains the proofs of main results in Section \ref{sec:main_superlinear}. Finally, Appendix \ref{appen:aux} contains the proofs for the auxiliary results in Sections \ref{sec:main_superlinear}, \ref{sec:app}, and \ref{sec:mtslproofs}, while Appendices \ref{appen:constexp} and \ref{appen:constexplip} present tables summarising full expressions of all constants appearing in the statements for aHOLA and aHOLLA, respectively.

\subsection{Notation}
We conclude this section by introducing some notation. Let $(\Omega,\mathcal{F},\mathbb{P})$ be a probability space. We denote by $\E[Z]$  the expectation of a random variable $Z$. For $1\leq p<\infty$, $\mathscr{L}^p$ is used to denote the usual space of $p$-integrable real-valued random variables. Fix integers $d, m \geq 1$. For an $\R^d$-valued random variable $Z$, its law on $\mathcal{B}(\R^d)$, i.e. the Borel sigma-algebra of $\R^d$, is denoted by $\mathcal{L}(Z)$. For a positive real number $a$, we denote by $\lfrf{a}$ its integer part, and $\lcrc{a} := \min\{b\in \mathbb{Z}|b\geq a\} $. The Euclidean scalar product is denoted
by $\langle \cdot,\cdot\rangle$, with $|\cdot|$ standing for the corresponding norm (where the dimension of the space may vary depending on the context). Denote by $|M|_{\cF}$ and $M^{\cT}$ the Frobenius norm and the transpose of any given matrix $M \in \R^{d\times m}$, respectively. Let $f:\R^d \rightarrow \R$ and $g:\R^d \rightarrow \R^d$ be twice continuously differentiable functions. Denote by $\nabla f$, $\nabla^2 f$ and $\Delta f$ the gradient of $f$, the Hessian of $f$, and the Laplacian of $f$, respectively. Denote by $\vec{\Delta}g: \R^d \to \R^d$ the vector Laplacian of $g$, i.e., for all $\theta \in \R^d$,  $\vec{\Delta}g(\theta)$ is a vector in $\R^d$ whose $i$-th entry is $\sum_{j=1}^d \partial^2_{\theta^{(j)}} g^{(i)}(\theta)$. For any integer $q \geq 1$, let $\mathcal{P}(\R^q)$ denote the set of probability measures on $\mathcal{B}(\R^q)$. For $\mu\in\mathcal{P}(\R^d)$ and for a $\mu$-integrable function $f:\R^d\to\R$, the notation $\mu(f):=\int_{\R^d} f(\theta)\mu(\rmd \theta)$ is used. For $\mu,\nu\in\mathcal{P}(\R^d)$, let $\mathcal{C}(\mu,\nu)$ denote the set of probability measures $\zeta$ on $\mathcal{B}(\R^{2d})$ such that its respective marginals are $\mu,\nu$. For two Borel probability measures $\mu$ and $\nu$ defined on $\R^d$ with finite $p$-th moments, the Wasserstein distance of order $p \geq 1$ is defined as
\[
{W}_p(\mu,\nu):=
\left(\inf_{\zeta\in\mathcal{C}(\mu,\nu)}\int_{\R^d}\int_{\R^d}|\theta-\overline{\theta}|^p\zeta(\rmd \theta \rmd \overline{\theta})\right)^{1/p}.
\]

\section{Assumptions and main results for aHOLA}\label{sec:main_superlinear}
In this section, we introduce the newly designed aHOLA algorithm~\eqref{eq:ahola1}-\eqref{eq:ahola3} to sample from $\pi_{\beta}$ given in \eqref{eq:targdist}. The aHOLA algorithm is a modification of the HOLA algorithm proposed in \cite{hola}, which is motivated by the order 1.5 numerical scheme of SDE \eqref{eq:sdeintro} \cite[Chapter 10]{kloeden2013numerical}. We present the assumptions under which our main results are established, and provide convergence guarantees in Wasserstein distances for the aHOLA algorithm.

\subsection{Setting}
Let $U: \R^d \rightarrow \R$ be a three times continuously differentiable function satisfying $\int_{\R^d} e^{-\beta U(\theta)} \, \rmd \theta <\infty$ for any $\beta>0$ and denote by 
\[
h:=\nabla U, \quad H:=\nabla^2 U, \quad \Upsilon:=\vec{\Delta}(\nabla U)
\] 
its gradient, Hessian, and vector Laplacian, respectively. Furthermore, define, for any $\beta>0$,
\begin{equation}\label{eq:pibetaexp}
\pi_{\beta}(A) := \frac{\int_A e^{-\beta U(\theta)} \, \rmd \theta}{\int_{\R^d} e^{-\beta U(\theta)} \, \rmd \theta}, \quad A \in \mathcal{B}(\R^d).
\end{equation}

Let $\rho\in [2, \infty)\cap \N$ and $q \in (0,1]$. Then, for any $i,j \in \N$, $\lambda>0$, and for any function $f: \R^d \rightarrow \R^{i\times j}$, we denote by
\begin{equation}\label{eq:taming}
f_{\lambda}(\theta):= \frac{f(\theta)}{(1+\lambda^{3/2}|\theta|^{3(\rho+q-1)})^{1/3}}, \quad \theta \in \R^d.
\end{equation}
The accelerated high order Langevin Monte Carlo algorithm (aHOLA) for SDE \eqref{eq:sdeintro} is given by
\begin{equation}\label{eq:ahola1}
\theta_0^{\cHOLA}:=\theta_0 , \quad \theta_{n+1}^{\cHOLA} = \theta_n^{\cHOLA} +\lambda\phi^{\lambda}( \theta_n^{\cHOLA})+\sqrt{2\lambda\beta^{-1}}\psi^{\lambda}( \theta_n^{\cHOLA})\xi_{n+1}, \quad n \in \N_0,
\end{equation}
where $\lambda>0$ is the step size, $\beta>0$, $(\xi_n)_{n \in \N_0}$ are i.i.d.\ standard $d$-dimensional Gaussian random variables, and where for all $\theta \in \R^d$, 
\begin{equation}\label{eq:ahola2}
\phi^{\lambda}(\theta):= - h_{\lambda}( \theta) +(\lambda/2) \left(H_{\lambda}( \theta)h_{\lambda}( \theta)-\beta^{-1}\Upsilon_{\lambda}(\theta) \right),
\end{equation}
and 
\begin{equation}\label{eq:ahola3}
\psi^{\lambda}(\theta) :=\sqrt{\cI_d - \lambda H_{\lambda}(\theta)+(\lambda^2/3) (H_{\lambda}(\theta))^2}
\end{equation}
with $\cI_d$ being the $d \times d$ identity matrix. We refer to Algorithm~\ref{algo:ahola} for the implementation of aHOLA~ \eqref{eq:ahola1}-\eqref{eq:ahola3}.

\begin{remark}
We note that, as mentioned in \cite{dk,kloeden2013numerical}, instead of taking the matrix square root as in \eqref{eq:ahola3}, $\psi^{\lambda}(\theta_n^{\cHOLA})\xi_{n+1}$ in \eqref{eq:ahola1} can be computed by considering the transformation 
\begin{equation}\label{eq:ahola3a}
\left(\cI_d- (1/2) \lambda H_{\lambda}(\theta_n^{\cHOLA})\right)\widehat{\xi}_{n+1}+(\sqrt{3}/6)\lambda  H_{\lambda}(\theta_n^{\cHOLA})\xi'_{n+1} 
\end{equation}
with $(\widehat{\xi}_n)_{n\in \N_0}$ and $(\xi'_n)_{n\in \N_0}$ being two independent standard Gaussian vectors independent of $(\xi_n)_{n \in \N_0}$ and $\theta_0$ (see also Algorithm \ref{algo:ahola2ndv}). This is due to the fact that \eqref{eq:ahola3a} has the same distribution as \eqref{eq:ahola3}. 

Furthermore, we note that aHOLA \eqref{eq:ahola1}-\eqref{eq:ahola3} is developed based on the (tamed) order 1.5 scheme of SDE~\eqref{eq:sdeintro}, see \cite[Chapter 10]{kloeden2013numerical} and \cite{sabanis2019explicit}, which is a class of explicit numerical schemes obtained using the It\^{o}-Taylor expansion \cite{MR715753} and is given by
\begin{align}\label{eq:aholaoh}
\begin{split}
\theta_{n+1}^{\lambda} &= \theta_n^{\lambda} -\lambda h_{\lambda}(\theta_n^{\lambda})+(\lambda^2/2)\left(H_{\lambda}( \theta_n^{\lambda})h_{\lambda}( \theta_n^{\lambda})-\beta^{-1}\Upsilon_{\lambda}(\theta_n^{\lambda}) \right)\\
&\quad +\sqrt{2\lambda\beta^{-1}} \widetilde{\xi}_{n+1}-\lambda\sqrt{2\lambda\beta^{-1}} H_{\lambda}(\theta_n^{\lambda})\overline{\xi}_{n+1}, \quad n \in \N_0,
\end{split}
\end{align}
where $\theta_0^{\lambda} := \theta_0$, $(\widetilde{\xi}_{n})_{n\in\N_0}$ is a sequence of i.i.d.\ standard $d$-dimensional Gaussian random variables, and $(\overline{\xi}_n)_{n \in \N_0}$ is a sequence of i.i.d. $d$-dimensional Gaussian random variables with mean $0$ and covariance $(1/3)\cI_d$. More precisely, for any $n \in \N_0$, $\overline{\xi}_{n+1} = \int_{n}^{n+1}\int_{n}^s\,\rmd B_r^{\lambda}\,\rmd s$, where $B^{\lambda}_t :=B_{\lambda t}/\sqrt{\lambda}$, $t \geq 0$. One observes that the law of aHOLA \eqref{eq:ahola1}-\eqref{eq:ahola3} coincides with that of the algorithm \eqref{eq:aholaoh} at grid points, i.e., for each $n \in \N_0$, $\mathcal{L}(\theta_n^{\cHOLA}) = \mathcal{L}(\theta_n^{\lambda} ) $. It is assumed throughout the paper that the $\R^d$-valued random variable $\theta_0$ (the initial condition) is independent of $(\xi_{n})_{n\in\N_0}$, $(\widetilde{\xi}_{n})_{n\in\N_0}$, and $(\overline{\xi}_n)_{n \in \N_0}$.
 
Crucially, the (tamed) order 1.5 scheme converges in the strong $\mathscr{L}^2$-sense with the rate of convergence equal to 1.5 \cite{kloeden2013numerical,sabanis2019explicit}, which is higher than that for the corresponding Euler and Milstein schemes. Hence, we are interested in extending such a result to an infinite time horizon in Wasserstein distances under relaxed conditions. To this end, we propose the aHOLA algorithm \eqref{eq:ahola1}-\eqref{eq:ahola3}, which can be viewed as a variant of the HOLA algorithm proposed in \cite{hola}. The key difference between the two algorithms is that aHOLA adopts a newly designed taming factor defined in \eqref{eq:taming}. This allows us to derive uniform in time moment estimates in Lemma~\ref{lem:2ndpthmmt}, and to establish the convergence result in Wasserstein-1 distance with a desired rate of convergence as shown in Theorem~\ref{thm:mainw1} which cannot be achieved using HOLA~\cite{hola}.
\end{remark}

\begin{algorithm}[!ht]
\caption{aHOLA Algorithm \eqref{eq:ahola1}-\eqref{eq:ahola3}} \label{algo:ahola}
\KwInput{$\beta,\delta>0$, $d\in\N$, measurable function $U:\R^d\to \R$, initialization $\theta_0\in\R^d$.}
\KwOutput{Estimator $\theta_n^{\cHOLA}$.}
Set $\rho, q, L, K_h, K_H$ to be the constants given by Assumption \ref{asm:ALL}.\\
Set $a, b, r, \overline{r}$ to be the constants given by Assumption \ref{asm:AC}.\\
Set $C_0, C_1, C_2$ to be the constants given in Theorem \ref{thm:mainw1}.\\
Set $\lambda_{\max}:=\eqref{eq:stepsizemax}$.\\
Fix $\lambda \in \left(0, \min \left\{(\delta/(2C_2))^{2/(2+q)}, \lambda_{\max}\right\}  \right)$.\\
Fix $n>\max\left\{\delta^{-2/(2+q)}(2C_2)^{2/(2+q)}/C_0, 1/(\lambda_{\max}C_0)\right\}\ln(2C_1(\E[|\theta_0|^{16(\rho+1)}]+1)/\delta)$.\\
Set $\phi^{\lambda}:=\eqref{eq:ahola2}, \psi^{\lambda}:=\eqref{eq:ahola3}$.\\
Initialize $\theta_0^{\cHOLA} \leftarrow \theta_0$.\\
\For{$\mathfrak{n}=0,\cdots, n-1$} {
    Draw $\xi_{\mathfrak{n}+1}\sim \mathcal{N}(0,\cI_d)$.\\
    Set $\theta_{\mathfrak{n}+1}^{\cHOLA} = \theta_{\mathfrak{n}}^{\cHOLA} +\lambda\phi^{\lambda}( \theta_{\mathfrak{n}}^{\cHOLA})+\sqrt{2\lambda\beta^{-1}}\psi^{\lambda}( \theta_{\mathfrak{n}}^{\cHOLA})\xi_{{\mathfrak{n}}+1}$.
}
\Return $\theta_n^{\cHOLA}$.
\end{algorithm}

\begin{algorithm}[!ht]
\caption{aHOLA Algorithm \eqref{eq:ahola1}-\eqref{eq:ahola3} --- A Variant Using \eqref{eq:ahola3a}} \label{algo:ahola2ndv}
\KwInput{$\beta,\delta>0$, $d\in\N$, measurable function $U:\R^d\to \R$, initialization $\theta_0\in\R^d$.}
\KwOutput{Estimator $\theta_n^{\cHOLA}$.}
Set $\rho, q, L, K_h, K_H$ to be the constants given by Assumption \ref{asm:ALL}.\\
Set $a, b, r, \overline{r}$ to be the constants given by Assumption \ref{asm:AC}.\\
Set $C_0, C_1, C_2$ to be the constants given in Theorem \ref{thm:mainw1}.\\
Set $\lambda_{\max}:=\eqref{eq:stepsizemax}$.\\
Fix $\lambda \in \left(0, \min \left\{(\delta/(2C_2))^{2/(2+q)}, \lambda_{\max}\right\}  \right)$.\\
Fix $n>\max\left\{\delta^{-2/(2+q)}(2C_2)^{2/(2+q)}/C_0, 1/(\lambda_{\max}C_0)\right\}\ln(2C_1(\E[|\theta_0|^{16(\rho+1)}]+1)/\delta)$.\\
Set $\phi^{\lambda}:=\eqref{eq:ahola2}$.\\
Initialize $\theta_0^{\cHOLA} \leftarrow \theta_0$.\\
\For{$\mathfrak{n}=0,\cdots, n-1$} {
    Draw independent standard Gaussian vectors $\widehat{\xi}_{\mathfrak{n}+1}$ and $\xi'_{\mathfrak{n}+1}$.\\
    Set $\theta_{\mathfrak{n}+1}^{\cHOLA} = \theta_{\mathfrak{n}}^{\cHOLA} +\lambda\phi^{\lambda}( \theta_{\mathfrak{n}}^{\cHOLA}) +\sqrt{2\lambda\beta^{-1}}\left(\cI_d- (1/2) \lambda H_{\lambda}(\theta_\mathfrak{n}^{\cHOLA})\right)\widehat{\xi}_{\mathfrak{n}+1}  
    \allowbreak   \hphantom{\theta_{\mathfrak{n}+1}^{\cHOLA} ==}+(\sqrt{3}/6)\lambda  H_{\lambda}(\theta_\mathfrak{n}^{\cHOLA})\xi'_{\mathfrak{n}+1}.$
}
\Return $\theta_n^{\cHOLA}$.
\end{algorithm}

\subsection{Assumptions} Let $U: \R^d \rightarrow \R$ be three times continuously differentiable, and let $\rho\in [2, \infty)\cap \N$ and $q \in (0,1]$ be fixed. Moreover, we impose the following assumptions.

We first impose a condition on the initial value $\theta_0$.
\begin{assumption}\label{asm:AI}
The initial condition $\theta_0$ is independent of $(\xi_{n})_{n\in\N_0}$ and has a finite $16(\rho+1)$-th moment, i.e., $\E[|\theta_0|^{16(\rho+1)}]<\infty$. 
\end{assumption}

Then, we impose a local H\"{o}lder condition on the third derivative of $U$.
\begin{assumption}\label{asm:ALL}
There exists $L>0$ such that, for all $i = 1, \dots, d$, $\theta, \overline{\theta} \in \R^d$,
\[
|\nabla^2 h^{(i)}(\theta) - \nabla^2 h^{(i)}(\overline{\theta})| \leq L(1+|\theta|+|\overline{\theta}|)^{\rho-2}|\theta-\overline{\theta}|^q.
\]
In addition, there exist $K_h, K_H>0$ such that, for all $\theta \in \R^d$,
\[
|h(\theta)| \leq K_h(1+|\theta|^{\rho+q}), \quad |H(\theta)|\leq K_H(1+|\theta|^{\rho+q-1}).
\]
\end{assumption}

By Assumption \ref{asm:ALL}, we obtain local Lipschitz (or H\"{o}lder) conditions and growth conditions on the first, second, and third derivatives of $U$ as presented in the following remark. The proof is postponed to Appendix \ref{rmk:growthcproof}.
\begin{remark}\label{rmk:growthc} By Assumption \ref{asm:ALL}, for any $\theta, \overline{\theta} \in \R^d$, $i = 1, \dots, d$, we obtain the following estimates:
\begin{align}
|\nabla^2 h^{(i)}(\theta)|&\leq \cK_0(1+|\theta|)^{\rho+q-2}, \label{eq:growthc1}\\
|\nabla  h^{(i)}(\theta) - \nabla  h^{(i)}(\overline{\theta})| &\leq \cK_0(1+|\theta|+|\overline{\theta}|)^{\rho+q-2}|\theta-\overline{\theta}|, \label{eq:growthc2}\\
|\nabla h^{(i)}(\theta)|&\leq \cK_1(1+|\theta|)^{\rho+q-1},\nonumber \\
|  h^{(i)}(\theta) -  h^{(i)}(\overline{\theta})| &\leq \cK_1(1+|\theta|+|\overline{\theta}|)^{\rho+q-1}|\theta-\overline{\theta}|,\nonumber \\
| h^{(i)}(\theta)| & \leq \cK_2(1+|\theta|)^{\rho+q},\nonumber \\
|\Upsilon(\theta) -  \Upsilon(\overline{\theta})|& \leq d^{3/2}L(1+|\theta|+|\overline{\theta}|)^{\rho-2}|\theta-\overline{\theta}|^q,\label{eq:growthc3}\\
|\Upsilon(\theta)|&\leq \cK_{3,d}(1+|\theta|)^{\rho+q-2},\nonumber 
\end{align}
where $\cK_0 := 2^{1-q}\max\{L, |\nabla^2 h^{(1)}(0)|,\dots, |\nabla^2 h^{(d)}(0)|\}$, $\cK_1:=\max\{\cK_0,|\nabla  h^{(1)}(0)|,\dots, |\nabla  h^{(d)}(0)|\}$, $\cK_2:=\max\{\cK_1,|h^{(1)}(0)|,\dots, |h^{(d)}(0)|\}$, and $\cK_{3,d}:=\max\{d^{3/2}L, \Upsilon(0)\}$.
\end{remark}
\begin{remark} \label{rmk:dwasmall} One may notice that in Assumption \ref{asm:ALL}, we assume separately growth conditions of $h$ and $H$, which could have been deduced directly by using the polynomial  H\"{o}lder condition of $\nabla^2 h^{(i)}$, $i = 1,\dots, d$, as shown in Remark \ref{rmk:growthc}. The reason is that the stepsize restriction $\lambda_{\max}$ given in \eqref{eq:stepsizemax} is reciprocally related to the growth constants of $h$ and $H$ (i.e., $K_h$ and $K_H$, respectively), thus, separately imposing growth conditions allows us to optimize $\lambda_{\max}$. For example, consider the double well potential $U(\theta) = (1/4)|\theta|^4-(1/2)|\theta|^2$, $\theta \in \R^d$. In this case, it can be shown that, for any $i = 1, \dots, d$, $\nabla^2 h^{(i)}(\theta) =2\theta \ce_i^{\cT}+2\theta^{(i)}\cI_d+2\ce_i\theta^{\cT}$, where $\ce_i$ denotes the standard basis vector in $\R^d$ with its $i$-th element being $1$. We have that, for any $\theta, \overline{\theta} \in \R^d$,
\[
|\nabla^2 h^{(i)}(\theta)-\nabla^2 h^{(i)}(\overline{\theta})|\leq 6|\theta-\overline{\theta}|,
\]
which implies, by following the same arguments as in the proof of Remark \ref{rmk:growthc},  that
\[
|h(\theta)| \leq 24\sqrt{d}(1+|\theta|^3), \quad |H(\theta)|\leq 12\sqrt{d}(1+|\theta|^2).
\]
However, since $h(\theta)=\theta(|\theta|^2-1) $ and $H(\theta)=\cI_d(|\theta|^2-1)+2\theta\theta^{\cT}$, we have that
\[
|h(\theta)| \leq 2(1+|\theta|^3), \quad |H(\theta)|\leq 3(1+|\theta|^2).
\]
From the above calculations, we observe that the growth constants of $h$ and $H$ obtained by using the expressions is much smaller than those deduced using the polynomial H\"{o}lder condition. 
\end{remark}

Next, we impose a convexity at infinity condition on $U$. 
\begin{assumption}\label{asm:AC}
There exist constants $a,b>0$, and $\overline{r} \in [0,r)$ with $r:= \rho+q-1$ such that, for all $\theta, \overline{\theta} \in \R^d$,
\[
\langle \theta - \overline{\theta}, h(\theta)-h(\overline{\theta}) \rangle \geq a|\theta-\overline{\theta}|^2(|\theta|^r+|\overline{\theta}|^r) - b|\theta-\overline{\theta}|^2(|\theta|^{\overline{r}}+|\overline{\theta}|^{\overline{r}}).
\]
\end{assumption}

Under Assumptions \ref{asm:ALL} and \ref{asm:AC}, we obtain a dissipativity condition on $h$. The explicit statement is provided below and its proof is postponed to Appendix \ref{rmk:dissipativitycproof}.
\begin{remark}\label{rmk:dissipativityc} By Assumption \ref{asm:AC}, for any $\theta \in \R^d$, we obtain
\[
\langle \theta, h(\theta) \rangle \geq \ca_\cD|\theta|^{r+2}-\cb_\cD,
\]
where $\ca_\cD:=a/2$ and $\cb_\cD:=(a/2+b)\cR_\cD^{\overline{r}+2}+|h(0)|^2/(2a)$ with $\cR_\cD:=\max\{(4b/a)^{1/(r-\overline{r})}, 2^{1/r}\}$. Moreover, for any $\theta \in \R^d$, we have that
\begin{equation}\label{eq:superlineardisp}
\langle \theta, h(\theta) \rangle \geq \ca_\cD|\theta|^2-\overline{\cb}_\cD,
\end{equation}
where $\overline{\cb}_\cD:=\ca_\cD+\cb_\cD$.
\end{remark}

Furthermore, we obtain a one-sided Lipschitz condition on $h$ as presented below. The proof is postponed to Appendix \ref{rmk:oslcproof}. 
\begin{remark}\label{rmk:oslc} By Assumptions \ref{asm:ALL} and \ref{asm:AC}, for any $\theta, \overline{\theta} \in \R^d$, we obtain
\[
\langle \theta - \overline{\theta}, h(\theta)-h(\overline{\theta}) \rangle \geq  -\cL_{\cOS}| \theta - \overline{\theta}|^2,
\]
where $\cL_{\cOS}:=\sqrt{d}\cK_1(1+2\cR_{\cOS})^{\rho+q-1}>0$ with $\cR_{\cOS}: = (b/a)^{1/(r-\overline{r})}$.
\end{remark}

\begin{remark} We comment on our Assumptions~\ref{asm:AI}-\ref{asm:AC}:
\begin{itemize}
\item Assumption~\ref{asm:AI} imposes conditions on the initial value $\theta_0$ to make sense of the convergence upper bounds in Theorem~\ref{thm:mainw1} and Corollary~\ref{crl:mainw2}. In particular, the requirement that $\theta_0$ has a finite $16(\rho+1)$-th moment is due to Lemma~\ref{lem:oserroralg} and \eqref{eq:multidmvt}, which is the minimal moment estimate required on $\theta_0$ to establish the aforementioned convergence results using our current proof technique. We note that this condition can be easily satisfied by, e.g., constants or Gaussian random variables, which are typically used for algorithm initialization.
\item Assumption~\ref{asm:ALL} imposes a local H\"{o}lder condition on the third derivative of $U$, which allows super-linear growths of derivatives of $U$ (as shown in Remark~\ref{rmk:growthc}) so as to accommodate a wide range of target distributions including, e.g., the double-well potential distribution. In addition, we impose growth conditions of $h$ and $H$ to avoid unnecessarily small stepsize restriction as discussed in Remark~\ref{rmk:dwasmall}.
\item Assumptions~\ref{asm:AC} imposes a convexity at infinity condition on $U$, which is the same as \cite[Assumption~3]{mtula} and one may refer to \cite[Remark~2.4]{mtula} for a detailed discussion. This condition implies a dissipativity condition of $h$ as shown in Remark~\ref{rmk:dissipativityc}, which (together with Assumption~\ref{asm:ALL}) ensures the existence of a unique solution to SDE~\eqref{eq:sdeintro}, and which is crucial to obtain moment estimates of the aHOLA algorithm. Moreover, it implies a one-sided Lipschitz condition as shown in Remark~\ref{rmk:oslc}, which is key in establishing convergence results in Wasserstein distances.
\end{itemize}

Note that, while our assumptions accommodate a wide range of distributions, especially those with highly non-linear potentials, they exclude discrete distributions and some heavy-tailed distributions including, e.g., the Cauchy distributions.
\end{remark}
\subsection{Main results} Denote by
\begin{equation}\label{eq:stepsizemax}
\lambda_{\max}:=\min\{  1,\ca_\cD^{-1},(19\ca_\cD / 240K_h\max\{K_H,K_h\})^2,(\ca_\cD/120K_h^2K_H^2)^{2/3}, \ca_\cD/(480K_h^2K_H)\}.
\end{equation}

Under Assumptions \ref{asm:AI}, \ref{asm:ALL}, and \ref{asm:AC}, we obtain the following non-asymptotic error bound in Wasserstein-1 distance between the law of aHOLA \eqref{eq:ahola1}-\eqref{eq:ahola3} and $\pi_\beta$.

\begin{theorem}\label{thm:mainw1} Let Assumptions \ref{asm:AI}, \ref{asm:ALL}, and \ref{asm:AC} hold. Then, for any $\beta>0$, there exist positive constants $C_0, C_1,C_2$ such that, for any $n \in \N_0$, $0<\lambda\leq \lambda_{\max}$,
\[
W_1(\mathcal{L}(\theta_n^{\cHOLA}),\pi_{\beta}) \leq C_1 e^{-C_0 \lambda n}(\E[|\theta_0|^{16(\rho+1)}]+1) +C_2\lambda^{1+q/2},
\]
where $C_0, C_1,C_2$ are given explicitly in \eqref{eq:mainw1const} (see also Appendix \ref{appen:constexp}). Furthermore, for any $\beta>0$, $\delta>0$, if we choose
\begin{align*}
\lambda &\leq \min \left\{(\delta/(2C_2))^{2/(2+q)}, \lambda_{\max}\right\},\\
n &\geq \max\left\{\delta^{-2/(2+q)}(2C_2)^{2/(2+q)}/C_0, 1/(\lambda_{\max}C_0)\right\}\ln(2C_1(\E[|\theta_0|^{16(\rho+1)}]+1)/\delta),
\end{align*}
then, we have $W_1(\mathcal{L}(\theta_n^{\cHOLA}),\pi_{\beta}) \leq \delta$.
\end{theorem}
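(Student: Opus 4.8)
The plan is to compare $\mu_n:=\mathcal{L}(\theta_n^{\cHOLA})$ with $\pi_\beta$ through the Langevin diffusion \eqref{eq:sdeintro}, whose unique invariant measure is $\pi_\beta$ (existence/uniqueness being guaranteed by the dissipativity of Remark~\ref{rmk:dissipativityc}), splitting the total error into a discretisation error and a relaxation-to-equilibrium error. Letting $(P_t)_{t\ge 0}$ denote the semigroup of \eqref{eq:sdeintro}, so $P_\lambda\pi_\beta=\pi_\beta$, I would set up the telescoping estimate across grid points: for each $n$,
\[
W_1(\mu_n,\pi_\beta)\;\le\;W_1(\mu_n,P_\lambda\mu_{n-1})\;+\;W_1(P_\lambda\mu_{n-1},P_\lambda\pi_\beta),
\]
where the first term is a one-step discretisation error and the second is controlled by a contraction of the diffusion. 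Iterating over $n$ steps produces exactly the two terms in the statement: the geometric factor $e^{-C_0\lambda n}$ from the repeated contraction, and a sum of one-step errors that a geometric series collapses to a single power of $\lambda$.

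\textbf{Contraction (the $e^{-C_0\lambda n}$ term).} Assumption~\ref{asm:AC} supplies the dissipativity of Remark~\ref{rmk:dissipativityc} and the one-sided Lipschitz bound of Remark~\ref{rmk:oslc}; these are precisely the hypotheses under which a reflection coupling à la \cite{eberle2019quantitative} yields a distance-like function $\rho$, comparable to $|\cdot-\cdot|$ up to fixed multiplicative constants, satisfying $W_\rho(P_t\mu,P_t\nu)\le e^{-ct}W_\rho(\mu,\nu)$. To avoid accumulating the bi-Lipschitz constants step by step, I would carry the whole recursion out in $W_\rho$ and convert to $W_1$ only at the end. The prefactor $(\E[|\theta_0|^{16(\rho+1)}]+1)$ then arises from bounding the initial discrepancy $W_\rho(\mu_0,\pi_\beta)$ by moments of $\theta_0$ (Assumption~\ref{asm:AI}) and of $\pi_\beta$.

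\textbf{Discretisation (the $C_2\lambda^{1+q/2}$ term).} The one-step error $W_1(\mu_n,P_\lambda\mu_{n-1})$ is estimated by coupling $\theta_n^{\cHOLA}$ with the diffusion started at $\theta_{n-1}^{\cHOLA}$ under synchronous noise and bounding the $\mathscr{L}^2$ distance of the increments. Here the order-$1.5$ It\^o--Taylor structure behind \eqref{eq:aholaoh} is essential: the drift correction $(\lambda/2)(H_\lambda h_\lambda-\beta^{-1}\Upsilon_\lambda)$ and the noise shaping $\psi^\lambda$ of \eqref{eq:ahola3} are designed so that the scheme matches the It\^o--Taylor expansion of \eqref{eq:sdeintro} up to an order-$\lambda^2$ remainder. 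Assumption~\ref{asm:ALL}, the local H\"older-$q$ continuity of $\nabla^2 h$ together with the growth bounds of Remark~\ref{rmk:growthc}, controls this remainder and injects the exponent $q$, upgrading the per-step error to $O(\lambda^{2+q/2})$ in the relevant sense (cf.\ Lemma~\ref{lem:oserroralg} and \eqref{eq:multidmvt}); the additional taming error $f_\lambda-f$ is handled separately via \eqref{eq:taming}, the new taming factor being chosen so as not to degrade this rate. All polynomial-in-$\theta$ prefactors are taken in expectation and bounded uniformly in $n$ by the moment estimates of Lemma~\ref{lem:2ndpthmmt} (this is where the new taming factor earns its keep, ensuring uniform-in-time moments up to order $16(\rho+1)$). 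Summing the per-step errors against the contraction factors, $\sum_k e^{-C_0\lambda k}\sim (C_0\lambda)^{-1}$, then gives $C_2\lambda^{2+q/2}/(C_0\lambda)=O(\lambda^{1+q/2})$.

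Combining the two contributions and tracking the explicit constants yields the displayed bound, after which the quantitative $\delta$-statement is elementary: since $1+q/2=(2+q)/2$, the choice $\lambda\le(\delta/(2C_2))^{2/(2+q)}$ forces $C_2\lambda^{1+q/2}\le\delta/2$, while $n\ge (C_0\lambda)^{-1}\ln(2C_1(\E[|\theta_0|^{16(\rho+1)}]+1)/\delta)$ forces the exponential term below $\delta/2$, the stated lower bound on $n$ following from the admissible range $\lambda\le\min\{(\delta/(2C_2))^{2/(2+q)},\lambda_{\max}\}$. I expect the main obstacle to be the one-step estimate that produces the sharp exponent $1+q/2$: reconciling the order-$1.5$ It\^o--Taylor matching with a merely H\"older-$q$ third derivative and the taming modification, while keeping every super-linear growth factor integrable under the uniform-in-time moment bounds, is the delicate technical core, and establishing those moment bounds for the tamed, super-linearly growing scheme is the companion difficulty that the taming factor \eqref{eq:taming} is specifically designed to resolve.
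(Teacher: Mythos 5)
Your decomposition bounds $W_1(\mathcal{L}(\theta_n^{\cHOLA}),\pi_\beta)$ by $\sum_{k=1}^{n}e^{-c\lambda(n-k)}\,W_1(\mu_k,P_\lambda\mu_{k-1})+e^{-c\lambda n}W_1(\mu_0,\pi_\beta)$, so everything hinges on the one-step error $W_1(\mu_k,P_\lambda\mu_{k-1})$ being $O(\lambda^{2+q/2})$. It is not: it is generically $\Theta(\lambda^{2})$, independently of $q$. Indeed, given a starting point $\theta$, the one-step law of aHOLA is exactly Gaussian (mean $\theta+\lambda\phi^{\lambda}(\theta)$, covariance $2\lambda\beta^{-1}(\psi^{\lambda}(\theta))^{2}$), whereas the law of $Z_\lambda$ started at $\theta$ carries a non-Gaussian component: its It\^o--Taylor expansion contains the conditionally mean-zero term $\tfrac12\int_0^\lambda\nabla^2h(\theta)\bigl[(Z_s-\theta)^{\otimes 2}-\E[(Z_s-\theta)^{\otimes 2}]\bigr]\,\rmd s$, of $\mathscr{L}^2$-size $\Theta(\lambda^2)$, which produces a third-cumulant mismatch of order $\lambda^{3}$ that no Gaussian law can reproduce. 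Testing against the $1$-Lipschitz function $f(z)=\sqrt{\lambda}\,g\bigl((z-\theta)/\sqrt{\lambda}\bigr)$ with $g$ odd shows $W_1$ between the two one-step laws is bounded below by a constant times $\lambda^{2}$ whenever $\nabla^2h(\theta)\neq 0$. Consequently your geometric sum can yield at best $O(\lambda^{2})\cdot O(1/(C_0\lambda))=O(\lambda)$ (and only $O(\lambda^{1/2})$ under the synchronous coupling you describe, where the noise-shaping mismatch already enters at order $\lambda^{3/2}$). Rate $1$ is a correct but strictly weaker conclusion; the entire point of Theorem \ref{thm:mainw1} is the improvement to $1+q/2$.

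The missing $\lambda^{q/2}$ comes from cancellation \emph{across} steps: the offending $\Theta(\lambda^{2})$ error terms have zero conditional mean given the past, so over a block of $T=\lfrf{1/\lambda}$ steps they accumulate as martingales instead of adding up, and a triangle-inequality-per-step decomposition irreversibly discards this structure, since each summand $W_1(\mu_k,P_\lambda\mu_{k-1})$ is a nonnegative number. This is exactly why the paper never takes a one-step estimate: in \eqref{eq:mtsplit} it compares the interpolated scheme with the auxiliary diffusion $\overline{\zeta}^{\lambda,n}$ of Definition \ref{def:auxzeta} over a whole block of length $T$, and in the proof of Lemma \ref{lem:w1converp1} the dangerous first-order terms are isolated in $\mathfrak{J}_2^{\lambda}$ and eliminated by conditional-expectation identities such as \eqref{eq:gridptmeasmartingale} and Lemma \ref{lem:Mest}; only afterwards is the contraction applied across blocks. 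Relatedly, the contraction the paper actually proves (Proposition \ref{prop:contractionw12}) holds in the weighted semimetric $w_{1,2}$ with a multiplicative prefactor $\hat{\cc}>1$, so it cannot be iterated once per step (you would pick up $\hat{\cc}^{\,n}$); your fix --- a prefactor-free contraction in a metric equivalent to $|\cdot-\cdot|$ via Eberle's reflection coupling --- is plausibly available here, since Assumption \ref{asm:AC} does imply strong convexity at infinity in the variable $|\theta-\overline{\theta}|$, but repairing the contraction does not repair the discretisation step, which is where the proof genuinely lives.
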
 

Moreover, we can also obtain a non-asymptotic result in Wasserstein-2 distance between the law of aHOLA \eqref{eq:ahola1}-\eqref{eq:ahola3} and $\pi_\beta$ as presented below.

\begin{corollary}\label{crl:mainw2} Let Assumptions \ref{asm:AI}, \ref{asm:ALL}, and \ref{asm:AC} hold. Then, for any $\beta>0$, there exist positive constants $C_3, C_4,C_5$ such that, for any $n \in \N_0$, $0<\lambda\leq \lambda_{\max}$,
\[
W_2(\mathcal{L}(\theta_n^{\cHOLA}),\pi_{\beta}) \leq C_4 e^{-C_3\lambda n}(\E[|\theta_0|^{16(\rho+1)}]+1)^{1/2} +C_5\lambda^{1/2+q/4}
\]
where $C_3, C_4,C_5$ are given explicitly in \eqref{eq:mainw2const} (see also Appendix \ref{appen:constexp}). Furthermore, for any $\beta>0$, $\delta>0$, if we choose
\begin{align*}
\lambda &\leq \min \left\{(\delta/(2C_5))^{4/(2+q)}, \lambda_{\max}\right\},\\
n &\geq \max\left\{\delta^{-4/(2+q)}(2C_5)^{4/(2+q)}/C_3, 1/(\lambda_{\max}C_3)\right\}\ln(2C_4(\E[|\theta_0|^{16(\rho+1)}]+1)^{1/2}/\delta),
\end{align*}
then, we have $W_2(\mathcal{L}(\theta_n^{\cHOLA}),\pi_{\beta}) \leq \delta$.
\end{corollary}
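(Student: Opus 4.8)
The plan is to deduce the Wasserstein-2 bound directly from the Wasserstein-1 bound of Theorem~\ref{thm:mainw1}, using the uniform-in-time moment estimates for the aHOLA iterates (Lemma~\ref{lem:2ndpthmmt}) together with the finiteness of the moments of $\pi_\beta$ (which follows from the dissipativity in Remark~\ref{rmk:dissipativityc}). The bridge is a moment-weighted comparison between $W_2$ and $W_1$. Concretely, I would fix an optimal coupling $\zeta^*\in\mathcal{C}(\mathcal{L}(\theta_n^{\cHOLA}),\pi_\beta)$ for $W_1$ and start from
\[
W_2^2(\mathcal{L}(\theta_n^{\cHOLA}),\pi_\beta)\le\int_{\R^d}\int_{\R^d}|\theta-\overline{\theta}|^2\,\zeta^*(\rmd\theta\,\rmd\overline{\theta}).
\]
Writing $|\theta-\overline{\theta}|^2=|\theta-\overline{\theta}|\cdot|\theta-\overline{\theta}|$ and applying H\"older's inequality isolates the factor $\int|\theta-\overline{\theta}|\,\zeta^*=W_1(\mathcal{L}(\theta_n^{\cHOLA}),\pi_\beta)$, while the remaining factor is bounded, via $|\theta-\overline{\theta}|\le|\theta|+|\overline{\theta}|$, by a combination of higher moments of $\mathcal{L}(\theta_n^{\cHOLA})$ and $\pi_\beta$ that is uniform in $n$ and $\lambda$. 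This is meant to yield a comparison of the form $W_2^2\lesssim C_{\mathsf{M}}\,W_1$, where $C_{\mathsf{M}}$ collects the moment bounds.

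Next I would substitute the two-term estimate of Theorem~\ref{thm:mainw1},
\[
W_2^2(\mathcal{L}(\theta_n^{\cHOLA}),\pi_\beta)\le C_{\mathsf{M}}\left(C_1e^{-C_0\lambda n}(\E[|\theta_0|^{16(\rho+1)}]+1)+C_2\lambda^{1+q/2}\right),
\]
and take square roots using the sub-additivity $\sqrt{x+y}\le\sqrt{x}+\sqrt{y}$. This separates the two contributions: the exponential term becomes $\sqrt{C_{\mathsf{M}}C_1}\,e^{-(C_0/2)\lambda n}(\E[|\theta_0|^{16(\rho+1)}]+1)^{1/2}$, giving the halved contraction rate $C_3=C_0/2$ and the square-rooted initial-moment factor appearing in the statement, whereas the polynomial term becomes $\sqrt{C_{\mathsf{M}}C_2}\,\lambda^{(1+q/2)/2}=\sqrt{C_{\mathsf{M}}C_2}\,\lambda^{1/2+q/4}$, which is exactly the claimed rate. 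Reading off $C_3,C_4,C_5$ from this then yields the explicit constants, and the stepsize/iteration prescriptions for achieving $W_2\le\delta$ follow by forcing each of the two terms below $\delta/2$, exactly as in the proof of the corresponding statement in Theorem~\ref{thm:mainw1}.

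The main obstacle I anticipate is making the moment-weighted comparison uniform in both the iteration index $n$ and the stepsize $\lambda$, since $C_{\mathsf{M}}$ must not depend on either for the final rate to be clean; this is precisely what the uniform-in-time moment estimates of Lemma~\ref{lem:2ndpthmmt} are designed to supply, and it explains the role of the high-order moment requirement in Assumption~\ref{asm:AI}. The genuinely delicate technical point is controlling the tail contribution $\int_{\{|\theta-\overline\theta|>1\}}|\theta-\overline\theta|^2\,\zeta^*$ sharply enough that the H\"older split costs only a bounded moment factor rather than degrading the power of $W_1$; here the super-linear growth of the potential (hence the light tails of $\pi_\beta$ and, via Lemma~\ref{lem:2ndpthmmt}, of $\mathcal{L}(\theta_n^{\cHOLA})$) is what should allow one to retain essentially the full strength of $W_1$ and thereby obtain the sharp rate $1/2+q/4$ rather than a smaller exponent. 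If a fully uniform comparison $W_2^2\lesssim C_{\mathsf{M}}\,W_1$ turns out to be out of reach, the fallback is the standard $L^p$-interpolation $W_2\le W_1^{\frac{s-2}{2(s-1)}}\,M_s^{\frac{1}{2(s-1)}}$ with moment order $s$ taken as large as Assumption~\ref{asm:AI} permits, which recovers the same rate in the limit of large $s$.
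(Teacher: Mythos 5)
There is a genuine gap at the core of your argument: the moment-weighted comparison $W_2^2(\mu,\nu)\le C_{\mathsf{M}}\,W_1(\mu,\nu)$ is false, and moment bounds alone cannot rescue it. The H\"older step you describe --- isolating $\int|\theta-\overline{\theta}|\,\rmd\zeta^*$ at full power $1$ --- is H\"older with exponent pair $(1,\infty)$, so the complementary factor is $\operatorname{ess\,sup}_{\zeta^*}|\theta-\overline{\theta}|$, not a moment, and this essential supremum is infinite here. Any valid interpolation necessarily degrades the exponent: with $M_s:=\int|\theta-\overline{\theta}|^s\,\rmd\zeta^*$ one only gets $W_2^2\le W_1^{(s-2)/(s-1)}M_s^{1/(s-1)}$, i.e.\ exactly your fallback, and this is sharp. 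Concretely, take $\mu_R=(1-R^{-s})\delta_0+R^{-s}\delta_{R\ce_1}$ and $\nu=\delta_0$: the $s$-th moments are uniformly bounded by $1$, yet $W_2^2/W_1=R\to\infty$. Light tails of the marginals do not help (each $\mu_R$ even has bounded support); the obstruction is a small mass displaced far away, which is precisely the regime governing Wasserstein tail errors. Consequently your route can only give $W_2\lesssim\lambda^{(1+q/2)\frac{s-2}{2(s-1)}}$, strictly worse than $\lambda^{1/2+q/4}$ for every finite $s$, and Assumption~\ref{asm:AI} caps the usable moment order at $16(\rho+1)$ (Lemma~\ref{lem:2ndpthmmt} supplies nothing beyond what the moments of $\theta_0$ allow), so the limit $s\to\infty$ is unavailable. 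The proposal therefore cannot produce the stated rate.

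The paper reaches the sharp exponent by a different route, which never interpolates $W_2$ against $W_1$: it repeats the three-term splitting \eqref{eq:crlsplit} directly at the level of $W_2$. The discretization term $W_2(\mathcal{L}(\overline{\theta}^{\lambda}_t),\mathcal{L}(\overline{\zeta}^{\lambda,n}_t))$ is a genuine strong $\mathscr{L}^2$ estimate (Lemma~\ref{lem:w1converp1}) and keeps the full rate $\lambda^{1+q/2}$; the square-root loss is taken only in the two contraction terms, via $W_2\le\sqrt{2w_{1,2}}$ applied to the weighted truncated semi-metric \eqref{eq:semimetricw1p}. The truncation $1\wedge|\theta-\theta'|$ together with the Lyapunov weights is what makes a pointwise domination of $|\theta-\theta'|^2$ legitimate (unlike a comparison with the bare $|\theta-\overline{\theta}|$), and since the relevant $w_{1,2}$ quantities are themselves of order $\lambda^{1+q/2}$ plus an exponentially decaying term, their square roots give \eqref{eq:w2converp2} and \eqref{eq:w2converp3}, hence the rate $\lambda^{1/2+q/4}$ and the halved contraction exponent $C_3$ in \eqref{eq:mainw2const}. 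If you want a short deduction of Corollary~\ref{crl:mainw2} from previously established results, the correct inputs are not Theorem~\ref{thm:mainw1} itself but the $w_{1,2}$-level bounds behind it.
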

The proofs of Theorem \ref{thm:mainw1} and Corollary \ref{crl:mainw2} are deferred in Section \ref{sec:mtslproofs}.
\section{Assumptions and main results for aHOLLA}\label{sec:main_linear}
In this section, we present the accelerated high order linear Langevin Monte Carlo algorithm (aHOLLA), which is the counterpart of the aHOLA algorithm in the linear setting where the derivatives of $U$ are growing at most linearly. We provide assumptions under which convergence results of aHOLLA are established, followed by the formal statement of our main results.
\subsection{Setting}
The setting in this case is similar to that described in Section \ref{sec:main_superlinear} except that there is no need to use tamed coefficients as in aHOLA \eqref{eq:ahola1}-\eqref{eq:ahola3}. More precisely, with the assumptions and notation defined up to \eqref{eq:pibetaexp}, the aHOLLA algorithm is given by 
\begin{equation}\label{eq:aholalip1}
\Theta_0^{\cHOLLA}:=\theta_0 , \quad \Theta_{n+1}^{\cHOLLA} = \Theta_n^{\cHOLLA} +\lambda\phi^{\lambda}_{\Lin} ( \Theta_n^{\cHOLLA})+\sqrt{2\lambda\beta^{-1}}\psi^{\lambda}_{\Lin}( \Theta_n^{\cHOLLA})\xi_{n+1}, \quad n \in \N_0,
\end{equation}
where for all $\theta \in \R^d$, 
\begin{equation}\label{eq:aholalip2}
\phi^{\lambda}_{\Lin}(\theta):= - h( \theta) +(\lambda/2) \left(H ( \theta)h ( \theta)-\beta^{-1}\Upsilon (\theta) \right),
\end{equation}
and 
\begin{equation}\label{eq:aholalip3}
\psi^{\lambda}_{\Lin}(\theta) :=\sqrt{\cI_d - \lambda H (\theta)+(\lambda^2/3) (H (\theta))^2}.
\end{equation}
\begin{remark}
The corresponding order 1.5 scheme of aHOLLA \eqref{eq:aholalip1}-\eqref{eq:aholalip3} in the linear case is given by
\begin{align}\label{eq:aholaohlip}
\begin{split}
\Theta_0^{\lambda} := \theta_0, \quad \Theta_{n+1}^{\lambda} &= \Theta_n^{\lambda} -\lambda h (\Theta_n^{\lambda})+(\lambda^2/2)\left(H ( \Theta_n^{\lambda})h ( \Theta_n^{\lambda})-\beta^{-1}\Upsilon (\Theta_n^{\lambda}) \right)\\
&\quad +\sqrt{2\lambda\beta^{-1}} \widetilde{\xi}_{n+1}-\lambda\sqrt{2\lambda\beta^{-1}} H (\Theta_n^{\lambda})\overline{\xi}_{n+1}, \quad n \in \N_0.
\end{split}
\end{align}
Then, it holds that $\mathcal{L}(\Theta_n^{\cHOLLA}) = \mathcal{L}(\Theta_n^{\lambda} ) $, for each $n \in \N_0$.
\end{remark}

We provide below the assumptions and main results for the aHOLLA algorithm \eqref{eq:aholalip1}-\eqref{eq:aholalip3}.
\subsection{Assumptions} Let $U: \R^d \rightarrow \R$ be three times continuously differentiable and let $q \in (0,1]$ be fixed. The following assumptions are stated, which can be viewed as counterparts to those stated in Section \ref{sec:main_superlinear}.

We first impose assumptions on the initial condition $\theta_0$.
\begin{assumption}\label{asm:AIlip}
The initial condition $\theta_0$ is independent of $(\xi_{n})_{n\in\N_0}$ and has a finite fourth moment, i.e., $\E[|\theta_0|^4]<\infty$. 
\end{assumption}

Then, we impose conditions on the first, second, and third derivatives of $U$.
\begin{assumption}\label{asm:ALLlip}
There exists $\overline{L}_1>0$ such that, for all $i = 1, \dots, d$, $\theta, \overline{\theta} \in \R^d$,
\[
|\nabla^2 h^{(i)}(\theta) - \nabla^2 h^{(i)}(\overline{\theta})| \leq \overline{L}_1|\theta-\overline{\theta}|^q.
\]
In addition, there exist $\overline{L}_2,\overline{L}_3>0$ such that, for all $\theta, \overline{\theta}  \in \R^d$,
\begin{align*}
|H(\theta) - H(\overline{\theta})| &\leq \overline{L}_2|\theta-\overline{\theta}|,\\
|h(\theta) - h(\overline{\theta})| &\leq \overline{L}_3|\theta-\overline{\theta}|.
\end{align*}
\end{assumption}

\begin{remark}\label{rmk:growthclip} By Assumption \ref{asm:ALLlip}, for any $\theta, \overline{\theta} \in \R^d$, $i = 1, \dots, d$, we obtain the following estimates:
\begin{alignat*}{3}
|\nabla^2 h^{(i)}(\theta)|
&\leq \cKl_0(1+|\theta|^q), 
&\qquad  
|H(\theta) \overline{\theta}| &\leq \overline{L}_3 | \overline{\theta}| ,
&\qquad  
|h(\theta)|  & \leq \cKl_1(1+|\theta|),\\
|\Upsilon(\theta) -  \Upsilon(\overline{\theta})|& \leq d^{3/2}\overline{L}_1|\theta-\overline{\theta}|^q,  
& 
|\Upsilon(\theta)|&\leq d\overline{L}_2,  & &
\end{alignat*}
where $\cKl_0 := \max\{\overline{L}_1, |\nabla^2 h^{(1)}(0)|,\dots, |\nabla^2 h^{(d)}(0)|\}$, $\cKl_1:=\max\{\overline{L}_3,|h(0)|\}$.
\end{remark}

Finally, we impose a dissipativity condition on $h$.
\begin{assumption}\label{asm:ADlip}
There exist constants $\overline{a},\overline{b}>0$ such that, for all $\theta \in \R^d$,
\[
\langle \theta, h(\theta) \rangle \geq \overline{a}|\theta|^2-\overline{b}.
\]
\end{assumption}
\subsection{Main results} Denote by
\begin{equation}\label{eq:stepsizemaxlip}
\overline{\lambda}_{\max}:=\min\left\{1,1/\overline{a},\overline{a}/(16\overline{L}_3\cKl_1),\overline{a}/(16\cKl_1^2), \overline{a}^{1/3}/(4\overline{L}_3^2\cKl_1^2)^{1/3}, \overline{a}^{1/2}/(16\overline{L}_3\cKl_1^2)^{1/3}  \right\}.
\end{equation}
Then, under Assumptions \ref{asm:AIlip}, \ref{asm:ALLlip}, and \ref{asm:ADlip}, we deduce the following non-asymptotic convergence results in Wasserstein distances for aHOLLA \eqref{eq:aholalip1}-\eqref{eq:aholalip3}.
\begin{theorem}\label{thm:mainw1lip} Let Assumptions \ref{asm:AIlip}, \ref{asm:ALLlip}, and \ref{asm:ADlip} hold. Then, for any $\beta>0$, there exist positive constants $C_{\Lin,0}, C_{\Lin,1},C_{\Lin,2}$ such that, for any $n \in \N_0$, $0<\lambda\leq \overline{\lambda}_{\max}$,
\[
W_1(\mathcal{L}(\Theta_n^{\cHOLLA}),\pi_{\beta}) \leq C_{\Lin,1} e^{-C_{\Lin,0} \lambda n}(\E[|\theta_0|^4]+1) +C_{\Lin,2}\lambda^{1+q/2},
\]
where $C_{\Lin,0}, C_{\Lin,1},C_{\Lin,2}$ are given explicitly in Appendix \ref{appen:constexplip}. Furthermore, for any $\beta>0$, $\delta>0$, if we choose
\begin{align*}
\lambda &\leq  \min \left\{(\delta/2C_{\Lin,2})^{2/(2+q)}, \overline{\lambda}_{\max}\right\}, \\
n &\geq \max\left\{(2C_{\Lin,2}/\delta)^{2/(2+q)}/C_{\Lin,0}, 1/(\overline{\lambda}_{\max}C_{\Lin,0})\right\}\ln(2C_{\Lin,1}(\E[|\theta_0|^4]+1)/\delta),
\end{align*}
then, we have $W_1(\mathcal{L}(\Theta_n^{\cHOLLA}),\pi_{\beta}) \leq \delta$. 
\end{theorem}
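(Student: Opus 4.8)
The plan is to follow the same strategy as for the super-linear case (Theorem~\ref{thm:mainw1}), which under the global conditions of Assumption~\ref{asm:ALLlip} simplifies considerably since no taming is required and $\phi^{\lambda}_{\Lin}, \psi^{\lambda}_{\Lin}$ in \eqref{eq:aholalip2}--\eqref{eq:aholalip3} already grow at most linearly. The core is a one-step recursion for the distance to $\pi_\beta$ that combines three ingredients: (i) uniform-in-time moment bounds for the scheme; (ii) an exponential contraction of the Langevin semigroup of \eqref{eq:sdeintro}; and (iii) a sharp local estimate comparing one step of aHOLLA with the diffusion run for time $\lambda$. To keep the contraction constant equal to one under iteration, I would carry out the recursion in the Eberle semimetric $W_f(\mu,\nu)=\inf_{\zeta\in\mathcal{C}(\mu,\nu)}\int_{\R^d}\int_{\R^d} f(|\theta-\overline{\theta}|)\,\zeta(\rmd\theta\,\rmd\overline{\theta})$ associated with a concave increasing $f$ equivalent to the identity, and convert back to $W_1$ at the end.

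First I would establish $\sup_{n\in\N_0}\E[|\Theta_n^{\cHOLLA}|^4]\leq \overline{C}(\E[|\theta_0|^4]+1)$, the linear counterpart of Lemma~\ref{lem:2ndpthmmt}. Conditioning on $\mathcal{F}_n$ and expanding $\E[|\Theta_{n+1}^{\cHOLLA}|^2\mid\mathcal{F}_n]$ from \eqref{eq:aholalip1}, the dissipativity of Assumption~\ref{asm:ADlip} produces the contractive contribution $-2\lambda\overline{a}|\Theta_n^{\cHOLLA}|^2$, while the linear growth bounds of Remark~\ref{rmk:growthclip} control the remaining $O(\lambda^2)$ terms. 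The step-size threshold $\overline{\lambda}_{\max}$ in \eqref{eq:stepsizemaxlip} is chosen precisely so that the resulting one-step factor has the form $1-c\lambda\in(0,1)$, which yields a uniform geometric bound after iteration; the fourth moment is treated analogously, and Assumption~\ref{asm:AIlip} guarantees that the bound is finite. For the contraction I would invoke the reflection-coupling theory of \cite{eberle2019quantitative}: Assumption~\ref{asm:ALLlip} gives a global one-sided Lipschitz bound $\langle\theta-\overline{\theta},h(\theta)-h(\overline{\theta})\rangle\geq-\overline{L}_3|\theta-\overline{\theta}|^2$ and Assumption~\ref{asm:ADlip} gives dissipativity, so the drift is Lipschitz and contractive at infinity, furnishing a rate $\dot c>0$ and a semimetric $W_f$ with $W_f(\mu P_t,\nu P_t)\leq e^{-\dot c t}W_f(\mu,\nu)$ for the semigroup $(P_t)_{t\geq0}$ of \eqref{eq:sdeintro}. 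Since $\pi_\beta$ is invariant, applying this over one interval of length $\lambda$ to $\mu=\mathcal{L}(\Theta_n^{\cHOLLA})$ and $\nu=\pi_\beta$ contributes the factor $e^{-\dot c\lambda}$, which after iteration produces the transient term $C_{\Lin,1}e^{-C_{\Lin,0}\lambda n}(\E[|\theta_0|^4]+1)$ once converted back to $W_1$ and combined with the moment bound on the initial distance.

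The crux is ingredient (iii): bounding $W_f(\mathcal{L}(\Theta_{n+1}^{\cHOLLA}),\mathcal{L}(Z_\lambda^{(n)}))$, where $Z_\lambda^{(n)}$ denotes \eqref{eq:sdeintro} run for time $\lambda$ from $\mathcal{L}(\Theta_n^{\cHOLLA})$. Here I would compare not pathwise realizations but the one-step conditional laws. Conditional on $\Theta_n^{\cHOLLA}=v$, one aHOLLA step is Gaussian with mean $v+\lambda\phi^{\lambda}_{\Lin}(v)$ and covariance $2\lambda\beta^{-1}(\cI_d-\lambda H(v)+(\lambda^2/3)H(v)^2)$; because aHOLLA coincides in law with the order~1.5 scheme \eqref{eq:aholaohlip}, both moments match those of the diffusion \eqref{eq:sdeintro} started from $v$ up to and including order $\lambda^{3/2}$ in its It\^{o}--Taylor expansion, so the covariance discrepancy is of higher order and the leading contribution is the conditional-mean (bias) term. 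Using the mean value theorem and the H\"{o}lder-$q$ modulus of $\nabla^2 h$ from Assumption~\ref{asm:ALLlip} to estimate the remainder, together with the moment bounds of the first step, this bias is $O(\lambda^{2+q/2})$, whence $W_f(\mathcal{L}(\Theta_{n+1}^{\cHOLLA}),\mathcal{L}(Z_\lambda^{(n)}))\leq C\lambda^{2+q/2}$. Feeding this into the recursion $W_f(\mathcal{L}(\Theta_{n+1}^{\cHOLLA}),\pi_\beta)\leq e^{-\dot c\lambda}W_f(\mathcal{L}(\Theta_n^{\cHOLLA}),\pi_\beta)+C\lambda^{2+q/2}$ and summing the geometric series (dividing the accumulated local error by $1-e^{-\dot c\lambda}\asymp\dot c\lambda$) gives the steady-state term $C_{\Lin,2}\lambda^{1+q/2}$.

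I expect the main obstacle to be precisely this local estimate: one must verify that, under only H\"{o}lder (rather than Lipschitz) regularity of the third derivative, the order~1.5 cancellations in \eqref{eq:aholaohlip} against the It\^{o}--Taylor expansion of $Z_\lambda^{(n)}$ leave a bias of order exactly $\lambda^{2+q/2}$, and --- crucially --- that all constants are uniform in $n$ so that the contraction of ingredient (ii) is not overwhelmed; it is the matching of the fluctuation \emph{laws} (not realizations) that lets the per-step discrepancy be governed by the bias $\lambda^{2+q/2}$ rather than by the strong error $O(\lambda^2)$, thereby producing the full rate $1+q/2$ rather than $1$. The final $\delta$-accuracy claim then follows by balancing the transient and steady-state terms and solving for $\lambda$ and $n$, exactly as for Theorem~\ref{thm:mainw1}.
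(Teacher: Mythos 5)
Your ingredients (i) and (ii) --- uniform-in-time moment bounds and the reflection-coupling contraction of the Langevin semigroup in an Eberle-type semimetric --- are exactly the ones the paper uses (Lemma~\ref{lem:2ndpthmmtlip}, Proposition~\ref{prop:contractionw12lip}). The genuine gap is in your ingredient (iii), and it is fatal to the one-step recursion as sketched. Writing $Z_\lambda$ for the diffusion at time $\lambda$ started from $v$ and $\theta_1$ for one aHOLLA step from $v$, the It\^o--Taylor remainder is
\[
Z_\lambda - \theta_1 \;=\; \int_0^\lambda\!\!\int_0^s \bigl\{(Hh-\beta^{-1}\Upsilon)(Z_r)-(Hh-\beta^{-1}\Upsilon)(v)\bigr\}\,\rmd r\,\rmd s \;-\;\sqrt{2\beta^{-1}}\int_0^\lambda\!\!\int_0^s \bigl\{H(Z_r)-H(v)\bigr\}\,\rmd B_r\,\rmd s,
\]
whose first (bias-type) term is indeed $O(\lambda^{2+q/2})$ as you claim, but whose second, conditionally centered term has $L^2$ norm genuinely of order $\lambda^2$ (only Lipschitz continuity of $H$ is available). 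Any coupling-based bound on the semimetric between the one-step laws --- in particular the synchronous coupling, which is the only one available under your assumptions --- therefore cannot do better than $O(\lambda^2)$ per step, and your recursion then yields a steady-state error $O(\lambda^2)/(\dot{c}\lambda)=O(\lambda)$, i.e.\ rate $1$, not $1+q/2$. The assertion that ``matching of the fluctuation laws lets the per-step discrepancy be governed by the bias'' is precisely what needs proof: a conditionally mean-zero perturbation of size $\lambda^2$ does move the law (and the semimetric \eqref{eq:semimetricw1plip} is controlled from above only through couplings), and making its contribution $o(\lambda^{2+q/2})$ requires either (a) a contractive synchronous coupling, so that centered terms enter only at second order inside a squared-distance expansion --- this needs strong convexity, which Assumption~\ref{asm:ADlip} does not provide --- or (b) a law-level smoothing or Gaussian-comparison argument for the transition kernels, which you do not supply and which is the actual hard part; moment matching of the two laws through order $\lambda^{3/2}$ does not by itself imply it.

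The paper circumvents exactly this obstruction by changing the comparison window: instead of one step, it compares the interpolated scheme $(\overline{\Theta}^{\lambda}_t)$ with an auxiliary diffusion $(\widetilde{\zeta}^{\lambda,n}_t)$ restarted from $\overline{\Theta}^{\lambda}_{nT}$ over windows of $T=\lfrf{1/\lambda}$ steps, i.e.\ physical time of order one, both driven by the same Brownian motion (Definition~\ref{def:auxzetalip}, splitting \eqref{eq:mtsplitlip}). Inside a window one works with $\E|\overline{\Theta}^{\lambda}_t-\widetilde{\zeta}^{\lambda,n}_t|^2$, where the conditionally centered structure can legitimately be exploited: after the $\mathfrak{M}$-decomposition of $H(\overline{\Theta}^{\lambda}_r)-H(\overline{\Theta}^{\lambda}_{\lfrf{r}})$ and the conditioning identity \eqref{eq:gridptmeasmartingalelip}, centered contributions accumulate in quadrature (order $\lambda^{3/2}\le\lambda^{1+q/2}$) while the H\"older remainder accumulates linearly (order $\lambda^{1+q/2}$), at the cost of only a Gr\"onwall constant $e^{2\overline{L}_3+6}$ over the unit-length window (Lemma~\ref{lem:w1converlipp1}). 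The reflection-coupling contraction is then invoked only \emph{between} windows, where both processes compared are genuine Langevin diffusions (Lemma~\ref{lem:w1converlipp2} and \eqref{eq:w1converlipp3}), so no per-step comparison of laws is ever needed. If you wish to retain a one-step architecture you would have to develop option (b) above from scratch; otherwise the windowed $L^2$ comparison is the natural repair of your argument.
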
 

\begin{corollary}\label{crl:mainw2lip} Let Assumptions \ref{asm:AIlip}, \ref{asm:ALLlip}, and \ref{asm:ADlip} hold. Then, for any $\beta>0$, there exist positive constants $C_{\Lin,3}, C_{\Lin,4},C_{\Lin,5}$ such that, for any $n \in \N_0$, $0<\lambda\leq \overline{\lambda}_{\max}$,
\[
W_2(\mathcal{L}(\Theta_n^{\cHOLLA}),\pi_{\beta}) \leq C_{\Lin,4} e^{-C_{\Lin,3}\lambda n}(\E[|\theta_0|^4]+1)^{1/2} +C_{\Lin,5}\lambda^{1/2+q/4}
\]
where $C_{\Lin,3}, C_{\Lin,4},C_{\Lin,5}$ are given explicitly in Appendix \ref{appen:constexplip}. Furthermore, for any $\beta>0$, $\delta>0$, if we choose
\begin{align*}
\lambda &\leq  \min \left\{(\delta/2C_{\Lin,5})^{4/(2+q)}, \overline{\lambda}_{\max}\right\}, \\
n &\geq \max\left\{(2C_{\Lin,5}/\delta)^{4/(2+q)}/C_{\Lin,3}, 1/(\overline{\lambda}_{\max}C_{\Lin,3})\right\}\ln(2C_{\Lin,4}(\E[|\theta_0|^4]+1)^{1/2}/\delta),
\end{align*}
then, we have $W_2(\mathcal{L}(\Theta_n^{\cHOLLA}),\pi_{\beta}) \leq \delta$.
\end{corollary}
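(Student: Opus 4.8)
The plan is to mirror the proof of Theorem~\ref{thm:mainw1lip} (and of its super-linear analogue, Corollary~\ref{crl:mainw2}), deriving the Wasserstein-2 bound from the machinery already assembled for the Wasserstein-1 estimate. The three ingredients I would use are: uniform-in-time fourth-moment bounds for aHOLLA, a contraction estimate to $\pi_\beta$ in the Eberle-type semimetric with a quadratic Lyapunov weight that underlies Theorem~\ref{thm:mainw1lip}, and a second-moment ($\mathscr{L}^2$) control of the discretization error. The guiding observation is that in the non-convex regime a direct Wasserstein-2 contraction is unavailable, so instead I would bound the \emph{squared} distance $W_2(\mathcal{L}(\Theta_n^{\cHOLLA}),\pi_\beta)^2$ by a quantity of exactly the same structural form as the right-hand side of Theorem~\ref{thm:mainw1lip}, namely an exponentially decaying term with a linear dependence on $\E[|\theta_0|^4]+1$ plus a term of order $\lambda^{1+q/2}$, and then take square roots. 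This is precisely what produces the halved rate $1/2+q/4=(1+q/2)/2$, the halved exponential constant $C_{\Lin,3}=C_{\Lin,0}/2$, and the square-root moment dependence $(\E[|\theta_0|^4]+1)^{1/2}$.

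First I would establish uniform-in-time fourth-moment bounds $\sup_{n\in\N_0}\E[|\Theta_n^{\cHOLLA}|^4]<\infty$, the linear counterpart of Lemma~\ref{lem:2ndpthmmt}, using Assumption~\ref{asm:AIlip}, the global Lipschitz/H\"older growth from Assumption~\ref{asm:ALLlip} (and Remark~\ref{rmk:growthclip}), the dissipativity in Assumption~\ref{asm:ADlip}, and the stepsize restriction~\eqref{eq:stepsizemaxlip}; the dissipativity provides the negative drift that keeps these bounds uniform in $n$. I would also record that $\pi_\beta$, having Gaussian-type tails by dissipativity, possesses finite moments of every order, and that the diffusion~\eqref{eq:sdeintro} inherits the same uniform moment control from $\theta_0$.

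For the two remaining pieces I would use $W_2(\mathcal{L}(\Theta_n^{\cHOLLA}),\pi_\beta)^2\le 2W_2(\mathcal{L}(\Theta_n^{\cHOLLA}),\mathcal{L}(\bar Z_{\lambda n}))^2+2W_2(\mathcal{L}(\bar Z_{\lambda n}),\pi_\beta)^2$, where $\bar Z$ solves~\eqref{eq:sdeintro} with $\bar Z_0=\theta_0$. For the discretization term I would couple the continuous-time interpolation of aHOLLA and $\bar Z$ through the same Brownian motion and apply It\^o's formula to the squared difference; the one-sided Lipschitz property coming from Assumption~\ref{asm:ADlip} together with Assumption~\ref{asm:ALLlip} absorbs the contractive part and bounds the coefficient-freezing error, while the cancellations built into $\phi^\lambda_{\Lin}$ and $\psi^\lambda_{\Lin}$ in~\eqref{eq:aholalip2}-\eqref{eq:aholalip3} (that is, the order-1.5 structure~\eqref{eq:aholaohlip}) keep the leading error small, so that a Gronwall argument with a strictly negative Lyapunov exponent yields a uniform-in-time bound of order $\lambda^{1+q/2}$. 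For the ergodicity term I would invoke the Lyapunov-weighted semimetric $\varrho$ underlying Theorem~\ref{thm:mainw1lip}: the quadratic weight gives the pointwise domination $|x-y|^2\le C\varrho(x,y)$, hence $W_2(\mathcal{L}(\bar Z_{\lambda n}),\pi_\beta)^2\le C\,W_{1,\varrho}(\mathcal{L}(\bar Z_{\lambda n}),\pi_\beta)\le C'e^{-C_{\Lin,0}\lambda n}(\E[|\theta_0|^4]+1)$. Adding the two contributions and taking square roots via $\sqrt{a+b}\le\sqrt a+\sqrt b$ gives the stated estimate, and $C_{\Lin,3},C_{\Lin,4},C_{\Lin,5}$ are then read off from $C_{\Lin,0},C_{\Lin,1},C_{\Lin,2}$; the threshold conditions on $\lambda$ and $n$ ensuring $W_2(\mathcal{L}(\Theta_n^{\cHOLLA}),\pi_\beta)\le\delta$ follow by the same elementary manipulation as in Theorem~\ref{thm:mainw1lip}.

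The main obstacle is the discretization step at the $\mathscr{L}^2$ level: obtaining the sharp H\"older-dependent order $\lambda^{1+q/2}$ for the squared error \emph{uniformly over the infinite time horizon}. This requires simultaneously (i) exploiting the dissipativity of $h$ to secure a strictly negative exponent in the Gronwall estimate so that the per-step errors do not accumulate, and (ii) tracking the remainder terms of the order-1.5 It\^o-Taylor scheme under only a H\"older-$q$ condition on $\nabla^2 h^{(i)}$ (Assumption~\ref{asm:ALLlip}), so that the surviving iterated-integral remainders contribute exactly the exponent $1+q/2$ and not a worse power. A secondary technical point is to verify that the quadratic Lyapunov weight in the contraction semimetric genuinely dominates $|x-y|^2$, since this domination is what legitimises the passage from the Wasserstein-1-type functional to $W_2^2$ and thereby the factor-two relation between the two rates.
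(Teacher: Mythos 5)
Your overall architecture has a genuine gap at exactly the step you yourself flag as the main obstacle: the claim that a synchronous coupling of the aHOLLA interpolation with the Langevin diffusion $\bar Z$ started from $\theta_0$, run over the whole horizon $[0,\lambda n]$, admits a Gronwall estimate ``with a strictly negative Lyapunov exponent'' thanks to Assumption \ref{asm:ADlip}. Dissipativity $\langle\theta,h(\theta)\rangle\ge\overline{a}|\theta|^2-\overline{b}$ controls each process individually, but it gives no sign information on $\langle x-y,h(x)-h(y)\rangle$; under Assumptions \ref{asm:ALLlip} and \ref{asm:ADlip} the only available bound on the difference is the one-sided estimate $\langle x-y,h(x)-h(y)\rangle\ge-\overline{L}_3|x-y|^2$ with $\overline{L}_3>0$, which has the wrong sign. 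Applying It\^o's formula to $|\widetilde{\Theta}^{\lambda}_t-\bar Z_t|^2$ therefore produces a Gronwall factor of order $e^{2\overline{L}_3 t}$ growing exponentially in time, so your discretization error is not uniform in $n$, and the two-term splitting $W_2^2\le 2\,W_2(\mathcal{L}(\Theta_n^{\cHOLLA}),\mathcal{L}(\bar Z_{\lambda n}))^2+2\,W_2(\mathcal{L}(\bar Z_{\lambda n}),\pi_\beta)^2$ cannot deliver the stated bound. This is precisely the obstruction you acknowledge in your opening paragraph (no direct $W_2$ contraction in the non-convex regime): it applies equally to the scheme-versus-SDE difference, and dissipativity does not resolve it.

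The paper's proof avoids this with a three-term splitting built on the restarted auxiliary process $\widetilde{\zeta}^{\lambda,n}$ of Definition \ref{def:auxzetalip}. First, the synchronous-coupling/It\^o/Gronwall estimate (your discretization step; Lemma \ref{lem:w1converlipp1}) is performed only on the window $(nT,(n+1)T]$ with $T=\lfrf{1/\lambda}$, where the Gronwall factor $e^{(2\overline{L}_3+6)\lambda T}\le e^{2\overline{L}_3+6}$ is harmless; this yields a genuine $W_2$ bound of \emph{full} order $\lambda^{1+q/2}$ for that term. Second, the long-time behaviour is handled entirely at the level of exact Langevin dynamics, comparing $\widetilde{\zeta}^{\lambda,n}$ with $Z^{\lambda}$ (both solving the same SDE from different initial laws) via the Eberle-type contraction in $w_{1,2}$ (Proposition \ref{prop:contractionw12lip}) together with $W_2\le\sqrt{2w_{1,2}}$; it is this square root, applied to an accumulated $w_{1,2}$-error of order $\lambda^{1+q/2}$ between the restarted processes, and not a square root of the full $W_1$ bound, that produces the rate $\lambda^{1/2+q/4}$ and the factor $(\E[|\theta_0|^4]+1)^{1/2}$ in \eqref{eq:w2converlipp2}. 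Third, $W_2(\mathcal{L}(Z^{\lambda}_t),\pi_\beta)$ is bounded by the same contraction, which matches your ergodicity step; your pointwise domination $|x-y|^2\le C\,[1\wedge|x-y|](1+V_2(x)+V_2(y))$ is indeed the correct justification for $W_2\le\sqrt{2w_{1,2}}$, so that part of your plan is sound. If you reroute your discretization estimate through the restart construction, the rest of your argument goes through essentially as you describe.
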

The proofs of Theorem \ref{thm:mainw1lip} and Corollary \ref{crl:mainw2lip} follow the same ideas as in the proofs of Theorem~\ref{thm:mainw1} and Corollary~\ref{crl:mainw2} explained in Section~\ref{sec:mtslproofs}, and hence are deferred in Appendix \ref{appen:mtlinearproofs}.
\section{Related work and comparison}\label{sec:literaturereview}
In this section, we discuss related works in the literature and compare them with our results in Theorem~\ref{thm:mainw1}, Corollary~\ref{crl:mainw2}, Theorem \ref{thm:mainw1lip}, and Corollary~\ref{crl:mainw2lip}. More precisely, we classify these results based on two types of assumptions: (i) the curvature condition and (ii) the smoothness condition, and then discuss the corresponding convergence guarantees.

Many works \cite{convex,ppbdm,cheng2018underdamped,dalalyan,dk,dalalyan2020sampling,DM16,durmus2019analysis,mou2021high,vempala2019rapid} that investigate the sampling behavior of the Langevin-dynamics based algorithms impose a global Lipschitz condition and a strong convexity condition on $h$, i.e., there exist $\widetilde{m},\widetilde{L}>0$ such that for all $\theta, \overline{\theta}\in\R^d$,
\[
\begin{cases}|h(\theta )- h(\overline{\theta})|\leq \widetilde{L}|\theta - \overline{\theta}|, & \text{(Global Lipschitz continuity)}\\
\langle h(\theta )- h(\overline{\theta}), \theta - \overline{\theta}\rangle \geq \widetilde{m}|\theta - \overline{\theta}|^2. & \text{(Strong convexity)}
\end{cases}\]
Under these conditions, \cite{dalalyan} shows that the LMC algorithm requires $\widetilde{O}(d^3\delta^{-2})$ steps to reach a $\delta>0$ precision level in total variation distance, while \cite{convex,ppbdm,dk,DM16,durmus2019analysis} prove that the ($\delta$-)mixing time\footnote{The minimum number of steps required for an algorithm to reach within $\delta>0$ of the target distribution in certain distance.} scales as $\widetilde{O}(d\delta^{-2})$ in Wasserstein-2 distance. Moreover, \cite{vempala2019rapid} obtains the convergence result in Kullback-Leibler (KL) divergence with mixing time $\widetilde{O}(d\delta^{-1})$ under the additional assumption that the target distribution $\pi_{\beta}$ of the form \eqref{eq:targdist} satisfies a log-Sobolev inequality. For the underdamped Markov chain Monte Carlo algorithm (known also as the second-order Langevin algorithm), \cite{cheng2018underdamped,dalalyan2020sampling} establish convergence results in Wasserstein-2 distance with mixing time $\widetilde{O}(d^{1/2}\delta^{-1})$. Furthermore, \cite{mou2021high} proposes a third-order Langevin algorithm and provides a convergence result in Wasserstein-2 distance for the algorithm to sample from the distribution whose potential $U$ is of a ridge-separable form \cite[Eq.\ (1)]{mou2021high}. It is shown in \cite[Theorem 1]{mou2021high} that the mixing time is $\widetilde{O}(d^{1/4}\delta^{-1/2})$. 

However, the global Lipschitz condition and the strong convexity condition are restrictive in the sense that they cannot accommodate key applications including, e.g., the problem of sampling from the double-well potential distribution and from the posterior distribution associated with the Bridge linear regression \cite{fu1998penalized}. To address this issue, recent works focus on the relaxation of the aforementioned assumptions:
\begin{enumerate}
\item\label{item:comparisonsmth} To relax the strong convexity condition, one direction is to replace it with a convexity at infinity condition. \cite{berkeley} shows that, under the condition that $U$ is strongly convex outside a ball of radius~$c_{\cR}$, the LMC algorithm converges in Wasserstein-1 distance with mixing time $\widetilde{O}(de^{O(c_{\cR}^2)}\delta^{-2})$. 

Another direction considers to replace the strong convexity condition with a dissipativity condition \cite{nonconvex,mou2022improved,mtula,neufeld2024robust,raginsky,sglddiscont,xu,sgldloc}. In particular, \cite{raginsky,xu} establish convergence results in Wasserstein-2 distance for the SGLD algorithm, requiring respectively $\widetilde{O}(d(c_{\mathsf{spe}}\delta^4)^{-1})$ and $\widetilde{O}(d(c_{\mathsf{spe}}\delta)^{-1})$ steps together with a large enough minibatch size to reach a $\delta>0$ precision level, where $c_{\mathsf{spe}}>0$ denotes the spectral gap which scales as $O(e^{-\widetilde{O}(d)})$. Then, \cite{nonconvex}, and subsequently \cite{neufeld2024robust,sglddiscont,sgldloc}, improve substantially the results in \cite{raginsky,xu} by removing the requirement on the minibatch sizes, and show that the SGLD algorithm reaches $\delta$ precision level within $\widetilde{O}(e^{O(d)}\delta^{-2})$ and $\widetilde{O}(e^{O(d)}\delta^{-4})$ steps in Wasserstein-1 and Wasserstein-2 distances, respectively. These results are further improved in \cite{mtula} to $\widetilde{O}(e^{O(d)}\delta^{-1})$ and $\widetilde{O}(e^{O(d)}\delta^{-2})$, under a Hessian Lipschitz condition on $U$. Moreover, for the LMC algorithm with drift $h/2$, by assuming additionally a log-Sobolev inequality on $\pi_{\beta}$, \cite{mou2022improved} obtains convergence results in Wasserstein-1 and Wasserstein-2 distances with mixing times $\widetilde{O}(d^{3/2}(c_{\mathsf{LSI}}^{3/2}\delta)^{-1})$ and $\widetilde{O}(d(c_{\mathsf{LSI}}^{5/2}\delta)^{-1})$, respectively, where $c_{\mathsf{LSI}}>0$ denotes the log-Sobolev constant.

 In addition, a recent result \cite{chewi2024analysis} establishes a convergence result for the LMC algorithm in R\'{e}nyi divergence under the conditions that $\pi_{\beta}$ has a Lipschitz log-gradient and satisfies a log-Sobolev inequality with constant $c_{\mathsf{LSI}}>0$. It shows that the LMC algorithm requires $\widetilde{O}(dc_{\mathsf{LSI}}^2\delta^{-1})$ steps to reach a $\delta$ precision level.

\item \label{item:comparisoncur}To relax the global Lipschitz condition, one line of research focuses on sampling from distributions with (a mixture of) weakly smooth potentials \cite{erdogdu2021convergence,nguyen2022unadjusted}. Under the degenerate convexity at infinity condition, a $c^\mathsf{WS}_{1}$-dissipativity condition with $c^\mathsf{WS}_{1}\in[1,2]$, and a $c^\mathsf{WS}_{2}$-H\"{o}lder condition with $c^\mathsf{WS}_{2}\in[c^\mathsf{WS}_{3},1], c^\mathsf{WS}_{3}\in(0,c^\mathsf{WS}_{1}/2]$, \cite{erdogdu2021convergence} establishes a convergence result for the LMC algorithm in Wasserstein-$c^\mathsf{WS}_{1}$ distance with a mixing time $\widetilde{O}(d^{(3c^\mathsf{WS}_{1}+(1+c^\mathsf{WS}_{4})c^\mathsf{WS}_{2})/(c^\mathsf{WS}_{1}c^\mathsf{WS}_{2})} \delta^{-2c^\mathsf{WS}_{1}/c^\mathsf{WS}_{2}})$ with $c^\mathsf{WS}_{4}\geq0$. Then, \cite{nguyen2022unadjusted} considers to relax the aforementioned conditions and replaces them with the following weaker ones: a Poincar\'{e} inequality with constant $c_{\mathsf{PI}}>0$, a $\overline{c}^\mathsf{WS}_{2}$-mixture weakly smooth condition with $0<\overline{c}^\mathsf{WS}_{2}=\overline{c}^\mathsf{WS}_{2,1}<\dots<\overline{c}^\mathsf{WS}_{2,N}\leq 1$, and a $\overline{c}^\mathsf{WS}_{1}$-dissipativity condition with $\overline{c}^\mathsf{WS}_{1}\geq \overline{c}^\mathsf{WS}_{2,N}$. Under these conditions, \cite{nguyen2022unadjusted} obtains Wasserstein-$\overline{c}^\mathsf{WS}_{1}$ convergence result with a mixing time $\widetilde{O}(d^{(2+2(1+1/\overline{c}^\mathsf{WS}_{2})/\overline{c}^\mathsf{WS}_{1})\vee (2+4/\overline{c}^\mathsf{WS}_{2}+3\overline{c}^\mathsf{WS}_{2,N}/p)} c_{\mathsf{PI}}^{-(1+1/\overline{c}^\mathsf{WS}_{2})}	\delta^{-2\overline{c}^\mathsf{WS}_{1}(1+2/\overline{c}^\mathsf{WS}_{2})})$ with $p>1$. 

Another line of research focuses on sampling from distributions with super-linearly growing log-gradients \cite{lim2021polygonal,lim2022langevin,lim2021nonasymptotic,lovas2023taming,mtula,hola}. \cite{lim2021nonasymptotic,lovas2023taming} propose the TUSLA algorithm which is a variant of the SGLD algorithm, and establish convergence results in Wasserstein-1 and Wasserstein-2 distances with mixing times $\widetilde{O}(e^{O(d)}\delta^{-2})$ and $\widetilde{O}(e^{O(d)}\delta^{-4})$, respectively. These results are achieved under a polynomial Lipschitz condition imposed on the gradient of $U$. \cite{lim2021polygonal,lim2022langevin} propose another variant of the SGLD algorithm, called TH$\varepsilon$O POULA, which possesses superior empirical performance, and obtain similar convergence results to those in \cite{lim2021nonasymptotic,lovas2023taming}. In addition, \cite{mtula} proposes the mTULA algorithm, which can be viewed as a variant of the LMC algorithm. Theoretical guarantees are established in Wasserstein-1 and Wasserstein-2 distances with improved mixing times $\widetilde{O}(e^{O(d)}\delta^{-1})$ and $\widetilde{O}(e^{O(d)}\delta^{-2})$, respectively, but under a polynomial Lipschitz condition on the Hessian of $U$. \cite{hola} proposes the HOLA algorithm that utilizes the third derivative of~$U$. Under the conditions that the third derivative of $U$ is polynomially $q$-H\"{o}lder continuous, $q\in(0,1]$, and that $U$ is strongly convex, \cite{hola} shows that the HOLA algorithm converges in Wasserstein-2 distance within $\widetilde{O}(e^{O(d)}\delta^{-2/(2+q)})$ steps.
\end{enumerate}

In Table \ref{tab:comparison}, we compare our assumptions and results to those in the aforementioned works in \ref{item:comparisonsmth} and~\ref{item:comparisoncur}. In particular, our Theorem~\ref{thm:mainw1}, Corollary~\ref{crl:mainw2}, Theorem \ref{thm:mainw1lip}, and Corollary~\ref{crl:mainw2lip} provide convergence results for aHOLA \eqref{eq:ahola1}-\eqref{eq:ahola3} and aHOLLA \eqref{eq:aholalip1}-\eqref{eq:aholalip3} in Wasserstein-1 and Wasserstein-2 distances with mixing times $\widetilde{O}(e^{O(d)}\delta^{-2/(2+q)})$ and $\widetilde{O}(e^{O(d)}\delta^{-4/(2+q)})$. The results are obtained under a polynomial $q$-H\"{o}lder condition on the third derivative of $U$ (Assumption \ref{asm:ALL}) and a convexity at infinity condition (Assumption \ref{asm:AC}). The latter condition is weaker than the strong convexity condition imposed in \cite{hola}, hence allowing the application of our results to a broader range of distributions that cannot be covered by \cite[Theorems 1 and 3]{hola} including, e.g., the double-well distribution. In addition, our results improve the rates of convergence obtained in \cite{mtula}, from $\widetilde{O}(e^{O(d)}\delta^{-1})$ and $\widetilde{O}(e^{O(d)}\delta^{-2})$ to $\widetilde{O}(e^{O(d)}\delta^{-2/(2+q)})$ and $\widetilde{O}(e^{O(d)}\delta^{-4/(2+q)})$. These improvements are achieved due to the assumptions imposed on the high order derivatives of $U$.

{
\begin{table}[h]
\footnotesize 
\begin{tabularx}{\textwidth}
{ 
	>{\hsize=0.47\hsize\linewidth=\hsize}X|
	>{\hsize=2.24\hsize\linewidth=\hsize}c|
	>{\hsize=0.98\hsize\linewidth=\hsize}X|
	>{\hsize=0.96\hsize\linewidth=\hsize}X|
	>{\hsize=1.1\hsize\linewidth=\hsize}X|
	>{\hsize=0.25\hsize\linewidth=\hsize}X
} 
\hline
\hline
{WORK} 	 		& CONVERGENCE RATE 						& SMOOTHNESS 				& CURVATURE 		& OTHERS 		& DIST.\\
\hline
 \cite{berkeley}		& $\widetilde{O}(de^{O(c_{\cR}^2)}\delta^{-2})$		&Lipschitz $h$				&Convexity at $\infty$	&---			&W-1	\\
\hline
 \cite{raginsky}		& $\widetilde{O}(d(c_{\mathsf{spe}}\delta^4)^{-1})$	&Lipschitz $h$				&---				&Dissipativity	&W-2	\\
\hline	
 \cite{xu}			&  $\widetilde{O}(d(c_{\mathsf{spe}}\delta)^{-1})$	&Lipschitz $h$				&---				&Dissipativity	&W-2	\\
\hline	
\multirow{ 2}{3em}{\cite{nonconvex,neufeld2024robust,sglddiscont,sgldloc}	}	
				& $\widetilde{O}(e^{O(d)}\delta^{-2})$			&\multirow{ 2}{*}{Lipschitz $h$}	&\multirow{ 2}{*}{---}	&\multirow{ 2}{*}{Dissipativity}	&W-1	\\
	
 				& $\widetilde{O}(e^{O(d)}\delta^{-4})$			&						&				&			&W-2	\\
\hline	
\multirow{ 2}{3em}{\cite{mtula}	}	
				& $\widetilde{O}(e^{O(d)}\delta^{-1})$			&\multirow{ 2}{8em}{Lipschitz $h,H$}	&\multirow{ 2}{*}{---}	&	\multirow{ 2}{*}{Dissipativity}	&W-1	\\
	
 				& $\widetilde{O}(e^{O(d)}\delta^{-2})$			&						&				&			&W-2	\\
\hline	
\cite{hola}			& $\widetilde{O}(e^{O(d)}\delta^{-2/3})$		&Lipschitz $h, H, \nabla^2 h^{(i)}$	&Strong convexity	&---	&W-2	\\
\hline	
\multirow{ 2}{3em}{\cite{mou2022improved}	}	
				& $\widetilde{O}(d^{3/2}(c_{\mathsf{LSI}}^{3/2}\delta)^{-1})$	&\multirow{ 2}{8em}{Lipschitz $h,H$}		&\multirow{ 2}{*}{---}	&\multirow{ 2}{8em}{Dissipativity, LSI, LMC with drift $h/2$}	&W-1	\\
	
 				& $\widetilde{O}(d(c_{\mathsf{LSI}}^{5/2}\delta)^{-1})$&						&				&			&W-2	\\
\hline	
\cite{chewi2024analysis}	& $\widetilde{O}(dc_{\mathsf{LSI}}^2\delta^{-1})$	&	Lipschitz $h$				&---				&LSI 			&R\'{e}nyi	\\
\hline	
\multirow{ 2}{3em}{\textbf{Thm~\ref{thm:mainw1lip}, Cor~\ref{crl:mainw2lip}}	}	
				& $\widetilde{O}(e^{O(d)}\delta^{-2/(2+q)})$ 		&\multirow{ 2}{8em}{Lipschitz $h, H$, $q$-H\"{o}lder $\nabla^2 h^{(i)}$	}	&\multirow{ 2}{*}{---}	&\multirow{ 2}{8em}{Dissipativity}	&W-1	\\
 				&$\widetilde{O}(e^{O(d)}\delta^{-4/(2+q)})$		&						&				&			&W-2	\\
\hline	
\cite{erdogdu2021convergence}	
				& $\widetilde{O}\left(\frac{d^{(3c^\mathsf{WS}_{1}+(1+c^\mathsf{WS}_{4})c^\mathsf{WS}_{2})/(c^\mathsf{WS}_{1}c^\mathsf{WS}_{2})} }{\delta^{2c^\mathsf{WS}_{1}/c^\mathsf{WS}_{2}}}\right)$				&$c^\mathsf{WS}_{2}$-H\"{o}lder $h$				&Degenerate convexity at $\infty$	&$c^\mathsf{WS}_{1}$-dissipativity	&W-$c^\mathsf{WS}_{1}$	\\
\hline	
\cite{nguyen2022unadjusted}	
				& $\widetilde{O}\left(\frac{d^{\left(2+\frac{2(1+1/\overline{c}^\mathsf{WS}_{2})}{\overline{c}^\mathsf{WS}_{1}}\right)\vee \left(2+\frac{4}{\overline{c}^\mathsf{WS}_{2}}+\frac{3\overline{c}^\mathsf{WS}_{2,N}}{p}\right)}}{ c_{\mathsf{PI}}^{(1+1/\overline{c}^\mathsf{WS}_{2})}\delta^{2\overline{c}^\mathsf{WS}_{1}(1+2/\overline{c}^\mathsf{WS}_{2})}	}\right)$						&$\overline{c}^\mathsf{WS}_{2}$-mixture weakly smooth $h$	&---	&$\overline{c}^\mathsf{WS}_{1}$-dissipativity, Poincar\'{e} inequality	&W-$\overline{c}^\mathsf{WS}_{1}$	\\
\hline	
\multirow{ 2}{3em}{\cite{lim2021polygonal,lim2022langevin,lim2021nonasymptotic,lovas2023taming}	}	
				& $\widetilde{O}(e^{O(d)}\delta^{-2})$			&\multirow{ 2}{5em}{Polynomial Lipschitz $h$}			&\multirow{ 2}{*}{Convexity at $\infty$}			&	\multirow{ 2}{*}{---}	&W-1	\\
	
 				& $\widetilde{O}(e^{O(d)}\delta^{-4})$			&						&				&			&W-2	\\
\hline	
\multirow{ 2}{3em}{\cite{mtula}	}	
				& $\widetilde{O}(e^{O(d)}\delta^{-1})$			&\multirow{ 2}{8em}{Polynomial Lipschitz $h,H$}	&\multirow{ 2}{7em}{Convexity at $\infty$}	&	\multirow{ 2}{*}{---}	&W-1	\\
	
 				& $\widetilde{O}(e^{O(d)}\delta^{-2})$			&						&				&			&W-2	\\
\hline	
\cite{hola}			& $\widetilde{O}(e^{O(d)}\delta^{-2/(2+q)})$		&Polynomial $q$-H\"{o}lder 	$\nabla^2 h^{(i)}$		&Strong convexity	&---	&W-2	\\
\hline	
\multirow{ 2}{3em}{\textbf{Thm~\ref{thm:mainw1}, Cor~\ref{crl:mainw2}}	}	
				& $\widetilde{O}(e^{O(d)}\delta^{-2/(2+q)})$ 		&	\multirow{ 2}{7em}{Polynomial $q$-H\"{o}lder $ \nabla^2 h^{(i)}$	}		&\multirow{ 2}{*}{Convexity at $\infty$}	&\multirow{ 2}{8em}{---}	&W-1	\\
 				&$\widetilde{O}(e^{O(d)}\delta^{-4/(2+q)})$		&						&				&			&W-2	\\
\hline	
\hline
\caption{ Comparison between our main results, i.e., Theorem~\ref{thm:mainw1}, Corollary~\ref{crl:mainw2}, Theorem \ref{thm:mainw1lip}, Corollary~\ref{crl:mainw2lip} and results in other works. $h, H, \nabla^2 h^{(i)}$ denotes the first, second, third derivative of $U$, respectively. }
\label{tab:comparison}
\end{tabularx}
\end{table}
}

\section{Numerical experiments}\label{sec:app}
In this section, we illustrate the applicability of our results in Theorem \ref{thm:mainw1}, Corollary \ref{crl:mainw2}, Theorem \ref{thm:mainw1lip}, and Corollary \ref{crl:mainw2lip}. We show that aHOLA \eqref{eq:ahola1}-\eqref{eq:ahola3} and aHOLLA \eqref{eq:aholalip1}-\eqref{eq:aholalip3} can be used to solve the problems of high-dimensional sampling and binary logistic regression. We then compare the performance of our algorithms with existing algorithms including LMC~\cite{dk}, LMCO'~\cite{dk}, mTULA~\cite{mtula}, and HOLA~\cite{hola}.\footnote{The python code is available at \url{https://github.com/tracyyingzhang/aHOLA}.}

\subsection{High-dimensional sampling} 
We use aHOLA \eqref{eq:ahola1}-\eqref{eq:ahola3} and aHOLLA \eqref{eq:aholalip1}-\eqref{eq:aholalip3} to draw samples from various distributions. More precisely, we consider the following high-dimensional target distributions:
\begin{enumerate}
\item a multivariate standard Gaussian distribution with its potential given by
\begin{equation}\label{eq:gaudistr}
U(\theta):=\frac{1}{2}|\theta|^2, \quad \theta \in \R^d,
\end{equation}\label{itm:gaudistr}
\item a multivariate Gaussian mixture distribution with its potential given by
\begin{equation}\label{eq:gaumixtdistr}
U(\theta):=\frac{1}{2}|\theta - \hat{a}|^2 -\log(1+\exp(-2\langle \theta, \hat{a}\rangle)), \quad \theta \in \R^d,
\end{equation}
where $\hat{a}\in \R^d$ is a given vector with $|\hat{a}|>1$, and\label{itm:gaumixtdistr}
\item a double-well potential distribution with its potential given by
\begin{equation}\label{eq:dwdistr}
U(\theta):=\frac{1}{4}|\theta|^4-\frac{1}{2}|\theta|^2, \quad \theta \in \R^d.
\end{equation}\label{itm:dwdistr}
\end{enumerate}

\begin{figure}
\centering
\includegraphics[width=0.85\textwidth]{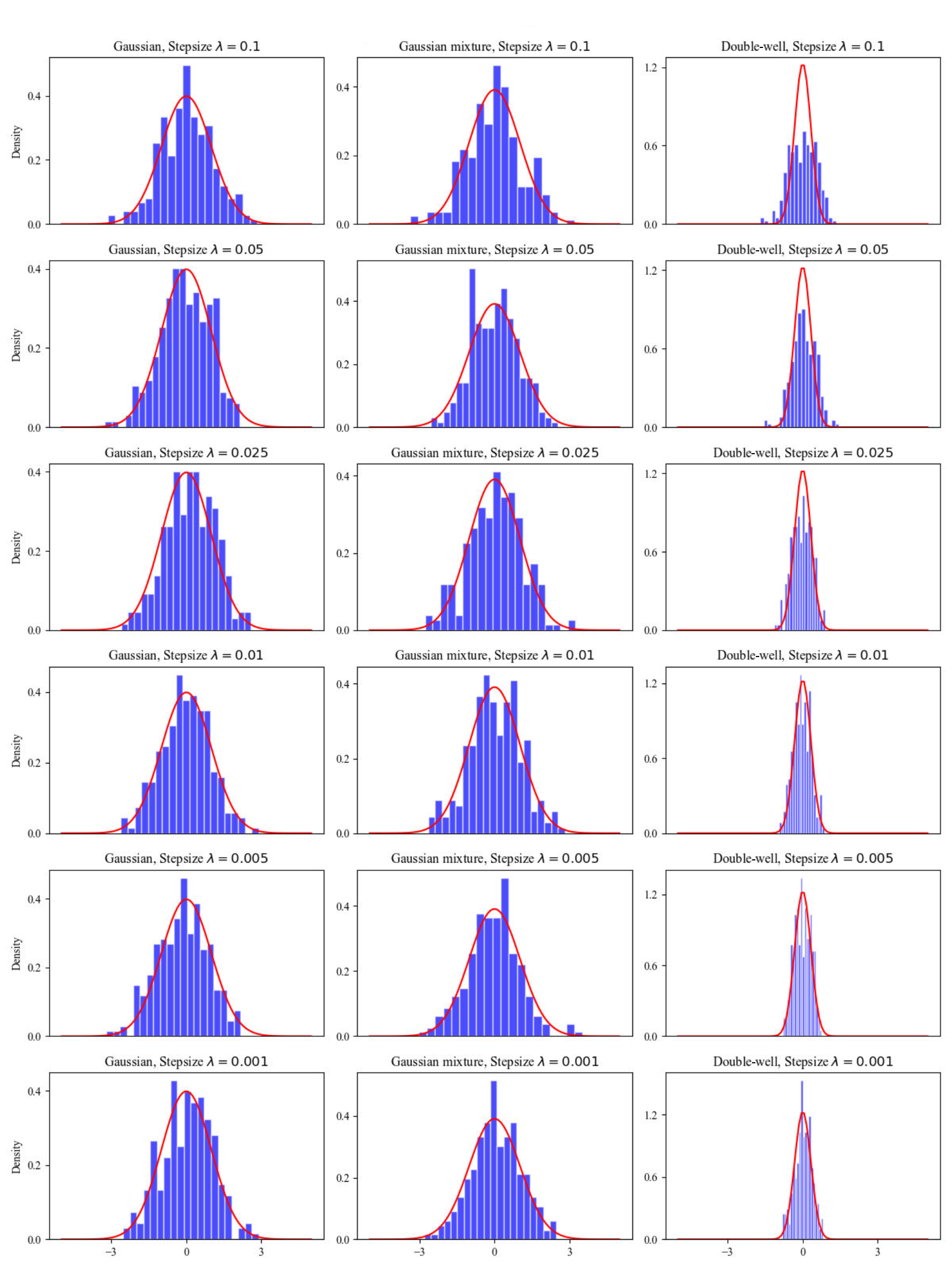}
\caption{Normalised histograms of the first components of the samples drawn using aHOLA and aHOLLA.}\label{fig:histogram}
\end{figure}

We note that the first two distributions in \ref{itm:gaudistr} and \ref{itm:gaumixtdistr} have potentials \eqref{eq:gaudistr} and \eqref{eq:gaumixtdistr} whose gradients are growing at most linearly, while the last distribution in \ref{itm:dwdistr} has potential \eqref{eq:dwdistr} whose gradient is growing super-linearly. We show in the following proposition that the examples \ref{itm:gaudistr} and \ref{itm:gaumixtdistr} satisfy Assumptions~\ref{asm:ALLlip} and \ref{asm:ADlip}, and \ref{itm:dwdistr} satisfies Assumptions \ref{asm:ALL} and \ref{asm:AC}.
\begin{proposition}\label{prop:example} The functions $U: \R^d \rightarrow \R$ given in \eqref{eq:gaudistr} and \eqref{eq:gaumixtdistr} satisfy Assumptions~\ref{asm:ALLlip} and \ref{asm:ADlip} while that given in \eqref{eq:dwdistr} satisfies Assumptions~\ref{asm:ALL} and \ref{asm:AC}.
\end{proposition}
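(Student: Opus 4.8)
The plan is to verify each assumption for each of the three potentials by direct computation of the derivatives, exploiting the rotational symmetry of all three examples (each potential depends essentially on $|\theta|^2$ plus, in the mixture case, a logistic term in $\langle\theta,\hat a\rangle$). I would organize the proof as three separate blocks, one per potential, and within each block address the relevant assumptions in turn.

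For the Gaussian potential \eqref{eq:gaudistr} I would simply note that $h(\theta)=\theta$, $H(\theta)=\cI_d$, and $\nabla^2 h^{(i)}\equiv 0$, so Assumption~\ref{asm:ALLlip} holds with $\overline{L}_1=\overline{L}_2=0$ and $\overline{L}_3=1$, and Assumption~\ref{asm:ADlip} holds with $\overline a=1$, $\overline b=0$ since $\langle\theta,h(\theta)\rangle=|\theta|^2$. For the Gaussian mixture \eqref{eq:gaumixtdistr}, I would first compute $h(\theta)=\theta-\hat a+\frac{2\hat a}{1+\exp(2\langle\theta,\hat a\rangle)}$ using $\frac{\rmd}{\rmd\theta}\log(1+\exp(-2\langle\theta,\hat a\rangle))=\frac{-2\hat a}{1+\exp(2\langle\theta,\hat a\rangle)}$, then differentiate twice more. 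The key observation is that all the logistic contributions are bounded together with their derivatives (the sigmoid and its derivatives are uniformly bounded), so $H$, $\nabla H$ and the Hölder-type increments of $\nabla^2 h^{(i)}$ pick up only bounded correction terms on top of the identity/Gaussian part; the Lipschitz constants $\overline L_1,\overline L_2,\overline L_3$ can therefore be read off explicitly in terms of $|\hat a|$. For the dissipativity bound I would use that the logistic gradient term is bounded by $2|\hat a|$ in norm, so $\langle\theta,h(\theta)\rangle\ge|\theta|^2-\langle\theta,\hat a\rangle-2|\hat a|\,|\theta|$, and complete the square to extract $\overline a=1/2$ and a finite $\overline b$ depending on $|\hat a|$.

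For the double-well potential \eqref{eq:dwdistr}, the computations are already essentially carried out in Remark~\ref{rmk:dwasmall}: there one finds $h(\theta)=\theta(|\theta|^2-1)$, $H(\theta)=\cI_d(|\theta|^2-1)+2\theta\theta^{\cT}$, and $\nabla^2 h^{(i)}(\theta)=2\theta\ce_i^{\cT}+2\theta^{(i)}\cI_d+2\ce_i\theta^{\cT}$, together with the bound $|\nabla^2 h^{(i)}(\theta)-\nabla^2 h^{(i)}(\overline\theta)|\le 6|\theta-\overline\theta|$. So Assumption~\ref{asm:ALL} holds with $\rho=3$, $q=1$, $L=6$, $K_h=2$, $K_H=3$. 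For Assumption~\ref{asm:AC} I would set $r=\rho+q-1=3$ and compute $\langle\theta-\overline\theta,h(\theta)-h(\overline\theta)\rangle=\langle\theta-\overline\theta,\theta|\theta|^2-\overline\theta|\overline\theta|^2\rangle-|\theta-\overline\theta|^2$; the cubic term should be bounded below by a positive multiple of $|\theta-\overline\theta|^2(|\theta|^3+|\overline\theta|^3)$, which combined with the $-|\theta-\overline\theta|^2$ term (taking $\overline r=0$) yields the stated form with suitable $a,b$.

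The main obstacle will be the convexity-at-infinity estimate for the double well, namely producing the lower bound $\langle\theta-\overline\theta,\theta|\theta|^2-\overline\theta|\overline\theta|^2\rangle\ge c\,|\theta-\overline\theta|^2(|\theta|^3+|\overline\theta|^3)$ with an explicit constant $c>0$. A clean way is to write $\theta|\theta|^2-\overline\theta|\overline\theta|^2=(\theta-\overline\theta)\frac{|\theta|^2+|\overline\theta|^2}{2}+(\theta+\overline\theta)\frac{|\theta|^2-|\overline\theta|^2}{2}$ and pair with $\theta-\overline\theta$, then control the cross term via $|\theta|^2-|\overline\theta|^2=\langle\theta-\overline\theta,\theta+\overline\theta\rangle$ and Cauchy--Schwarz; one must check that the resulting quadratic form in $(|\theta|,|\overline\theta|)$ stays bounded below by a positive multiple of $|\theta|^3+|\overline\theta|^3$, which is where the constants $a$ and $b$ get pinned down. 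The remaining verifications are routine but bookkeeping-heavy because one must exhibit explicit admissible constants rather than merely asserting existence.
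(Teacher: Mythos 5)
Your treatment of the Gaussian potential \eqref{eq:gaudistr} and the Gaussian mixture \eqref{eq:gaumixtdistr} matches the paper's proof in substance: the paper obtains the mixture dissipativity bound slightly more sharply by writing $-\langle\theta,\hat a\rangle+\tfrac{2\langle\theta,\hat a\rangle}{1+\exp(2\langle\theta,\hat a\rangle)}=\tfrac{1-\exp(2\langle\theta,\hat a\rangle)}{1+\exp(2\langle\theta,\hat a\rangle)}\langle\theta,\hat a\rangle\geq -|\theta||\hat a|$, which gives $\overline a=1/2$, $\overline b=|\hat a|^2/2$, but your completing-the-square variant with a slightly larger $\overline b$ is equally admissible, and your Lipschitz-constant strategy via bounded sigmoid derivatives is exactly what the paper carries out.

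The double-well block, however, contains a genuine error. You take $\rho=3$, $q=1$, hence $r=\rho+q-1=3$ in Assumption~\ref{asm:AC}, and you assert that $\langle\theta-\overline\theta,\,\theta|\theta|^2-\overline\theta|\overline\theta|^2\rangle$ is bounded below by a positive multiple of $|\theta-\overline\theta|^2(|\theta|^3+|\overline\theta|^3)$. This is false: the left-hand side is homogeneous of degree $4$ in $(\theta,\overline\theta)$ while the right-hand side is of degree $5$. Concretely, with $\overline\theta=-\theta$ the left-hand side equals $4|\theta|^4$ while the right-hand side equals $8c|\theta|^5$, so the bound fails for $|\theta|$ large, whatever $c>0$. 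This is not a fixable estimate but a structural obstruction: for the double well $\langle\theta,h(\theta)\rangle=|\theta|^4-|\theta|^2$, and since Assumption~\ref{asm:AC} forces $\langle\theta,h(\theta)\rangle\gtrsim|\theta|^{r+2}$ (Remark~\ref{rmk:dissipativityc}), the largest admissible value is $r=2$. The correct choice, and the one the paper makes in Remark~\ref{rmk:dwasmall}, is $\rho=2$, $q=1$ (note this is also the only choice consistent with the growth bounds $|h(\theta)|\leq 2(1+|\theta|^3)$, $|H(\theta)|\leq 3(1+|\theta|^2)$ you quote, since Assumption~\ref{asm:ALL} calibrates these exponents as $\rho+q$ and $\rho+q-1$). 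With $\rho=2$ your own decomposition $\theta|\theta|^2-\overline\theta|\overline\theta|^2=(\theta-\overline\theta)\tfrac{|\theta|^2+|\overline\theta|^2}{2}+(\theta+\overline\theta)\tfrac{|\theta|^2-|\overline\theta|^2}{2}$ closes the argument immediately, because pairing with $\theta-\overline\theta$ gives the exact identity
\begin{equation*}
\langle\theta-\overline\theta,\,\theta|\theta|^2-\overline\theta|\overline\theta|^2\rangle
=\tfrac12\bigl(|\theta|^2+|\overline\theta|^2\bigr)|\theta-\overline\theta|^2+\tfrac12\bigl(|\theta|^2-|\overline\theta|^2\bigr)^2
\geq \tfrac12|\theta-\overline\theta|^2\bigl(|\theta|^2+|\overline\theta|^2\bigr),
\end{equation*}
so no Cauchy--Schwarz control of a cross term is needed at all: the cross term is exactly the nonnegative square $\tfrac12(|\theta|^2-|\overline\theta|^2)^2$ and can simply be dropped. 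Then Assumption~\ref{asm:AC} holds with $r=2$, $\overline r=0$, $a=1/2$, $b=1$, which is the paper's conclusion.
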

\begin{proof}See Appendix \ref{prop:exampleproof}.
\end{proof}

For the numerical experiments, we set $d=100$. The initial value $\theta_0$ is set to be the null vector in $\R^d$ so as to satisfy Assumptions \ref{asm:AI} and \ref{asm:AIlip}. This, together with Proposition \ref{prop:example}, ensures that the proposed algorithms aHOLA \eqref{eq:ahola1}-\eqref{eq:ahola3} and aHOLLA \eqref{eq:aholalip1}-\eqref{eq:aholalip3} can sample approximately from the target distributions due to our main results. In addition, we set $\beta=1$ and consider different stepsizes $\lambda = \{0.001, 0.005, 0.01, 0.025, 0.05, 0.1\}$. The number of iterations $n$ is chosen such that $\lambda n = 400$ is fixed. For each distribution and stepsize mentioned above, we run 250 independent aHOLA or aHOLLA Markov chains and collect outputs from the last iterations of these Markov chains.

To illustrate the performance of the proposed algorithms, in Figure \ref{fig:histogram}, we plot the normalised histograms of the first components of the 250 samples generated using the method described above, and compare the histograms obtained numerically with their corresponding marginal density functions which are red lines superimposed on the plots. We note that the marginal density functions for the first component of a multivariate standard Gaussian distribution, a multivariate Gaussian mixture distribution, and a doubel-well potential distribution are given by
\begin{align}\label{eq:1ddensity}
\begin{split}
&\theta^{(1)}\ni \frac{1}{\sqrt{2\pi}}e^{-(\theta^{(1)})^2/2},\\
&\theta^{(1)}\ni \frac{1}{2\sqrt{2\pi}}\left(e^{-(\theta^{(1)}-\hat{a}^{(1)})^2/2}+e^{-(\theta^{(1)}+\hat{a}^{(1)})^2/2}\right),\\
&\theta^{(1)}\ni \frac{\Gamma(d/2)}{\sqrt{\pi}\Gamma((d-1)/2)}\frac{\int_0^\infty x^{(d-3)/2}\exp\left(-(x+(\theta^{(1)})^2)^2/4+(x+(\theta^{(1)})^2)/2\right)\, \rmd x}{\int_0^\infty x^{d/2-1}\exp\left(-x^2/4+x/2\right)\, \rmd x},
\end{split}
\end{align}
respectively, where we denote by $\theta^{(1)}, \hat{a}^{(1)}\in \R$ the first component of $\theta, \hat{a} \in \R^d$, respectively, and where $\Gamma$ denotes the gamma function. In addition, for the multivariate Gaussian mixture distribution, we set $|\hat{a}|=2$ in the experiments with all its components being equal.

In Figure \ref{fig:histogram}, we observe that the histograms obtained using samples generated by aHOLA \eqref{eq:ahola1}-\eqref{eq:ahola3} and aHOLLA \eqref{eq:aholalip1}-\eqref{eq:aholalip3} are close to their corresponding theoretical marginal density functions. This illustrates that the proposed algorithms can generate samples approximately from given target distributions with appropriately chosen stepsizes. The numerical results support our main findings, i.e., Theorem \ref{thm:mainw1}, Corollary \ref{crl:mainw2}, Theorem \ref{thm:mainw1lip}, and Corollary \ref{crl:mainw2lip}.

Moreover, we compare the sampling performance of aHOLLA and aHOLA with that of existing algorithms including LMC \cite[Eq.\ (2)]{dk},  LMCO' \cite[Eq.\ (17)]{dk}, mTULA \cite[Eqs.\ (6)-(7)]{mtula}, and HOLA~\cite[Eqs.\ (2)-(3)]{hola}. More precisely, we use aHOLLA, LMC, and LMCO' to sample from the standard normal distribution and the Gaussian mixture distribution given in \ref{itm:gaudistr} and \ref{itm:gaumixtdistr}, and use aHOLA, mTULA, and HOLA to sample from the double-well potential distribution given in \ref{itm:dwdistr}. Denote by $\pi_{\beta}^{\mathsf{sn}}$, $\pi_{\beta}^{\mathsf{gm}}$, and $\pi_{\beta}^{\mathsf{dw}}$ the marginal distributions of \ref{itm:gaudistr}, \ref{itm:gaumixtdistr} and \ref{itm:dwdistr}, respectively, with their densities given in \eqref{eq:1ddensity}. We compute the Wasserstein-1 distances between $\pi_{\beta}^{\mathsf{sn}}$, $\pi_{\beta}^{\mathsf{gm}}$, $\pi_{\beta}^{\mathsf{dw}}$ and their corresponding distributions generated by the aforementioned numerical algorithms using 200 samples (collected from 200 independent Markov chains in each setting). For the numerical experiments, we set $\beta=1,d=100$, $\theta_0=(2,2,\dots,2)\in \R^{100}$, $\lambda = \{0.025,0.05,0.075,0.1,0.25\}$, $n = 1000/\lambda$. 

In Table \ref{tab:samplingfirstbelowthreshold}, we present the first time (measured in number of iterations) the aforementioned Wasserstein distances fall below a precision level of 0.1, while we present in Table \ref{tab:samplingbestacc} the best accuracy achieved by each algorithm in Wasserstein-1 distance. We observe from Table \ref{tab:samplingfirstbelowthreshold} that, in most cases, the algorithms incorporating third order derivatives, i.e., aHOLLA, HOLA, and aHOLA, converge much faster than those utilizing only first and second order derivatives, i.e., LMC, LMCO', and mTULA. Moreover, as illustrated in Table \ref{tab:samplingbestacc}, the approximated distributions generated by aHOLLA, HOLA, and aHOLA are much closer to the target distributions in Wasserstein-1 distance compared to LMC, LMCO', and mTULA, and the distances become smaller when $\lambda$ decreases. These support our theoretical findings in Theorem \ref{thm:mainw1} and Theorem \ref{thm:mainw1lip} that aHOLLA and aHOLA converge with a higher rate of convergence in Wasserstein-1 distance. Furthermore, in Table \ref{tab:samplingrt}, we report the running time required to complete the task in each setting, while, in Table \ref{tab:samplingpic}, we present the per-iteration cost for each algorithm. We highlight that the computation time required by aHOLLA and aHOLA to complete each task is comparable to that of LMC, as it is not computationally more expensive to compute the Hessian-vector product and the vector Laplacian in \eqref{eq:ahola2}, \eqref{eq:ahola3a}, and \eqref{eq:aholalip2} than computing the gradient. The per-iteration cost for both aHOLLA and aHOLA scales as $\mathcal{O}(d)$, which is the same as that of LMC, LMCO', and mTULA. We note that the per-iteration cost for HOLA is $\mathcal{O}(d^2)$, which is due to the design of its taming factor that involves the computation of the full Hessian matrix.

\begin{table}[ht]
\footnotesize 
\begin{tabularx}{\textwidth}
{ 
	>{\hsize=1\hsize\linewidth=\hsize}c|
	>{\hsize=1\hsize\linewidth=\hsize}c
	>{\hsize=1\hsize\linewidth=\hsize}c
	>{\hsize=1\hsize\linewidth=\hsize}c
	>{\hsize=1\hsize\linewidth=\hsize}c
	>{\hsize=1\hsize\linewidth=\hsize}c
} 
\hline
\hline
			&\multicolumn{5}{c}{Standard normal distribution}\\
 	 		& $\lambda = 0.25$ 	& $\lambda = 0.1$		& $\lambda = 0.075$	& $\lambda = 0.05$	& $\lambda = 0.025$				\\
\hline
LMC			&8				&25				&32				&56				&78		\\
LMCO'			&10				&26				&27				&47				&66		\\
\rowcolor{lightgray}%
aHOLLA		&11				&26				&40				&45				&151		\\
\hline
			&\multicolumn{5}{c}{Gaussian-mixture distribution}\\	
  	 		& $\lambda = 0.25$ 	& $\lambda = 0.1$		& $\lambda = 0.075$	& $\lambda = 0.05$	& $\lambda = 0.025$				\\
\hline
LMC			&14				&42				&103				&72				&207		\\
LMCO'			&28				&40				&67				&104				&122		\\
\rowcolor{lightgray}%
aHOLLA		&14				&43				&77				&146				&104		\\
\hline
			&\multicolumn{5}{c}{Double-well potential distribution}\\	
 	 		& $\lambda = 0.25$ 	& $\lambda = 0.1$		& $\lambda = 0.075$	& $\lambda = 0.05$	& $\lambda = 0.025$				\\
\hline
mTULA		&NA				& NA				&NA				&NA					&42		\\
HOLA			&NA				& NA				&NA				&4365					&28	\\
\rowcolor{lightgray}%
aHOLA		&NA				& NA				&NA				&43					&25		\\
\hline	
\hline
\caption{{ (High-dimensional sampling)} Number of iterations required for an algorithm to reach within 0.1 precision of the target distribution in Wasserstein-1 distance. ``NA'' implies the algorithm never reaches the 0.1 precision level within $1000/\lambda$ iterations.}
\label{tab:samplingfirstbelowthreshold}
\end{tabularx}
\end{table}

\begin{table}[ht]
\footnotesize 
\begin{tabularx}{\textwidth}
{ 
	>{\hsize=1\hsize\linewidth=\hsize}c|
	>{\hsize=1\hsize\linewidth=\hsize}c
	>{\hsize=1\hsize\linewidth=\hsize}c
	>{\hsize=1\hsize\linewidth=\hsize}c
	>{\hsize=1\hsize\linewidth=\hsize}c
	>{\hsize=1\hsize\linewidth=\hsize}c
} 
\hline
\hline
			&\multicolumn{5}{c}{Standard normal distribution}\\
 	 		& $\lambda = 0.25$ 	& $\lambda = 0.1$		& $\lambda = 0.075$	& $\lambda = 0.05$	& $\lambda = 0.025$				\\
\hline
LMC			&0.0529			&0.0449			&0.0496			&0.0444			&0.0434		\\
LMCO'			&0.0533			&0.0534			&0.0419			&0.0387			&0.0438	\\
\rowcolor{lightgray}%
aHOLLA		&0.0484			&0.0403			&0.0491			&0.0459			&0.0382	\\
\hline
			&\multicolumn{5}{c}{Gaussian-mixture distribution}\\	
  	 		& $\lambda = 0.25$ 	& $\lambda = 0.1$		& $\lambda = 0.075$	& $\lambda = 0.05$	& $\lambda = 0.025$				\\
\hline
LMC			&0.051			&0.0454			&0.0445			&0.0406			&0.0432		\\
LMCO'			&0.051			&0.0471			&0.0465			&0.0429			&0.0436		\\
\rowcolor{lightgray}%
aHOLLA		&0.0457			&0.0419			&0.0457			&0.0421			&0.0397	\\
\hline
			&\multicolumn{5}{c}{Double-well potential distribution}\\	
 	 		& $\lambda = 0.25$ 	& $\lambda = 0.1$		& $\lambda = 0.075$	& $\lambda = 0.05$	& $\lambda = 0.025$				\\
\hline
mTULA		&0.3169			&0.1747			&0.1353			&0.1039				&0.05		\\
HOLA			&1.2993			&0.381			&0.2513			&0.0964				&0.0224	\\
\rowcolor{lightgray}%
aHOLA		&0.2049			&0.1337			&0.1115			&0.0795				&0.0383	\\
\hline	
\hline
\caption{(High-dimensional sampling) Best accuracy achieved among $1000/\lambda$ iterations, measured in Wasserstein-1 distance between the approximated distribution generated by a numerical algorithm and the target distribution.}
\label{tab:samplingbestacc}
\end{tabularx}
\end{table}

\begin{table}[ht]
\footnotesize 
\begin{tabularx}{\textwidth}
{ 
	>{\hsize=1\hsize\linewidth=\hsize}c|
	>{\hsize=1\hsize\linewidth=\hsize}c
	>{\hsize=1\hsize\linewidth=\hsize}c
	>{\hsize=1\hsize\linewidth=\hsize}c
	>{\hsize=1\hsize\linewidth=\hsize}c
	>{\hsize=1\hsize\linewidth=\hsize}c
} 
\hline
\hline
			&\multicolumn{5}{c}{Standard normal distribution}\\
 	 		& $\lambda = 0.25$ 	& $\lambda = 0.1$		& $\lambda = 0.075$	& $\lambda = 0.05$	& $\lambda = 0.025$				\\
\hline
LMC			&6.38				&14.33			&19.01			&28.52			&56.88		\\
LMCO'			&5.91				&14.61			&19.39			&29.05			&58.04	\\
\rowcolor{lightgray}%
aHOLLA		&6.55				&16.23			&21.63			&32.32			&64.53	\\
\hline
			&\multicolumn{5}{c}{Gaussian-mixture distribution}\\	
  	 		& $\lambda = 0.25$ 	& $\lambda = 0.1$		& $\lambda = 0.075$	& $\lambda = 0.05$	& $\lambda = 0.025$				\\
\hline
LMC			&25.38			&63.08			&84.34			&123.43			&246.84		\\
LMCO'			&25.55			&63.21			&84.23			&126.50			&252.15		\\
\rowcolor{lightgray}%
aHOLLA		&29.33			&71.36			&96.87			&142.64			&285.16	\\
\hline
			&\multicolumn{5}{c}{Double-well potential distribution}\\	
 	 		& $\lambda = 0.25$ 	& $\lambda = 0.1$		& $\lambda = 0.075$	& $\lambda = 0.05$	& $\lambda = 0.025$				\\
\hline
mTULA		&18.27			&46.58			&60.77			&92.99				&185.56		\\
HOLA			&24.00			&60.55			&79.43			&120.92				&241.55	\\
\rowcolor{lightgray}%
aHOLA		&21.70			&54.43			&72.54			&108.84				&217.81	\\
\hline	
\hline
\caption{(High-dimensional sampling) Running time (measured in seconds) for an algorithm to complete $200$ runs with each run consisting of $1000/\lambda$ iterations.}
\label{tab:samplingrt}
\end{tabularx}
\end{table}

\begin{table}[h]
\footnotesize 
\begin{tabularx}{\textwidth}
{ 
	>{\hsize=1\hsize\linewidth=\hsize}c
	>{\hsize=1\hsize\linewidth=\hsize}c
	>{\hsize=1\hsize\linewidth=\hsize}c
	>{\hsize=1\hsize\linewidth=\hsize}c
	>{\hsize=1\hsize\linewidth=\hsize}c
	>{\hsize=1\hsize\linewidth=\hsize}c
} 
\hline
\hline
LMC 	 		& LMCO'			& aHOLLA			& mTULA			&HOLA		&aHOLA				\\
\hline
$\mathcal{O}(d)$	& $\mathcal{O}(d)$	& $\mathcal{O}(d)$	&$\mathcal{O}(d)$		&$\mathcal{O}(d^2)$	&$\mathcal{O}(d)$\\
\hline	
\hline
\caption{(High-dimensional sampling) Per-iteration cost for each algorithm.}
\label{tab:samplingpic}
\end{tabularx}
\end{table}

\begin{table}
\footnotesize 
\begin{tabularx}{\textwidth}
{ 
	>{\hsize=1\hsize\linewidth=\hsize}c|
	>{\hsize=1\hsize\linewidth=\hsize}c
	>{\hsize=1\hsize\linewidth=\hsize}c
	>{\hsize=1\hsize\linewidth=\hsize}c
	>{\hsize=1\hsize\linewidth=\hsize}c
	>{\hsize=1\hsize\linewidth=\hsize}c
} 
\hline
\hline
$d = 2$		& $N = 100$				& $N = 200$				& $N = 300$				&$N = 400$		&$N = 500$	\\
\hline
LMC			&0.1261    				&0.0668  				&0.0361   				&0.0310		&0.0194	\\
LMCO'			&0.1579				&0.0668				&0.0439  				&0.0316		&0.0232	\\
\rowcolor{lightgray}%
aHOLLA		&0.1144				&0.0942				&0.0494				&0.0327		&0.0224	\\
\hline
$d = 5$ 	 	& $N = 100$				& $N = 200$				& $N = 300$				&$N = 400$		&$N = 500$	\\
\hline
LMC			& 0.3834 				&0.2140				&0.1355  				&0.1416		&0.0938	\\
LMCO'			& 0.3896				&0.2214				&0.1563   				&0.1148		&0.0881	\\
\rowcolor{lightgray}%
aHOLLA		&0.4465				&0.2121				&0.1294				&0.1142		&0.0810	\\
\hline	
$d = 10$		& $N = 100$				& $N = 200$				& $N = 300$				&$N = 400$		&$N = 500$	\\
\hline
LMC			&1.0737				&0.5184  				&0.3786  				&0.2641		&0.2413	\\
LMCO'			&1.0605				&0.5249				&0.4279				&0.2533		&0.2227	\\
\rowcolor{lightgray}%
aHOLLA		&1.1439				&0.6004				&0.3787   				&0.2976		&0.2305		\\
\hline	
\hline
\caption{(Binary logistic regression) MSE between the true value and the Bayesian posterior mean computed using $200$ samples generated by an algorithm.}
\label{tab:lrmse}
\end{tabularx}
\end{table}

\begin{table}[h]
\footnotesize 
\begin{tabularx}{\textwidth}
{ 
	>{\hsize=1\hsize\linewidth=\hsize}c|
	>{\hsize=1\hsize\linewidth=\hsize}c
	>{\hsize=1\hsize\linewidth=\hsize}c
	>{\hsize=1\hsize\linewidth=\hsize}c
	>{\hsize=1\hsize\linewidth=\hsize}c
	>{\hsize=1\hsize\linewidth=\hsize}c
} 
\hline
\hline
$d = 2$		& $N = 100$				& $N = 200$				& $N = 300$				&$N = 400$		&$N = 500$	\\
\hline
LMC			&19.55    				&22.85  				&26.72 				&30.30		&33.80	\\
LMCO'			&52.28				&62.82				&72.69 				&81.51		&91.09	\\
\rowcolor{lightgray}%
aHOLLA		&66.50				&79.88				&93.01				&106.05		&118.14	\\
\hline
$d = 5$ 	 	& $N = 100$				& $N = 200$				& $N = 300$				&$N = 400$		&$N = 500$	\\
\hline
LMC			&19.93   				&24.88				&29.32  				&33.70		&37.97	\\
LMCO'			&55.38				&69.47				&81.39				&93.34		&104.81	\\
\rowcolor{lightgray}%
aHOLLA		&69.98				&88.84				&104.65 				&120.30		&136.19	\\
\hline	
$d = 10$		& $N = 100$				& $N = 200$				& $N = 300$				&$N = 400$		&$N = 500$	\\
\hline
LMC			&21.89				&27.76 				&33.80 				&39.72		&45.28  	\\
LMCO'			&60.19				&76.62				&92.72				&108.89		&124.09	\\
\rowcolor{lightgray}%
aHOLLA		&76.02				&97.85				&119.59   				&139.62		&159.87	\\
\hline	
\hline
\caption{(Binary logistic regression) Running time (measured in seconds) for an algorithm to complete $50$ runs with each run consisting of $100000$ iterations.}
\label{tab:lrrt}
\end{tabularx}
\end{table}

\begin{table}[h]
\footnotesize 
\begin{tabularx}{\textwidth}
{ 
	>{\hsize=1\hsize\linewidth=\hsize}c
	>{\hsize=1\hsize\linewidth=\hsize}c
	>{\hsize=1\hsize\linewidth=\hsize}c
} 
\hline
\hline
LMC 	 		& LMCO'			& aHOLLA			\\
\hline
$\mathcal{O}(Nd)$	& $\mathcal{O}(Nd)$	& $\mathcal{O}(Nd)$	\\
\hline	
\hline
\caption{(Binary logistic regression) Per-iteration cost for each algorithm. $N$ denotes the number of data points used for each iteration.}
\label{tab:lrpic}
\end{tabularx}
\end{table}

\subsection{Binary logistic regression}
We consider the problem of logistic regression \cite{dalalyan} where we obtain a set of data points $\{x_i\}_{i=1}^N = \{(z_i, y_i)\}_{i=1}^N$ with $z_i \in \R^d$ denoting the feature vector and $y_i\in \{0,1\}$ denoting the binary label. Our goal is to estimate the conditional distribution of $Y_1$ given $Z_1$, which is equivalent to the estimation of the regression function $f(z) = \mathbb{P}(Y_1=1|Z_1 = z)$, $z\in\R^d$. In the logistic regression model, $f(z)$ is approximated by $f_{\theta}(z) = e^{\langle \theta, z\rangle}/(1+e^{\langle \theta, z\rangle})$. To estimate the parameter $\theta$, we adopt a Bayesian approach by sampling from a posterior distribution $\pi(\theta) = \pi_0(\theta)\Pi_{i=1}^Np(y_i|z_i, \theta)$, where $\pi_0$  is a standard normal prior and $p(y_i|z_i, \theta)$ is the likelihood function given by $p(y_i|z_i, \theta) = (1+e^{-\langle \theta, z_i\rangle})^{-y_i}(1+e^{\langle \theta, z_i\rangle})^{y_i-1}$. Thus, the posterior takes the form
\begin{equation}\label{eq:lrtargetdistribution}
\pi(\rmd \theta) \propto \exp\left\{-|\theta|^2/2 - \sum_{i=1}^N (1-y_i)\langle \theta, z_i\rangle -\sum_{i=1}^N \log(1+e^{-\langle \theta, z_i\rangle})\right\}\,\rmd \theta,
\end{equation}
which satisfies the form of \eqref{eq:targdist} with $\beta = 1$ and
\begin{equation}\label{eq:lrU}
U(\theta) = |\theta|^2/2 +\sum_{i=1}^N (1-y_i)\langle \theta, z_i\rangle +\sum_{i=1}^N \log(1+e^{-\langle \theta, z_i\rangle}),
\end{equation}
for all $\theta \in \R^d$. We aim to use aHOLLA~\eqref{eq:aholalip1}-\eqref{eq:aholalip3} to solve the logistic regression example described above, and we show, in the following result, that our theoretical findings can be applied.
\begin{proposition}\label{prop:lrexample} The function $U: \R^d \rightarrow \R$ given in \eqref{eq:lrU} satisfies Assumptions~\ref{asm:ALLlip} and \ref{asm:ADlip}.
\end{proposition}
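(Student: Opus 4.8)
The plan is to verify Assumptions~\ref{asm:ALLlip} and \ref{asm:ADlip} by direct computation, following the same strategy as in the proof of Proposition~\ref{prop:example} for the Gaussian-mixture potential~\eqref{eq:gaumixtdistr}, whose sigmoidal structure closely mirrors that of~\eqref{eq:lrU}. Writing $\sigma(t):=(1+e^{-t})^{-1}$ for the logistic function and $s_i(\theta):=\langle\theta,z_i\rangle$, differentiating~\eqref{eq:lrU} gives
\begin{align*}
h(\theta) &= \theta + \sum_{i=1}^N (\sigma(s_i(\theta))-y_i)\,z_i,\\
H(\theta) &= \cI_d + \sum_{i=1}^N \sigma'(s_i(\theta))\, z_i z_i^{\cT},
\end{align*}
while the third-order derivatives assemble into the matrices $\nabla^2 h^{(j)}(\theta)$ having $(k,l)$-entry $\sum_{i=1}^N \sigma''(s_i(\theta))\, z_i^{(j)}z_i^{(k)}z_i^{(l)}$, where $\sigma'(t)=\sigma(t)(1-\sigma(t))$ and $\sigma''(t)=\sigma(t)(1-\sigma(t))(1-2\sigma(t))$. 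The whole argument then rests on the elementary facts that $\sigma$, $\sigma'$, $\sigma''$, and $\sigma'''$ are globally bounded and globally Lipschitz, the $z_i$ being fixed data vectors.

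To verify Assumption~\ref{asm:ALLlip} I would first establish the Lipschitz bounds for $h$ and $H$. Since $0\le\sigma'\le 1/4$, we have $\cI_d\preceq H(\theta)\preceq \cI_d + \tfrac14\sum_{i=1}^N z_i z_i^{\cT}$, so $h=\nabla U$ has a uniformly bounded Jacobian and is therefore globally Lipschitz with $\overline{L}_3 := 1+\tfrac14\sum_{i=1}^N |z_i|^2$. For $H$, the difference $H(\theta)-H(\overline{\theta})=\sum_{i=1}^N(\sigma'(s_i(\theta))-\sigma'(s_i(\overline{\theta})))z_iz_i^{\cT}$ is controlled via the Lipschitz continuity of $\sigma'$ (as $\sigma''$ is bounded) together with $|s_i(\theta)-s_i(\overline{\theta})|\le|z_i||\theta-\overline{\theta}|$, yielding a constant $\overline{L}_2$ proportional to $\sum_{i=1}^N |z_i|^3$. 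The first, H\"older-type bound is the only part requiring a small trick: each entry of $\nabla^2 h^{(j)}$ is simultaneously a bounded and a Lipschitz function of $\theta$ (boundedness from $\sigma''$ bounded, Lipschitzness from $\sigma'''$ bounded), and any map that is both bounded and Lipschitz is globally $q$-H\"older for every $q\in(0,1]$. This is seen by splitting into the regimes $|\theta-\overline{\theta}|\le 1$ (use Lipschitzness and $|\theta-\overline{\theta}|\le|\theta-\overline{\theta}|^q$) and $|\theta-\overline{\theta}|>1$ (use boundedness and $|\theta-\overline{\theta}|^q>1$), which furnishes $\overline{L}_1$ and completes Assumption~\ref{asm:ALLlip}.

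For the dissipativity condition in Assumption~\ref{asm:ADlip} I would compute
\[
\langle\theta,h(\theta)\rangle = |\theta|^2 + \sum_{i=1}^N (\sigma(s_i(\theta))-y_i)\langle\theta,z_i\rangle \ge |\theta|^2 - |\theta|\sum_{i=1}^N |z_i|,
\]
using $|\sigma(s_i(\theta))-y_i|\le 1$ (since $\sigma\in(0,1)$ and $y_i\in\{0,1\}$) together with the Cauchy--Schwarz inequality. Applying Young's inequality to the cross term, $|\theta|\sum_{i=1}^N|z_i|\le \tfrac12|\theta|^2+\tfrac12(\sum_{i=1}^N|z_i|)^2$, then gives $\langle\theta,h(\theta)\rangle\ge\tfrac12|\theta|^2-\tfrac12(\sum_{i=1}^N|z_i|)^2$, so Assumption~\ref{asm:ADlip} holds with $\overline{a}=1/2$ and $\overline{b}=\tfrac12(\sum_{i=1}^N|z_i|)^2$.

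I do not expect a genuine analytic obstacle here, as the result is a verification and the computations become routine once the derivatives are expressed through $\sigma$ and its derivatives. The one point demanding care is the \emph{global} (rather than merely local) nature of the $q$-H\"older estimate on the third derivatives: in contrast with the super-linear setting of Assumption~\ref{asm:ALL}, the bound must hold for all $\theta,\overline{\theta}\in\R^d$ with a single constant, and this is precisely what the boundedness-plus-Lipschitz splitting delivers. To complete the proof cleanly I would also record the explicit finite values of $\sup_t|\sigma''(t)|$ and $\sup_t|\sigma'''(t)|$, so that $\overline{L}_1$, $\overline{L}_2$, and $\overline{L}_3$ can be written in closed form in terms of the data $\{z_i\}_{i=1}^N$.
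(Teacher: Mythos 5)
Your proof is correct and follows essentially the same route as the paper's: explicit computation of $h$, $H$, and $\nabla^2 h^{(l)}$, global bounds on the logistic function and its derivatives to obtain the Lipschitz constants $\overline{L}_2,\overline{L}_3$, and Cauchy--Schwarz plus Young's inequality to get dissipativity with $\overline{a}=1/2$. The only (harmless) difference is that the paper verifies the third-derivative condition as a plain Lipschitz bound (i.e., with $q=1$ and $\overline{L}_1=\sum_{i=1}^N|z_i|^4$), whereas your bounded-plus-Lipschitz splitting yields the global $q$-H\"older bound for every fixed $q\in(0,1]$ --- a slight strengthening rather than a different method.
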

\begin{proof}See Appendix \ref{prop:lrexampleproof}.
\end{proof}

For the numerical experiments, we set $\theta_0 = (2,2,\dots,2)\in\R^d$ to ensure that Assumption~\ref{asm:AIlip} is satisfied. Hence, Theorem \ref{thm:mainw1lip} and Corollary \ref{crl:mainw2lip} can be used to provide theoretical guarantees for aHOLLA to sample from $\pi$ defined in \eqref{eq:lrtargetdistribution}. Furthermore, we set $\beta = 1$, $d = \{2, 5, 10\}$ and $N =\{100,200,300,400,500\}$. For each pair of $d$ and $N$, and for each $i = 1,\dots,N$, we draw $z_i\in\R^d$ from a Rademacher distribution where each coordinate of $z_i$ takes the value of $1$ or $-1$ with probability $1/2$, and the corresponding $y_i$ given $z_i$ is drawn from a Bernoulli distribution with parameter $f_{\theta^*}(z_i)$, where $\theta^*$ is the true value set to be $(1,1,\dots,1)\in\R^d$. We generate samples from $\pi$ using LMC \cite[Eq.\ (2)]{dk},  LMCO' \cite[Eq.\ (17)]{dk}, and aHOLLA~\eqref{eq:aholalip1}-\eqref{eq:aholalip3}. For each algorithm, we set the stepsize $\lambda = 0.01$ and the number of iterations $n = 100000$, then we run 50 independent Markov chains and compute the sample mean using the last 200 samples for each chain.

To compare the performance of LMC, LMCO', and aHOLLA, we follow the approach described in \cite{dalalyan}. We compute the mean squared error (MSE) between the sample mean and $\theta^*$ averaged over 50 chains, and report the results in Table \ref{tab:lrmse}. We observe that the error increases when $d$ increases while it decreases when $N$ gets large. All three algorithms produce accurate and comparable results, indicating that they can be used to efficiently sample from the posterior distribution \eqref{eq:lrtargetdistribution}. Moreover, we present the running time and per-iteration cost for each algorithm in Tables \ref{tab:lrrt} and \ref{tab:lrpic}, respectively. These results show that it takes less time for LMC to complete the experiment in each setting. 

\subsection{Conclusion}
In this section, we illustrate the applicability of aHOLLA and aHOLA via two examples, namely, the high-dimensional sampling and the binary logistic regression. In the high-dimensional sampling example, aHOLLA and aHOLA exhibit superior empirical performance compared to the other algorithms in terms of the approximation accuracy measured in Wasserstein-1 distance. Crucially, the significant improvement in the accuracy is achieved by introducing only negligible (or small) amount of additional computation time. Thus, we recommend using our proposed algorithms for the problem of sampling from high-dimensional target distributions, especially those with highly non-linear potentials. Moreover, in the logistic regression example, we show that aHOLLA can be used to sample from a given posterior distribution, but it would take more time to complete the tasks compared to LMC. 

\section{Proof of main results for aHOLA}\label{sec:mtslproofs}
In this section, we provide the proofs for Theorem \ref{thm:mainw1} and Corollary \ref{crl:mainw2}. We first introduce auxiliary processes which we use throughout the convergence analysis. Then, we provide moment estimates for the newly introduced processes, which are followed by the detailed proofs for the main results. We postpone the proofs for the results presented in this section to Appendix \ref{sec:mtslproofsapd}.

\subsection{Auxiliary processes} Fix $\beta>0$. Consider the Langevin SDE $(Z_t)_{t \geq 0}$ given by
\begin{equation} \label{eq:sde}
Z_0 := \theta_0, \quad \rmd Z_t=-h\left(Z_t\right) \rmd t+ \sqrt{2\beta^{-1}} \rmd B_t,
\end{equation}
where $(B_t)_{t \geq 0}$ is a $d$-dimensional Brownian motion on $(\Omega,\mathcal{F},P)$ with its completed natural filtration denoted by $(\mathcal{F}_t)_{t\geq 0}$. Moreover, we assume that $(\mathcal{F}_t)_{t\geq 0}$ is independent of $\sigma(\theta_0)$. Under Assumptions~\ref{asm:ALL} and \ref{asm:AC}, and by Remarks \ref{rmk:dissipativityc} and \ref{rmk:oslc}, we note that the Langevin SDE \eqref{eq:sde} admits a unique solution, which is adapted to $\mathcal{F}_t \vee \sigma(\theta_0)$, $t\geq 0$, due to \cite[Theorem 1]{krylovsolmontone}. Its $2p$-th moment estimate with $p \in \N$ is provided in Lemma \ref{lem:sde2pmmt}, which can be used to further deduce the $2p$-th moment estimate of $\pi_{\beta}$ \eqref{eq:pibetaexp}. 

For each $\lambda>0$, recall $ B^{\lambda}_t :=B_{\lambda t}/\sqrt{\lambda}, t \geq 0$. Denote by $(\mathcal{F}^{\lambda}_t)_{t \geq 0}$ with $\mathcal{F}^{\lambda}_t := \mathcal{F}_{\lambda t}$, $t \geq 0$, its completed natural filtration, which is independent of $\sigma(\theta_0)$. Moreover, we denote by $Z^{\lambda}_t := Z_{\lambda t}$, $t \geq 0$, the time-changed version of Langevin SDE \eqref{eq:sde}, which is given by
\begin{equation} \label{eq:tcsde}
Z^{\lambda}_0:=\theta_0, \quad \rmd Z^{\lambda}_t = -\lambda h(Z^{\lambda}_t)\, \rmd t +\sqrt{2\lambda\beta^{-1}}\,\rmd B^{\lambda}_t.
\end{equation}

Furthermore, we denote by $(\widetilde{\theta}^{\lambda}_t)_{t \geq 0}$ the continuous-time interpolation of aHOLA \eqref{eq:ahola1}-\eqref{eq:ahola3} given by 
\begin{equation}\label{eq:aholaproc}
\widetilde{\theta}^{\lambda}_0 := \theta_0, \quad \rmd \widetilde{\theta}^{\lambda}_t= \lambda \phi^{\lambda}(\widetilde{\theta}^{\lambda}_{\lfrf{t}})\, \rmd t
+ \sqrt{2\lambda\beta^{-1}}\psi^{\lambda}(\widetilde{\theta}^{\lambda}_{\lfrf{t}}) \,\rmd B^{\lambda}_t,
\end{equation}
where $\phi^{\lambda}$ and $\psi^{\lambda}$ are defined in \eqref{eq:ahola2} and \eqref{eq:ahola3}, respectively.

\begin{remark} 
Similarly, denote by $(\overline{\theta}^{\lambda}_t)_{t \geq 0 }$ the continuous-time interpolation of the order 1.5 scheme \eqref{eq:aholaoh} given by
\begin{align}\label{eq:aholahoproc}
\begin{split}
 \rmd \overline{\theta}^{\lambda}_t&=  -\lambda h_{\lambda}(\overline{\theta}^{\lambda}_{\lfrf{t}})\,\rmd t+\lambda^2\int_{\lfrf{t}}^t\left(H_{\lambda}( \overline{\theta}^{\lambda}_{\lfrf{s}})h_{\lambda}( \overline{\theta}^{\lambda}_{\lfrf{s}})-\beta^{-1}\Upsilon_{\lambda}(\overline{\theta}^{\lambda}_{\lfrf{s}}) \right)\,\rmd s\,\rmd t\\
&\quad-\lambda\sqrt{2\lambda\beta^{-1}} \int_{{\lfrf{t}}}^t H_{\lambda}(\overline{\theta}_{\lfrf{s}}^{\lambda})\,\rmd B_s^{\lambda}\,\rmd t  +\sqrt{2\lambda\beta^{-1}} \, \rmd B^{\lambda}_t
\end{split}
\end{align}
with $\overline{\theta}^{\lambda}_0 := \theta_0$. We note that $\mathcal{L}(\widetilde{\theta}^{\lambda}_n)=\mathcal{L}(\theta_n^{\cHOLA})=\mathcal{L}(\theta_n^{\lambda})=\mathcal{L}(\overline{\theta}^{\lambda}_n)$, for each $n\in\N_0$.
\end{remark}

Finally, for any $s \geq 0$, consider the continuous-time process $(\zeta^{s,v, \lambda}_t)_{t \geq s}$ defined by
\begin{equation}\label{eq:auxproc}
\zeta^{s,v, \lambda}_s := v \in \R^d, \quad \rmd\zeta^{s,v, \lambda}_t = -\lambda h(\zeta^{s,v, \lambda}_t)\, \rmd t +\sqrt{2\lambda\beta^{-1}}\,\rmd B^{\lambda}_t.
\end{equation}
\begin{definition}\label{def:auxzeta} Fix $\lambda >0$. Define $T\equiv T(\lambda) : = \lfrf{1/\lambda}$. Then, for any $n \in \N_0$ and $t \geq nT$, define 
\[
\overline{\zeta}^{\lambda, n}_t := \zeta^{nT,\overline{\theta}^{\lambda}_{nT}, \lambda}_t.
\]
\end{definition}

\subsection{Moment estimates}\label{sec:me} We first introduce the following Lyapunov functions: for each $p\in [2, \infty)\cap {\N}$, define $V_p(\theta) := (1+|\theta|^2)^{p/2}$, for all $\theta \in \R^d$, as well as $\mathrm{v}_p(w) := (1+w^2)^{p/2}$, for all $w \geq 0$. We observe that $V_p$ is twice continuously differentiable and satisfies:
\begin{equation}\label{eq:contrassumption}
\sup_{\theta \in \R^d}|\nabla V_p(\theta)|/V_p(\theta) <\infty, \quad \lim_{|\theta|\to\infty} \nabla V_p(\theta)/V_p(\theta)=0.
\end{equation}
Furthermore, we denote by $\mathcal{P}_{V_p}(\R^d)$ the set of probability measures $\mu \in \mathcal{P}(\R^d)$ which satisfies $\int_{\R^d} V_p(\theta)\, \mu(\rmd \theta) <\infty$.

Next, we establish moment estimates for $(\widetilde{\theta}^{\lambda}_t)_{t\geq 0}$ given in \eqref{eq:aholaproc}. The results with explicit constants are provided below. We note that for any $p\in [2, \infty)\cap {\N}$ and $t\geq 0$, we have that $\E[|\widetilde{\theta}^{\lambda}_t|^{2p}] = \E[| \overline{\theta}^{\lambda}_t|^{2p}]$.
\begin{lemma}\label{lem:2ndpthmmt} Let Assumptions \ref{asm:AI}, \ref{asm:ALL}, and \ref{asm:AC} hold.  Then, we obtain the following estimates:
\begin{enumerate}[leftmargin=*]
\item For any $0<\lambda\leq \lambda_{\max}$, $n \in \N_0$, and $t \in (n, n+1]$,
\[
\E\left[ |\widetilde{\theta}^{\lambda}_t|^2  \right]  \leq \left(1 -\lambda(t-n)\ca_\cD\kappa \right)\left(1 -\lambda \ca_\cD\kappa \right)^n\E\left[ |\theta_0|^2\right]+  \cc_0\left(1+1/(\ca_\cD\kappa)\right),
\]
where the constants $\cc_0, \kappa$ are given explicitly in \eqref{eq:2ndmmtexpconst}. In particular, the above inequality implies $\sup_{t\geq 0}\E\left[|\widetilde{\theta}^{\lambda}_t|^2\right]  \leq  \E\left[|\theta_0|^2\right] +\cc_0(1+1/(\ca_\cD\kappa))<\infty$.
\label{lem:2ndpthmmti}
\item  For any $p\in [2, \infty)\cap {\N}$, $0<\lambda\leq \lambda_{\max}$, $n \in \N_0$, and $t \in (n, n+1]$,
\[
\E\left[ |\widetilde{\theta}^{\lambda}_t|^{2p} \right]  \leq \left(1 -\lambda(t-n)\ca_\cD\kappa \right)\left(1 -\lambda \ca_\cD\kappa \right)^n\E\left[|\theta_0|^{2p}\right]  + \cc_p\left(1+1/(\ca_\cD\kappa)\right),
\]
where $\kappa$ is given explicitly in \eqref{eq:2ndmmtexpconst} and $\cc_p$ is given in \eqref{eq:2pthmmtexpconst}. In particular, the above estimate implies $\sup_{t\geq 0}\E\left[|\widetilde{\theta}^{\lambda}_t|^{2p} \right]  \leq  \E\left[|\theta_0|^{2p}\right]+\cc_p\left(1+1/(\ca_\cD\kappa)\right)<\infty$. \label{lem:2ndpthmmtii}
\end{enumerate}
\end{lemma}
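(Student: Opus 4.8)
The plan is to work directly with the interpolation $(\widetilde{\theta}^{\lambda}_t)_{t\geq0}$ in \eqref{eq:aholaproc}, whose coefficients $\phi^{\lambda},\psi^{\lambda}$ are frozen at the left grid point on each unit interval, so that for $t\in(n,n+1]$ one has the explicit representation
\[
\widetilde{\theta}^{\lambda}_t=\widetilde{\theta}^{\lambda}_n+\lambda(t-n)\phi^{\lambda}(\widetilde{\theta}^{\lambda}_n)+\sqrt{2\lambda\beta^{-1}}\,\psi^{\lambda}(\widetilde{\theta}^{\lambda}_n)\bigl(B^{\lambda}_t-B^{\lambda}_n\bigr),
\]
with the Gaussian increment $B^{\lambda}_t-B^{\lambda}_n$ independent of $\mathcal{F}^{\lambda}_n$ and of covariance $(t-n)\cI_d$. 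First I would expand $|\widetilde{\theta}^{\lambda}_t|^2$ and take the conditional expectation given $\mathcal{F}^{\lambda}_n$: the terms linear in the Brownian increment vanish, the quadratic diffusion term contributes $2\lambda\beta^{-1}(t-n)\,\mathrm{tr}\bigl((\psi^{\lambda})^2\bigr)$ where by \eqref{eq:ahola3} $(\psi^{\lambda})^2=\cI_d-\lambda H_{\lambda}+(\lambda^2/3)(H_{\lambda})^2$, and the drift contributes $2\lambda(t-n)\langle\widetilde{\theta}^{\lambda}_n,\phi^{\lambda}(\widetilde{\theta}^{\lambda}_n)\rangle+\lambda^2(t-n)^2|\phi^{\lambda}(\widetilde{\theta}^{\lambda}_n)|^2$.

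The heart of the estimate is the drift inner product $\langle\theta,\phi^{\lambda}(\theta)\rangle=-\langle\theta,h_{\lambda}(\theta)\rangle+(\lambda/2)\langle\theta,H_{\lambda}(\theta)h_{\lambda}(\theta)-\beta^{-1}\Upsilon_{\lambda}(\theta)\rangle$. For the leading term I would combine the superlinear dissipativity \eqref{eq:superlineardisp} with the taming definition \eqref{eq:taming}: writing $\langle\theta,h_{\lambda}(\theta)\rangle=\langle\theta,h(\theta)\rangle/(1+\lambda^{3/2}|\theta|^{3r})^{1/3}$ with $r=\rho+q-1$, the bound $\langle\theta,h(\theta)\rangle\geq\ca_\cD|\theta|^2-\overline{\cb}_\cD$ produces a negative drift of order $-\ca_\cD|\theta|^2$ modulated by the taming denominator, and this modulation is exactly what defines the damping factor $\kappa$. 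All remaining terms --- the $O(\lambda^2)$ drift corrections built from $H_{\lambda}h_{\lambda}$ and $\Upsilon_{\lambda}$, the quadratic contribution $|\phi^{\lambda}|^2$, and the diffusion trace through $\mathrm{tr}(H_{\lambda})$ and $\mathrm{tr}((H_{\lambda})^2)$ --- I would control using the polynomial growth bounds of Remark \ref{rmk:growthc} together with the boundedness enforced by taming, noting that each tamed coefficient grows like $|\theta|^{\rho+q}/(\lambda^{1/2}|\theta|^{r})\sim\lambda^{-1/2}|\theta|$ for large $|\theta|$. The stepsize restriction \eqref{eq:stepsizemax}, which is reciprocal in $K_h,K_H$, is calibrated so these error terms stay dominated by the leading dissipation, yielding the one-step inequality
\[
\E\left[|\widetilde{\theta}^{\lambda}_t|^2\mid\mathcal{F}^{\lambda}_n\right]\leq\left(1-\lambda(t-n)\ca_\cD\kappa\right)|\widetilde{\theta}^{\lambda}_n|^2+\lambda(t-n)\cc_0.
\]

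Taking full expectations and specialising to $t=n+1$ gives a geometric contraction with ratio $1-\lambda\ca_\cD\kappa$; iterating over the $n$ full steps and then over the fractional step on $(n,n+1]$ yields part (i), and summing the resulting geometric series produces the uniform-in-time bound $\sup_{t\geq0}\E[|\widetilde{\theta}^{\lambda}_t|^2]\leq\E[|\theta_0|^2]+\cc_0(1+1/(\ca_\cD\kappa))$. For part (ii) I would repeat the scheme with the Lyapunov function $V_p(\theta)=(1+|\theta|^2)^{p/2}$, applying the binomial expansion to $|\widetilde{\theta}^{\lambda}_t|^{2p}$ and collecting the higher Gaussian moments of the Brownian increment; the leading term $-2p\lambda(t-n)\langle\theta,h_{\lambda}(\theta)\rangle|\theta|^{2p-2}$ again supplies the dissipation while every other term is absorbed under \eqref{eq:stepsizemax}, giving the same contraction with a $p$-dependent constant $\cc_p$. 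I expect the main obstacle to be the argument in the second paragraph: verifying that the single taming factor \eqref{eq:taming} simultaneously preserves a genuine $-\ca_\cD\kappa|\theta|^2$ dissipative drift and tames all the higher-order drift and diffusion corrections so that the contraction holds uniformly in the state. The combinatorial bookkeeping of the cross terms in the $2p$-th moment case is lengthy but routine once the second-moment template is fixed.
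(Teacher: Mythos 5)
Your overall architecture---freezing the coefficients at the left grid point, splitting $\widetilde{\theta}^{\lambda}_t$ into a deterministic part $\Delta_{n,t}^{\lambda}$ and a Gaussian part $\Xi_{n,t}^{\lambda}$, killing the cross term by conditioning on $\mathcal{F}^{\lambda}_n$, bounding the diffusion trace via the taming bound $|H_{\lambda}|_{\cF}\lesssim\lambda^{-1/2}\sqrt{d}\,\cK_1$, and iterating a one-step contraction---is exactly the paper's. The gap is in the step you yourself flag as the heart of the argument: the source of dissipation. You invoke \eqref{eq:superlineardisp}, i.e.\ the \emph{quadratic} lower bound $\langle\theta,h(\theta)\rangle\geq\ca_\cD|\theta|^2-\overline{\cb}_\cD$, and claim that dividing it by the taming denominator still produces a negative drift of order $-\ca_\cD|\theta|^2$ whose modulation defines $\kappa$. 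It does not. The taming denominator $(1+\lambda^{3/2}|\theta|^{3(\rho+q-1)})^{1/3}$ grows like $\lambda^{1/2}|\theta|^{\rho+q-1}$, so with the quadratic bound the tamed drift only delivers
\[
-\,\frac{\ca_\cD|\theta|^2-\overline{\cb}_\cD}{\left(1+\lambda^{3/2}|\theta|^{3(\rho+q-1)}\right)^{1/3}}\;\approx\;-\,\ca_\cD\lambda^{-1/2}|\theta|^{\,3-\rho-q},
\]
which is \emph{sublinear} in $|\theta|$ (constant when $\rho=2$, $q=1$; decaying when $\rho\geq3$), since $\rho+q-1>1$. Meanwhile the correction terms do not become negligible after taming: $\lambda^2(t-n)\langle\theta,H_{\lambda}(\theta)h_{\lambda}(\theta)\rangle$ has numerator growth $|\theta|^{2(\rho+q)}$ against denominator $\approx\lambda|\theta|^{2(\rho+q-1)}$, hence contributes about $+4\lambda K_hK_H|\theta|^2$, and $\lambda^2(t-n)^2|h_{\lambda}(\theta)|^2$ similarly contributes about $+2\lambda K_h^2|\theta|^2$. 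For any fixed $\lambda>0$, a quadratically growing positive term cannot be dominated by a sublinearly growing negative one, so the one-step inequality $\E[|\widetilde{\theta}^{\lambda}_t|^2\mid\mathcal{F}^{\lambda}_n]\leq(1-\lambda(t-n)\ca_\cD\kappa)|\widetilde{\theta}^{\lambda}_n|^2+\lambda(t-n)\cc_0$ is false under your inputs, and no stepsize restriction rescues it: the imbalance occurs as $|\theta|\to\infty$ at fixed $\lambda$.

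The fix---and what the paper actually does---is to use the \emph{superlinear} dissipativity from Remark \ref{rmk:dissipativityc}, namely $\langle\theta,h(\theta)\rangle\geq\ca_\cD|\theta|^{\rho+q+1}-\cb_\cD$, rather than its weakened quadratic consequence \eqref{eq:superlineardisp}. The extra factor $|\theta|^{\rho+q-1}$ in the numerator is precisely what offsets the growth of the taming denominator: the tamed drift becomes $-\frac{\ca_\cD|\theta|^{\rho+q-1}}{(1+\lambda^{3/2}|\theta|^{3(\rho+q-1)})^{1/3}}\,|\theta|^2$, and since $s\mapsto s/(1+\lambda^{3/2}s^3)^{1/3}$ is non-decreasing and $\lambda\leq1$, the modulating factor is bounded below by $\kappa=\cM_1^{\rho+q-1}/(1+\cM_1^{3(\rho+q-1)})^{1/3}$ on $\{|\theta|>\cM_1\}$, giving a genuine $-\ca_\cD\kappa|\theta|^2$ contraction (the ball $\{|\theta|\leq\cM_1\}$ only feeds the constant $\cc_0$). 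Against the correction terms, the superlinear numerator yields dissipation of size $\ca_\cD\lambda^{1/2}|\theta|^{2(\rho+q)}$ in the relevant comparison versus errors of size $\lambda(4K_hK_H+2K_h^2)|\theta|^{2(\rho+q)}$, which is exactly where the stepsize restriction \eqref{eq:stepsizemax} enters (the paper's nonnegativity check $\mathfrak{I}_1^{\lambda}\geq0$). A secondary remark on part (ii): with the correct dissipativity your outline goes through along the paper's lines, but the cross term $|\Delta_{n,t}^{\lambda}|^{2p-2}\,\E[|\Xi_{n,t}^{\lambda}|^2\mid\widetilde{\theta}^{\lambda}_n]$ carries a coefficient of order $\lambda$ times $|\theta|^{2p-2}$ that is not dominated by $\lambda\ca_\cD\kappa|\theta|^{2p}$ near the origin; one needs a second radius threshold (the paper's $\cM_2(p)$) to absorb it, in addition to the bookkeeping you describe.
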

\begin{proof} See Appendix \ref{lem:2ndpthmmtproof}.
\end{proof}

We provide below a drift condition for $V_p$ (defined in the beginning of Section \ref{sec:me}), which is key to obtain the moment estimates of $(\bar{\zeta}^{\lambda, n}_t)_{t \geq nT}$ defined in Definition \ref{def:auxzeta}.
\begin{lemma}
\label{lem:driftcon} Let Assumptions \ref{asm:AI}, \ref{asm:ALL}, and \ref{asm:AC} hold. Then, for any $p\in [2, \infty)\cap {\N}$, $\theta \in \R^d$, we obtain
\[
\Delta V_p(\theta)/\beta - \langle \nabla V_p(\theta), h(\theta) \rangle \leq -\cc_{V,1}(p) V_p(\theta) +\cc_{V,2}(p),
\]
where $\cc_{V,1}(p) := \ca_\cD p/4$, $\cc_{V,2}(p) := (3/4)\ca_\cD p\mathrm{v}_p(\cM_V(p))$ with 
\[
\cM_V(p) := (1/3+4\overline{\cb}_\cD/(3\ca_\cD)+4d/(3\ca_\cD\beta)+4(p-2)/(3\ca_\cD\beta))^{1/2}.
\]
\end{lemma}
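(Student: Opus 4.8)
The plan is to compute the left-hand side explicitly and then reduce everything to a scalar estimate in the variable $u := 1+|\theta|^2 \ge 1$. First I would differentiate $V_p(\theta) = (1+|\theta|^2)^{p/2}$ to obtain $\nabla V_p(\theta) = p(1+|\theta|^2)^{p/2-1}\theta$ and
\[
\Delta V_p(\theta) = pd(1+|\theta|^2)^{p/2-1} + p(p-2)(1+|\theta|^2)^{p/2-2}|\theta|^2 .
\]
Substituting these into $\Delta V_p(\theta)/\beta - \langle \nabla V_p(\theta), h(\theta)\rangle$ and factoring out $p(1+|\theta|^2)^{p/2-1}$ leaves a bracket of the form $d/\beta + (p-2)|\theta|^2/(\beta(1+|\theta|^2)) - \langle \theta, h(\theta)\rangle$. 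Since $p\ge 2$, I would bound $|\theta|^2/(1+|\theta|^2)\le 1$ to replace the middle contribution by $(p-2)/\beta$.

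Next I would invoke the dissipativity estimate \eqref{eq:superlineardisp} from Remark~\ref{rmk:dissipativityc}, namely $\langle \theta, h(\theta)\rangle \ge \ca_\cD|\theta|^2 - \overline{\cb}_\cD$. Writing $u := 1+|\theta|^2$ and collecting all constants into $K := \ca_\cD + \overline{\cb}_\cD + (d+p-2)/\beta$, this yields
\[
\Delta V_p(\theta)/\beta - \langle \nabla V_p(\theta), h(\theta)\rangle \le p\, u^{p/2-1}\bigl(K - \ca_\cD u\bigr).
\]
The crucial algebraic step is to split $-\ca_\cD u = -\tfrac34\ca_\cD u - \tfrac14\ca_\cD u$, so that the right-hand side becomes $-\tfrac{\ca_\cD p}{4}u^{p/2} + p\,u^{p/2-1}(K - \tfrac34\ca_\cD u)$. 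The first term is exactly $-\cc_{V,1}(p)V_p(\theta)$ with $\cc_{V,1}(p)=\ca_\cD p/4$, so it remains only to show that the residual $p\,u^{p/2-1}(K - \tfrac34\ca_\cD u)$ is bounded above by $\cc_{V,2}(p)$.

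To finish, I would locate the threshold $u^{\ast} := 4K/(3\ca_\cD)$ at which the factor $K-\tfrac34\ca_\cD u$ vanishes and verify by a direct computation that $u^{\ast} = 1 + \cM_V(p)^2$ --- this is precisely how $\cM_V(p)$ is defined. For $u\ge u^{\ast}$ the residual is nonpositive, hence $\le \cc_{V,2}(p)$; for $1\le u\le u^{\ast}$ I would rewrite it as $\tfrac{3\ca_\cD p}{4}u^{p/2-1}(u^{\ast}-u)$ and apply the two monotone bounds $u^{p/2-1}\le (u^{\ast})^{p/2-1}$ (valid since $p/2-1\ge 0$) and $u^{\ast}-u\le u^{\ast}$ to get $\tfrac{3\ca_\cD p}{4}(u^{\ast})^{p/2} = \tfrac{3\ca_\cD p}{4}\mathrm{v}_p(\cM_V(p)) = \cc_{V,2}(p)$. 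I do not expect a genuine obstacle; the only point requiring care is the bookkeeping matching $u^{\ast}$ with $1+\cM_V(p)^2$, since the clean closed form of $\cc_{V,2}(p)$ hinges on this identity, together with keeping the $\tfrac14/\tfrac34$ split consistent with the stated value $\cc_{V,1}(p)=\ca_\cD p/4$.
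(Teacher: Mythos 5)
Your proof is correct: the computation of $\nabla V_p$ and $\Delta V_p$, the reduction to the scalar inequality $p\,u^{p/2-1}(K-\ca_\cD u)$ via the dissipativity bound \eqref{eq:superlineardisp}, the $\tfrac14/\tfrac34$ split, and the identity $u^{\ast}=4K/(3\ca_\cD)=1+\cM_V(p)^2$ all check out and yield exactly the stated constants $\cc_{V,1}(p)$ and $\cc_{V,2}(p)$. Note that the paper does not prove this lemma itself but cites \cite[Lemma 3.5]{nonconvex}; your argument is precisely the standard threshold computation behind that cited result, so it is essentially the same approach, just written out in a self-contained way.
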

\begin{proof} See \cite[Lemma 3.5]{nonconvex}.
\end{proof}

By applying Lemma \ref{lem:2ndpthmmt} and \ref{lem:driftcon}, we obtain the second and the fourth moment estimates for $(\bar{\zeta}^{\lambda, n}_t)_{t \geq nT}$.
\begin{lemma}\label{lem:zetaprocme} Let Assumptions \ref{asm:AI}, \ref{asm:ALL}, and \ref{asm:AC} hold. Then, for any $p \in \N$, $0<\lambda\leq \lambda_{\max}$, $n \in \N_0$, and $t\geq nT$, we obtain
\[
\E[V_{2p}(\bar{\zeta}^{\lambda, n}_t)] \leq 2^{p-1}e^{-\lambda\ca_\cD \min\{ \kappa, 1 /2\} t}\E[|\theta_0|^{2p}] + 2^{p-1}\left( \cc_p\left(1+1/(\ca_\cD\kappa)\right)+1\right)+3\mathrm{v}_{2p}(\cM_V(2p)),
\]
where $\kappa$ and $\cc_p$ are given in \eqref{eq:2ndmmtexpconst} and \eqref{eq:2pthmmtexpconst} (see also Lemma \ref{lem:2ndpthmmt}) and $\cM_V(2p)$ is given in Lemma \ref{lem:driftcon}. 
\end{lemma}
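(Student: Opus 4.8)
The plan is to establish the moment bound for $(\bar{\zeta}^{\lambda, n}_t)_{t \geq nT}$ by combining the infinitesimal drift condition from Lemma~\ref{lem:driftcon} with the uniform-in-time moment control of the interpolated aHOLA process from Lemma~\ref{lem:2ndpthmmt}. The key observation is that $\bar{\zeta}^{\lambda, n}_t = \zeta^{nT, \overline{\theta}^{\lambda}_{nT}, \lambda}_t$ is (a time-changed copy of) the true Langevin diffusion~\eqref{eq:auxproc} started at time $nT$ from the \emph{random} initial point $\overline{\theta}^{\lambda}_{nT}$, so I can analyze its moments via the Lyapunov function $V_{2p}$ and then handle the randomness of the initial condition by conditioning.

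First I would apply It\^{o}'s formula to $V_{2p}(\zeta^{s,v,\lambda}_t)$ for the diffusion~\eqref{eq:auxproc} started deterministically at $(s,v)$. Since the generator of~\eqref{eq:auxproc} is $\lambda$ times the generator of the Langevin SDE~\eqref{eq:sde}, the drift term of $\mathrm{d}\, \E[V_{2p}(\zeta^{s,v,\lambda}_t)]$ is exactly $\lambda\bigl(\Delta V_{2p}/\beta - \langle \nabla V_{2p}, h\rangle\bigr)$ evaluated along the process. Invoking Lemma~\ref{lem:driftcon} with $p$ replaced by $2p$ gives
\[
\frac{\rmd}{\rmd t}\E[V_{2p}(\zeta^{s,v,\lambda}_t)] \leq -\lambda \cc_{V,1}(2p)\,\E[V_{2p}(\zeta^{s,v,\lambda}_t)] + \lambda \cc_{V,2}(2p),
\]
where $\cc_{V,1}(2p) = \ca_\cD p/2$ and $\cc_{V,2}(2p) = (3/2)\ca_\cD p\,\mathrm{v}_{2p}(\cM_V(2p))$. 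Gr\"{o}nwall's inequality then yields, for $t \geq s$,
\[
\E[V_{2p}(\zeta^{s,v,\lambda}_t)] \leq e^{-\lambda \cc_{V,1}(2p)(t-s)} V_{2p}(v) + \frac{\cc_{V,2}(2p)}{\cc_{V,1}(2p)}\bigl(1 - e^{-\lambda \cc_{V,1}(2p)(t-s)}\bigr),
\]
and since $\cc_{V,2}(2p)/\cc_{V,1}(2p) = 3\,\mathrm{v}_{2p}(\cM_V(2p))$, the constant part matches the target.

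Next I would specialize to $s = nT$ and $v = \overline{\theta}^{\lambda}_{nT}$, taking expectations over the random starting point. Using that $(\mathcal{F}^{\lambda}_t)$ is independent of the Brownian increments driving~\eqref{eq:auxproc} after time $nT$, conditioning on $\overline{\theta}^{\lambda}_{nT}$ gives
\[
\E[V_{2p}(\bar{\zeta}^{\lambda, n}_t)] \leq e^{-\lambda \cc_{V,1}(2p)(t-nT)}\,\E[V_{2p}(\overline{\theta}^{\lambda}_{nT})] + 3\,\mathrm{v}_{2p}(\cM_V(2p)).
\]
To bound $\E[V_{2p}(\overline{\theta}^{\lambda}_{nT})]$ I would use $V_{2p}(\theta) = (1+|\theta|^2)^p \leq 2^{p-1}(1 + |\theta|^{2p})$ together with the identity $\E[|\overline{\theta}^{\lambda}_{nT}|^{2p}] = \E[|\widetilde{\theta}^{\lambda}_{nT}|^{2p}]$ and the uniform moment estimate from Lemma~\ref{lem:2ndpthmmt}(ii), giving $\E[V_{2p}(\overline{\theta}^{\lambda}_{nT})] \leq 2^{p-1}\bigl(\E[|\theta_0|^{2p}] + \cc_p(1 + 1/(\ca_\cD\kappa)) + 1\bigr)$. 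The main obstacle is reconciling the exponential rate: the clean bound above carries the decay $e^{-\lambda \cc_{V,1}(2p)(t-nT)}$ anchored at $nT$, whereas the statement asks for decay $e^{-\lambda\ca_\cD\min\{\kappa,1/2\}t}$ anchored at $0$. I would resolve this by absorbing the factor $e^{\lambda \cc_{V,1}(2p)\, nT}$ into the moment estimate for $\overline{\theta}^{\lambda}_{nT}$, which itself decays geometrically in $n$ by Lemma~\ref{lem:2ndpthmmt}, and then choosing the common rate $\ca_\cD\min\{\kappa, 1/2\}$ (noting $\cc_{V,1}(2p) = \ca_\cD p/2 \geq \ca_\cD/2$ for $p \geq 1$ and $T = \lfloor 1/\lambda\rfloor$ so that $\lambda T \leq 1$); matching the transient constant $2^{p-1}$ and the decaying coefficient across the two regimes, and verifying the rate inequality uniformly in $t \geq nT$ and $n \in \N_0$, is the delicate bookkeeping step where the $\min\{\kappa, 1/2\}$ and the factor $2^{p-1}$ arise.
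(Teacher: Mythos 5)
Your proof is correct and follows essentially the same route as the paper, which gives no argument of its own here but simply cites \cite[Corollary 4.6]{mtula}: that cited result is proved exactly by your scheme---apply Lemma \ref{lem:driftcon} (with $2p$ in place of $p$) and Gr\"{o}nwall to the diffusion \eqref{eq:auxproc} started at $(nT,\overline{\theta}^{\lambda}_{nT})$, condition on the random initial point, and then invoke the grid-point moment bound of Lemma \ref{lem:2ndpthmmt}. The bookkeeping you flag as delicate is in fact immediate: since $\cc_{V,1}(2p)=\ca_\cD p/2\geq \ca_\cD\min\{\kappa,1/2\}$ and $\ca_\cD\kappa\geq\ca_\cD\min\{\kappa,1/2\}$, one has $e^{-\lambda \cc_{V,1}(2p)(t-nT)}\,e^{-\lambda\ca_\cD\kappa\, nT}\leq e^{-\lambda\ca_\cD\min\{\kappa,1/2\}\,t}$, and $V_{2p}(\theta)\leq 2^{p-1}(1+|\theta|^{2p})$ supplies the factor $2^{p-1}$, giving the stated bound verbatim.
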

\begin{proof} See \cite[Corollary 4.6]{mtula}.
\end{proof}

\subsection{Proof of main results} In this section, we present key results used to obtain Theorem \ref{thm:mainw1}. We consider establishing a non-asymptotic error bound in Wasserstein-1 distance between the law of $\overline{\theta}^{\lambda}_t$ given in \eqref{eq:aholahoproc} and $\pi_\beta$, i.e., $W_1(\mathcal{L}(\overline{\theta}^{\lambda}_t),\pi_\beta)$, for any $n \in \N_0$ and $t \in (nT, (n+1)T]$. This and the fact that $ W_1(\mathcal{L}(\theta_n^{\cHOLA}),\pi_{\beta})=W_1(\mathcal{L}(\overline{\theta}^{\lambda}_t),\pi_\beta) $ holds at any grid point yields the desired result. To this end, we split $W_1(\mathcal{L}(\overline{\theta}^{\lambda}_t),\pi_\beta)$ by using the law of $\overline{\zeta}^{\lambda, n}_t $ defined in Definition \ref{def:auxzeta} and $Z^{\lambda}_t$ given in \eqref{eq:tcsde} as follows:
\begin{equation}\label{eq:mtsplit}
W_1(\mathcal{L}(\overline{\theta}^{\lambda}_t),\pi_\beta) \leq W_1(\mathcal{L}(\overline{\theta}^{\lambda}_t),\mathcal{L}(\overline{\zeta}^{\lambda, n}_t))+W_1(\mathcal{L}(\overline{\zeta}^{\lambda, n}_t),\mathcal{L}(Z^{\lambda}_t))+W_1(\mathcal{L}(Z^{\lambda}_t),\pi_\beta).
\end{equation}

In the following lemma, we provide a non-asymptotic estimate for $W_2(\mathcal{L}(\overline{\theta}^{\lambda}_t),\mathcal{L}(\overline{\zeta}^{\lambda, n}_t))$, which can be used to upper bound the first term on the RHS of \eqref{eq:mtsplit}.
\begin{lemma}\label{lem:w1converp1} Let Assumptions \ref{asm:AI}, \ref{asm:ALL}, and \ref{asm:AC} hold. Then, for any $0<\lambda\leq \lambda_{\max}$, $n \in \N_0$, and $t \in (nT, (n+1)T]$, we obtain
\[
W_2(\mathcal{L}(\overline{\theta}^{\lambda}_t),\mathcal{L}(\overline{\zeta}^{\lambda, n}_t)) 
\leq \lambda^{1+q/2}\left(e^{- \ca_\cD \min\{ \kappa, 1 /2\} n/2}\cC_0\E[|\theta_0|^{16(\rho+1)}] +\cC_1 \right)^{1/2},
\]
where $\kappa, \cC_0, \cC_1$ are given explicitly in \eqref{eq:2ndmmtexpconst} and \eqref{eq:w1converp1const}.
\end{lemma}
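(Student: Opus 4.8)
The plan is to use a \emph{synchronous coupling}: both $\overline{\theta}^{\lambda}_t$ in \eqref{eq:aholahoproc} and $\overline{\zeta}^{\lambda,n}_t$ from Definition~\ref{def:auxzeta} are driven by the same Brownian motion $B^{\lambda}$ and, by construction, coincide at time $nT$. Hence $(\overline{\theta}^{\lambda}_t,\overline{\zeta}^{\lambda,n}_t)$ is an admissible coupling and it suffices to bound $\E[|e_t|^2]$, where $e_t:=\overline{\theta}^{\lambda}_t-\overline{\zeta}^{\lambda,n}_t$ with $e_{nT}=0$, since $W_2^2(\mathcal{L}(\overline{\theta}^{\lambda}_t),\mathcal{L}(\overline{\zeta}^{\lambda,n}_t))\le\E[|e_t|^2]$. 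Because the leading diffusion terms $\sqrt{2\lambda\beta^{-1}}\,\rmd B^{\lambda}_t$ are identical in both dynamics, they cancel in $\rmd e_t$, so $e_t$ has absolutely continuous paths and I may differentiate $|e_t|^2$ pathwise rather than invoke It\^{o}'s formula.

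First I would write
\[
\frac{\rmd}{\rmd t}\E[|e_t|^2]=2\E\big[\langle e_t,\dot e_t\rangle\big],\qquad \dot e_t=-\lambda\big(h(\overline{\theta}^{\lambda}_t)-h(\overline{\zeta}^{\lambda,n}_t)\big)+R_t,
\]
where $R_t$ collects the remaining discretisation contributions, namely the taming/time-freezing defect $\lambda\big(h(\overline{\theta}^{\lambda}_t)-h_{\lambda}(\overline{\theta}^{\lambda}_{\lfrf{t}})\big)$ together with the two order-$1.5$ correction terms $\lambda^2\int_{\lfrf{t}}^t(H_{\lambda}h_{\lambda}-\beta^{-1}\Upsilon_{\lambda})(\overline{\theta}^{\lambda}_{\lfrf{s}})\,\rmd s$ and $-\lambda\sqrt{2\lambda\beta^{-1}}\int_{\lfrf{t}}^tH_{\lambda}(\overline{\theta}^{\lambda}_{\lfrf{s}})\,\rmd B^{\lambda}_s$. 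To the first term I apply the one-sided Lipschitz bound of Remark~\ref{rmk:oslc} to get $\langle e_t,-\lambda(h(\overline{\theta}^{\lambda}_t)-h(\overline{\zeta}^{\lambda,n}_t))\rangle\le \lambda\cL_{\cOS}|e_t|^2$, and to the cross term a $\lambda$-weighted Young inequality $2\langle e_t,R_t\rangle\le\lambda|e_t|^2+\lambda^{-1}|R_t|^2$. This yields $\frac{\rmd}{\rmd t}\E[|e_t|^2]\le\lambda(2\cL_{\cOS}+1)\E[|e_t|^2]+\lambda^{-1}\E[|R_t|^2]$, and Gr\"{o}nwall over the block $[nT,t]$ of length at most $T=\lfrf{1/\lambda}\le 1/\lambda$ gives
\[
\E[|e_t|^2]\le e^{2\cL_{\cOS}+1}\,\lambda^{-1}\int_{nT}^t\E[|R_s|^2]\,\rmd s,
\]
the crucial point being that $\lambda(2\cL_{\cOS}+1)(t-nT)$ stays bounded by a constant precisely because $\lambda T\le1$.

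The heart of the argument is the estimate $\E[|R_s|^2]\lesssim\lambda^{3+q}$, which then delivers $\int_{nT}^t\E[|R_s|^2]\,\rmd s\le T\sup_s\E[|R_s|^2]\lesssim\lambda^{2+q}$ and hence $\E[|e_t|^2]\lesssim\lambda^{2+q}$, i.e.\ the claimed $W_2\le\lambda^{1+q/2}(\cdots)^{1/2}$. To obtain this I would expand the increment $h(\overline{\theta}^{\lambda}_s)-h(\overline{\theta}^{\lambda}_{\lfrf{s}})$ hidden in $R_s$ by an It\^{o}--Taylor expansion along the known dynamics of $\overline{\theta}^{\lambda}$ on $[\lfrf{s},s]$, and show that its leading drift and stochastic parts cancel the two order-$1.5$ correction terms; the taming defects $h-h_{\lambda}$, $H-H_{\lambda}$, $\Upsilon-\Upsilon_{\lambda}$ are of order $\lambda^{3/2}$ by \eqref{eq:taming} and are absorbed into the remainder. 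What survives is the third-order It\^{o}--Taylor term, whose mismatch is controlled by the $q$-H\"{o}lder continuity of $\nabla^2h^{(i)}$ (Assumption~\ref{asm:ALL}, cf.\ \eqref{eq:growthc3}); combined with the $\sqrt{\lambda}$-scaling of Brownian increments over $[\lfrf{s},s]$ this produces exactly the fractional factor $\lambda^{q/2}$ beyond the order-$1$ term, explaining the exponent $3+q$. The polynomial-growth prefactors arising throughout are handled by the uniform-in-time moment bounds of Lemma~\ref{lem:2ndpthmmt} (and Lemma~\ref{lem:zetaprocme}); the highest such power forces second moments of $|\overline{\theta}^{\lambda}_s|$ up to order $16(\rho+1)$, which is why Assumption~\ref{asm:AI} is imposed, and the geometrically decaying part of these moment bounds (with rate $\ca_\cD\min\{\kappa,1/2\}$, evaluated at $s\ge nT\approx n/\lambda$) produces the factor $e^{-\ca_\cD\min\{\kappa,1/2\}n/2}$ multiplying $\E[|\theta_0|^{16(\rho+1)}]$, the extra $1/2$ coming from the final square root.

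I expect the main obstacle to be this last step: carrying out the It\^{o}--Taylor expansion of $h(\overline{\theta}^{\lambda}_s)-h(\overline{\theta}^{\lambda}_{\lfrf{s}})$ with its nested stochastic integrals, verifying that the designed order-$1.5$ corrections cancel the leading terms, and tracking the many taming and H\"{o}lder remainder terms so that only a genuinely $O(\lambda^{3/2+q/2})$ contribution to $R_s$ remains, all while keeping the polynomial prefactors within the available moment bounds.
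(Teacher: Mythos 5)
Your setup (synchronous coupling, cancellation of the common noise $\sqrt{2\lambda\beta^{-1}}\,\rmd B^{\lambda}_t$, the one-sided Lipschitz bound of Remark \ref{rmk:oslc}, Gr\"onwall over a block of length $T\le 1/\lambda$) is exactly the skeleton of the paper's proof, but your treatment of the residual $R_t$ has a genuine gap, in two respects. First, your arithmetic is internally inconsistent: with your own Gr\"onwall bound $\E[|e_t|^2]\le e^{2\cL_{\cOS}+1}\lambda^{-1}\int_{nT}^t\E[|R_s|^2]\,\rmd s$ and the claimed $\E[|R_s|^2]\lesssim\lambda^{3+q}$, you obtain $\E[|e_t|^2]\lesssim \lambda^{-1}\cdot T\cdot\lambda^{3+q}\lesssim\lambda^{1+q}$, i.e.\ $W_2\lesssim\lambda^{1/2+q/2}$, not the claimed $\lambda^{1+q/2}$; the step ``hence $\E[|e_t|^2]\lesssim\lambda^{2+q}$'' silently drops the factor $\lambda^{-1}$ produced by your weighted Young inequality. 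Second, and more fundamentally, no choice of Young weights can repair this, because the stochastic part of $R_t$ is too large in mean square: after the order-$1.5$ cancellation, what survives is $\lambda\sqrt{2\lambda\beta^{-1}}\int_{\lfrf{t}}^t\bigl(H(\overline{\theta}^{\lambda}_r)-H_{\lambda}(\overline{\theta}^{\lambda}_{\lfrf{r}})\bigr)\,\rmd B^{\lambda}_r$, and since $H(\overline{\theta}^{\lambda}_r)-H(\overline{\theta}^{\lambda}_{\lfrf{r}})$ is of genuine order $\sqrt{\lambda}$ in $L^2$ (the one-step increment is dominated by a Brownian increment, and the $q$-H\"older condition on $\nabla^2 h^{(i)}$ does not improve the Lipschitz estimate for $H$ itself), this stochastic residual has mean square of order $\lambda^4$, not $\lambda^{4+q}$. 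Feeding $\E[|R_s|^2]\asymp\lambda^4$ into your scheme gives at best $\E[|e_t|^2]\lesssim\lambda^{-2}\lambda^4=\lambda^2$, i.e.\ $W_2\lesssim\lambda$, which falls short of the rate $1+q/2$.

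The missing idea --- and the heart of the paper's proof --- is that this stochastic residual must \emph{not} be estimated in norm against $e_s$ (via Young or Cauchy--Schwarz), but exploited as a conditionally centred martingale increment. The paper decomposes $H-H_{\lambda}=\bigl(H-H_{\lambda}-\mathfrak{M}\bigr)+\mathfrak{M}$, where $\mathfrak{M}(\theta,\overline{\theta})$ is the linearisation of $H$ from Definition \ref{def:Mdef}. Only the remainder $H-H_{\lambda}-\mathfrak{M}$ enjoys a H\"older-improved bound: by \eqref{eq:multidmvt} and Lemma \ref{lem:Mest} its contribution is $O(\lambda^{2+q})$ in mean square, so it can safely be Young-ed (the term $\mathfrak{J}_1^{\lambda}$ in the paper). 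The linearisation term $\mathfrak{M}$ remains of order $\sqrt{\lambda}$, but its inner product with $e_s$ is small for a different reason: the grid-point values $\overline{\theta}^{\lambda}_{\lfrf{s}}$, $\overline{\zeta}^{\lambda,n}_{\lfrf{s}}$ are $\mathcal{F}^{\lambda}_{\lfrf{s}}$-measurable while $\int_{\lfrf{s}}^s\mathfrak{M}\,\rmd B^{\lambda}_r$ is conditionally mean zero \eqref{eq:gridptmeasmartingale}, so one may replace $e_s$ by the block increments, cancel their common Brownian part, and then apply Cauchy--Schwarz using Corollary \ref{cor:graditoub} together with the exact vanishing of the triple Brownian product established in \eqref{lem:Mestineq3} (the terms $\mathfrak{J}_2^{\lambda}$ and $\mathfrak{J}_{2,1}^{\lambda}$ in the paper). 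Without this linearisation-plus-conditioning step, the It\^o--Taylor cancellation you invoke does not leave an $O(\lambda^{3/2+q/2})$ remainder, and your argument cannot reach the exponent $1+q/2$.
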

\begin{proof} See Appendix \ref{lem:w1converp1proof}.
\end{proof}

For the last two terms on the RHS of \eqref{eq:mtsplit}, we observe that they can be viewed as Wasserstein-1 distances between distributions of Langevin processes starting from different initial points. Therefore, to obtain their upper bounds, we introduce a semi-metric which allows us to establish a contraction result for the Langevin SDE \eqref{eq:sde} under our assumptions. 

We consider the following semi-metric: for any $p\in [2, \infty)\cap {\N}$, $ \mu,\nu \in \mathcal{P}_{V_p}(\R^d)$, let
\begin{equation}\label{eq:semimetricw1p}
w_{1,p}(\mu,\nu):=\inf_{\zeta\in\mathcal{C}(\mu,\nu)}\int_{\mathbb{R}^d}\int_{\mathbb{R}^d} [1\wedge |\theta-\theta'|](1+V_p(\theta)+V_p(\theta'))\zeta(\rmd\theta, \rmd\theta').
\end{equation}
Then, we provide a result which states the contraction property of the Langevin SDE \eqref{eq:sde} in $w_{1,2}$.
\begin{proposition}\label{prop:contractionw12} Let Assumptions \ref{asm:AI}, \ref{asm:ALL}, and \ref{asm:AC} hold.  Moreover, let $\theta_0' \in \mathscr{L}^2$, and let $(Z_t')_{t \geq 0}$ be the solution of SDE \eqref{eq:sde} whose starting point $Z'_0 := \theta'_0$ is assumed to be independent of $\mathcal{F}_{\infty} := \sigma(\bigcup_{t \geq 0} \mathcal{F}_t)$. Then, we obtain
\begin{equation}\label{eq:w12contraction}
w_{1,2}(\mathcal{L}(Z_t),\mathcal{L}(Z'_t)) \leq \hat{\cc} e^{-\dot{\cc} t} w_{1,2}(\mathcal{L}(\theta_0),\mathcal{L}(\theta_0')),
\end{equation}
where the explicit expressions for $\dot{\cc}, \hat{\cc}$ are given below.

The contraction constant $\dot{\cc}$ is given by:
\begin{equation*}
\dot{\cc}:=\min\{\bar{\phi}, \cc_{V,1}(2), 4\cc_{V,2}(2) \epsilon \cc_{V,1}(2)\}/2,
\end{equation*}
where $\cc_{V,1}(2) :=  \ca_\cD/2$, $\cc_{V,2}(2) := 3 \ca_\cD \mathrm{v}_2(\cM_V(2))/2$ with $\cM_V(2):= (1/3+4\overline{\cb}_\cD/(3\ca_\cD)+4d/(3\ca_\cD\beta) )^{1/2}$, the constant $\bar{\phi} $ is given by
\begin{equation*}
\bar{\phi} := \left(\sqrt{8\pi/(\beta \cL_{\cOS})} \dot{\cc}_0  \exp \left( \left(\dot{\cc}_0 \sqrt{\beta \cL_{\cOS}/8} + \sqrt{8/(\beta \cL_{\cOS})} \right)^2 \right) \right)^{-1},
\end{equation*}
and $\epsilon >0$ is chosen such that
\begin{equation*}
\epsilon  \leq 1 \wedge    \left(4 \cc_{V,2}(2) \sqrt{2 \beta\pi/  \cL_{\cOS} }\int_0^{\dot{\cc}_1}\exp  \left( \left(s \sqrt{\beta \cL_{\cOS}/8}+\sqrt{8/(\beta \cL_{\cOS})}\right)^2 \right) \,\rmd s \right)^{-1}
\end{equation*}
with $\dot{\cc}_0 := 2(4\cc_{V,2}(2)(1+\cc_{V,1}(2))/\cc_{V,1}(2)-1)^{1/2}$ and $\dot{\cc}_1:=2(2 \cc_{V,2}(2)/\cc_{V,1}(2)-1)^{1/2}$.

Moreover, the constant $\hat{\cc}$ is given by:
\begin{equation*}
\hat{\cc}: =2(1+ \dot{\cc}_0)\exp(\beta \cL_{\cOS} \dot{\cc}_0^2/8+2\dot{\cc}_0)/\epsilon.
\end{equation*}
\end{proposition}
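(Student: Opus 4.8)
# Proof Proposal for Proposition \ref{prop:contractionw12}

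\textbf{Overall strategy.}\textbf{Overall strategy.} The plan is to prove the contraction \eqref{eq:w12contraction} by a reflection-coupling argument in the spirit of \cite{eberleold,eberle2019quantitative}, adapted to the convexity-at-infinity setting precisely as in \cite{nonconvex,mtula}. The two structural inputs are already available: the one-sided Lipschitz bound of Remark~\ref{rmk:oslc}, which reads $\langle \theta-\overline{\theta}, h(\theta)-h(\overline{\theta})\rangle \geq -\cL_{\cOS}|\theta-\overline{\theta}|^2$ and controls the drift of the inter-particle distance, and the drift condition of Lemma~\ref{lem:driftcon}, which controls the Lyapunov function $V_2$. I would fuse these into a single modified semi-metric $\rho(x,y):=f(|x-y|)\bigl(1+\epsilon V_2(x)+\epsilon V_2(y)\bigr)$, where $f$ is a bounded, increasing, concave function with $f(0)=0$ and $\epsilon>0$ is the small constant displayed in the statement. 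The point of the weight $1+\epsilon V_2(x)+\epsilon V_2(y)$ is exactly to reproduce, up to the multiplicative factor $\hat{\cc}$, the integrand of $w_{1,2}$ in \eqref{eq:semimetricw1p}; the concavity of $f$ ensures $f(r)$ is comparable to $1\wedge r$, which is what makes $\rho$ and $w_{1,2}$ equivalent.

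\textbf{Construction of the concave function.} First I would build $f$ explicitly. Running a reflection coupling of two copies $Z_t,Z_t'$ of SDE~\eqref{eq:sde} driven by reflected Brownian motions, the radial process $r_t:=|Z_t-Z_t'|$ has generator bounded on smooth test functions by $\tfrac{4}{\beta}f''(r)+\cL_{\cOS}\,r\,f'(r)$, where the $\tfrac{4}{\beta}$ comes from the diffusion coefficient $\sqrt{2\beta^{-1}}$ together with the factor two produced by reflection, and the $\cL_{\cOS}\,r$ term from Remark~\ref{rmk:oslc}. Seeking a concave $f$ with $\tfrac{4}{\beta}f''(r)+\cL_{\cOS}\,r\,f'(r)\leq -\dot{\cc}\,f(r)$, I would use the ansatz $f'(r)=\phi(r)g(r)$ with $\phi(r):=\exp(-\tfrac{\beta\cL_{\cOS}}{8}r^2)$, chosen so that $\tfrac{4}{\beta}\phi'+\cL_{\cOS}\,r\,\phi=0$; this reduces the second-order inequality to a first-order condition on a decreasing $g$ supported on $[0,\dot{\cc}_1]$. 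The normalizations of $g$ and $f$ then produce the thresholds $\dot{\cc}_0,\dot{\cc}_1$ and the rate $\bar{\phi}$, and the Gaussian-type integrals appearing in $\bar{\phi}$ and in the upper bound for $\epsilon$ arise directly from integrating $\phi^{-1}(s)=\exp(\tfrac{\beta\cL_{\cOS}}{8}s^2)$ against the Lyapunov correction, which contributes the linear shift inside the exponent $(s\sqrt{\beta\cL_{\cOS}/8}+\sqrt{8/(\beta\cL_{\cOS})})^2$.

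\textbf{Coupling, It\^o, and conclusion.} With $f$ fixed, I would couple $Z_t,Z_t'$ using reflection coupling when the two processes are far apart and a synchronous coupling when they are close, interpolating with a Lipschitz coupling rate so that It\^o's formula applies to $\rho(Z_t,Z_t')$. Applying It\^o and taking expectations splits the generator of $\rho$ into a distance contribution, handled by the differential inequality for $f$ just constructed and by Remark~\ref{rmk:oslc}, and a Lyapunov contribution, handled by Lemma~\ref{lem:driftcon} through the constants $\cc_{V,1}(2)=\ca_\cD/2$ and $\cc_{V,2}(2)$. Choosing $\epsilon$ as small as the displayed bound guarantees that the Lyapunov cross-terms do not overwhelm the distance contraction, yielding a single inequality $\tfrac{\rmd}{\rmd t}\E[\rho(Z_t,Z_t')]\leq -\dot{\cc}\,\E[\rho(Z_t,Z_t')]$ with $\dot{\cc}=\min\{\bar{\phi},\cc_{V,1}(2),4\cc_{V,2}(2)\epsilon\cc_{V,1}(2)\}/2$. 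Gr\"onwall's inequality gives exponential decay of $\E[\rho]$, and translating back from $\rho$ to $w_{1,2}$ via the two-sided comparison $f(r)(1+\epsilon V_2+\epsilon V_2')\asymp (1\wedge r)(1+V_2+V_2')$ produces the prefactor $\hat{\cc}=2(1+\dot{\cc}_0)\exp(\beta\cL_{\cOS}\dot{\cc}_0^2/8+2\dot{\cc}_0)/\epsilon$.

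\textbf{Main obstacle.} The technical crux is the simultaneous verification, with the explicit constants as stated, of the combined differential inequality across both coupling regimes: the one-sided Lipschitz estimate only provides the uniform lower bound $-\cL_{\cOS}$ and no genuine convexity near the origin, so the radial contraction must come entirely from the reflection diffusion shaped by $f$ at moderate distances, while the net contraction at large distances relies on the dissipativity of Remark~\ref{rmk:dissipativityc} entering through Lemma~\ref{lem:driftcon}. Balancing these two effects through the single small parameter $\epsilon$, and checking that the concave $f$ together with the Lyapunov weight indeed yields the claimed rate $\dot{\cc}$ rather than merely some positive rate, is where the careful bookkeeping lies.
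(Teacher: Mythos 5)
Your proposal is correct and follows essentially the same route as the paper: the paper verifies that Remark \ref{rmk:oslc} gives the one-sided Lipschitz bound, Lemma \ref{lem:driftcon} the Lyapunov drift condition, and \eqref{eq:contrassumption} the regularity hypotheses, and then invokes \cite[Theorem 2.2, Corollary 2.3]{eberle2019quantitative} together with the constant-tracking arguments of \cite[Proposition 3.14]{nonconvex} and \cite[Proposition 4.6]{lim2021nonasymptotic}, whereas you re-derive that machinery (reflection coupling, concave $f$ built from the weight $\exp(-\beta\cL_{\cOS}r^2/8)$, Lyapunov-weighted semimetric, Gr\"onwall) from scratch. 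Since the coupling construction you sketch is precisely the content of the cited results, the two proofs coincide in substance; the paper's version is shorter only because it cites rather than reproves.
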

\begin{proof} We note that \cite[Assumption 2.1]{eberle2019quantitative} holds with $\kappa = \cL_{\cOS}$ due to Remark \ref{rmk:oslc}, \cite[Assumption 2.2]{eberle2019quantitative} holds with $V=V_2$ due to Remark \ref{lem:driftcon}, and \cite[Assumptions 2.4 and 2.5]{eberle2019quantitative} hold due to \eqref{eq:contrassumption}. Therefore, we can obtain \eqref{eq:w12contraction} following the same arguments as in the proof of \cite[Proposition~3.14]{nonconvex} based on \cite[Theorem 2.2, Corollary 2.3]{eberle2019quantitative}. In addition, $\dot{\cc}, \hat{\cc}$ can be obtained following the arguments in the proof of \cite[Proposition 4.6]{lim2021nonasymptotic}.
\end{proof}

By using the above result and $W_1 \leq w_{1,2}$ (see \cite[Lemma A.3]{lim2021nonasymptotic}), we can establish a non-asymptotic error bound for the second term on the RHS of \eqref{eq:mtsplit}. The explicit statement is given below.
\begin{lemma}\label{lem:w1converp2} Let Assumptions \ref{asm:AI}, \ref{asm:ALL}, and \ref{asm:AC} hold. Then, for any $0<\lambda\leq \lambda_{\max}$, $n \in \N_0$, and $t \in (nT, (n+1)T]$, we obtain
\[
W_1(\mathcal{L}(\bar{\zeta}_t^{\lambda,n}),\mathcal{L}(Z_t^\lambda)) \leq \lambda^{1+q/2}\left(e^{- \min\{\dot{\cc}, \ca_\cD \kappa, \ca_\cD  /2\} n/4}\cC_2\E[|\theta_0|^{16(\rho+1)}] +\cC_3 \right) ,
\]
where
\begin{align}\label{eq:w1converp2consts}
\begin{split}
\cC_2
& := \hat{\cc}\left(1+\frac{4}{\min\{\dot{\cc}, \ca_\cD \kappa, \ca_\cD  /2\} }\right)e^{\min\{\dot{\cc}, \ca_\cD \kappa, \ca_\cD  /2\} /4}\left(\cC_0 +2^{8\rho+11}\right),\\
\cC_3
& :=  2(\hat{\cc}/\dot{\cc})e^{\dot{\cc}/2}\left(\cC_1+15+2^{8\rho+11}(\cc_{8(\rho+1)}(1+1/( \ca_\cD \kappa))+1)\right. \left. +18\mathrm{v}_{16(\rho+1)}(\cM_V(16(\rho+1)))\right)
\end{split}
\end{align}
with $\dot{\cc}, \hat{\cc}$ given in Proposition \ref{prop:contractionw12}, $\cC_0, \cC_1$ given in \eqref{eq:w1converp1const} (see also Lemma \ref{lem:w1converp1}), $\kappa$, $\cc_{8(\rho+1)}$ given in Lemma \ref{lem:2ndpthmmt}, and $\cM_V(16(\rho+1))$ given in Lemma \ref{lem:zetaprocme}.
\end{lemma}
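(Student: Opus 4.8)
The plan is to estimate $W_1(\mathcal{L}(\overline{\zeta}^{\lambda,n}_t),\mathcal{L}(Z^\lambda_t))$ through the semi-metric $w_{1,2}$ of \eqref{eq:semimetricw1p}, exploiting the contraction of Proposition \ref{prop:contractionw12} together with a telescoping over the restart grid $\{kT\}_{k=0}^n$, where $T=\lfrf{1/\lambda}$. First I would use $W_1\le w_{1,2}$ (\cite[Lemma A.3]{lim2021nonasymptotic}) to pass to $w_{1,2}$. The key structural observation is that $\overline{\zeta}^{\lambda,n}$ and $Z^\lambda$ solve the \emph{same} time-homogeneous SDE \eqref{eq:auxproc} driven by the same $B^\lambda$: the former is started at time $nT$ from the aHOLA value $\overline{\theta}^\lambda_{nT}$, while the latter coincides with $\overline{\zeta}^{\lambda,0}$, started at time $0$ from $\theta_0$. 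Interpolating along the restart times, the triangle inequality for $w_{1,2}$ gives
\[
w_{1,2}(\mathcal{L}(\overline{\zeta}^{\lambda,n}_t),\mathcal{L}(Z^\lambda_t)) \le \sum_{k=0}^{n-1} w_{1,2}(\mathcal{L}(\overline{\zeta}^{\lambda,k+1}_t),\mathcal{L}(\overline{\zeta}^{\lambda,k}_t)).
\]

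For a fixed $k$, both $\overline{\zeta}^{\lambda,k+1}$ and $\overline{\zeta}^{\lambda,k}$ evolve according to \eqref{eq:auxproc} for times $\ge(k+1)T$. Conditioning on $\mathcal{F}^\lambda_{(k+1)T}$ and restarting with the fresh Brownian motion $B^\lambda_{(k+1)T+\cdot}-B^\lambda_{(k+1)T}$, which is independent of $\mathcal{F}^\lambda_{(k+1)T}$, the contraction of Proposition \ref{prop:contractionw12}, transported to the $\lambda$-scaled clock $Z^\lambda_t=Z_{\lambda t}$, yields
\[
w_{1,2}(\mathcal{L}(\overline{\zeta}^{\lambda,k+1}_t),\mathcal{L}(\overline{\zeta}^{\lambda,k}_t)) \le \hat{\cc}\,e^{-\dot{\cc}\lambda(t-(k+1)T)}\, w_{1,2}(\mathcal{L}(\overline{\theta}^\lambda_{(k+1)T}),\mathcal{L}(\overline{\zeta}^{\lambda,k}_{(k+1)T})),
\]
where I used $\overline{\zeta}^{\lambda,k+1}_{(k+1)T}=\overline{\theta}^\lambda_{(k+1)T}$. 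Since $t>nT\ge(k+1)T$ and $\lambda T=\lambda\lfrf{1/\lambda}\in(1/2,1]$, the prefactor is bounded by $\hat{\cc}\,e^{-\dot{\cc}(n-k-1)/2}$.

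It then remains to estimate the one-block error $w_{1,2}(\mathcal{L}(\overline{\theta}^\lambda_{(k+1)T}),\mathcal{L}(\overline{\zeta}^{\lambda,k}_{(k+1)T}))$. Along the synchronous (or $W_2$-optimal) coupling, $[1\wedge|\cdot|]\le|\cdot|$ and Cauchy--Schwarz give $w_{1,2}\le W_2\cdot(\E[(1+V_2(\cdot)+V_2(\cdot))^2])^{1/2}$; the $W_2$ factor is exactly Lemma \ref{lem:w1converp1} applied with index $k$ and evaluation time $(k+1)T$, of order $\lambda^{1+q/2}$ with an $e^{-\ca_\cD\min\{\kappa,1/2\}k/2}$-term under a square root. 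For the moment factor I would use the crude pointwise bound $V_4\le V_{16(\rho+1)}$ and control $\E[V_{16(\rho+1)}]$ uniformly in $k$ by Lemma \ref{lem:2ndpthmmt} (for $\overline{\theta}^\lambda$) and Lemma \ref{lem:zetaprocme} (for $\overline{\zeta}^{\lambda,k}$); this reuses the single $16(\rho+1)$-th moment estimate, concentrates the $\theta_0$-dependence in $\E[|\theta_0|^{16(\rho+1)}]$, and supplies the constants $\cc_{8(\rho+1)}$ and $\mathrm{v}_{16(\rho+1)}(\cM_V(16(\rho+1)))$ entering $\cC_3$. Substituting back, the sum splits: the part carrying $\cC_1^{1/2}$ times the bounded moment factor is geometric in $n-k$ and sums to the uniform constant $\cC_3$ (with prefactor $2(\hat{\cc}/\dot{\cc})e^{\dot{\cc}/2}$), while the part carrying $e^{-\ca_\cD\min\{\kappa,1/2\}k/4}$ (after splitting the square root) convolves with $e^{-\dot{\cc}(n-k-1)/2}$ to give the decaying leading term; the square root of the $W_2$ bound against the square root of the moment factor produces the first-power dependence $\cC_2\E[|\theta_0|^{16(\rho+1)}]$.

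The main obstacle is the convolution of the two exponentials in the telescoping sum: the one-block error decays in $k$ while the contraction weight decays in $n-k$, so near the balance point $k\approx n/2$ they combine into a polynomial-in-$n$ prefactor that must be absorbed into a marginally slower exponential. This is precisely what forces the final rate down to $\min\{\dot{\cc},\ca_\cD\kappa,\ca_\cD/2\}\,n/4$: one factor $1/2$ from splitting $(a+b)^{1/2}\le a^{1/2}+b^{1/2}$ in Lemma \ref{lem:w1converp1}, and a second from absorbing the polynomial factor. A secondary but genuine difficulty is the measure-theoretic bookkeeping for Proposition \ref{prop:contractionw12} at each restart time $(k+1)T$: one must verify that the two initial data are $\mathcal{F}^\lambda_{(k+1)T}$-measurable and that the subsequent dynamics are driven by an independent copy of $B^\lambda$, so that the contraction (stated for initial data independent of the driving filtration) legitimately applies on the shifted clock, while keeping the moment factors bounded uniformly in $k$ throughout the summation, which is what pins down the explicit forms of $\cC_2$ and $\cC_3$.
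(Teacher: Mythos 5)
Your overall strategy---restart-grid telescoping with $Z^\lambda=\overline{\zeta}^{\lambda,0}$, the $w_{1,2}$-contraction of Proposition \ref{prop:contractionw12} applied blockwise to initial data measurable with respect to $\mathcal{F}^{\lambda}_{(k+1)T}\vee\sigma(\theta_0)$, the one-block error from Lemma \ref{lem:w1converp1} combined via Cauchy--Schwarz with the uniform moment bounds of Lemmas \ref{lem:2ndpthmmt} and \ref{lem:zetaprocme}, and the final convolution of the two exponentials---is exactly the argument behind the paper's proof, which is given by citation to \cite[Lemma 4.7]{lim2021nonasymptotic}. However, the order of your first two steps contains a step that would fail: you pass to $w_{1,2}$ first and then invoke ``the triangle inequality for $w_{1,2}$'' to telescope along the restart grid. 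No such inequality is available: $w_{1,2}$ is only a \emph{semi-metric}, because the cost $[1\wedge|\theta-\theta'|](1+V_2(\theta)+V_2(\theta'))$ fails the pointwise triangle inequality. Concretely, take $x=(-1/2,0,\dots,0)$, $y=0$, $z=(1/2,0,\dots,0)$; since the only coupling of two Dirac measures is their product, $w_{1,2}(\delta_x,\delta_z)=1\cdot\left(1+\tfrac54+\tfrac54\right)=\tfrac72$, while $w_{1,2}(\delta_x,\delta_y)+w_{1,2}(\delta_y,\delta_z)=\tfrac12\cdot\tfrac{13}{4}+\tfrac12\cdot\tfrac{13}{4}=\tfrac{13}{4}<\tfrac72$. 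Note also that the weaker inequality $\rho(x,z)\le 2\left(\rho(x,y)+\rho(y,z)\right)$, which does hold, is of no use here: iterating it across the $n$ blocks compounds to a factor growing geometrically in $n$, which would destroy the estimate.

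The repair is immediate and is precisely what the cited proof does: telescope at the level of $W_1$, which is a genuine metric, writing $W_1(\mathcal{L}(\overline{\zeta}^{\lambda,n}_t),\mathcal{L}(Z^\lambda_t))\le\sum_{k=0}^{n-1}W_1(\mathcal{L}(\overline{\zeta}^{\lambda,k+1}_t),\mathcal{L}(\overline{\zeta}^{\lambda,k}_t))$ (using that $\overline{\zeta}^{\lambda,0}$ and $Z^\lambda$ solve the same SDE \eqref{eq:auxproc} with the same initial condition and driving noise, hence coincide by uniqueness), and only then bound each summand by $w_{1,2}(\mathcal{L}(\overline{\zeta}^{\lambda,k+1}_t),\mathcal{L}(\overline{\zeta}^{\lambda,k}_t))$ using $W_1\le w_{1,2}$ from \cite[Lemma A.3]{lim2021nonasymptotic}. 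Since this produces exactly the sum you work with, the remainder of your argument---the contraction with prefactor $\hat{\cc}e^{-\dot{\cc}\lambda(t-(k+1)T)}$ and $\lambda T\ge 1/2$, the bound $w_{1,2}\le W_2\cdot(\text{moment factor})^{1/2}$ feeding in Lemma \ref{lem:w1converp1} at time $(k+1)T$, and the summation yielding the exponent $\min\{\dot{\cc},\ca_\cD\kappa,\ca_\cD/2\}\,n/4$ and the constants $\cC_2,\cC_3$---goes through unchanged. With that one reordering, your proposal reproduces the paper's proof.
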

\begin{proof} See \cite[Lemma 4.7]{lim2021nonasymptotic}.
\end{proof}
To obtain an upper bound for the last term on the RHS of \eqref{eq:mtsplit}, we observe that $\pi_{\beta}$ is the invariant measure of the Langevin SDE \eqref{eq:tcsde}. Thus, by applying Proposition \ref{prop:contractionw12}, we have that
\begin{equation}\label{eq:w1converp3}
W_1(\mathcal{L}(Z^{\lambda}_t),\pi_\beta) \leq  \hat{\cc} e^{-\dot{\cc} \lambda t} w_{1,2}(\mathcal{L}(\theta_0), \pi_\beta)
\leq \hat{\cc} e^{-\dot{\cc} \lambda t} \left[1+\E[V_2(\theta_0)]+ \int_{\R^d} V_2(\theta) \pi_{\beta}(\rmd \theta)\right].
\end{equation}
By using Lemma \ref{lem:w1converp1}, \ref{lem:w1converp2} and \eqref{eq:w1converp3}, we can obtain an upper bound for each $W_2(\mathcal{L}(\theta_n^{\cHOLA}),\pi_{\beta})$, $n \in \N_0$, as stated in Theorem \ref{thm:mainw1}.

\begin{proof}[\textbf{Proof of Theorem \ref{thm:mainw1}}] Substituting the results in Lemma \ref{lem:w1converp1}, \ref{lem:w1converp2} and \eqref{eq:w1converp3} into \eqref{eq:mtsplit}, for any $0<\lambda\leq \lambda_{\max}$, $n \in \N_0$, and $t \in (nT, (n+1)T]$, we have that
\begin{align*}
W_1(\mathcal{L}(\overline{\theta}^{\lambda}_t),\pi_\beta) 
&\leq  \lambda^{1+q/2}\left(e^{- \ca_\cD \min\{ \kappa, 1 /2\} n/2}\cC_0\E[|\theta_0|^{16(\rho+1)}] +\cC_1 \right)^{1/2}\\
&\quad + \lambda^{1+q/2}\left(e^{- \min\{\dot{\cc}, \ca_\cD \kappa, \ca_\cD  /2\} n/4}\cC_2\E[|\theta_0|^{16(\rho+1)}] +\cC_3 \right) \\
&\quad +\hat{\cc} e^{-\dot{\cc} \lambda t} \left[1+\E[V_2(\theta_0)]+ \int_{\R^d} V_2(\theta) \pi_{\beta}(\rmd \theta)\right]\\
&\leq C_1 e^{-C_0 ( n+1)}(\E[|\theta_0|^{16(\rho+1)}]+1) +C_2\lambda^{1+q/2},
\end{align*}
where 
\begin{align}\label{eq:mainw1const}
\begin{split}
C_0&:=\min\{\dot{\cc}, \ca_\cD \kappa, \ca_\cD  /2\}  /4, \\
C_1 &:=e^{\min\{\dot{\cc}, \ca_\cD \kappa, \ca_\cD  /2\}  /4}\left[\cC_0^{1/2}+\cC_2+\hat{\cc}\left(3+\int_{\R^d} V_2(\theta) \pi_{\beta}(\rmd \theta)\right)\right], \\
C_2&:=\cC_1^{1/2}+\cC_3\\
\end{split}
\end{align}
with  $\dot{\cc}, \hat{\cc}$ given in Proposition \ref{prop:contractionw12}, $\kappa$ given in Lemma \ref{lem:2ndpthmmt}, $\cC_0, \cC_1$ given in \eqref{eq:w1converp1const} (see also Lemma \ref{lem:w1converp1}), $\cC_2, \cC_3$ given in \eqref{eq:w1converp2consts} (see also Lemma \ref{lem:w1converp2}). The above result implies that, for each $n \in \N_0$,
\[
W_1(\mathcal{L}(\overline{\theta}^{\lambda}_{nT}),\pi_\beta) \leq  C_1 e^{-C_0 n}(\E[|\theta_0|^{16(\rho+1)}]+1) +C_2\lambda^{1+q/2},
\]
which further yields, by setting $nT$ to $n$ on the LHS and $n$ to $n/T$ on the RHS, that
\[
W_1(\mathcal{L}(\overline{\theta}^{\lambda}_n),\pi_\beta) = W_1(\mathcal{L}(\theta^{\lambda}_n),\pi_\beta)  = W_1(\mathcal{L}(\theta_n^{\cHOLA}),\pi_{\beta}) \leq  C_1 e^{-C_0 \lambda n}(\E[|\theta_0|^{16(\rho+1)}]+1) +C_2\lambda^{1+q/2},
\]
where the inequality holds due to $n\lambda \leq n/T$. This completes the proof.
\end{proof}

By using similar arguments as in the proof of Theorem \ref{thm:mainw1}, we can obtain the upper bound for $W_2(\mathcal{L}(\theta_n^{\cHOLA}),\pi_{\beta}) $, $n \in \N_0$, as stated in Corollary \ref{crl:mainw2}.
\begin{proof}[\textbf{Proof of Corollary \ref{crl:mainw2}}] To establish a non-asymptotic error bound for $W_2(\mathcal{L}(\theta_n^{\cHOLA}),\pi_{\beta}) $, we consider the following splitting: for any $0<\lambda\leq \lambda_{\max}$, $n \in \N_0$, and $t \in (nT, (n+1)T]$,
\begin{equation}\label{eq:crlsplit}
W_2(\mathcal{L}(\overline{\theta}^{\lambda}_t),\pi_\beta) \leq W_2(\mathcal{L}(\overline{\theta}^{\lambda}_t),\mathcal{L}(\overline{\zeta}^{\lambda, n}_t))+W_2(\mathcal{L}(\overline{\zeta}^{\lambda, n}_t),\mathcal{L}(Z^{\lambda}_t))+W_2(\mathcal{L}(Z^{\lambda}_t),\pi_\beta).
\end{equation}
An upper bound for the first term on the RHS of \eqref{eq:crlsplit} is provided in Lemma \ref{lem:w1converp1}. To establish an estimate for the second term on the RHS of \eqref{eq:crlsplit}, we use $W_2 \leq \sqrt{2w_{1,2}}$ (see \cite[Lemma A.3]{lim2021nonasymptotic} for the proof) and follow the same arguments as that in the proof of \cite[Lemma 4.7]{lim2021nonasymptotic}. Consequently, for any $0<\lambda\leq \lambda_{\max}$, $n \in \N_0$, and $t \in (nT, (n+1)T]$, we obtain that,
\begin{equation}\label{eq:w2converp2} 
W_2(\mathcal{L}(\overline{\zeta}_t^{\lambda,n}),\mathcal{L}(Z_t^\lambda)) \leq \lambda^{1/2+q/4}\left(e^{- \min\{\dot{\cc}, \ca_\cD \kappa, \ca_\cD  /2\} n/8}\cC_4\E^{1/2}[|\theta_0|^{16(\rho+1)}] +\cC_5 \right) ,
\end{equation}
where
\begin{align}\label{eq:w2converp2consts}
\begin{split}
\cC_4
& := \sqrt{\hat{\cc}}\left(1+\frac{8}{\min\{\dot{\cc}, \ca_\cD \kappa, \ca_\cD  /2\} }\right)e^{\min\{\dot{\cc}, \ca_\cD \kappa, \ca_\cD  /2\} /8}\left(\sqrt{2}\cC_0^{1/2} +2^{4\rho+5}\right),\\
\cC_5
& :=  4(\sqrt{\hat{\cc}}/\dot{\cc})e^{\dot{\cc}/4}\left(\sqrt{2}\cC_1^{1/2}+3\sqrt{2}+2^{4\rho+5}(\cc_{8(\rho+1)}(1+1/( \ca_\cD \kappa))+1)^{1/2}\right.\\
&\qquad \left. +\sqrt{6}\mathrm{v}^{1/2}_{16(\rho+1)}(\cM_V(16(\rho+1)))\right)
\end{split}
\end{align}
with $\dot{\cc}, \hat{\cc}$ given in Proposition \ref{prop:contractionw12}, $\cC_0, \cC_1$ given in \eqref{eq:w1converp1const} (see also Lemma \ref{lem:w1converp1}), $\kappa$, $\cc_{8(\rho+1)}$ given in Lemma \ref{lem:2ndpthmmt}, and $\cM_V(16(\rho+1))$ given in Lemma \ref{lem:zetaprocme}. An upper bound for the last term on the RHS of  \eqref{eq:crlsplit} can be obtained by using $W_2 \leq \sqrt{2w_{1,2}}$ and Proposition \ref{prop:contractionw12}:
\begin{align}\label{eq:w2converp3} 
W_2(\mathcal{L}(Z^{\lambda}_t),\pi_\beta) 
&\leq  \sqrt{2\hat{\cc}} e^{-\dot{\cc} \lambda t/2} w_{1,2}^{1/2}(\mathcal{L}(\theta_0), \pi_\beta)\nonumber\\
&\leq  \sqrt{2\hat{\cc}} e^{-\dot{\cc} \lambda t/2}\left[1+\E[V_2(\theta_0)]+ \int_{\R^d} V_2(\theta) \pi_{\beta}(\rmd \theta)\right]^{1/2}. 
\end{align}
Applying the results in Lemma \ref{lem:w1converp1}, \eqref{eq:w2converp2}, \eqref{eq:w2converp3} to \eqref{eq:crlsplit} yields, for any $0<\lambda\leq \lambda_{\max}$, $n \in \N_0$, and $t \in (nT, (n+1)T]$, that
\begin{align*}
W_2(\mathcal{L}(\overline{\theta}^{\lambda}_t),\pi_\beta) 
&\leq \lambda^{1+q/2}\left(e^{- \ca_\cD \min\{ \kappa, 1 /2\} n/2}\cC_0\E[|\theta_0|^{16(\rho+1)}] +\cC_1 \right)^{1/2}\\
&\quad +\lambda^{1/2+q/4}\left(e^{- \min\{\dot{\cc}, \ca_\cD \kappa, \ca_\cD  /2\} n/8}\cC_4\E^{1/2}[|\theta_0|^{16(\rho+1)}] +\cC_5 \right)\\
&\quad +\sqrt{2\hat{\cc}} e^{-\dot{\cc} \lambda t/2}\left[1+\E[V_2(\theta_0)]+ \int_{\R^d} V_2(\theta) \pi_{\beta}(\rmd \theta)\right]^{1/2}\\
&\leq  C_4 e^{-C_3( n+1)}(\E[|\theta_0|^{16(\rho+1)}]+1)^{1/2} +C_5\lambda^{1/2+q/4},
\end{align*}
where 
\begin{align}\label{eq:mainw2const}
\begin{split}
C_3&:=\min\{\dot{\cc}, \ca_\cD \kappa, \ca_\cD  /2\}  /8, \\
C_4 &:=e^{\min\{\dot{\cc}, \ca_\cD \kappa, \ca_\cD  /2\}  /8}\left[\cC_0^{1/2}+\cC_4+\sqrt{2\hat{\cc}}\left(3+\int_{\R^d} V_2(\theta) \pi_{\beta}(\rmd \theta)\right)^{1/2}\right], \\
C_5&:=\cC_1^{1/2}+\cC_5\\
\end{split}
\end{align}
with  $\dot{\cc}, \hat{\cc}$ given in Proposition \ref{prop:contractionw12}, $\kappa$ given in Lemma \ref{lem:2ndpthmmt}, $\cC_0, \cC_1$ given in \eqref{eq:w1converp1const} (see also Lemma \ref{lem:w1converp1}), $\cC_4, \cC_5$ given in \eqref{eq:w2converp2consts}. This further implies that, for each $n \in \N_0$,
\begin{align*}
W_2(\mathcal{L}(\overline{\theta}^{\lambda}_n),\pi_\beta) = W_2(\mathcal{L}(\theta^{\lambda}_n),\pi_\beta)  
&= W_2(\mathcal{L}(\theta_n^{\cHOLA}),\pi_{\beta}) \\
&\leq  C_4 e^{-C_3\lambda n}(\E[|\theta_0|^{16(\rho+1)}]+1)^{1/2} +C_5\lambda^{1/2+q/4},
\end{align*}
which completes the proof.
\end{proof}

\newpage
\appendix
\section{Proof of auxiliary results}\label{appen:aux}
\subsection{Proof of auxiliary results in Section \ref{sec:main_superlinear}} 

\begin{proof}[\textbf{Proof of statements in Remark \ref{rmk:growthc}}]\label{rmk:growthcproof} We provide detailed proofs for inequalities \eqref{eq:growthc1}-\eqref{eq:growthc3}, the other inequalities can be obtained by using similar arguments. By Assumption \ref{asm:ALL}, for any $\theta, \overline{\theta} \in \R^d$, $i = 1, \dots, d$, we have that
\begin{align*}
|\nabla^2 h^{(i)}(\theta)|&\leq |\nabla^2 h^{(i)}(\theta)-\nabla^2 h^{(i)}(0)|+|\nabla^2 h^{(i)}(0)|\\
&\leq L(1+|\theta| )^{\rho-2}|\theta |^q+|\nabla^2 h^{(i)}(0)|\\
&\leq \cK_0(1+|\theta|)^{\rho+q-2}, 
\end{align*}
where $\cK_0 := 2^{1-q}\max\{L, |\nabla^2 h^{(1)}(0)|,\dots,|\nabla^2 h^{(d)}(0)|\}$. Moreover, fix $\theta, \overline{\theta} \in \R^d$, denote by $g(t): =\nabla  h^{(i)}(t\theta+(1-t)\overline{\theta}) $, $t \in [0,1]$. Then, by using the above inequality, we obtain that
\begin{align*}
|\nabla h^{(i)}(\theta) -  \nabla h^{(i)}(\overline{\theta}) |&=\left|\int_0^1\nabla^2 h^{(i)}(t\theta+(1-t)\overline{\theta}) (\theta-\overline{\theta})\rmd t  \right|\\
&\leq \int_0^1 \cK_0(1+|t\theta+(1-t)\overline{\theta}|)^{\rho+q-2}\rmd t|\theta-\overline{\theta}|\\
&\leq  \cK_0(1+|\theta|+|\overline{\theta}|)^{\rho+q-2}|\theta-\overline{\theta}|.
\end{align*}
In addition, for any $\theta, \overline{\theta} \in \R^d$, by Assumption \ref{asm:ALL}, we have that
\begin{align*}
|\Upsilon(\theta)-\Upsilon(\overline{\theta})| &= \left(\sum_{i = 1}^d\left(\sum_{j=1}^d (\partial^2_{\theta^{(j)}} h^{(i)}(\theta)- \partial^2_{\theta^{(j)}} h^{(i)}(\overline{\theta}))\right)^2\right)^{1/2}\\
&\leq  \left(d\sum_{i = 1}^d\sum_{j=1}^d\left( \partial^2_{\theta^{(j)}} h^{(i)}(\theta)- \partial^2_{\theta^{(j)}} h^{(i)}(\overline{\theta}) \right)^2\right)^{1/2}\\
&\leq  \left(d\sum_{i = 1}^d  |\nabla^2  h^{(i)}(\theta)-\nabla^2 h^{(i)}(\overline{\theta})|_{\cF}^2\right)^{1/2}\\
&\leq \left(d^2\sum_{i = 1}^d  |\nabla^2  h^{(i)}(\theta)-\nabla^2 h^{(i)}(\overline{\theta})|^2\right)^{1/2}\\
&\leq d^{3/2} L(1+|\theta|+|\overline{\theta}|)^{\rho-2}|\theta-\overline{\theta}|^q,
\end{align*}
which completes the proof.
\end{proof}

\begin{proof}[\textbf{Proof of statements in Remark \ref{rmk:dissipativityc}}]\label{rmk:dissipativitycproof} By Assumption \ref{asm:AC}, for any $\theta \in \R^d$, we have that
\[
\langle \theta , h(\theta)-h(0) \rangle \geq a|\theta|^2|\theta|^r - b|\theta|^2|\theta|^{\overline{r}},
\]
which implies that
\begin{align}
\langle \theta , h(\theta)  \rangle 
&\geq a|\theta|^{r+2}  - b |\theta|^{\overline{r}+2}+\langle \theta , h(0) \rangle \nonumber\\
&\geq a|\theta|^{r+2}  - b |\theta|^{\overline{r}+2}-|\langle \theta , h(0) \rangle| \nonumber\\
&\geq  (a/2)|\theta|^{r+2} +(  (a/4)|\theta|^{r+2} - b |\theta|^{\overline{r}+2})+( (a/4)|\theta|^{r+2} -   a|\theta|^2/2)-|h(0)|^2/(2a), \label{eq:dissipativityc1}
\end{align}
where the last inequality holds due to the fact that $yz \leq \varepsilon y^2/2+z^2/(2\varepsilon)$ for any $y, z \geq 0$ and $\varepsilon>0$. Denote by $\cR_\cD:=\max\{(4b/a)^{1/(r-\overline{r})}, 2^{1/r}\}>1$. We observe that for any $|\theta|>\cR_\cD$,
\[
  (a/4)|\theta|^{r+2} - b |\theta|^{\overline{r}+2} >0, \quad (a/4)|\theta|^{r+2} -   a|\theta|^2/2>0,
\]
and thus \eqref{eq:dissipativityc1} becomes
\begin{equation}\label{eq:dissipativityc2}
\langle \theta , h(\theta)  \rangle >  (a/2)|\theta|^{r+2} -|h(0)|^2/(2a).
\end{equation}
Moreover, for any $|\theta|\leq \cR_\cD$, \eqref{eq:dissipativityc1} becomes
\begin{equation}\label{eq:dissipativityc3}
\langle \theta , h(\theta)  \rangle \geq  (a/2)|\theta|^{r+2} -(b+a/2)\cR_\cD^{\overline{r}+2}-|h(0)|^2/(2a).
\end{equation}
Combining \eqref{eq:dissipativityc2} and \eqref{eq:dissipativityc3} yields
\begin{equation}\label{eq:dissipativityc4}
\langle \theta, h(\theta) \rangle \geq \ca_\cD|\theta|^{r+2}-\cb_\cD,
\end{equation}
where $\ca_\cD:=a/2$ and $\cb_\cD:=(a/2+b)\cR_\cD^{\overline{r}+2}+|h(0)|^2/(2a)$ with $\cR_\cD:=\max\{(4b/a)^{1/(r-\overline{r})}, 2^{1/r}\}$.
Moreover, by \eqref{eq:dissipativityc4}, for any $\theta \in \R^d$, we observe that, 
\[
\langle \theta, h(\theta) \rangle \geq \ca_\cD|\theta|^2-\overline{\cb}_\cD,
\]
where $\overline{\cb}_\cD:=\ca_\cD+\cb_\cD$. Indeed, for $|\theta|>1$, it holds that
\[
\langle \theta, h(\theta) \rangle \geq \ca_\cD|\theta|^{r+2}-\cb_\cD> \ca_\cD|\theta|^2- \cb_\cD> \ca_\cD|\theta|^2-\overline{\cb}_\cD,
\]
while for $|\theta| \leq 1$, we have that
\[
\langle \theta, h(\theta) \rangle \geq \ca_\cD|\theta|^{r+2}-\cb_\cD\geq  \ca_\cD-(\ca_\cD+\cb_\cD)\geq \ca_\cD|\theta|^2-\overline{\cb}_\cD,
\]
which completes the proof.
\end{proof}

\begin{proof}[\textbf{Proof of statements in Remark \ref{rmk:oslc}}] \label{rmk:oslcproof} For any $\theta, \overline{\theta}\in \R^d$, we observe that the result holds trivially when $\theta = \overline{\theta} $, and thus we consider only the case where $\theta \neq \overline{\theta}$. Denote by $\cR_{\cOS}: = (b/a)^{1/(r-\overline{r})}$. 
By Assumption \ref{asm:AC}, for any $|\theta|, |\overline{\theta}| >\cR_{\cOS}$, we have that
\begin{equation}\label{eq:oslc1}
\langle \theta - \overline{\theta}, h(\theta)-h(\overline{\theta}) \rangle >0.
\end{equation}
Furthermore, we note that by Remark \ref{rmk:growthc},
\[
|h(\theta)- h(\overline{\theta})|\leq \sqrt{d}\cK_1(1+|\theta|+|\overline{\theta}|)^{\rho+q-1}|\theta-\overline{\theta}|.
\]
Thus, for any $|\theta|, |\overline{\theta}| \leq \cR_{\cOS}$, we have that
\begin{equation}\label{eq:oslc2}
-\langle \theta - \overline{\theta}, h(\theta)-h(\overline{\theta}) \rangle \leq |\theta - \overline{\theta}|| h(\theta)-h(\overline{\theta})| \leq \cL_{\cOS}|\theta - \overline{\theta}|^2,
\end{equation}
where $\cL_{\cOS}:=\sqrt{d}\cK_1(1+2\cR_{\cOS})^{\rho+q-1}>0$. In addition, for $|\theta| \leq  \cR_{\cOS}$, $|\overline{\theta}| >\cR_{\cOS}$, we consider the following two cases:
\begin{enumerate}
\item For $|\theta| =  \cR_{\cOS}$, $|\overline{\theta}| >\cR_{\cOS}$, we obtain \eqref{eq:oslc1}.
\item For any $|\theta| <  \cR_{\cOS}$, $|\overline{\theta}| >\cR_{\cOS}$, there is a unique $\hat{\theta} \in \R^d$ with $|\hat{\theta}|= | \cR_{\cOS}|$ such that
\[\theta - \hat{\theta} = \cc_{\theta, \overline{\theta}}(\theta - \overline{\theta}), \quad \hat{\theta}-\overline{\theta} = (1-\cc_{\theta, \overline{\theta}})(\theta - \overline{\theta}), 
\]
where $\cc_{\theta, \overline{\theta}} \in (0,1)$. Then, we obtain that
\begin{align}
\langle \theta - \overline{\theta}, h(\theta)-h(\overline{\theta}) \rangle &=\langle \theta - \overline{\theta}, h(\theta)-h(\hat{\theta}) \rangle +\langle \theta - \hat{\theta}, h(\hat{\theta})-h(\overline{\theta}) \rangle +\langle \hat{\theta} - \overline{\theta}, h(\hat{\theta})-h(\overline{\theta})\rangle \nonumber\\ 
& =\langle \theta - \overline{\theta}, h(\theta)-h(\hat{\theta}) \rangle +(\cc_{\theta, \overline{\theta}}/(1-\cc_{\theta, \overline{\theta}}))\langle \hat{\theta}-\overline{\theta}, h(\hat{\theta})-h(\overline{\theta}) \rangle \nonumber\\
&> (1/\cc_{\theta, \overline{\theta}}) \langle \theta - \hat{\theta}, h(\theta)-h(\hat{\theta}) \rangle \label{eq:oslc4}\\
&\geq -(\cL_{\cOS}/\cc_{\theta, \overline{\theta}})|\theta - \hat{\theta}|^2 \label{eq:oslc5}\\
&= -\cL_{\cOS}\cc_{\theta, \overline{\theta}}|\theta - \hat{\theta}|^2 \nonumber\\
&\geq -\cL_{\cOS} |\theta - \hat{\theta}|^2,\label{eq:oslc6}
\end{align}
where \eqref{eq:oslc4} holds due to \eqref{eq:oslc1}, \eqref{eq:oslc5} holds due to \eqref{eq:oslc2}, and \eqref{eq:oslc6} holds due to $\cc_{\theta, \overline{\theta}} \in (0,1)$. 
\end{enumerate}
Thus, for $|\theta| \leq  \cR_{\cOS}$, $|\overline{\theta}| >\cR_{\cOS}$, combining the two cases yield
\begin{equation}\label{eq:oslc7}
\langle \theta - \overline{\theta}, h(\theta)-h(\overline{\theta}) \rangle\geq -\cL_{\cOS}|\theta - \overline{\theta}|^2.
\end{equation}
Finally, for $|\theta| >  \cR_{\cOS}$, $|\overline{\theta}| \leq \cR_{\cOS}$, we obtain \eqref{eq:oslc7} by applying the same arguments above. Combining \eqref{eq:oslc1}, \eqref{eq:oslc2}, and \eqref{eq:oslc7} yields the desired result.
\end{proof}

\subsection{Proof of auxiliary results in Section \ref{sec:app}}\label{sec:appapd}
\begin{proof}[\textbf{Proof of Proposition \ref{prop:example}}]\label{prop:exampleproof} We first show that the potentials given in \eqref{eq:gaudistr} and \eqref{eq:gaumixtdistr} satisfy Assumptions~\ref{asm:ALLlip} and \ref{asm:ADlip}.
\begin{enumerate}
\item For $U$ given in \eqref{eq:gaudistr}, we have that, for any $\theta \in \R^d$, $i= 1, \dots, d$,
\[
h(\theta) = \theta, \quad H(\theta) = I_d, \quad \nabla^2 h^{(i)}(\theta)= 0.
\]
Then, we observe that Assumption \ref{asm:ALLlip} holds with $\overline{L}_1=0$, $q  =1$, $\overline{L}_2=0$, and $\overline{L}_3=1$. Moreover, Assumption \ref{asm:ADlip} holds with $\overline{a}=1$, $\overline{b}=0$ since
\[
\langle \theta, h(\theta) \rangle =|\theta|^2.
\]
\item For $U$ given in \eqref{eq:gaumixtdistr}, we have that, for any $\theta \in \R^d$, $i= 1, \dots, d$,
\begin{align*}
h(\theta) &= \theta-\hat{a}+\frac{2\hat{a}}{1+\exp(2\langle\theta, \hat{a}\rangle)}, \\
H(\theta) & = I_d - \frac{4\hat{a}\hat{a}^{\cT}\exp(2\langle\theta, \hat{a}\rangle)}{(1+\exp(2\langle\theta, \hat{a}\rangle))^2}, \\
\nabla^2 h^{(i)}(\theta)&= \frac{8\hat{a}^{(i)}\hat{a}\hat{a}^{\cT}(\exp(4\langle\theta, \hat{a}\rangle)-\exp(2\langle\theta, \hat{a}\rangle))}{(1+\exp(2\langle\theta, \hat{a}\rangle))^3}.
\end{align*}
To show that Assumption \ref{asm:ALLlip} holds, we consider the following calculations. For any $\theta, \overline{\theta} \in \R^d$, we have that
\begin{align}
&|h(\theta) - h(\overline{\theta} )|\nonumber \\
&= \left|\theta-\hat{a}+\frac{2\hat{a}}{1+\exp(2\langle\theta, \hat{a}\rangle)} - \left(\overline{\theta}-\hat{a}+\frac{2\hat{a}}{1+\exp(2\langle\overline{\theta}, \hat{a}\rangle)},\right)\right|\nonumber\\
\begin{split}\label{eq:explip}
&\leq |\theta-\overline{\theta}|+2|\hat{a}|\frac{|\exp(2\langle\overline{\theta}, \hat{a}\rangle)) - \exp(2\langle\theta, \hat{a}\rangle))|}{(1+\exp(2\langle\theta, \hat{a}\rangle))(1+\exp(2\langle\overline{\theta}, \hat{a}\rangle))}\\
&= |\theta-\overline{\theta}|+2|\hat{a}|\frac{\max\{\exp(2\langle\theta, \hat{a}\rangle),\exp(2\langle\overline{\theta}, \hat{a}\rangle)\}| 1- \exp(-2|\langle\theta, \hat{a}\rangle-\langle\overline{\theta}, \hat{a}\rangle|)|}{(1+\exp(2\langle\theta, \hat{a}\rangle))(1+\exp(2\langle\overline{\theta}, \hat{a}\rangle))}\\
&\leq  |\theta-\overline{\theta}|+4|\hat{a}||\langle\theta, \hat{a}\rangle-\langle\overline{\theta}, \hat{a}\rangle|\\
&\leq (1+4|\hat{a}|^2) |\theta-\overline{\theta}|,
\end{split}
\end{align}
where the second last inequality holds due to $1-e^{-x}\leq x$, for all $x\in \R$, and the last inequality holds due to Cauchy-Schwarz inequality. Similarly, we have that, for any $\theta, \overline{\theta} \in \R^d$,
\begin{align*}
&|H(\theta) - H(\overline{\theta} )|\nonumber \\
&= 4|\hat{a}|^2\frac{|\exp(2\langle\overline{\theta}, \hat{a}\rangle)(1+\exp(2\langle\theta, \hat{a}\rangle))^2-\exp(2\langle\theta, \hat{a}\rangle)(1+\exp(2\langle\overline{\theta}, \hat{a}\rangle))^2|}{(1+\exp(2\langle\theta, \hat{a}\rangle))^2(1+\exp(2\langle\overline{\theta}, \hat{a}\rangle))^2}\\
&\leq 4|\hat{a}|^2\frac{(1+\exp(2\langle\overline{\theta}+\theta, \hat{a}\rangle))|\exp(2\langle\overline{\theta}, \hat{a}\rangle) - \exp(2\langle\theta, \hat{a}\rangle)|}{(1+\exp(2\langle\theta, \hat{a}\rangle))^2(1+\exp(2\langle\overline{\theta}, \hat{a}\rangle))^2}\\
&\leq 8|\hat{a}|^3 |\theta-\overline{\theta}|,
\end{align*}
where the last inequality holds due to the calculations in \eqref{eq:explip}. Furthermore, we note that, for any $\theta, \overline{\theta} \in \R^d$,
\begin{align*}
&|\nabla^2 h^{(i)}(\theta) - \nabla^2 h^{(i)}(\overline{\theta} )|\nonumber \\
&\leq 8|\hat{a}|^3\left|\frac{ \exp(4\langle\theta, \hat{a}\rangle)-\exp(2\langle\theta, \hat{a}\rangle)}{(1+\exp(2\langle\theta, \hat{a}\rangle))^3}- \frac{\exp(4\langle\overline{\theta}, \hat{a}\rangle)-\exp(2\langle\overline{\theta}, \hat{a}\rangle)}{(1+\exp(2\langle\overline{\theta}, \hat{a}\rangle))^3}\right|\\
&\leq 8|\hat{a}|^3\frac{(1+\exp(2\langle\overline{\theta}+\theta, \hat{a}\rangle))|\exp(4\langle\overline{\theta}, \hat{a}\rangle) - \exp(4\langle\theta, \hat{a}\rangle)|}{(1+\exp(2\langle\theta, \hat{a}\rangle))^3(1+\exp(2\langle\overline{\theta}, \hat{a}\rangle))^3}\\
&\quad +8|\hat{a}|^3\frac{(1+6\exp(2\langle\overline{\theta}+\theta, \hat{a}\rangle)+\exp(4\langle\overline{\theta}+\theta, \hat{a}\rangle))|\exp(2\langle\overline{\theta}, \hat{a}\rangle) - \exp(2\langle\theta, \hat{a}\rangle)|}{(1+\exp(2\langle\theta, \hat{a}\rangle))^3(1+\exp(2\langle\overline{\theta}, \hat{a}\rangle))^3}\\
&\leq 56|\hat{a}|^4|\theta-\overline{\theta}|,
\end{align*}
where the last inequality holds due to the calculations in \eqref{eq:explip}. Consequently, Assumption \ref{asm:ALLlip} holds with $\overline{L}_1=56|\hat{a}|^4$, $q  =1$, $\overline{L}_2=8|\hat{a}|^3$, and $\overline{L}_3=1+4|\hat{a}|^2$. Now, we show Assumption \ref{asm:ADlip} holds with $\overline{a}=1/2$, $\overline{b}=1/2$. Indeed, we have, for any $\theta \in \R^d$, that
\[
\langle \theta, h(\theta) \rangle =|\theta|^2+\frac{1-\exp(2\langle\theta, \hat{a}\rangle)}{1+\exp(2\langle\theta, \hat{a}\rangle)}\langle \theta, \hat{a} \rangle \geq |\theta|^2-\frac{1}{2}(|\theta|^2+|\hat{a}|^2)=\frac{1}{2}(|\theta|^2-|\hat{a}|^2).
\]
\end{enumerate}
Finally, we show that the potential \eqref{eq:dwdistr} satisfies Assumptions \ref{asm:ALL} and \ref{asm:AC}.
\begin{enumerate}
\item[(iii)] For $U$ given in \eqref{eq:dwdistr}, by Remark \ref{rmk:dwasmall}, we have that Assumption \ref{asm:ALL} is satisfied with $\rho=2$, $L=6$, $q = 1$, $K_h = 2$, and $K_H = 3$. In addition, Assumption \ref{asm:AC} is satisfied with $a=1/2$, $b = 1$, $r=2$, and $\overline{r}=0$. Indeed, for any $\theta\in\R^d$, by noticing
\[
h(\theta)=\theta(|\theta|^2-1), 
\]
we obtain that
\begin{align*}
\langle \theta - \overline{\theta}, h(\theta)-h(\overline{\theta}) \rangle 
&=\langle \theta - \overline{\theta}, \theta|\theta|^2-\theta-(\overline{\theta}|\overline{\theta}|^2-\overline{\theta})\rangle\\
&=\frac{1}{2}\langle \theta - \overline{\theta}, \theta|\theta|^2-\theta|\overline{\theta}|^2+\theta|\overline{\theta}|^2 -\overline{\theta}|\overline{\theta}|^2 \rangle\\
&\quad +\frac{1}{2}\langle \theta - \overline{\theta}, \theta|\theta|^2-\overline{\theta}|\theta |^2+\overline{\theta}|\theta |^2 -\overline{\theta}|\overline{\theta}|^2 \rangle-|\theta - \overline{\theta}|^2\\
& = \frac{1}{2}(|\theta|^2+|\overline{\theta}|^2)|\theta - \overline{\theta}|^2+ \frac{1}{2}(|\theta|^2-|\overline{\theta}|^2)^2-|\theta - \overline{\theta}|^2\\
&\geq \frac{1}{2}(|\theta|^2+|\overline{\theta}|^2)|\theta - \overline{\theta}|^2 -|\theta - \overline{\theta}|^2.
\end{align*}
\end{enumerate}
This completes the proof.
\end{proof}

\begin{proof}[\textbf{Proof of Proposition \ref{prop:lrexample}}]\label{prop:lrexampleproof} For $U$ defined in \eqref{eq:lrU}, we note, for any $\theta\in\R^d$, $l = 1, \dots, d$, that
\begin{align*}
&h(\theta) =  \theta  +\sum_{i=1}^N (1-y_i) z_i -\sum_{i=1}^N \frac{z_i}{1+e^{\langle \theta, z_i\rangle}}, \quad H(\theta) = I_d + \sum_{i=1}^N \frac{z_i z_i^{\cT}e^{\langle \theta, z_i\rangle}}{(1+e^{\langle \theta, z_i\rangle})^2},\\
&\nabla^2 h^{(l)}(\theta) =  \sum_{i=1}^N \frac{z_i^{(l)}z_i z_i^{\cT}e^{\langle \theta, z_i\rangle}(1-e^{\langle \theta, z_i\rangle})}{(1+e^{\langle \theta, z_i\rangle})^3}.
\end{align*}
We first show that Assumption~\ref{asm:ALLlip} holds. To this end, denote by 
\[
g_1(t):=(e^t-e^{2t})/(1+e^t)^3, \quad g_2(t): = e^t/(1+e^t)^2, \quad g_3(t) = 1/(1+e^t),
\]
for all $t\in\R$. We have that
\begin{align*}
&\sup_{t\in\R}|g_3'(t)| = \sup_{t\in\R}|g_2(t)| \leq 1, \quad \sup_{t\in\R}|g_2'(t)| = \sup_{t\in\R}|g_1(t)| \leq 1,\\
&\sup_{t\in\R}|g_1'(t)| = \sup_{t\in\R}|e^t(1-4e^t+e^{2t})/(1+e^t)^4|\leq 1,
\end{align*}
which implies, for all $l = 1, \dots, d$, $\theta, \overline{\theta} \in \R^d$, that
\begin{align*}
|\nabla^2 h^{(l)}(\theta)  - \nabla^2 h^{(l)}(\overline{\theta}) | &= \left| \sum_{i=1}^Nz_i^{(l)}z_i z_i^{\cT}(g_1(\langle \theta, z_i\rangle) -g_1( \langle \overline{\theta} , z_i\rangle))\right|\leq  \sum_{i=1}^N|z_i|^4|\theta- \overline{\theta} |,\\
| H(\theta)  -  H(\overline{\theta}) |& = \left| \sum_{i=1}^N z_i z_i^{\cT}(g_2(\langle \theta, z_i\rangle) -g_2( \langle \overline{\theta} , z_i\rangle))\right|\leq  \sum_{i=1}^N|z_i|^3|\theta- \overline{\theta} |,\\
| h(\theta)  -  h(\overline{\theta}) |& = \left| \theta- \overline{\theta}  -\sum_{i=1}^N z_i (g_3(\langle \theta, z_i\rangle) -g_3( \langle \overline{\theta} , z_i\rangle))\right|\leq (1+ \sum_{i=1}^N|z_i|^2)|\theta- \overline{\theta} |.
\end{align*}
Thus, Assumption~\ref{asm:ALLlip} holds with $\overline{L}_1 =  \sum_{i=1}^N|z_i|^4 , \overline{L}_2 =  \sum_{i=1}^N|z_i|^3, \overline{L}_3= \left(1+ \sum_{i=1}^N|z_i|^2\right)$.

Moreover, to show that Assumption~\ref{asm:ADlip} holds, we note, for any $\theta\in\R^d$, that
\begin{align*}
\langle \theta, h(\theta) \rangle 
 = | \theta|^2  +\sum_{i=1}^N (1-y_i) \langle z_i, \theta\rangle -\sum_{i=1}^N \frac{\langle z_i, \theta\rangle}{1+e^{\langle \theta, z_i\rangle}}
&\geq |\theta|^2/2-\left(\sum_{i=1}^N |1-y_i||z_i|\right)^2/2-N.
\end{align*}
This implies that Assumption~\ref{asm:ADlip} holds with $\overline{a} = 1/2$ and $\overline{b} =\left(\sum_{i=1}^N |1-y_i||z_i|\right)^2/2+N $.
\end{proof}

\subsection{Proof of auxiliary results in Section \ref{sec:mtslproofs}}\label{sec:mtslproofsapd}

\begin{lemma}\label{lem:sde2pmmt} Let Assumptions \ref{asm:AI}, \ref{asm:ALL}, and \ref{asm:AC} hold. Then, for any $p \in {\N}, t \geq 0$, we obtain that
\[
\E[|Z^{\lambda}_t|^{2p}] \leq e^{- \lambda p \ca_\cD t }\E[|\theta_0|^{2p}]+2(\overline{\cb}_\cD+\beta^{-1}(d+2(p-1))) \cM_0^{2p-2}/ \ca_\cD<\infty,
\]
where $\cM_0 :=(2(\overline{\cb}_\cD+\beta^{-1}(d+2(p-1)))/ \ca_\cD)^{1/2}$. 
\end{lemma}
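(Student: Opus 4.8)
The plan is to apply It\^{o}'s formula to the function $f(\theta) := |\theta|^{2p}$ along the time-changed Langevin diffusion \eqref{eq:tcsde} and then to derive a linear differential inequality for $t \mapsto \E[|Z^\lambda_t|^{2p}]$ which can be integrated via Gr\"{o}nwall's inequality. Since $p \in \N$, the map $f(\theta) = (|\theta|^2)^p$ is a polynomial, hence smooth, so It\^{o}'s formula applies without any regularity caveat. Recalling that $B^\lambda$ is a standard $d$-dimensional Brownian motion and that the drift and diffusion coefficients of \eqref{eq:tcsde} are $-\lambda h$ and $\sqrt{2\lambda\beta^{-1}}\cI_d$, a direct computation gives $\nabla f(\theta) = 2p|\theta|^{2p-2}\theta$ and $\Delta f(\theta) = 2p(d+2p-2)|\theta|^{2p-2}$, so that
\[
\rmd |Z^\lambda_t|^{2p} = -2p\lambda |Z^\lambda_t|^{2p-2}\langle Z^\lambda_t, h(Z^\lambda_t)\rangle\,\rmd t + 2p\lambda\beta^{-1}(d+2p-2)|Z^\lambda_t|^{2p-2}\,\rmd t + \rmd M_t,
\]
where $(M_t)_{t\ge0}$ is a local martingale.

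Next I would introduce the stopping times $\tau_k := \inf\{t\ge0 : |Z^\lambda_t|\ge k\}$, integrate the above identity up to $t\wedge\tau_k$, and take expectations; the stopped stochastic integral has zero mean, which removes $M$. Existence and uniqueness of a solution to \eqref{eq:sde}, and hence to its time change \eqref{eq:tcsde}, with finite moments is guaranteed by Remarks~\ref{rmk:dissipativityc} and \ref{rmk:oslc} together with \cite[Theorem~1]{krylovsolmontone}; this a priori moment bound lets me send $k\to\infty$ by a standard localization argument (Fatou on the left and dominated convergence on the right, using the dissipative upper bound on the integrand). Writing $u(t) := \E[|Z^\lambda_t|^{2p}]$ and applying the dissipativity estimate \eqref{eq:superlineardisp}, namely $\langle \theta, h(\theta)\rangle \ge \ca_\cD|\theta|^2 - \overline{\cb}_\cD$, this yields
\[
u'(t) \le -2p\lambda\ca_\cD\,u(t) + 2p\lambda\big(\overline{\cb}_\cD + \beta^{-1}(d+2p-2)\big)\,\E\!\left[|Z^\lambda_t|^{2p-2}\right].
\]

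The remaining step is to control the lower-order moment $\E[|Z^\lambda_t|^{2p-2}]$ by splitting off half of the dissipative term. Writing $\cC := \overline{\cb}_\cD + \beta^{-1}(d+2p-2)$ as a local shorthand so that $\cM_0^2 = 2\cC/\ca_\cD$, I would verify the pointwise inequality $-\ca_\cD|\theta|^{2p} + 2\cC|\theta|^{2p-2} \le 2\cC\cM_0^{2p-2}$: for $|\theta|\le \cM_0$ the left-hand side is at most $2\cC|\theta|^{2p-2}\le 2\cC\cM_0^{2p-2}$, while for $|\theta|>\cM_0$ the factor $2\cC-\ca_\cD|\theta|^2$ is negative. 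Taking expectations, this absorbs one copy of $-p\lambda\ca_\cD u(t)$ and leaves
\[
u'(t) \le -p\lambda\ca_\cD\,u(t) + 2p\lambda\,\cC\,\cM_0^{2p-2},
\]
whence Gr\"{o}nwall's inequality gives $u(t)\le e^{-p\lambda\ca_\cD t}\E[|\theta_0|^{2p}] + 2\cC\cM_0^{2p-2}/\ca_\cD$, which is exactly the claimed bound upon unfolding $\cC$ and $\cM_0$.

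The main obstacle is the localization argument justifying the removal of the martingale and the interchange of limit and expectation: one must know a priori that $Z^\lambda_t$ has finite $2p$-th moments before the differential inequality can be taken at face value. This is handled by the a priori finite-moment property of the SDE solution recalled above, after which the Young-type pointwise estimate and the Gr\"{o}nwall step are routine.
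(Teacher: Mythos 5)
Your proof is correct, and it is self-contained where the paper's is not: the paper disposes of this lemma with a citation to \cite[Lemma A.1]{lim2021nonasymptotic} rather than proving it, so your argument (It\^{o}'s formula for $|\theta|^{2p}$, the dissipativity bound \eqref{eq:superlineardisp}, the splitting $-2p\lambda\ca_\cD u = -p\lambda\ca_\cD u - p\lambda\ca_\cD u$ with the pointwise estimate $-\ca_\cD|\theta|^{2p}+2\cC|\theta|^{2p-2}\le 2\cC\cM_0^{2p-2}$, and Gr\"{o}nwall) is essentially the standard derivation that underlies the cited result; your constants unfold exactly to the ones in the statement, since $d+2p-2 = d+2(p-1)$. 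What the citation buys is brevity; what your route buys is transparency about where $\cM_0$ and the factor $2(\overline{\cb}_\cD+\beta^{-1}(d+2(p-1)))/\ca_\cD$ come from. Two minor refinements are worth making. First, the localization does not actually require an a priori finite $2p$-th moment of $Z^{\lambda}$ (and it is doubtful that \cite[Theorem 1]{krylovsolmontone} supplies one beyond the second moment): since the drift integrand is bounded above by the constant $2p\lambda\cC\cM_0^{2p-2}$ after the dissipativity step, you can pass to the limit $k\to\infty$ by splitting it into positive and negative parts, using dominated convergence on the (bounded) positive part, monotone convergence on the negative part, and Fatou on the left-hand side; this first yields $u(t)\le u(0)+2p\lambda\cC\cM_0^{2p-2}t<\infty$, after which the integral form of Gr\"{o}nwall (preferable to the differential form $u'(t)\le\cdots$, which presumes regularity of $u$) gives the stated bound. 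Second, the concluding ``$<\infty$'' is only meaningful when $\E[|\theta_0|^{2p}]<\infty$, which Assumption \ref{asm:AI} guarantees only for $2p\le 16(\rho+1)$; for larger $p$ the inequality holds trivially. This caveat afflicts the paper's statement equally and is not a defect of your argument.
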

\begin{proof}See \cite[Lemma A.1]{lim2021nonasymptotic}.
\end{proof}

\begin{proof}[\textbf{Proof of Lemma \ref{lem:2ndpthmmt}-\ref{lem:2ndpthmmti}}]\label{lem:2ndpthmmtproof}
For any $0<\lambda\leq \lambda_{\max}\leq 1$ with $\lambda_{\max}$  given in \eqref{eq:stepsizemax}, $t\in (n, n+1]$, $n \in \N_0$, we define
\begin{equation}\label{eq:delxinotation} 
\Delta_{n,t}^{\lambda}
 := \widetilde{\theta}^{\lambda}_n +  \lambda \phi^{\lambda}(\widetilde{\theta}^{\lambda}_n) (t-n),\quad
\Xi_{n,t}^{\lambda}
 :=  \sqrt{2\lambda\beta^{-1}}\psi^{\lambda}(\widetilde{\theta}^{\lambda}_n) (B_t^{\lambda} - B_n^{\lambda}),
\end{equation}
where, for all $\theta \in \R^d$, 
\begin{equation}\label{eq:delxinotationphi}
\phi^{\lambda}(\theta):= - h_{\lambda}( \theta) +(\lambda/2) \left(H_{\lambda}( \theta)h_{\lambda}( \theta)-\beta^{-1}\Upsilon_{\lambda}(\theta) \right),
\end{equation}
and 
\begin{equation}\label{eq:delxinotationpsi}
\psi^{\lambda}(\theta) :=\sqrt{\cI_d - \lambda H_{\lambda}(\theta)+(\lambda^2/3) (H_{\lambda}(\theta))^2}.
\end{equation}
By using \eqref{eq:aholaproc} together with \eqref{eq:delxinotation} -- \eqref{eq:delxinotationpsi}, and by noticing $\E\left[\left.\langle \Delta_{n,t}^{\lambda}, \Xi_{n,t}^{\lambda}\rangle \right|\widetilde{\theta}^{\lambda}_n \right] =0$, we obtain that
\begin{equation}\label{eq:2ndmmtexp}
\E\left[\left.|\widetilde{\theta}^{\lambda}_t|^2\right|\widetilde{\theta}^{\lambda}_n \right]  = |\Delta_{n,t}^{\lambda}|^2+\E\left[\left.|\Xi_{n,t}^{\lambda}|^2\right|\widetilde{\theta}^{\lambda}_n \right].
\end{equation}
Further calculations yield the following upper bound for the second term on the RHS of \eqref{eq:2ndmmtexp}:
\begin{align}\label{eq:xiub1}
\E\left[\left.|\Xi_{n,t}^{\lambda}|^2\right|\widetilde{\theta}^{\lambda}_n \right] 
&= 2\lambda\beta^{-1}\E\left[\left. \left\langle \psi^{\lambda}(\widetilde{\theta}^{\lambda}_n) (B_t^{\lambda} - B_n^{\lambda}),  \psi^{\lambda}(\widetilde{\theta}^{\lambda}_n) (B_t^{\lambda} - B_n^{\lambda})\right\rangle\right|\widetilde{\theta}^{\lambda}_n \right] \nonumber\\
& =  2\lambda\beta^{-1}\E\left[\left. \left\langle    B_t^{\lambda} - B_n^{\lambda} ,  \left(\cI_d - \lambda H_{\lambda}(\widetilde{\theta}^{\lambda}_n)+(\lambda^2/3) (H_{\lambda}(\widetilde{\theta}^{\lambda}_n))^2\right)  (B_t^{\lambda} - B_n^{\lambda})\right\rangle\right|\widetilde{\theta}^{\lambda}_n \right]  \nonumber\\
& \leq  2\lambda\beta^{-1}\left(d(t-n)+\lambda\sum_{i=1}^d |H^{(i,i)}_{\lambda}(\widetilde{\theta}^{\lambda}_n)| (t-n)+(\lambda^2/3) |H_{\lambda}(\widetilde{\theta}^{\lambda}_n)|^2_{\cF}(t-n)\right) \nonumber\\
& \leq  2\lambda\beta^{-1}(t-n)\left(d+\lambda \sqrt{d}|H_{\lambda}(\widetilde{\theta}^{\lambda}_n)|_{\cF}+(\lambda^2/3) |H_{\lambda}(\widetilde{\theta}^{\lambda}_n)|^2_{\cF}\right).
\end{align}
By Remark \ref{rmk:growthc}, we have that, for all $\theta \in \R^d$,
\[
|H (\theta)|_{\cF} = \left(\sum_{i=1}^d|\nabla h^{(i)}(\theta)|^2\right)^{1/2} \leq\left(\sum_{i=1}^d \cK_1^2(1+|\theta|)^{2(\rho+q-1)}\right)^{1/2} =\sqrt{d} \cK_1(1+|\theta|)^{\rho+q-1},
\]
which implies that, for any $\theta \in \R^d$,
\begin{align}\label{eq:xiubhlambda}
\begin{split}
|H_{\lambda}(\theta)|_{\cF}  
&= \frac{|H (\theta)|_{\cF}}{(1+\lambda^{3/2}|\theta|^{3(\rho+q-1)})^{1/3}}\leq  \frac{4^{1/3}|H (\theta)|_{\cF}}{ 1+\lambda^{1/2}|\theta|^{ (\rho+q-1)} }\leq   \frac{4^{1/3}\lambda^{-1/2}|H (\theta)|_{\cF}}{ 1+|\theta|^{ (\rho+q-1)} }\\
&\leq   \frac{4^{1/3}\lambda^{-1/2} \sqrt{d} \cK_1(1+|\theta|)^{\rho+q-1}}{ 1+|\theta|^{ (\rho+q-1)} }\leq 2^{\rho+q-1}\lambda^{-1/2}\sqrt{d}  \cK_1.
\end{split}
\end{align}
Substituting \eqref{eq:xiubhlambda} into \eqref{eq:xiub1} yields
\begin{align}\label{eq:xiubfin}
\E\left[\left.|\Xi_{n,t}^{\lambda}|^2\right|\widetilde{\theta}^{\lambda}_n \right]  
&\leq  2\lambda\beta^{-1} (t-n)\left(d+2^{\rho+q-1} d\cK_1 +2^{2(\rho+q-1)} d\cK_1^2  \right)\nonumber\\
&\leq 2^{2(\rho+q)}\lambda\beta^{-1} (t-n)d(1+\cK_1)^2,
\end{align}
where we use $1+2^{\rho+q-1}+2^{2(\rho+q-1)} \leq 2^{2\rho+2q-3}+2^{2\rho+2q-3}+2^{2(\rho+q-1)} \leq 2^{2(\rho+q)-1}$ with $\rho\in [2, \infty)\cap \N$ and $q \in (0,1]$ to obtain the last inequality. To upper bound the first term on the RHS of \eqref{eq:2ndmmtexp}, we use \eqref{eq:delxinotation} to obtain
\begin{align}\label{eq:deltaub1}
|\Delta_{n,t}^{\lambda}|^2
& = |\widetilde{\theta}^{\lambda}_n|^2+2\lambda(t-n)\left\langle \widetilde{\theta}^{\lambda}_n, \phi^{\lambda}(\widetilde{\theta}^{\lambda}_n) \right\rangle+ \lambda^2(t-n)^2|\phi^{\lambda}(\widetilde{\theta}^{\lambda}_n) |^2.
\end{align}
By using \eqref{eq:delxinotationphi}, the second term on the RHS of \eqref{eq:deltaub1} can be estimated as follows:
\begin{align} 
\left\langle \widetilde{\theta}^{\lambda}_n, \phi^{\lambda}(\widetilde{\theta}^{\lambda}_n) \right\rangle 
&=-\frac{\left\langle \widetilde{\theta}^{\lambda}_n, h(\widetilde{\theta}^{\lambda}_n) \right\rangle }{\left(1+\lambda^{3/2}|\widetilde{\theta}^{\lambda}_n|^{3(\rho+q-1)}\right)^{1/3}}
+\frac{(\lambda/2)\left\langle \widetilde{\theta}^{\lambda}_n, H(\widetilde{\theta}^{\lambda}_n) h(\widetilde{\theta}^{\lambda}_n) \right\rangle }{\left(1+\lambda^{3/2}|\widetilde{\theta}^{\lambda}_n|^{3(\rho+q-1)}\right)^{2/3}} \nonumber\\
&\quad -\frac{(\lambda/2)\beta^{-1}\left\langle \widetilde{\theta}^{\lambda}_n, \Upsilon(\widetilde{\theta}^{\lambda}_n) \right\rangle }{\left(1+\lambda^{3/2}|\widetilde{\theta}^{\lambda}_n|^{3(\rho+q-1)}\right)^{1/3}}\nonumber\\
&\leq -\frac{\ca_\cD|\widetilde{\theta}^{\lambda}_n|^{\rho+q+1}-\cb_\cD }{\left(1+\lambda^{3/2}|\widetilde{\theta}^{\lambda}_n|^{3(\rho+q-1)}\right)^{1/3}} 
+\frac{2\lambda K_hK_H\left(1+|\widetilde{\theta}^{\lambda}_n|^{2(\rho+q)}\right) }{\left(1+\lambda^{3/2}|\widetilde{\theta}^{\lambda}_n|^{3(\rho+q-1)}\right)^{2/3}} \nonumber\\
&\quad +\frac{\lambda\beta^{-1}2^{\rho+q-3}\cK_{3,d}\left(1+|\widetilde{\theta}^{\lambda}_n|^{\rho+q-1}\right)}{\left(1+\lambda^{3/2}|\widetilde{\theta}^{\lambda}_n|^{3(\rho+q-1)}\right)^{1/3}}\nonumber\\
\begin{split}\label{eq:deltaubcp}
&\leq  -\frac{\ca_\cD|\widetilde{\theta}^{\lambda}_n|^{\rho+q+1}}{\left(1+\lambda^{3/2}|\widetilde{\theta}^{\lambda}_n|^{3(\rho+q-1)}\right)^{1/3}}
+\frac{2\lambda K_hK_H |\widetilde{\theta}^{\lambda}_n|^{2(\rho+q)} }{\left(1+\lambda^{3/2}|\widetilde{\theta}^{\lambda}_n|^{3(\rho+q-1)}\right)^{2/3}} \\
&\quad +\cb_\cD +2K_hK_H+\beta^{-1}2^{\rho+q-2}\cK_{3,d},
\end{split}
\end{align}
where the first inequality holds due to Remark \ref{rmk:dissipativityc} with $r=\rho+q-1$ and the fact that: for all $\theta \in \R^d$, 
\begin{align*}
|\theta||H(\theta)||h(\theta)|
&\leq K_HK_h|\theta|(1+|\theta|^{\rho+q-1})(1+|\theta|^{\rho+q}) \\
&=K_HK_h(|\theta|+|\theta|^{\rho+q}+|\theta|^{\rho+q+1}+|\theta|^{2(\rho+q)})\leq 4K_hK_H(1+|\theta|^{2(\rho+q)}),\\
|\theta||\Upsilon(\theta)|
&\leq \cK_{3,d}|\theta|(1+|\theta|)^{\rho+q-2}\leq \cK_{3,d} (1+|\theta|)^{\rho+q-1}\leq 2^{\rho+q-2}\cK_{3,d} (1+|\theta|^{\rho+q-1}),
\end{align*}
where we recall from our assumptions that $\rho\in [2, \infty)\cap \N$ and $q \in (0,1]$, and where the last inequality holds since, for all $\theta \in \R^d$,
\[
\left(1+\lambda^{3/2}|\theta|^{3(\rho+q-1)}\right)^{1/3} \geq 4^{-1/3}(1+\lambda^{1/2}|\theta|^{ \rho+q-1})\geq 4^{-1/3}\lambda^{1/2}(1+|\theta|^{ \rho+q-1}).
\]
Similarly, the third term on the RHS of \eqref{eq:deltaub1} can be upper bounded as follows:
\begin{align}\label{eq:deltaubsq1}
|\phi^{\lambda}(\widetilde{\theta}^{\lambda}_n) |^2 
& = |- h_{\lambda}( \widetilde{\theta}^{\lambda}_n) +(\lambda/2) H_{\lambda}( \widetilde{\theta}^{\lambda}_n)h_{\lambda}( \widetilde{\theta}^{\lambda}_n)-(\lambda/2) \beta^{-1}\Upsilon_{\lambda}(\widetilde{\theta}^{\lambda}_n)|^2\nonumber\\
& = |h_{\lambda}( \widetilde{\theta}^{\lambda}_n) |^2+(\lambda^2/4)| H_{\lambda}( \widetilde{\theta}^{\lambda}_n)h_{\lambda}( \widetilde{\theta}^{\lambda}_n)|^2+(\lambda^2/4)\beta^{-2}|\Upsilon_{\lambda}(\widetilde{\theta}^{\lambda}_n)|^2\nonumber\\
&\quad  -\lambda \left\langle h_{\lambda}( \widetilde{\theta}^{\lambda}_n),H_{\lambda}( \widetilde{\theta}^{\lambda}_n)h_{\lambda}( \widetilde{\theta}^{\lambda}_n)\right\rangle
+\lambda\beta^{-1}\left\langle h_{\lambda}( \widetilde{\theta}^{\lambda}_n),\Upsilon_{\lambda}(\widetilde{\theta}^{\lambda}_n)\right\rangle\nonumber\\
&\quad -(\lambda^2/2)\beta^{-1}\left\langle H_{\lambda}( \widetilde{\theta}^{\lambda}_n)h_{\lambda}( \widetilde{\theta}^{\lambda}_n),\Upsilon_{\lambda}(\widetilde{\theta}^{\lambda}_n)\right\rangle.
\end{align}
By using Assumption \ref{asm:ALL}, Remark \ref{rmk:growthc}, and the inequalities $(u+w)^v \geq 2^{v-1}(u^v+w^v)$, $(u+w)^{\nu} \leq 2^{\nu-1}(u^{\nu}+w^{\nu})$, $u, w \geq 0$, $0<v\leq 1$, $\nu \geq 1$, $|\theta|^k \leq 1+|\theta|^l$, $\theta \in \R^d$, $k \leq l$, we obtain
\begin{align}
|h_{\lambda}( \widetilde{\theta}^{\lambda}_n) |^2
&\leq  \frac{K_h^2\left(1+|\widetilde{\theta}^{\lambda}_n|^{\rho+q}\right)^2}{\left(1+\lambda^{3/2}|\widetilde{\theta}^{\lambda}_n|^{3(\rho+q-1)}\right)^{2/3}} \leq  \frac{2K_h^2\left(1+|\widetilde{\theta}^{\lambda}_n|^{2(\rho+q)}\right) }{\left(1+\lambda^{3/2}|\widetilde{\theta}^{\lambda}_n|^{3(\rho+q-1)}\right)^{2/3}}\nonumber\\
\begin{split}
&\leq  \frac{2K_h^2 |\widetilde{\theta}^{\lambda}_n|^{2(\rho+q)}  }{\left(1+\lambda^{3/2}|\widetilde{\theta}^{\lambda}_n|^{3(\rho+q-1)}\right)^{2/3}}+2K_h^2,\label{eq:deltaubsqt1}
\end{split}\\
(\lambda^2/4)| H_{\lambda}( \widetilde{\theta}^{\lambda}_n)h_{\lambda}( \widetilde{\theta}^{\lambda}_n)|^2
&\leq  \frac{(\lambda^2/4)K_H^2K_h^2\left(1+|\widetilde{\theta}^{\lambda}_n|^{\rho+q-1}\right)^2\left(1+|\widetilde{\theta}^{\lambda}_n|^{\rho+q}\right)^2}{\left(1+\lambda^{3/2}|\widetilde{\theta}^{\lambda}_n|^{3(\rho+q-1)}\right)^{4/3}}\nonumber\\
&\leq  \frac{ \lambda^2 K_H^2K_h^2\left(1+|\widetilde{\theta}^{\lambda}_n|^{2(\rho+q-1)} +|\widetilde{\theta}^{\lambda}_n|^{2(\rho+q)}+|\widetilde{\theta}^{\lambda}_n|^{4(\rho+q)-2}\right) }{\left(1+\lambda^{3/2}|\widetilde{\theta}^{\lambda}_n|^{3(\rho+q-1)}\right)^{4/3}}\nonumber\\
&\leq  \frac{3 \lambda^2 K_H^2K_h^2\left(1 +|\widetilde{\theta}^{\lambda}_n|^{4(\rho+q)-2}\right) }{\left(1+\lambda^{3/2}|\widetilde{\theta}^{\lambda}_n|^{3(\rho+q-1)}\right)^{4/3}}\nonumber\\
\begin{split}
&\leq \frac{3 \lambda^2 K_H^2K_h^2 |\widetilde{\theta}^{\lambda}_n|^{4(\rho+q)-2}  }{\left(1+\lambda^{3/2}|\widetilde{\theta}^{\lambda}_n|^{3(\rho+q-1)}\right)^{4/3}}+3  K_H^2K_h^2,\label{eq:deltaubsqt2}
\end{split}\\
(\lambda^2/4)\beta^{-2}|\Upsilon_{\lambda}(\widetilde{\theta}^{\lambda}_n)|^2
&\leq \frac{(\lambda^2/4)\beta^{-2}\cK_{3,d}^2\left(1+|\widetilde{\theta}^{\lambda}_n|\right)^{2(\rho+q-2)}}{\left(1+\lambda^{3/2}|\widetilde{\theta}^{\lambda}_n|^{3(\rho+q-1)}\right)^{2/3}}\nonumber\\
&\leq \frac{2^{2(\rho+q)-7}\lambda^2 \beta^{-2}\cK_{3,d}^2\left(1+|\widetilde{\theta}^{\lambda}_n|^{2(\rho+q-2)}\right)}{\left(1+\lambda^{3/2}|\widetilde{\theta}^{\lambda}_n|^{3(\rho+q-1)}\right)^{2/3}}\nonumber\\
&\leq \frac{2^{2(\rho+q)-6}\lambda^2 \beta^{-2}\cK_{3,d}^2\left(1+|\widetilde{\theta}^{\lambda}_n|^{2(\rho+q-2)}\right)}{ 1+\lambda |\widetilde{\theta}^{\lambda}_n|^{2(\rho+q-1)} }\nonumber\\
\begin{split}
&\leq 2^{2(\rho+q)-6}  \beta^{-2}\cK_{3,d}^2,\label{eq:deltaubsqt3}
\end{split}\\
 -\lambda \left\langle h_{\lambda}( \widetilde{\theta}^{\lambda}_n),H_{\lambda}( \widetilde{\theta}^{\lambda}_n)h_{\lambda}( \widetilde{\theta}^{\lambda}_n)\right\rangle
&\leq \frac{\lambda K_h^2K_H\left(1+|\widetilde{\theta}^{\lambda}_n|^{\rho+q}\right)^2\left(1+|\widetilde{\theta}^{\lambda}_n|^{\rho+q-1}\right)}{ 1+\lambda^{3/2}|\widetilde{\theta}^{\lambda}_n|^{3(\rho+q-1)} }\nonumber\\
&\leq \frac{2\lambda K_h^2K_H\left(1+|\widetilde{\theta}^{\lambda}_n|^{2(\rho+q)} +|\widetilde{\theta}^{\lambda}_n|^{\rho+q-1}+|\widetilde{\theta}^{\lambda}_n|^{3(\rho+q)-1}\right)}{ 1+\lambda^{3/2}|\widetilde{\theta}^{\lambda}_n|^{3(\rho+q-1)} }\nonumber\\
&\leq  \frac{6\lambda K_h^2K_H\left(1 +|\widetilde{\theta}^{\lambda}_n|^{3(\rho+q)-1}\right)}{ 1+\lambda^{3/2}|\widetilde{\theta}^{\lambda}_n|^{3(\rho+q-1)} }\nonumber\\
\begin{split}
&\leq  \frac{6\lambda K_h^2K_H |\widetilde{\theta}^{\lambda}_n|^{3(\rho+q)-1} }{ 1+\lambda^{3/2}|\widetilde{\theta}^{\lambda}_n|^{3(\rho+q-1)} }+6  K_h^2K_H,\label{eq:deltaubsqt4}
\end{split}\\
\lambda\beta^{-1}\left\langle h_{\lambda}( \widetilde{\theta}^{\lambda}_n),\Upsilon_{\lambda}(\widetilde{\theta}^{\lambda}_n)\right\rangle
&\leq \frac{\lambda\beta^{-1} K_h\cK_{3,d}\left(1+|\widetilde{\theta}^{\lambda}_n|^{\rho+q}\right)\left(1+|\widetilde{\theta}^{\lambda}_n|\right)^{\rho+q-2} }{ \left(1+\lambda^{3/2}|\widetilde{\theta}^{\lambda}_n|^{3(\rho+q-1)}\right)^{2/3} }\nonumber\\
&\leq \frac{\lambda\beta^{-1} K_h\cK_{3,d}2^{\rho+q-3}\left(1+|\widetilde{\theta}^{\lambda}_n|^{\rho+q} +|\widetilde{\theta}^{\lambda}_n|^{\rho+q-2} +|\widetilde{\theta}^{\lambda}_n|^{2(\rho+q-1)}\right)}{ \left(1+\lambda^{3/2}|\widetilde{\theta}^{\lambda}_n|^{3(\rho+q-1)}\right)^{2/3} }\nonumber\\
&\leq\frac{3\lambda\beta^{-1} K_h\cK_{3,d}2^{\rho+q-2}\left(1  +|\widetilde{\theta}^{\lambda}_n|^{2(\rho+q-1)}\right)}{  1+\lambda |\widetilde{\theta}^{\lambda}_n|^{2(\rho+q-1)} }\nonumber\\
\begin{split}
&\leq \beta^{-1} K_h\cK_{3,d}2^{\rho+q},\label{eq:deltaubsqt5}
\end{split}\\
&\hspace{-10em} -(\lambda^2/2)\beta^{-1}\left\langle H_{\lambda}( \widetilde{\theta}^{\lambda}_n)h_{\lambda}( \widetilde{\theta}^{\lambda}_n),\Upsilon_{\lambda}(\widetilde{\theta}^{\lambda}_n)\right\rangle\nonumber\\
&\hspace{-8em}\leq \frac{(\lambda^2/2)\beta^{-1} K_H K_h\cK_{3,d}\left(1+|\widetilde{\theta}^{\lambda}_n|^{\rho+q-1}\right)\left(1+|\widetilde{\theta}^{\lambda}_n|^{\rho+q}\right) \left(1+|\widetilde{\theta}^{\lambda}_n|\right)^{\rho+q-2}}{ 1+\lambda^{3/2}|\widetilde{\theta}^{\lambda}_n|^{3(\rho+q-1)} }\nonumber\\
&\hspace{-8em}\leq \frac{ \lambda^2 \beta^{-1} K_H K_h\cK_{3,d}2^{\rho+q-4}\left(1+|\widetilde{\theta}^{\lambda}_n|^{\rho+q-1} +|\widetilde{\theta}^{\lambda}_n|^{\rho+q}+|\widetilde{\theta}^{\lambda}_n|^{2(\rho+q)-1}\right) \left(1+|\widetilde{\theta}^{\lambda}_n|^{\rho+q-2}\right)}{ 1+\lambda^{3/2}|\widetilde{\theta}^{\lambda}_n|^{3(\rho+q-1)} }\nonumber\\
\begin{split}
&\hspace{-8em}\leq \frac{7 \lambda^2 \beta^{-1} K_H K_h\cK_{3,d}2^{\rho+q-4} \left(1+|\widetilde{\theta}^{\lambda}_n|^{3(\rho+q-1)}\right)}{ 1+\lambda^{3/2}|\widetilde{\theta}^{\lambda}_n|^{3(\rho+q-1)} }\leq  \beta^{-1} K_H K_h\cK_{3,d}2^{\rho+q-1} .\label{eq:deltaubsqt6}
\end{split}
\end{align}
Substituting \eqref{eq:deltaubsqt1} -- \eqref{eq:deltaubsqt6} into \eqref{eq:deltaubsq1} yields
\begin{align}
\begin{split}\label{eq:deltaubsq2}
|\phi^{\lambda}(\widetilde{\theta}^{\lambda}_n) |^2 
&\leq \frac{2K_h^2 |\widetilde{\theta}^{\lambda}_n|^{2(\rho+q)}  }{\left(1+\lambda^{3/2}|\widetilde{\theta}^{\lambda}_n|^{3(\rho+q-1)}\right)^{2/3}}+2K_h^2+\frac{3 \lambda^2 K_H^2K_h^2 |\widetilde{\theta}^{\lambda}_n|^{4(\rho+q)-2}  }{\left(1+\lambda^{3/2}|\widetilde{\theta}^{\lambda}_n|^{3(\rho+q-1)}\right)^{4/3}}+3  K_H^2K_h^2\\
&\quad +\frac{6\lambda K_h^2K_H |\widetilde{\theta}^{\lambda}_n|^{3(\rho+q)-1} }{ 1+\lambda^{3/2}|\widetilde{\theta}^{\lambda}_n|^{3(\rho+q-1)} }+6  K_h^2K_H+2^{2(\rho+q)-6}  \beta^{-2}\cK_{3,d}^2+\beta^{-1} K_h\cK_{3,d}2^{\rho+q}\\ 
&\quad +\beta^{-1} K_H K_h\cK_{3,d}2^{\rho+q-1}.
\end{split}
\end{align}
Combining the results in \eqref{eq:deltaubcp} and \eqref{eq:deltaubsq2}, we obtain the following upper bound for \eqref{eq:deltaub1}:
\begin{align}\label{eq:deltaub2}
|\Delta_{n,t}^{\lambda}|^2
&\leq |\widetilde{\theta}^{\lambda}_n|^2 -\frac{2\lambda(t-n)\ca_\cD|\widetilde{\theta}^{\lambda}_n|^{\rho+q+1}}{\left(1+\lambda^{3/2}|\widetilde{\theta}^{\lambda}_n|^{3(\rho+q-1)}\right)^{1/3}}
+\frac{4\lambda^2(t-n) K_hK_H |\widetilde{\theta}^{\lambda}_n|^{2(\rho+q)} }{\left(1+\lambda^{3/2}|\widetilde{\theta}^{\lambda}_n|^{3(\rho+q-1)}\right)^{2/3}}\nonumber\\
&\quad +\frac{\lambda^2(t-n)2K_h^2 |\widetilde{\theta}^{\lambda}_n|^{2(\rho+q)}  }{\left(1+\lambda^{3/2}|\widetilde{\theta}^{\lambda}_n|^{3(\rho+q-1)}\right)^{2/3}}
+\frac{\lambda^4(t-n)3 K_H^2K_h^2 |\widetilde{\theta}^{\lambda}_n|^{4(\rho+q)-2}  }{\left(1+\lambda^{3/2}|\widetilde{\theta}^{\lambda}_n|^{3(\rho+q-1)}\right)^{4/3}}\nonumber\\
&\quad +\frac{\lambda^3(t-n)6 K_h^2K_H |\widetilde{\theta}^{\lambda}_n|^{3(\rho+q)-1} }{ 1+\lambda^{3/2}|\widetilde{\theta}^{\lambda}_n|^{3(\rho+q-1)} } 
+\lambda(t-n)\Big(2\cb_\cD +4K_hK_H\Big.\nonumber\\
&\quad +\beta^{-1}2^{\rho+q-1}\cK_{3,d}+2K_h^2+3  K_H^2K_h^2+6  K_h^2K_H\nonumber\\
&\quad \Big.+2^{2(\rho+q)-6}  \beta^{-2}\cK_{3,d}^2+\beta^{-1} K_h\cK_{3,d}2^{\rho+q}+\beta^{-1} K_H K_h\cK_{3,d}2^{\rho+q-1}\Big)\nonumber\\
& = \left(1 -\frac{\lambda(t-n)\ca_\cD|\widetilde{\theta}^{\lambda}_n|^{\rho+q-1}}{\left(1+\lambda^{3/2}|\widetilde{\theta}^{\lambda}_n|^{3(\rho+q-1)}\right)^{1/3}}\right)|\widetilde{\theta}^{\lambda}_n|^2-\lambda(t-n)\mathfrak{I}^{\lambda}_1(\widetilde{\theta}^{\lambda}_n)+ \lambda(t-n)\cc_1,
\end{align}
where for any $\theta \in \R^d$
\begin{align*}
\mathfrak{I}^{\lambda}_1(\theta)
&:= \frac{ \ca_\cD|\theta|^{\rho+q+1}}{\left(1+\lambda^{3/2}|\theta|^{3(\rho+q-1)}\right)^{1/3}} 
- \frac{\lambda (4 K_hK_H +2K_h^2)|\theta|^{2(\rho+q)} }{\left(1+\lambda^{3/2}|\theta|^{3(\rho+q-1)}\right)^{2/3}}\\
&\qquad -\frac{\lambda^3 3 K_H^2K_h^2 |\theta|^{4(\rho+q)-2}  }{\left(1+\lambda^{3/2}|\theta|^{3(\rho+q-1)}\right)^{4/3}}
-  \frac{\lambda^2 6 K_h^2K_H |\widetilde{\theta}^{\lambda}_n|^{3(\rho+q)-1} }{ 1+\lambda^{3/2}|\widetilde{\theta}^{\lambda}_n|^{3(\rho+q-1)} } ,
\end{align*}
and where
\begin{align*}
\cc_1&:=\Big(2\cb_\cD +4K_hK_H +\beta^{-1}2^{\rho+q-1}\cK_{3,d}+2K_h^2+3  K_H^2K_h^2+6  K_h^2K_H\Big.\\ 
&\qquad \Big.+2^{2(\rho+q)-6}  \beta^{-2}\cK_{3,d}^2+\beta^{-1} K_h\cK_{3,d}2^{\rho+q}+\beta^{-1} K_H K_h\cK_{3,d}2^{\rho+q-1}\Big).
\end{align*}
We note that, for $0<\lambda\leq \lambda_{\max}\leq \min\{(19\ca_\cD / 240K_hK_H)^2,(19\ca_\cD/240 K_h^2)^2,  (\ca_\cD/120K_h^2K_H^2)^{2/3}, \allowbreak \ca_\cD/(480K_h^2K_H)\}$, and for all $\theta \in \R^d$, 
\begin{equation}\label{eq:deltaub3}
\mathfrak{I}^{\lambda}_1(\theta) \geq 0.
\end{equation}
Indeed, by using the expression of $\mathfrak{I}^{\lambda}_1$, we have that, for all $\theta \in \R^d$, 
\begin{align*}
\mathfrak{I}^{\lambda}_1(\theta)
&=\frac{ (38\ca_\cD/60+19\ca_\cD/60)|\theta|^{\rho+q+1}\left(1+\lambda^{3/2}|\theta|^{3(\rho+q-1)}\right)^{1/3}-\lambda (4 K_hK_H +2K_h^2)|\theta|^{2(\rho+q)} }{\left(1+\lambda^{3/2}|\theta|^{3(\rho+q-1)}\right)^{2/3}}\\
&\quad +\frac{ (\ca_\cD/40)|\theta|^{\rho+q+1}\left(1+\lambda^{3/2}|\theta|^{3(\rho+q-1)}\right) -\lambda^3 3 K_H^2K_h^2 |\theta|^{4(\rho+q)-2}  }{\left(1+\lambda^{3/2}|\theta|^{3(\rho+q-1)}\right)^{4/3}}\\
&\quad+\frac{ (\ca_\cD/40)|\theta|^{\rho+q+1}\left(1+\lambda^{3/2}|\theta|^{3(\rho+q-1)}\right)^{2/3} -\lambda^2 6 K_h^2K_H |\theta|^{3(\rho+q)-1}  }{ 1+\lambda^{3/2}|\theta|^{3(\rho+q-1)} }\\
&\geq \frac{ (19\ca_\cD/60+19\ca_\cD/120)  \lambda^{1/2}|\theta|^{2(\rho+q)}-\lambda (4 K_hK_H +2K_h^2)|\theta|^{2(\rho+q)} }{\left(1+\lambda^{3/2}|\theta|^{3(\rho+q-1)}\right)^{2/3}}\\
& \quad+\frac{ (\ca_\cD/40) \lambda^{3/2}|\theta|^{4(\rho+q)-2} -\lambda^3 3 K_H^2K_h^2 |\theta|^{4(\rho+q)-2}  }{\left(1+\lambda^{3/2}|\theta|^{3(\rho+q-1)}\right)^{4/3}}\\
&\quad+\frac{ (\ca_\cD/80) \lambda |\theta|^{3(\rho+q)-1}  -\lambda^2 6 K_h^2K_H |\theta|^{3(\rho+q)-1}  }{ 1+\lambda^{3/2}|\theta|^{3(\rho+q-1)} }\\
&\geq 0,
\end{align*}
where the last inequality holds due to $0<\lambda\leq \lambda_{\max}$. Substituting \eqref{eq:deltaub3} into \eqref{eq:deltaub2} yields
\[
|\Delta_{n,t}^{\lambda}|^2
\leq \left(1 -\frac{\lambda(t-n)\ca_\cD|\widetilde{\theta}^{\lambda}_n|^{\rho+q-1}}{\left(1+\lambda^{3/2}|\widetilde{\theta}^{\lambda}_n|^{3(\rho+q-1)}\right)^{1/3}}\right)|\widetilde{\theta}^{\lambda}_n|^2 + \lambda(t-n)\cc_1.
\]
In addition, since $f(s) = s/(1+\lambda^{3/2}s^3)^{1/3}$ is non-decreasing for all $s \geq 0$, we obtain that, for all $|\theta|>\cM_1>0$,
\[
\frac{|\theta|^{\rho+q-1}}{\left(1+\lambda^{3/2}|\theta|^{3(\rho+q-1)}\right)^{1/3}}\geq \frac{\cM_1^{\rho+q-1}}{\left(1+\lambda^{3/2}\cM_1^{3(\rho+q-1)}\right)^{1/3}}\geq \frac{\cM_1^{\rho+q-1}}{\left(1+ \cM_1^{3(\rho+q-1)}\right)^{1/3}}=:\kappa.
\]
Denote by $\cS_{n,\cM_1}:=\{\omega \in \Omega:|\widetilde{\theta}^{\lambda}_n(\omega)|>\cM_1\}$. The above inequality further implies that,
\begin{equation*} 
|\Delta_{n,t}^{\lambda}|^2 \mathbbm{1}_{\cS_{n,\cM_1}} 
\leq \left(1 -\lambda(t-n)\ca_\cD\kappa \right)|\widetilde{\theta}^{\lambda}_n|^2\mathbbm{1}_{\cS_{n,\cM_1}} + \lambda(t-n)\cc_1 \mathbbm{1}_{\cS_{n,\cM_1}}.
\end{equation*}
Similarly, we have that
\begin{equation*} 
|\Delta_{n,t}^{\lambda}|^2\mathbbm{1}_{\cS_{n,\cM_1}^{\cc}}
\leq \left(1 -\lambda(t-n)\ca_\cD\kappa \right)|\widetilde{\theta}^{\lambda}_n|^2\mathbbm{1}_{\cS_{n,\cM_1}^{\cc}} + \lambda(t-n)(\ca_\cD\kappa \cM_1^2 +\cc_1)\mathbbm{1}_{\cS_{n,\cM_1}^{\cc}}.
\end{equation*}
Combining the two cases yields
\begin{equation} \label{eq:deltaubfin}
|\Delta_{n,t}^{\lambda}|^2 
\leq  \left(1 -\lambda(t-n)\ca_\cD\kappa \right)|\widetilde{\theta}^{\lambda}_n|^2+ \lambda(t-n)(\ca_\cD\kappa \cM_1^2 +\cc_1).
\end{equation}
Finally, by substituting \eqref{eq:xiubfin} and \eqref{eq:deltaubfin} into \eqref{eq:2ndmmtexp}, we obtain
\begin{equation}\label{eq:2ndmmtexpfin}
\E\left[\left.|\widetilde{\theta}^{\lambda}_t|^2\right|\widetilde{\theta}^{\lambda}_n \right]  
\leq  \left(1 -\lambda(t-n)\ca_\cD\kappa \right)|\widetilde{\theta}^{\lambda}_n|^2+ \lambda(t-n)\cc_0,
\end{equation}
where 
\begin{align}
\begin{split}\label{eq:2ndmmtexpconst} 
\kappa&:=\cM_1^{\rho+q-1}/\left(1+ \cM_1^{3(\rho+q-1)}\right)^{1/3} \quad (\text{with $ \cM_1>0$}),\\
\cc_0 &: =\ca_\cD\kappa \cM_1^2 +\cc_1+2^{2(\rho+q)} \beta^{-1} d(1+\cK_1)^2 , \\
\cc_1&:=\Big(2\cb_\cD +4K_hK_H +\beta^{-1}2^{\rho+q-1}\cK_{3,d}+2K_h^2+3  K_H^2K_h^2+6  K_h^2K_H\Big.\\ 
&\qquad \Big.+2^{2(\rho+q)-6}  \beta^{-2}\cK_{3,d}^2+\beta^{-1} K_h\cK_{3,d}2^{\rho+q}+\beta^{-1} K_H K_h\cK_{3,d}2^{\rho+q-1}\Big).
\end{split}
\end{align}
We observe that, for $0<\lambda\leq \lambda_{\max}\leq 1/\ca_\cD$,
\[
1>1 -\lambda(t-n)\ca_\cD\kappa > 1-\lambda \ca_\cD\geq 0,
\]
then, by induction, \eqref{eq:2ndmmtexpfin} implies, for $t\in (n, n+1]$, $n \in \N_0$, $0<\lambda\leq \lambda_{\max}\leq 1$, that, 
\begin{align*}
\E\left[ |\widetilde{\theta}^{\lambda}_t|^2 \right]  
&\leq  \left(1 -\lambda(t-n)\ca_\cD\kappa \right)\E\left[ |\widetilde{\theta}^{\lambda}_n|^2\right]+ \lambda(t-n)\cc_0\\
&\leq  \left(1 -\lambda(t-n)\ca_\cD\kappa \right)\left(1 -\lambda \ca_\cD\kappa \right)\E\left[ |\widetilde{\theta}^{\lambda}_{n-1}|^2\right]+  \cc_0+ \lambda \cc_0\\
&\leq  \left(1 -\lambda(t-n)\ca_\cD\kappa \right)\left(1 -\lambda \ca_\cD\kappa \right)^2\E\left[ |\widetilde{\theta}^{\lambda}_{n-2}|^2\right]+  \cc_0+ \lambda \cc_0\left(1+\left(1 -\lambda \ca_\cD\kappa \right)\right)\\
&\leq \dots\\
&\leq \left(1 -\lambda(t-n)\ca_\cD\kappa \right)\left(1 -\lambda \ca_\cD\kappa \right)^n\E\left[ |\theta_0|^2\right]+  \cc_0\left(1+1/(\ca_\cD\kappa)\right),
\end{align*}
which completes the proof.
\end{proof}

\begin{proof}[\textbf{Proof of Lemma \ref{lem:2ndpthmmt}-\ref{lem:2ndpthmmtii}}] For any $p\in [2, \infty)\cap {\N}$, $0<\lambda\leq \lambda_{\max}\leq 1$ with $\lambda_{\max}$  given in \eqref{eq:stepsizemax}, $t\in (n, n+1]$, $n \in \N_0$, by using the same arguments as in the proof of \cite[Lemma 4.2-(ii)]{lim2021nonasymptotic} up to the inequality before \cite[Eq. (134)]{lim2021nonasymptotic} and by using \eqref{eq:aholaproc} with \eqref{eq:delxinotation}, we obtain that
\begin{align}
\begin{split}\label{eq:2pthmmtexp}
\E\left[\left.|\widetilde{\theta}^{\lambda}_t|^{2p}\right|\widetilde{\theta}^{\lambda}_n \right] 
&\leq |\Delta_{n,t}^{\lambda}|^{2p} +2^{2p-3}p(2p-1)|\Delta_{n,t}^{\lambda}|^{2p-2}  \E\left[\left.|\Xi_{n,t}^{\lambda}|^2\right|\widetilde{\theta}^{\lambda}_n \right]  \\
&\quad + 2^{2p-3}p(2p-1)\E\left[\left.|\Xi_{n,t}^{\lambda}|^{2p}\right|\widetilde{\theta}^{\lambda}_n \right].
\end{split}
\end{align}
Then, by using \eqref{eq:delxinotation} and \eqref{eq:delxinotationpsi}, we can obtain an upper estimate for the last term in \eqref{eq:2pthmmtexp} as follows:
\begin{align}\label{eq:xi2pub} 
\E\left[\left.|\Xi_{n,t}^{\lambda}|^{2p}\right|\widetilde{\theta}^{\lambda}_n \right]
& = (2\lambda\beta^{-1})^p \E\left[\left. \left\langle  (B_t^{\lambda} - B_n^{\lambda}),  
\left(\psi^{\lambda}(\widetilde{\theta}^{\lambda}_n)\right)^2 (B_t^{\lambda} - B_n^{\lambda})\right\rangle^p\right|\widetilde{\theta}^{\lambda}_n \right] \nonumber\\
&\leq  (2\lambda\beta^{-1})^p\left|\cI_d - \lambda H_{\lambda}(\widetilde{\theta}^{\lambda}_n)+(\lambda^2/3) (H_{\lambda}(\widetilde{\theta}^{\lambda}_n))^2\right|^p\E\left[|B_t^{\lambda} - B_n^{\lambda}|^{2p}\right]\nonumber\\
&\leq  (2\lambda\beta^{-1}dp(2p-1)(t-n))^p3^{p-1}\left(1+\lambda^p|H_{\lambda}(\widetilde{\theta}^{\lambda}_n)|^p_{\cF}+(\lambda^2/3)^p |H_{\lambda}(\widetilde{\theta}^{\lambda}_n)|^{2p}_{\cF}\right)\nonumber\\
&\leq \lambda(t-n) (2\beta^{-1}dp(2p-1))^p3^{p-1}\left(1+\lambda^p|H_{\lambda}(\widetilde{\theta}^{\lambda}_n)|^p_{\cF}\right)^2\nonumber\\
&\leq \lambda(t-n) (2\beta^{-1}dp(2p-1))^p3^{p-1}\left(1+ 2^{p(\rho+q-1)} d^{p/2}  \cK_1^p  \right)^2,
\end{align}
where the last inequality holds due to \eqref{eq:xiubhlambda}. Substituting \eqref{eq:xiubfin} and \eqref{eq:xi2pub} into \eqref{eq:2pthmmtexp} yields
\begin{align}\label{eq:2pthmmtexpub1}
\begin{split}
\E\left[\left.|\widetilde{\theta}^{\lambda}_t|^{2p}\right|\widetilde{\theta}^{\lambda}_n \right] 
&\leq |\Delta_{n,t}^{\lambda}|^{2p} +\lambda(t-n) 2^{2(p+\rho+q)-3}p(2p-1) \beta^{-1} d(1+\cK_1)^2|\Delta_{n,t}^{\lambda}|^{2p-2}\\
&\quad + \lambda(t-n) \cc_{\Xi}(p),
\end{split}
\end{align}
where $\cc_{\Xi}(p):= 2^{2p-4}(\beta^{-1}d)^p(2p(2p-1))^{p+1}3^{p-1}\left(1+ 2^{p(\rho+q-1)} d^{p/2}  \cK_1^p  \right)^2$. Next, we apply \eqref{eq:deltaubfin} to obtain
\begin{align}\label{eq:2pthmmtexpub2}
|\Delta_{n,t}^{\lambda}|^{2p} 
&\leq \left( \left(1 -\lambda(t-n)\ca_\cD\kappa \right)|\widetilde{\theta}^{\lambda}_n|^2+ \lambda(t-n)(\ca_\cD\kappa \cM_1^2 +\cc_1)\right)^p\nonumber\\
&\leq \left(1+\lambda(t-n)\ca_\cD\kappa /2\right)^{p-1}\left(1 -\lambda(t-n)\ca_\cD\kappa \right)^p|\widetilde{\theta}^{\lambda}_n|^{2p}\nonumber\\
&\quad +\left(1+2/(\lambda(t-n)\ca_\cD\kappa )\right)^{p-1}(\lambda(t-n))^p(\ca_\cD\kappa \cM_1^2 +\cc_1)^p\nonumber\\
&\leq \left(1-\lambda(t-n)\ca_\cD\kappa /2\right)^{p-1}\left(1 -\lambda(t-n)\ca_\cD\kappa \right)|\widetilde{\theta}^{\lambda}_n|^{2p}\nonumber\\
&\quad +\lambda(t-n)\left(1+2/( \ca_\cD\kappa )\right)^{p-1}(\ca_\cD\kappa \cM_1^2 +\cc_1)^p\nonumber\\
&=\overline{\cc}_{n,t}^\lambda(p)|\widetilde{\theta}^{\lambda}_n|^{2p}+\widetilde{\cc}_{n,t}^\lambda(p),
\end{align}
where the second inequality holds due to $(u+v)^p\leq (1+\varepsilon)^{p-1}u^p+(1+\varepsilon^{-1})^{p-1}v^p$, $u,v \geq 0$, $\varepsilon>0$ with $\varepsilon = \lambda(t-n)\ca_\cD\kappa /2$, and where
\begin{align*}
\overline{\cc}_{n,t}^\lambda(p) &:=  \left(1-\lambda(t-n)\ca_\cD\kappa /2\right)^{p-1}\left(1 -\lambda(t-n)\ca_\cD\kappa \right),\\
\widetilde{\cc}_{n,t}^\lambda(p)&:=\lambda(t-n)\left(1+2/( \ca_\cD\kappa )\right)^{p-1}(\ca_\cD\kappa \cM_1^2 +\cc_1)^p.
\end{align*}
In addition, we observe that by \eqref{eq:2pthmmtexpub2}, 
\begin{equation}\label{eq:2pthmmtexpub3}
|\Delta_{n,t}^{\lambda}|^{2p-2}\leq  \overline{\cc}_{n,t}^\lambda(p-1)|\widetilde{\theta}^{\lambda}_n|^{2p-2}+\widetilde{\cc}_{n,t}^\lambda(p-1),
\end{equation}
and, in particular, when $p = 2$, \eqref{eq:2pthmmtexpub3} yields $|\Delta_{n,t}^{\lambda}|^2 \leq  \overline{\cc}_{n,t}^\lambda(1)|\widetilde{\theta}^{\lambda}_n|^{2}+\widetilde{\cc}_{n,t}^\lambda(1)$ which is exactly the upper bound \eqref{eq:deltaubfin}. By substituting \eqref{eq:2pthmmtexpub2} and \eqref{eq:2pthmmtexpub3} into \eqref{eq:2pthmmtexpub1}, we have that
\begin{align}\label{eq:2pthmmtexpub4}
\begin{split}
\E\left[\left.|\widetilde{\theta}^{\lambda}_t|^{2p}\right|\widetilde{\theta}^{\lambda}_n \right] 
&\leq \overline{\cc}_{n,t}^\lambda(p)|\widetilde{\theta}^{\lambda}_n|^{2p}+\widetilde{\cc}_{n,t}^\lambda(p) + \lambda(t-n) \cc_{\Xi}(p)\\
&\quad +\lambda(t-n) 2^{2(p+\rho+q)-3}p(2p-1) \beta^{-1} d(1+\cK_1)^2\\
&\qquad \times\left(\overline{\cc}_{n,t}^\lambda(p-1)|\widetilde{\theta}^{\lambda}_n|^{2p-2}+\widetilde{\cc}_{n,t}^\lambda(p-1)\right).
\end{split}
\end{align}
Denote by $\cM_2(p):=(2^{2(p+\rho+q)-1}p(2p-1) \beta^{-1} d(1+\cK_1)^2/(\ca_\cD\kappa))^{1/2}$. For all $|\theta|>\cM_2(p)$, we have that
\[
(\lambda(t-n)\ca_\cD\kappa /4)|\theta|^{2p}>(\lambda(t-n)2^{2(p+\rho+q)-1}p(2p-1) \beta^{-1} d(1+\cK_1)^2/4)|\theta|^{2p-2}.
\]
Denote by $\cS_{n,\cM_2(p)}:=\{\omega \in \Omega:|\widetilde{\theta}^{\lambda}_n(\omega)|>\cM_2(p)\}$. By using the above inequality, \eqref{eq:2pthmmtexpub4} can be further bounded as follows:
\begin{align}\label{eq:2pthmmtexpub5}
&\E\left[\left.|\widetilde{\theta}^{\lambda}_t|^{2p}\mathbbm{1}_{\cS_{n,\cM_2(p)}} \right|\widetilde{\theta}^{\lambda}_n \right] \nonumber\\
&\leq \left(1-\lambda(t-n)\ca_\cD\kappa /4\right)\overline{\cc}_{n,t}^\lambda(p-1)|\widetilde{\theta}^{\lambda}_n|^{2p}\mathbbm{1}_{\cS_{n,\cM_2(p)}}  +\left(\widetilde{\cc}_{n,t}^\lambda(p) + \lambda(t-n) \cc_{\Xi}(p)\right)\mathbbm{1}_{\cS_{n,\cM_2(p)}} \nonumber\\
&\quad +\lambda(t-n) 2^{2(p+\rho+q)-3}p(2p-1) \beta^{-1} d(1+\cK_1)^2\widetilde{\cc}_{n,t}^\lambda(p-1)\mathbbm{1}_{\cS_{n,\cM_2(p)}}\nonumber \\
&\quad -\left(\lambda(t-n)\ca_\cD\kappa /4\right)\overline{\cc}_{n,t}^\lambda(p-1)|\widetilde{\theta}^{\lambda}_n|^{2p}\mathbbm{1}_{\cS_{n,\cM_2(p)}}\nonumber \\
&\quad +(\lambda(t-n) 2^{2(p+\rho+q)-1}p(2p-1) \beta^{-1} d(1+\cK_1)^2/4)\overline{\cc}_{n,t}^\lambda(p-1)|\widetilde{\theta}^{\lambda}_n|^{2p-2}\mathbbm{1}_{\cS_{n,\cM_2(p)}}\nonumber \\
&\leq \left(1 -\lambda(t-n)\ca_\cD\kappa \right)|\widetilde{\theta}^{\lambda}_n|^{2p}\mathbbm{1}_{\cS_{n,\cM_2(p)}} +\lambda(t-n)\left[\left(1+2/( \ca_\cD\kappa )\right)^{p-1}(\ca_\cD\kappa \cM_1^2 +\cc_1)^p+\cc_{\Xi}(p)\right.\nonumber\\
&\quad \left. +2^{2(p+\rho+q)-3}p(2p-1) \beta^{-1} d(1+\cK_1)^2\right.\nonumber\\
&\qquad \left. \times\left(\left(1+2/( \ca_\cD\kappa )\right)^{p-2}(\ca_\cD\kappa \cM_1^2 +\cc_1)^{p-1}+\cM_2(p)^{2p-2}\right)\right]\mathbbm{1}_{\cS_{n,\cM_2(p)}} \nonumber\\
&\leq \left(1 -\lambda(t-n)\ca_\cD\kappa \right)|\widetilde{\theta}^{\lambda}_n|^{2p}\mathbbm{1}_{\cS_{n,\cM_2(p)}} +\lambda(t-n)\cc_p \mathbbm{1}_{\cS_{n,\cM_2(p)}},
\end{align}
where
\begin{align}\label{eq:2pthmmtexpconst} 
\begin{split}
\cc_p 
&:= \left(1+2/( \ca_\cD\kappa )\right)^{p-1}(\ca_\cD\kappa \cM_1^2 +\cc_1)^p+\cc_{\Xi}(p) +2^{2(p+\rho+q)-3}p(2p-1) \beta^{-1} d(1+\cK_1)^2\\
&\qquad \times\left(\left(1+2/( \ca_\cD\kappa )\right)^{p-2}(\ca_\cD\kappa \cM_1^2 +\cc_1)^{p-1}+\cM_2(p)^{2p-2}\right),\\
\cc_{\Xi}(p)
&:= 2^{2p-4}(\beta^{-1}d)^p(2p(2p-1))^{p+1}3^{p-1}\left(1+ 2^{p(\rho+q-1)} d^{p/2}  \cK_1^p  \right)^2,\\
\cM_2(p)
&:=(2^{2(p+\rho+q)-1}p(2p-1) \beta^{-1} d(1+\cK_1)^2/(\ca_\cD\kappa))^{1/2}
\end{split}
\end{align}
with $\kappa$ and $\cc_1$ given in \eqref{eq:2ndmmtexpconst}. Similarly, by using \eqref{eq:2pthmmtexpub4}, we have that
\begin{align}\label{eq:2pthmmtexpub6}
\E\left[\left.|\widetilde{\theta}^{\lambda}_t|^{2p}\mathbbm{1}_{\cS_{n,\cM_2(p)}^{\cc}} \right|\widetilde{\theta}^{\lambda}_n \right]\leq \left(1 -\lambda(t-n)\ca_\cD\kappa \right)|\widetilde{\theta}^{\lambda}_n|^{2p}\mathbbm{1}_{\cS_{n,\cM_2(p)}^{\cc}} +\lambda(t-n)\cc_p \mathbbm{1}_{\cS_{n,\cM_2(p)}^{\cc}}.
\end{align}
Combing \eqref{eq:2pthmmtexpub5} and \eqref{eq:2pthmmtexpub6} yields the desired result.
\end{proof}

\begin{lemma}\label{lem:oserroralg} Let Assumptions \ref{asm:AI}, \ref{asm:ALL}, and \ref{asm:AC} hold. Then, for any $0<\lambda\leq \lambda_{\max}$, $p > 0, t \geq 0$, we obtain
\begin{align}
\E\left[|\overline{\theta}^{\lambda}_t - \overline{\theta}^{\lambda}_{\lfrf{t}} |^{2p}\right]
&\leq \lambda^p\left(e^{-  \lambda\ca_\cD\kappa \lfrf{t}}\overline{\cC}_{\mathbf{A}0,p}\E[|\theta_0|^{4\lcrc{p}(\rho+1)}]+\widetilde{\cC}_{\mathbf{A}0,p} \right),\label{lem:oserroralgineq1}\\
\E\left[|\overline{\zeta}^{\lambda, n}_t - \overline{\zeta}^{\lambda, n}_{\lfrf{t}}|^{2p}\right]
&\leq \lambda^p\left(e^{-  \lambda\ca_\cD\min\{ \kappa, 1 /2\} \lfrf{t}}\overline{\cC}_{\mathbf{A}1,p}\E[|\theta_0|^{2\lcrc{p}(\rho+1)}]+\widetilde{\cC}_{\mathbf{A}1,p} \right),\label{lem:oserroralgineq2}
\end{align}
where $\overline{\cC}_{\mathbf{A}0,p}$ and $\widetilde{\cC}_{\mathbf{A}0,p}$ are given in \eqref{eq:oserroralgconst}, and $\overline{\cC}_{\mathbf{A}1,p}$ and $\widetilde{\cC}_{\mathbf{A}1,p}$ are given in \eqref{eq:oserrorauxconst}.
\end{lemma}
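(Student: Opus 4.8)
The plan is to obtain both estimates from the explicit integral representations of the one-step increments: the deterministic contributions are controlled by the growth and taming bounds collected in Remark~\ref{rmk:growthc} together with the uniform-in-time moment estimates of Lemma~\ref{lem:2ndpthmmt} and Lemma~\ref{lem:zetaprocme}, while the stochastic contributions are controlled by the Burkholder--Davis--Gundy (BDG) inequality. Throughout I use the elementary fact that each taming denominator in \eqref{eq:taming} is $\geq 1$, so that $|h_{\lambda}(\theta)|\leq|h(\theta)|$, $|H_{\lambda}(\theta)|\leq|H(\theta)|$, and $|\Upsilon_{\lambda}(\theta)|\leq|\Upsilon(\theta)|$ for all $\theta\in\R^d$, which lets me discard the taming factors and invoke directly the polynomial growth bounds of Remark~\ref{rmk:growthc}. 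I first prove both bounds for integer exponents and extend to arbitrary real $p>0$ at the end.

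For \eqref{lem:oserroralgineq1}, integrating \eqref{eq:aholahoproc} from $\lfrf{t}$ to $t$ and using that $\lfrf{s}=\lfrf{t}$ on $[\lfrf{t},t]$ (so all coefficients are frozen at $\overline{\theta}^{\lambda}_{\lfrf{t}}$) decomposes the increment $\overline{\theta}^{\lambda}_t-\overline{\theta}^{\lambda}_{\lfrf{t}}$ into four terms: the drift $-\lambda h_{\lambda}(\overline{\theta}^{\lambda}_{\lfrf{t}})(t-\lfrf{t})$; the deterministic double integral $\lambda^2 (H_{\lambda}h_{\lambda}-\beta^{-1}\Upsilon_{\lambda})(\overline{\theta}^{\lambda}_{\lfrf{t}})\int_{\lfrf{t}}^t(u-\lfrf{t})\,\rmd u$; the iterated stochastic integral $-\lambda\sqrt{2\lambda\beta^{-1}}\,H_{\lambda}(\overline{\theta}^{\lambda}_{\lfrf{t}})\int_{\lfrf{t}}^t(B^{\lambda}_u-B^{\lambda}_{\lfrf{t}})\,\rmd u$; and the Brownian increment $\sqrt{2\lambda\beta^{-1}}(B^{\lambda}_t-B^{\lambda}_{\lfrf{t}})$. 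Applying the power-mean inequality $|\sum_{i=1}^{4}a_i|^{2p}\leq 4^{2p-1}\sum_{i=1}^{4}|a_i|^{2p}$, I bound each piece. The first two deterministic terms carry prefactors $\lambda^{2p}$ and $\lambda^{4p}$ respectively (after raising to the power $2p$ and using $t-\lfrf{t}\leq1$), and polynomial factors whose highest degree is $2p\bigl(2(\rho+q)-1\bigr)$, coming from the product $H_{\lambda}h_{\lambda}$; since $\rho+q\leq\rho+1$, this degree is at most $4p(\rho+1)$. Taking expectations and using $\E[|\overline{\theta}^{\lambda}_t|^{2p}]=\E[|\widetilde{\theta}^{\lambda}_t|^{2p}]$ together with Lemma~\ref{lem:2ndpthmmt}(ii) applied at the grid point $\lfrf{t}$ converts these into $e^{-\lambda\ca_\cD\kappa\lfrf{t}}\E[|\theta_0|^{4p(\rho+1)}]$ plus a constant, the decay arising from $(1-\lambda\ca_\cD\kappa)^{\lfrf{t}}\leq e^{-\lambda\ca_\cD\kappa\lfrf{t}}$. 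For the stochastic terms I use BDG: the Brownian increment gives $\E[|\sqrt{2\lambda\beta^{-1}}(B^{\lambda}_t-B^{\lambda}_{\lfrf{t}})|^{2p}]\leq C_{d,p}\lambda^{p}(t-\lfrf{t})^{p}$, which is precisely of order $\lambda^{p}$, while the iterated stochastic integral carries the surplus prefactor $\lambda^{3/2}$ (and, via $\E[|\int_{\lfrf{t}}^t(B^{\lambda}_u-B^{\lambda}_{\lfrf{t}})\,\rmd u|^{2p}]\lesssim(t-\lfrf{t})^{3p}$, is of order $\lambda^{3p}$). Since $\lambda\leq1$ forces $\lambda^{2p},\lambda^{3p},\lambda^{4p}\leq\lambda^{p}$, collecting the four bounds yields \eqref{lem:oserroralgineq1}.

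For \eqref{lem:oserroralgineq2} the process $\overline{\zeta}^{\lambda,n}$ solves the genuine time-changed Langevin SDE \eqref{eq:auxproc}, so integrating from $\lfrf{t}$ to $t$ gives only two terms, $\overline{\zeta}^{\lambda,n}_t-\overline{\zeta}^{\lambda,n}_{\lfrf{t}}=-\lambda\int_{\lfrf{t}}^t h(\overline{\zeta}^{\lambda,n}_s)\,\rmd s+\sqrt{2\lambda\beta^{-1}}(B^{\lambda}_t-B^{\lambda}_{\lfrf{t}})$. For the drift I pull the $2p$-th power inside the time integral by Jensen's inequality, apply the growth bound $|h(\theta)|\leq K_h(1+|\theta|^{\rho+q})$ and $\rho+q\leq\rho+1$, so that only the $2p(\rho+1)$-th moment of $\overline{\zeta}^{\lambda,n}_s$ is required; this is supplied by Lemma~\ref{lem:zetaprocme} (applied with index $p(\rho+1)$), which carries the decay $e^{-\lambda\ca_\cD\min\{\kappa,1/2\}s}\leq e^{-\lambda\ca_\cD\min\{\kappa,1/2\}\lfrf{t}}$ for $s\geq\lfrf{t}$ and the initial moment $\E[|\theta_0|^{2p(\rho+1)}]$. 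The drift carries $\lambda^{2p}\leq\lambda^{p}$ and is again dominated by the Brownian increment, which contributes the leading $\lambda^{p}$ term exactly as above. This proves \eqref{lem:oserroralgineq2} for integer $p$. Finally, for arbitrary real $p>0$ I set $m:=\lcrc{p}$ and apply the Lyapunov (Jensen) inequality $\E[|X|^{2p}]\leq(\E[|X|^{2m}])^{p/m}$ to the integer-$m$ bounds just obtained, using the subadditivity of $x\mapsto x^{p/m}$ and $x^{p/m}\leq 1+x$ to recover the stated form with exponents $4\lcrc{p}(\rho+1)$ and $2\lcrc{p}(\rho+1)$ and the prefactor $\lambda^{p}$ (here $\lambda^{m}\leq\lambda^{p}$ since $\lambda\leq1$).

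The main obstacle I anticipate is the simultaneous bookkeeping of the $\lambda$-powers and the polynomial degrees: one must check that, after the taming denominators are discarded, every non-Brownian contribution genuinely carries a surplus power of $\lambda$ relative to $\lambda^{p}$, and that the highest polynomial degree across all four terms is exactly $4p(\rho+1)$ for the first process (driven by $H_{\lambda}h_{\lambda}$) and $2p(\rho+1)$ for the second (driven by $h$). The iterated stochastic integral term is the most delicate, requiring a conditional BDG estimate nested inside a further Lebesgue integral in time, but thanks to its $\lambda^{3/2}$ prefactor it never enters at leading order.
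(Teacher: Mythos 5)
Your core argument is essentially the paper's own proof: the same integral decomposition of the one-step increment (drift, deterministic double integral, iterated stochastic integral, Brownian increment for $\overline{\theta}^{\lambda}$; drift plus Brownian increment for $\overline{\zeta}^{\lambda,n}$), the same use of the power-mean inequality, the discarding of the taming denominators to invoke the growth bounds of Remark~\ref{rmk:growthc}, the Gaussian/BDG moment estimates, and the moment Lemmas~\ref{lem:2ndpthmmt} and \ref{lem:zetaprocme} at the grid point to produce the exponential decay factors. Your power counting in $\lambda$ and your bookkeeping of the polynomial degrees ($4p(\rho+1)$ and $2p(\rho+1)$ respectively) are correct and match the paper.

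However, your final step — proving the bounds for integer exponents and then extending to real $p>0$ via the Lyapunov/Jensen inequality $\E[|X|^{2p}]\leq(\E[|X|^{2m}])^{p/m}$ with $m=\lcrc{p}$ — does not deliver the lemma as stated. After raising the integer-$m$ bound to the power $p/m\leq 1$ and using subadditivity, the exponential factor becomes $e^{-(p/m)\lambda\ca_\cD\kappa\lfrf{t}}$ (resp.\ $e^{-(p/m)\lambda\ca_\cD\min\{\kappa,1/2\}\lfrf{t}}$), i.e., the decay rate is degraded by the factor $p/\lcrc{p}<1$ whenever $p$ is not an integer. Since $e^{-(p/m)x}\geq e^{-x}$ for $x\geq 0$, there is no way to recover the stated rate from this reduction. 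This matters here: the lemma is invoked downstream (in Lemma~\ref{lem:graditoest}) precisely with non-integer values such as $p=2q$ and $p=2+2q$, and the stated rates feed into the explicit constants of Theorem~\ref{thm:mainw1}. The fix is simple and is what the paper does: run your argument directly for arbitrary real $p>0$. Nothing in it actually requires integrality — the inequality $(\sum_{l=1}^{v}u_l)^{w}\leq v^{w}\sum_{l=1}^{v}u_l^{w}$ and the Gaussian moment bound $\E[|\int_{\lfrf{t}}^{t}\rmd B^{\lambda}_s|^{2p}]\leq((p+1)(d+2p))^{p}$ hold for all real exponents; the only place integer structure is needed is when invoking Lemmas~\ref{lem:2ndpthmmt} and \ref{lem:zetaprocme}, and there one rounds up the \emph{polynomial degree}, bounding $(1+|\theta|^{\cdot})^{2p}$ by $2^{4\lcrc{p}(\rho+1)}(1+|\theta|^{4\lcrc{p}(\rho+1)})$, so that the moment lemmas are applied with the integer index $2\lcrc{p}(\rho+1)$ while the full decay rate and the prefactor $\lambda^{p}$ are preserved.
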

\begin{proof} To show that \eqref{lem:oserroralgineq1} holds, we use the definition of $(\overline{\theta}^{\lambda}_t)_{t \geq 0 }$ given in \eqref{eq:aholahoproc} and obtain that, for any $t \geq 0$,
\begin{align*}
&\E\left[|\overline{\theta}^{\lambda}_t - \overline{\theta}^{\lambda}_{\lfrf{t}} |^{2p}\right] \\
&= \E\left[\left|-\lambda \int_{\lfrf{t}}^t h_{\lambda}(\overline{\theta}^{\lambda}_{\lfrf{s}})\,\rmd s+\lambda^2\int_{\lfrf{t}}^t \int_{\lfrf{s}}^s\left(H_{\lambda}( \overline{\theta}^{\lambda}_{\lfrf{r}})h_{\lambda}( \overline{\theta}^{\lambda}_{\lfrf{r}})-\beta^{-1}\Upsilon_{\lambda}(\overline{\theta}^{\lambda}_{\lfrf{r}}) \right)\,\rmd r\,\rmd s \right.\right.\\
&\qquad \left.\left. -\lambda\sqrt{2\lambda\beta^{-1}} \int_{\lfrf{t}}^t \int_{{\lfrf{s}}}^s H_{\lambda}(\overline{\theta}_{\lfrf{r}}^{\lambda})\,\rmd B_r^{\lambda}\,\rmd s  +\sqrt{2\lambda\beta^{-1}} \int_{\lfrf{t}}^t \, \rmd B^{\lambda}_s\right|^{2p}\right] \\
&\leq 5^{2p}\Bigg(\lambda^{2p}\E\left[|h_{\lambda}(\overline{\theta}^{\lambda}_{\lfrf{t}})|^{2p}\right]+\lambda^{4p}\E\left[|H_{\lambda}( \overline{\theta}^{\lambda}_{\lfrf{r}})|^{2p}|h_{\lambda}( \overline{\theta}^{\lambda}_{\lfrf{t}})|^{2p}\right]\Bigg.\\
&\qquad +\left. \lambda^{4p}\beta^{-2p}\E\left[|\Upsilon_{\lambda}(\overline{\theta}^{\lambda}_{\lfrf{t}}) |^{2p}\right]+\lambda^p(2 \beta^{-1})^p\E\left[\left| \int_{\lfrf{t}}^t   \,\rmd B_s^{\lambda} \right|^{2p}\right]\right.\\
&\qquad \Bigg. +\lambda^{3p}(2 \beta^{-1})^p\E\left[|H_{\lambda}(\overline{\theta}_{\lfrf{t}}^{\lambda})|^{2p}\left| \int_{\lfrf{t}}^t \int_{{\lfrf{s}}}^s \,\rmd B_r^{\lambda}\,\rmd s\right|^{2p}\right]\Bigg) \\
&\leq 5^{2p}\lambda^p\left(K_h^{2p}\E\left[(1+|\overline{\theta}^{\lambda}_{\lfrf{t}}|^{\rho+q})^{2p}\right]+\beta^{-2p}\cK_{3,d}^{2p}\E\left[(1+|\overline{\theta}^{\lambda}_{\lfrf{t}}|)^{2p(\rho+q-2)}\right]\right.\\
&\qquad +K_H^{2p}K_h^{2p}\E\left[(1+|\overline{\theta}^{\lambda}_{\lfrf{t}}|^{\rho+q-1})^{2p}(1+|\overline{\theta}^{\lambda}_{\lfrf{t}}|^{\rho+q})^{2p}\right]+(2\beta^{-1}(p+1)(d+2p))^p\\
&\qquad \left.+ (\beta^{-1}(4p+2)(d+4p))^pK_H^{2p}\left(\E\left[(1+|\overline{\theta}^{\lambda}_{\lfrf{t}}|^{\rho+q-1})^{4p} \right] \right)^{1/2}\right)\\
&\leq \lambda^p5^{2p}\left(K_h^{2p}+\beta^{-2p}\cK_{3,d}^{2p}+K_H^{2p}K_h^{2p}+(\beta^{-1}(4p+2)(d+4p))^p(1+K_H^{2p})\right)\\
&\qquad \times 2^{4\lcrc{p}(\rho+1)}\left(\E\left[ |\overline{\theta}^{\lambda}_{\lfrf{t}}|^{4\lcrc{p}(\rho+1)}\right]+1\right)\\
&\leq \lambda^p\left(e^{-\lambda\ca_\cD\kappa \lfrf{t}}\overline{\cC}_{\mathbf{A}0,p}\E[|\theta_0|^{4\lcrc{p}(\rho+1)}]+\widetilde{\cC}_{\mathbf{A}0,p} \right),
\end{align*}
where the first inequality holds due to $(\sum_{l=1}^vu_l)^w\leq v^w\sum_{l=1}^vu_l^w$, $v \in \N$, $u_l \geq 0$, $w>0$, the second inequality holds due to Remark \ref{rmk:growthc}, Cauchy-Schwarz inequality, and the following inequality:
\[
\E\left[ \left| \int_{\lfrf{t}}^t   \,\rmd B_s^{\lambda} \right|^{2p}\right] \leq \max\{d^p, (p(d+2p-2))^p\}\leq ((p+1)(d+2p))^p,
\] 
the fourth inequality holds due to Lemma \ref{lem:2ndpthmmt}, and where
\begin{align}\label{eq:oserroralgconst}
\begin{split}
\overline{\cC}_{\mathbf{A}0,p}&: = 5^{2p}2^{4\lcrc{p}(\rho+1)}\left(K_h^{2p}+\beta^{-2p}\cK_{3,d}^{2p}+K_H^{2p}K_h^{2p}+(\beta^{-1}(4p+2)(d+4p))^p(1+K_H^{2p})\right),\\
\widetilde{\cC}_{\mathbf{A}0,p}&: = \overline{\cC}_{\mathbf{A}0,p}\left( \cc_{2\lcrc{p}(\rho+1)}\left(1+1/(\ca_\cD\kappa)\right)+1\right).
\end{split}
\end{align}
The inequality \eqref{lem:oserroralgineq2} can be obtained by using similar arguments. More precisely, by using Definition \ref{def:auxzeta} with \eqref{eq:auxproc}, we obtain that, for any $t \geq 0$,
\begin{align*}
\E\left[|\overline{\zeta}^{\lambda, n}_t - \overline{\zeta}^{\lambda, n}_{\lfrf{t}}|^{2p}\right]
& = \E\left[\left|-\lambda \int_{\lfrf{t}}^t h(\overline{\zeta}^{\lambda, n}_s)\,\rmd s+\sqrt{2\lambda\beta^{-1}} \int_{\lfrf{t}}^t \, \rmd B^{\lambda}_s \right|^{2p}\right]\\
&\leq 2^{2p}\left(\lambda^{2p}\E\left[\int_{\lfrf{t}}^t |h(\overline{\zeta}^{\lambda, n}_s)|^{2p}\,\rmd s\right]+\lambda^p(2\beta^{-1})^p\E\left[\left| \int_{\lfrf{t}}^t   \,\rmd B_s^{\lambda} \right|^{2p}\right]\right)\\
&\leq 2^{2p}\lambda^p\left(K_h^{2p}\int_{\lfrf{t}}^t \E\left[(1+|\overline{\zeta}^{\lambda, n}_s|^{\rho+q})^{2p}\right]\,\rmd s+ (2\beta^{-1}(p+1)(d+2p))^p\right)\\
&\leq \lambda^p\left( 2^{2p}K_h^{2p}\int_{\lfrf{t}}^t \E\left[V_{2\lcrc{p}(\rho+1)}(\overline{\zeta}^{\lambda, n}_s) \right]\,\rmd s+  2^{2p}(2\beta^{-1}(p+1)(d+2p))^p\right)\\
&\leq  \lambda^p\left(e^{-\lambda\ca_\cD \min\{ \kappa, 1 /2\} \lfrf{t}}\overline{\cC}_{\mathbf{A}1,p}\E[|\theta_0|^{2\lcrc{p}(\rho+1)}]+\widetilde{\cC}_{\mathbf{A}1,p} \right),
\end{align*}
where the last inequality holds due to Lemma \ref{lem:zetaprocme} and where
\begin{align}\label{eq:oserrorauxconst}
\begin{split}
\overline{\cC}_{\mathbf{A}1,p}&: = 2^{2p+\lcrc{p}(\rho+1)-1}K_h^{2p}, \\
\widetilde{\cC}_{\mathbf{A}1,p}
&: = \overline{\cC}_{\mathbf{A}1,p}\left( \cc_{\lcrc{p}(\rho+1)}\left(1+1/(\ca_\cD\kappa)\right)+1\right)+ 2^{2p}K_h^{2p}3\mathrm{v}_{2\lcrc{p}(\rho+1)}(\cM_V(2\lcrc{p}(\rho+1)))\\
&\quad +2^{2p}(2\beta^{-1}(p+1)(d+2p))^p.
\end{split}
\end{align}
This completes the proof.
\end{proof}

\begin{lemma}\label{lem:graditoest} Let Assumptions \ref{asm:AI}, \ref{asm:ALL}, and \ref{asm:AC} hold. Then, for any $0<\lambda\leq \lambda_{\max}$, $n \in \N_0$, $, t \geq nT$, we obtain the following inequalities:
\begin{align*}
\E\left[\left|-\lambda \int_{\lfrf{t}}^t  \left(H(\overline{\theta}^{\lambda}_s) - H_{\lambda}(\overline{\theta}^{\lambda}_{\lfrf{s}})\right)h_{\lambda}( \overline{\theta}^{\lambda}_{\lfrf{s}})\,\rmd s\right|^2\right] 
\leq \lambda^3\left(e^{- \ca_\cD  \kappa n/2} \overline{\cC}_{\mathbf{A}2}\E[|\theta_0|^{16(\rho+1)}] +\widetilde{\cC}_{\mathbf{A}2} \right),&\\
\E\left[\left| \lambda^2 \int_{\lfrf{t}}^t   H(\overline{\theta}^{\lambda}_s) \int_{\lfrf{s}}^s  H_{\lambda}(\overline{\theta}^{\lambda}_{\lfrf{r}}))h_{\lambda}( \overline{\theta}^{\lambda}_{\lfrf{r}}) \, \rmd r \,\rmd s\right|^2\right] 
\leq \lambda^3\left(e^{- \ca_\cD  \kappa n/2} \overline{\cC}_{\mathbf{A}2}\E[|\theta_0|^{16(\rho+1)}] +\widetilde{\cC}_{\mathbf{A}2} \right), &\\
\E\left[\left| -\lambda^2\beta^{-1} \int_{\lfrf{t}}^t   H(\overline{\theta}^{\lambda}_s) \int_{\lfrf{s}}^s  \Upsilon_{\lambda}( \overline{\theta}^{\lambda}_{\lfrf{r}}) \, \rmd r \,\rmd s\right|^2\right] 
\leq \lambda^3\left(e^{- \ca_\cD  \kappa n/2} \overline{\cC}_{\mathbf{A}2}\E[|\theta_0|^{16(\rho+1)}] +\widetilde{\cC}_{\mathbf{A}2} \right), &\\
\E\left[\left| -\lambda\sqrt{2\lambda\beta^{-1}} \int_{\lfrf{t}}^t   H(\overline{\theta}^{\lambda}_s) \int_{\lfrf{s}}^s  H_{\lambda}( \overline{\theta}^{\lambda}_{\lfrf{r}}) \, \rmd B_r^{\lambda} \,\rmd s\right|^2\right] 
\leq \lambda^3\left(e^{- \ca_\cD  \kappa n/2} \overline{\cC}_{\mathbf{A}2}\E[|\theta_0|^{16(\rho+1)}] +\widetilde{\cC}_{\mathbf{A}2} \right), &\\
\E\left[\left| \sqrt{2\lambda\beta^{-1}} \int_{\lfrf{t}}^t  \left( H(\overline{\theta}^{\lambda}_s) -  H_{\lambda}( \overline{\theta}^{\lambda}_{\lfrf{s}})  \right) \,\rmd B_s^{\lambda}\right|^2\right] 
\leq \lambda^2\left(e^{- \ca_\cD  \kappa n/2} \overline{\cC}_{\mathbf{A}2}\E[|\theta_0|^{16(\rho+1)}] +\widetilde{\cC}_{\mathbf{A}2} \right), &\\
\E\left[\left| \sqrt{2\lambda\beta^{-1}} \int_{\lfrf{t}}^t  \left( H(\overline{\zeta}^{\lambda, n}_s) -  H ( \overline{\zeta}^{\lambda, n}_{\lfrf{s}})\right)   \,\rmd B_s^{\lambda}\right|^2\right] \hspace{18em}&\\
\leq \lambda^2\left(e^{- \ca_\cD   \min\{ \kappa, 1 /2\} n/2} \overline{\cC}_{\mathbf{A}2}\E[|\theta_0|^{16(\rho+1)}] +\widetilde{\cC}_{\mathbf{A}2} \right),&\\
\E\left[\left| \lambda\beta^{-1} \int_{\lfrf{t}}^t  \left( \Upsilon(\overline{\theta}^{\lambda}_s) - \Upsilon_{\lambda}( \overline{\theta}^{\lambda}_{\lfrf{s}}) \right)  \,\rmd s\right|^2\right] 
\leq \lambda^{2+q}\left(e^{- \ca_\cD  \kappa n/2} \overline{\cC}_{\mathbf{A}2}\E[|\theta_0|^{16(\rho+1)}] +\widetilde{\cC}_{\mathbf{A}2} \right),&\\
\E\left[\left| -\lambda  \int_{\lfrf{t}}^t H(\overline{\theta}^{\lambda}_s) h_{\lambda}(\overline{\theta}^{\lambda}_{\lfrf{s}})\,\rmd s\right|^2+\left| \lambda\beta^{-1}\int_{\lfrf{t}}^t\Upsilon (\overline{\theta}^{\lambda}_s)\rmd s\right|^2\right] \hspace{15em}& \\
\leq \lambda^2\left(e^{- \ca_\cD  \kappa n/2} \overline{\cC}_{\mathbf{A}2}\E[|\theta_0|^{16(\rho+1)}] +\widetilde{\cC}_{\mathbf{A}2} \right), &\\
\E\left[\left|  -\lambda  \int_{\lfrf{t}}^t H(\overline{\zeta}^{\lambda, n}_s) h ( \overline{\zeta}^{\lambda, n}_s)\,\rmd s  \right|^2+\left| \lambda\beta^{-1}\int_{\lfrf{t}}^t\Upsilon (\overline{\zeta}^{\lambda, n}_s)\rmd s\right|^2\right] \hspace{15em}& \\
\leq \lambda^2\left(e^{- \ca_\cD \min\{ \kappa, 1 /2\} n/2} \overline{\cC}_{\mathbf{A}2}\E[|\theta_0|^{16(\rho+1)}] +\widetilde{\cC}_{\mathbf{A}2} \right),&
\end{align*}
where $\overline{\cC}_{\mathbf{A}2}$ and $\widetilde{\cC}_{\mathbf{A}2}$ are given in \eqref{eq:graditoestconst}.
\end{lemma}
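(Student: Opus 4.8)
The plan is to treat all nine inequalities by a single common mechanism, since each one bounds the $\mathscr{L}^2$-norm of an integral over the single-step interval $(\lfrf{t}, t]$, whose length is at most $1$. First I would isolate the two elementary building blocks. For a Lebesgue (drift-type) integral $\int_{\lfrf{t}}^t f_s\,\rmd s$, Jensen's (or Cauchy--Schwarz) inequality gives $|\int_{\lfrf{t}}^t f_s\,\rmd s|^2 \leq (t-\lfrf{t})\int_{\lfrf{t}}^t |f_s|^2\,\rmd s$, while for an It\^o (diffusion-type) integral $\int_{\lfrf{t}}^t g_s\,\rmd B^\lambda_s$ the It\^o isometry gives $\E|\int_{\lfrf{t}}^t g_s\,\rmd B^\lambda_s|^2 = \E\int_{\lfrf{t}}^t |g_s|^2_{\cF}\,\rmd s$; iterated integrals are handled by composing these two estimates, each nested integration contributing one further factor of the (unit) interval length. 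In every case the explicit prefactors ($\lambda$, $\lambda^2$, $\lambda\sqrt{2\lambda\beta^{-1}}$, and so on) already supply the announced leading power of $\lambda$, and it remains only to show that the resulting integrand is controlled by a uniformly-in-time bounded polynomial moment.

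For that control I would invoke the growth and continuity estimates of Remark~\ref{rmk:growthc} to bound the integrands $|H|$, $|h_\lambda|$, $|H_\lambda h_\lambda|$, $|\Upsilon|$ pointwise by polynomials in $|\overline{\theta}^\lambda_s|$ (respectively $|\overline{\zeta}^{\lambda,n}_s|$), and then take expectations using the uniform-in-time moment estimates of Lemma~\ref{lem:2ndpthmmt} for the scheme $\overline{\theta}^\lambda$ (recalling $\E[|\widetilde{\theta}^\lambda_t|^{2p}]=\E[|\overline{\theta}^\lambda_t|^{2p}]$) and of Lemma~\ref{lem:zetaprocme} for the auxiliary process $\overline{\zeta}^{\lambda,n}$. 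Whenever the integrand is a product of two growing factors (as in $H(\overline{\theta}^\lambda_s)\,H_\lambda(\overline{\theta}^\lambda_{\lfrf{s}})h_\lambda(\overline{\theta}^\lambda_{\lfrf{s}})$), I would first apply Cauchy--Schwarz inside the expectation and then the moment bounds; here the budget $\E[|\theta_0|^{16(\rho+1)}]<\infty$ of Assumption~\ref{asm:AI} is precisely what is needed to dominate the fastest-growing product after squaring. The exponentially decaying prefactor $e^{-\ca_\cD\kappa n/2}$ (respectively with $\min\{\kappa,1/2\}$ for the $\overline{\zeta}$-terms) is then extracted from the factor $e^{-\lambda\ca_\cD\kappa\lfrf{t}}$ carried by those moment bounds, using $t\geq nT$ with $T=\lfrf{1/\lambda}$ and $\lambda\le\lambda_{\max}\le 1$, so that $\lambda\lfrf{t}\gtrsim n$.

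The genuinely new point, and the step I expect to be the main obstacle, concerns the inequalities involving the differences $H(\overline{\theta}^\lambda_s)-H_\lambda(\overline{\theta}^\lambda_{\lfrf{s}})$, $\Upsilon(\overline{\theta}^\lambda_s)-\Upsilon_\lambda(\overline{\theta}^\lambda_{\lfrf{s}})$, and $H(\overline{\zeta}^{\lambda,n}_s)-H(\overline{\zeta}^{\lambda,n}_{\lfrf{s}})$, where an extra power of $\lambda$ must be produced by the difference itself. My plan is to split each such difference as $[f(\overline{\theta}^\lambda_s)-f(\overline{\theta}^\lambda_{\lfrf{s}})]+[f(\overline{\theta}^\lambda_{\lfrf{s}})-f_\lambda(\overline{\theta}^\lambda_{\lfrf{s}})]$. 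The first (continuity) bracket is estimated by the local Lipschitz/H\"older bounds of Remark~\ref{rmk:growthc} times the one-step displacement $|\overline{\theta}^\lambda_s-\overline{\theta}^\lambda_{\lfrf{s}}|$ (respectively its $q$-th power for $\Upsilon$), whose moments are supplied by Lemma~\ref{lem:oserroralg}; crucially, applying \eqref{lem:oserroralgineq1}--\eqref{lem:oserroralgineq2} with $p=q$ produces the factor $\lambda^{q}$ responsible for the sharp exponent $\lambda^{2+q}$ in the $\Upsilon$-difference inequality, since $\E[|\overline{\theta}^\lambda_s-\overline{\theta}^\lambda_{\lfrf{s}}|^{2q}]\lesssim\lambda^{q}$, whereas the other continuity brackets use $p=1$ and yield $\lambda^{2}$. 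The second (taming-error) bracket is handled using the explicit form $f-f_\lambda = f\bigl(1-(1+\lambda^{3/2}|\cdot|^{3(\rho+q-1)})^{-1/3}\bigr)$ together with the elementary inequality $1-(1+x)^{-1/3}\le \tfrac13 x$; this produces a factor $\lambda^{3/2}$ against a higher polynomial moment, hence a contribution of order $\lambda^{3}$ which is strictly dominated by the continuity part (as $q\le 1<3$) and may be absorbed. Verifying that the moment budget suffices for these high-degree products, and that each taming-error term is genuinely of higher order so that the announced powers $\lambda^2$, $\lambda^{2+q}$ and $\lambda^3$ are attained rather than merely upper-bounded by the wrong order, is the delicate bookkeeping the proof must carry out; collecting the resulting constants then yields the explicit expressions for $\overline{\cC}_{\mathbf{A}2}$ and $\widetilde{\cC}_{\mathbf{A}2}$.
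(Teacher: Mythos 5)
Your proposal is correct and follows essentially the same route as the paper's own proof: the same splitting of each difference into a continuity bracket (bounded via Remark~\ref{rmk:growthc} and the one-step displacement moments of Lemma~\ref{lem:oserroralg}) plus a taming-error bracket (your elementary bound $1-(1+x)^{-1/3}\le x/3$ is exactly the paper's estimate \eqref{eq:graditoestint1} up to the constant), combined with Cauchy--Schwarz/It\^o isometry, the moment bounds of Lemmas~\ref{lem:2ndpthmmt} and~\ref{lem:zetaprocme}, and the same extraction of the decay factor from $\lambda\lfrf{t}\geq n/2$. The only slip is cosmetic: after Cauchy--Schwarz against the polynomial prefactor, the H\"older bracket requires the $4q$-th displacement moment (Lemma~\ref{lem:oserroralg} with $p=2q$, giving $(\lambda^{2q})^{1/2}=\lambda^{q}$) rather than the $2q$-th, and likewise $p=2$ rather than $p=1$ for the Lipschitz brackets; the resulting powers of $\lambda$ are unchanged.
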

\begin{proof} We note that, by using \eqref{eq:taming}, for any $f: \R^d \rightarrow \R^{i\times j}$, $i.j \in \N$, $\theta \in \R^d$,
\begin{equation}\label{eq:graditoestint1}
|f(\theta) - f_{\lambda}(\theta)|=\left| \frac{\left((1+\lambda^{3/2}|\theta|^{3(\rho+q-1)})^{1/3}  -1\right) f(\theta)}{(1+\lambda^{3/2}|\theta|^{3(\rho+q-1)})^{1/3}} \right|\leq \lambda^{3/2}|\theta|^{3(\rho+q-1)}| f(\theta)|.
\end{equation}
The inequalities can be obtained by using the following arguments:
\begin{enumerate}
\item To show that the first inequality holds, by using Remark \ref{rmk:growthc} and \eqref{eq:graditoestint1}, we have that
\begin{align*}
&\E\left[\left|-\lambda \int_{\lfrf{t}}^t \left(H(\overline{\theta}^{\lambda}_s) - H_{\lambda}(\overline{\theta}^{\lambda}_{\lfrf{s}})\right)h_{\lambda}( \overline{\theta}^{\lambda}_{\lfrf{s}})\,\rmd s\right|^2\right] \\
&\leq \lambda^2 \int_{\lfrf{t}}^t \E\left[|H(\overline{\theta}^{\lambda}_s) - H_{\lambda}(\overline{\theta}^{\lambda}_{\lfrf{s}})|^2|h_{\lambda}( \overline{\theta}^{\lambda}_{\lfrf{s}}) |^2\right]  \,\rmd s\\
&\leq 2\lambda^2 \int_{\lfrf{t}}^t \E\left[|H(\overline{\theta}^{\lambda}_s) - H (\overline{\theta}^{\lambda}_{\lfrf{s}})|_{\cF}^2|h_{\lambda}( \overline{\theta}^{\lambda}_{\lfrf{s}}) |^2\right]  \,\rmd s\\
&\quad + 2\lambda^2 \int_{\lfrf{t}}^t \E\left[|H(\overline{\theta}^{\lambda}_{\lfrf{s}}) - H_{\lambda}(\overline{\theta}^{\lambda}_{\lfrf{s}})|^2|h_{\lambda}( \overline{\theta}^{\lambda}_{\lfrf{s}}) |^2\right]  \,\rmd s\\
&\leq 2\lambda^2 \int_{\lfrf{t}}^t \E\left[d\cK_0^2(1+|\overline{\theta}^{\lambda}_s|+|\overline{\theta}^{\lambda}_{\lfrf{s}}|)^{2(\rho+q-2)}|\overline{\theta}^{\lambda}_s - \overline{\theta}^{\lambda}_{\lfrf{s}}|^2K_h^2(1+|\overline{\theta}^{\lambda}_{\lfrf{s}}|^{\rho+q})^2\right]  \,\rmd s\\
&\quad + 2\lambda^2 \int_{\lfrf{t}}^t \E\left[ \lambda^{3}|\overline{\theta}^{\lambda}_{\lfrf{s}}|^{6(\rho+q-1)} K_H^2(1+|\overline{\theta}^{\lambda}_{\lfrf{s}}|^{\rho+q-1})^2 K_h^2(1+|\overline{\theta}^{\lambda}_{\lfrf{s}}|^{\rho+q})^2\right]  \,\rmd s\\
&\leq  2\lambda^2d\cK_0^2K_h^2 \int_{\lfrf{t}}^t3^{8(\rho+1)} \left(\E\left[1+|\overline{\theta}^{\lambda}_s|^{16(\rho+1)}+|\overline{\theta}^{\lambda}_{\lfrf{s}}|^{16(\rho+1)}\right] \right)^{1/2} \\
&\qquad \times\left(\E\left[|\overline{\theta}^{\lambda}_s - \overline{\theta}^{\lambda}_{\lfrf{s}}|^4\right] \right)^{1/2} \,\rmd s\\
&\quad +\lambda^5 K_H^2K_h^2 \int_{\lfrf{t}}^t 2^{16(\rho+1)}\E\left[  1+|\overline{\theta}^{\lambda}_{\lfrf{s}}|^{16(\rho+1)} \right]  \,\rmd s\\
&\leq  \lambda^23^{8(\rho+1)+1} d\cK_0^2K_h^2\left(2e^{-\lambda \ca_\cD \kappa \lfrf{t}}\E\left[|\theta_0|^{16(\rho+1)}\right]  +2 \cc_{8(\rho+1)}\left(1+1/(\ca_\cD\kappa)\right)+1\right)^{1/2}\\
&\qquad \times \lambda \left(e^{-  \lambda\ca_\cD\kappa \lfrf{t}}\overline{\cC}_{\mathbf{A}0,2}\E[|\theta_0|^{8(\rho+1)}]+\widetilde{\cC}_{\mathbf{A}0,2} \right)^{1/2} \\
&\quad + \lambda^5 2^{16(\rho+1)}  K_H^2K_h^2\left(e^{-\lambda \ca_\cD \kappa \lfrf{t}}\E\left[|\theta_0|^{16(\rho+1)}\right]  + \cc_{8(\rho+1)}\left(1+1/(\ca_\cD\kappa)\right)+1\right)\\
&\leq \lambda^33^{8(\rho+1)+1} d\cK_0^2K_h^2\left(e^{-\lambda \ca_\cD \kappa \lfrf{t}}(2+\overline{\cC}_{\mathbf{A}0,2})\E\left[|\theta_0|^{16(\rho+1)}\right]  \right.\\
&\qquad \left.+2 \cc_{8(\rho+1)}\left(1+1/(\ca_\cD\kappa)\right)+2\widetilde{\cC}_{\mathbf{A}0,2}+1\right)\\
&\quad +\lambda^32^{16(\rho+1)}  K_H^2K_h^2\left(e^{-\lambda \ca_\cD \kappa \lfrf{t}}\E\left[|\theta_0|^{16(\rho+1)}\right]  + \cc_{8(\rho+1)}\left(1+1/(\ca_\cD\kappa)\right)+1\right)\\
&\leq \lambda^3\left(e^{-  \ca_\cD \kappa n/2}\overline{\cC}_{\mathbf{A}2}\E\left[|\theta_0|^{16(\rho+1)}\right]  + \widetilde{\cC}_{\mathbf{A}2}\right),
\end{align*}
where the fifth inequality holds by applying Lemma \ref{lem:2ndpthmmt} and \ref{lem:oserroralg}, the second last inequality holds by using $\overline{\cC}_{\mathbf{A}0,2}<\widetilde{\cC}_{\mathbf{A}0,2}$, the last inequality holds due to $\lambda \lfrf{t}\geq \lambda nT \geq n/2$, and where $\kappa$ is given in \eqref{eq:2ndmmtexpconst},
\begin{align}\label{eq:graditoestconst}
\begin{split}
\overline{\cC}_{\mathbf{A}2}&:=3^{24(\rho+1)}d^3(1+\cK_0)^2(1+K_h)^2(1+K_H)^4(1+\beta^{-1})^2(1+L)^2(1+ \cK_{3,d})^2\\
&\qquad \times \left(2+\max\{\overline{\cC}_{\mathbf{A}0,2},\overline{\cC}_{\mathbf{A}0,2q},\overline{\cC}_{\mathbf{A}1,2},\overline{\cC}_{\mathbf{A}0,2+2q}\}\right)\\
\widetilde{\cC}_{\mathbf{A}2}&:=3^{24(\rho+1)}d^3(1+\cK_0)^2(1+K_h)^2(1+K_H)^4(1+\beta^{-1})^2(1+L)^2(1+ \cK_{3,d})^2\\
&\qquad \times \Big(2 \cc_{8(\rho+1)}\left(1+1/(\ca_\cD\kappa)\right)+2\max\{\widetilde{\cC}_{\mathbf{A}0,2},\widetilde{\cC}_{\mathbf{A}0,2q},\widetilde{\cC}_{\mathbf{A}1,2},\widetilde{\cC}_{\mathbf{A}0,2+2q}\}+1 \Big.\\
&\qquad \Big. +\mathrm{v}_{16(\rho+1)}(\cM_V(16(\rho+1)))\Big)
\end{split}
\end{align}
with $\overline{\cC}_{\mathbf{A}0,p}$, $\widetilde{\cC}_{\mathbf{A}0,p}$, $\overline{\cC}_{\mathbf{A}1,p}$, $\widetilde{\cC}_{\mathbf{A}1,p}$, $p>0$, given in \eqref{eq:oserroralgconst} and \eqref{eq:oserrorauxconst}, and $ \cc_p$, $\cM_V(p)$, $p\in [2, \infty)\cap {\N}$ given in \eqref{eq:2pthmmtexpconst} (see also Lemma \ref{lem:2ndpthmmt}) and Lemma \ref{lem:driftcon}.
\item To establish the second inequality, we apply Lemma \ref{lem:2ndpthmmt} to obtain
\begin{align*}
&\E\left[\left| \lambda^2 \int_{\lfrf{t}}^t   H(\overline{\theta}^{\lambda}_s) \int_{\lfrf{s}}^s  H_{\lambda}(\overline{\theta}^{\lambda}_{\lfrf{r}}))h_{\lambda}( \overline{\theta}^{\lambda}_{\lfrf{r}}) \, \rmd r \,\rmd s\right|^2\right] \\
&\leq \lambda^4 \int_{\lfrf{t}}^t\E\left[|H(\overline{\theta}^{\lambda}_s)|^2|H_{\lambda}(\overline{\theta}^{\lambda}_{\lfrf{s}}))|^2|h_{\lambda}( \overline{\theta}^{\lambda}_{\lfrf{s}})|^2\right]\,\rmd s\\ 
&\leq \lambda^4 \int_{\lfrf{t}}^t\E\left[K_H^4(1+| \overline{\theta}^{\lambda}_s|^{\rho+q-1})^2(1+| \overline{\theta}^{\lambda}_{\lfrf{s}}|^{\rho+q-1})^2K_h^2(1+| \overline{\theta}^{\lambda}_{\lfrf{s}}|^{\rho+q})^2\right]\,\rmd s\\
&\leq \lambda^4K_H^4K_h^2 \int_{\lfrf{t}}^t3^{16(\rho+1)}\E\left[1+| \overline{\theta}^{\lambda}_s|^{16(\rho+1)}+| \overline{\theta}^{\lambda}_{\lfrf{s}}|^{16(\rho+1)}\right]\,\rmd s\\
&\leq \lambda^33^{16(\rho+1)}K_H^4K_h^2 \left(2e^{-\lambda \ca_\cD \kappa \lfrf{t}}\E\left[|\theta_0|^{16(\rho+1)}\right]  +2 \cc_{8(\rho+1)}\left(1+1/(\ca_\cD\kappa)\right)+1\right)\\
&\leq \lambda^3\left(e^{- \ca_\cD  \kappa n/2} \overline{\cC}_{\mathbf{A}2}\E[|\theta_0|^{16(\rho+1)}] +\widetilde{\cC}_{\mathbf{A}2} \right),
\end{align*} 
where $\overline{\cC}_{\mathbf{A}2}$ and $\widetilde{\cC}_{\mathbf{A}2}$ are given in \eqref{eq:graditoestconst}.
\item To obtain the third inequality, we apply Lemma \ref{lem:2ndpthmmt} and write
\begin{align*}
&\E\left[\left| -\lambda^2\beta^{-1} \int_{\lfrf{t}}^t   H(\overline{\theta}^{\lambda}_s) \int_{\lfrf{s}}^s  \Upsilon_{\lambda}( \overline{\theta}^{\lambda}_{\lfrf{r}}) \, \rmd r \,\rmd s\right|^2\right] \\
&\leq \lambda^4\beta^{-2} \int_{\lfrf{t}}^t \E\left[ |H(\overline{\theta}^{\lambda}_s)|^2| \Upsilon_{\lambda}( \overline{\theta}^{\lambda}_{\lfrf{s}})  |^2\right]  \,\rmd s\\
&\leq \lambda^4\beta^{-2} \int_{\lfrf{t}}^t \E\left[K_H^2(1+|\overline{\theta}^{\lambda}_s|^{\rho+q-1})^2  \cK_{3,d}^2(1+|\overline{\theta}^{\lambda}_{\lfrf{s}}|)^{2(\rho+q-2)} \right]  \,\rmd s\\
&\leq \lambda^4\beta^{-2} K_H^2\cK_{3,d}^2\int_{\lfrf{t}}^t 3^{16(\rho+1)}\E\left[1+| \overline{\theta}^{\lambda}_s|^{16(\rho+1)}+| \overline{\theta}^{\lambda}_{\lfrf{s}}|^{16(\rho+1)}\right]  \,\rmd s\\
&\leq  \lambda^3  3^{16(\rho+1)} \beta^{-2} K_H^2\cK_{3,d}^2\left(2e^{-\lambda \ca_\cD \kappa \lfrf{t}}\E\left[|\theta_0|^{16(\rho+1)}\right]  +2 \cc_{8(\rho+1)}\left(1+1/(\ca_\cD\kappa)\right)+1\right)\\
&\leq \lambda^3\left(e^{- \ca_\cD  \kappa n/2} \overline{\cC}_{\mathbf{A}2}\E[|\theta_0|^{16(\rho+1)}] +\widetilde{\cC}_{\mathbf{A}2} \right),
\end{align*}
where $\overline{\cC}_{\mathbf{A}2}$ and $\widetilde{\cC}_{\mathbf{A}2}$ are given in \eqref{eq:graditoestconst}.
\item To obtain the fourth inequality, we use Cauchy-Schwarz inequality and Lemma \ref{lem:2ndpthmmt}:
\begin{align*}
&\E\left[\left| -\lambda\sqrt{2\lambda\beta^{-1}} \int_{\lfrf{t}}^t   H(\overline{\theta}^{\lambda}_s) \int_{\lfrf{s}}^s  H_{\lambda}( \overline{\theta}^{\lambda}_{\lfrf{r}}) \, \rmd B_r^{\lambda} \,\rmd s\right|^2\right] \\
&\leq 2\lambda^3\beta^{-1}  \int_{\lfrf{t}}^t  \E\left[ |  H(\overline{\theta}^{\lambda}_s) |^2|H_{\lambda}( \overline{\theta}^{\lambda}_{\lfrf{s}}) |^2\left|\int_{\lfrf{s}}^s  \, \rmd B_r^{\lambda}  \right|^2\right] \,\rmd s\\
&\leq 2\lambda^3\beta^{-1}  \int_{\lfrf{t}}^t  \E\left[K_H^4(1+|\overline{\theta}^{\lambda}_s |^{\rho+q-1})^2(1+|\overline{\theta}^{\lambda}_{\lfrf{s}} |^{\rho+q-1})^2 \left|\int_{\lfrf{s}}^s  \, \rmd B_r^{\lambda}  \right|^2\right] \,\rmd s\\
&\leq 2\lambda^3\beta^{-1} K_H^4 \int_{\lfrf{t}}^t 3^{8(\rho+1)} \left( \E\left[1+|\overline{\theta}^{\lambda}_s |^{16(\rho+1)}+|\overline{\theta}^{\lambda}_{\lfrf{s}} |^{16(\rho+1)} \right]\right)^{1/2}\\
&\qquad \times \left(\E\left[ \left|\int_{\lfrf{s}}^s  \, \rmd B_r^{\lambda}  \right|^4\right]\right)^{1/2}\,\rmd s\\
&\leq \lambda^3\beta^{-1} K_H^43^{8(\rho+1)+2}(d+4)\left(2e^{-\lambda \ca_\cD \kappa \lfrf{t}}\E\left[|\theta_0|^{16(\rho+1)}\right]  +2 \cc_{8(\rho+1)}\left(1+1/(\ca_\cD\kappa)\right)+1\right) \\
&\leq \lambda^3\left(e^{- \ca_\cD  \kappa n/2} \overline{\cC}_{\mathbf{A}2}\E[|\theta_0|^{16(\rho+1)}] +\widetilde{\cC}_{\mathbf{A}2} \right),
\end{align*}
where $\overline{\cC}_{\mathbf{A}2}$ and $\widetilde{\cC}_{\mathbf{A}2}$ are given in \eqref{eq:graditoestconst}.
\item\label{item:graditoestfifth} To obtain the fifth inequality, we apply Remark \ref{rmk:growthc}, \eqref{eq:graditoestint1}, and  Cauchy-Schwarz inequality:
\begin{align*} 
&\E\left[\left| \sqrt{2\lambda\beta^{-1}} \int_{\lfrf{t}}^t   \left(H(\overline{\theta}^{\lambda}_s) -  H_{\lambda}( \overline{\theta}^{\lambda}_{\lfrf{s}})\right)   \,\rmd B_s^{\lambda}\right|^2\right] \\
& =  2\lambda\beta^{-1}  \int_{\lfrf{t}}^t \E\left[ |   H(\overline{\theta}^{\lambda}_s) -  H_{\lambda}( \overline{\theta}^{\lambda}_{\lfrf{s}})  |_{\cF}^2\right]    \,\rmd s\\
&\leq 4\lambda\beta^{-1}  \int_{\lfrf{t}}^t \E\left[ |   H(\overline{\theta}^{\lambda}_s) -  H ( \overline{\theta}^{\lambda}_{\lfrf{s}})  |_{\cF}^2\right]    \,\rmd s + 4\lambda\beta^{-1}  \int_{\lfrf{t}}^t \E\left[ |  H ( \overline{\theta}^{\lambda}_{\lfrf{s}})-  H_{\lambda}( \overline{\theta}^{\lambda}_{\lfrf{s}})  |_{\cF}^2\right]    \,\rmd s \\
&\leq 4\lambda\beta^{-1}  \int_{\lfrf{t}}^t \E\left[ d\cK_0^2(1+| \overline{\theta}^{\lambda}_s|+| \overline{\theta}^{\lambda}_{\lfrf{s}} |)^{2(\rho+q-2)}|    \overline{\theta}^{\lambda}_s  -  \overline{\theta}^{\lambda}_{\lfrf{s}}  |^2\right]    \,\rmd s\\
&\quad  + 4\lambda\beta^{-1}  \int_{\lfrf{t}}^t \E\left[  \lambda^{3}|\overline{\theta}^{\lambda}_{\lfrf{s}}|^{6(\rho+q-1)} dK_H^2(1+|\overline{\theta}^{\lambda}_{\lfrf{s}}|^{\rho+q-1})^2\right]    \,\rmd s \\
&\leq 4\lambda\beta^{-1}  d\cK_0^2 \int_{\lfrf{t}}^t 3^{8(\rho+1)} \left(\E\left[ 1+| \overline{\theta}^{\lambda}_s|^{16(\rho+1)} +| \overline{\theta}^{\lambda}_{\lfrf{s}} |^{16(\rho+1)}  \right] \right)^{1/2}  \\
&\qquad \times \left(\E\left[  |    \overline{\theta}^{\lambda}_s  -  \overline{\theta}^{\lambda}_{\lfrf{s}}  |^4\right] \right)^{1/2}  \,\rmd s\\
&\quad  + 4\lambda^4\beta^{-1} dK_H^2 \int_{\lfrf{t}}^t 2^{16(\rho+1)} \E\left[   1+|\overline{\theta}^{\lambda}_{\lfrf{s}}|^{16(\rho+1)}\right]    \,\rmd s \\
&\leq  \lambda\beta^{-1}  d\cK_0^2 3^{8(\rho+1)+2} \left(2e^{-\lambda \ca_\cD \kappa \lfrf{t}}\E\left[|\theta_0|^{16(\rho+1)}\right]  +2 \cc_{8(\rho+1)}\left(1+1/(\ca_\cD\kappa)\right)+1\right)^{1/2} \\
&\qquad \times  \lambda \left(e^{-  \lambda\ca_\cD\kappa \lfrf{t}}\overline{\cC}_{\mathbf{A}0,2}\E[|\theta_0|^{8(\rho+1)}]+\widetilde{\cC}_{\mathbf{A}0,2} \right)^{1/2} \\
&\quad +\lambda^4 \beta^{-1} dK_H^2 2^{16(\rho+1)+2} \left( e^{-\lambda \ca_\cD \kappa \lfrf{t}}\E\left[|\theta_0|^{16(\rho+1)}\right]  +  \cc_{8(\rho+1)}\left(1+1/(\ca_\cD\kappa)\right)+1\right) \\
&\leq  \lambda^2\beta^{-1}  d\cK_0^2 3^{8(\rho+1)+2} \left(e^{-\lambda \ca_\cD \kappa \lfrf{t}}(2+\overline{\cC}_{\mathbf{A}0,2})\E\left[|\theta_0|^{16(\rho+1)}\right]  \right.\\
&\qquad \left. +2 \cc_{8(\rho+1)}\left(1+1/(\ca_\cD\kappa)\right)+2\widetilde{\cC}_{\mathbf{A}0,2} +1\right)  \\ 
&\quad  +\lambda^2 \beta^{-1} dK_H^2 2^{16(\rho+1)+2} \left( e^{-\lambda \ca_\cD \kappa \lfrf{t}}\E\left[|\theta_0|^{16(\rho+1)}\right]  +  \cc_{8(\rho+1)}\left(1+1/(\ca_\cD\kappa)\right)+1\right) \\
&\leq \lambda^2\left(e^{- \ca_\cD  \kappa n/2} \overline{\cC}_{\mathbf{A}2}\E[|\theta_0|^{16(\rho+1)}] +\widetilde{\cC}_{\mathbf{A}2} \right),
\end{align*}
where the fourth inequality holds by applying Lemma \ref{lem:2ndpthmmt} and \ref{lem:oserroralg}, and where $\overline{\cC}_{\mathbf{A}2}$ and $\widetilde{\cC}_{\mathbf{A}2}$ are given in \eqref{eq:graditoestconst}.
\item To obtain the sixth inequality, we use the same arguments as in \ref{item:graditoestfifth}:
\begin{align*} 
&\E\left[\left| \sqrt{2\lambda\beta^{-1}} \int_{\lfrf{t}}^t   \left(H(\overline{\zeta}^{\lambda, n}_s) -  H ( \overline{\zeta}^{\lambda, n}_{\lfrf{s}}) \right)  \,\rmd B_s^{\lambda}\right|^2\right] \\
& =  2\lambda\beta^{-1}  \int_{\lfrf{t}}^t \E\left[ | H(\overline{\zeta}^{\lambda, n}_s) -  H ( \overline{\zeta}^{\lambda, n}_{\lfrf{s}})   |_{\cF}^2\right]    \,\rmd s\\
& \leq  2\lambda\beta^{-1}  \int_{\lfrf{t}}^t \E\left[d\cK_0^2(1+|\overline{\zeta}^{\lambda, n}_s|+|\overline{\zeta}^{\lambda, n}_{\lfrf{s}}|)^{2(\rho+q-2)} |\overline{\zeta}^{\lambda, n}_s- \overline{\zeta}^{\lambda, n}_{\lfrf{s}}|^2\right]    \,\rmd s\\
& \leq  2\lambda\beta^{-1}d\cK_0^2  \int_{\lfrf{t}}^t 3^{8(\rho+1)}\left( \E\left[1+|\overline{\zeta}^{\lambda, n}_s|^{16(\rho+1)}+|\overline{\zeta}^{\lambda, n}_{\lfrf{s}}|^{16(\rho+1)}   \right]  \right)^{1/2}\\
&\qquad \times \left( \E\left[ |\overline{\zeta}^{\lambda, n}_s- \overline{\zeta}^{\lambda, n}_{\lfrf{s}}|^4\right]  \right)^{1/2} \,\rmd s\\
&\leq  \lambda\beta^{-1}d\cK_0^23^{8(\rho+1)+1}\left(2^{8(\rho+1)}e^{-\lambda\ca_\cD \min\{ \kappa, 1 /2\} \lfrf{t}}\E[|\theta_0|^{16(\rho+1)}] \right.\\
&\qquad \left.+ 2^{8(\rho+1)}\left( \cc_{8(\rho+1)}\left(1+1/(\ca_\cD\kappa)\right)+1\right)+6\mathrm{v}_{16(\rho+1)}(\cM_V(16(\rho+1)))\right)^{1/2}\\
&\qquad \times \lambda\left(e^{-  \lambda\ca_\cD\min\{ \kappa, 1 /2\} \lfrf{t}}\overline{\cC}_{\mathbf{A}1,2}\E[|\theta_0|^{4(\rho+1)}]+\widetilde{\cC}_{\mathbf{A}1,2} \right)^{1/2}\\
&\leq  \lambda^2\beta^{-1}d\cK_0^23^{16(\rho+1)}\left(e^{-\lambda\ca_\cD \min\{ \kappa, 1 /2\} \lfrf{t}}(1+\overline{\cC}_{\mathbf{A}1,2})\E[|\theta_0|^{16(\rho+1)}] \right.\\
&\qquad \left.+  \cc_{8(\rho+1)}\left(1+1/(\ca_\cD\kappa)\right)+1 +2\widetilde{\cC}_{\mathbf{A}1,2}+\mathrm{v}_{16(\rho+1)}(\cM_V(16(\rho+1)))\right)\\
&\leq \lambda^2\left(e^{- \ca_\cD \min\{ \kappa, 1 /2\} n/2} \overline{\cC}_{\mathbf{A}2}\E[|\theta_0|^{16(\rho+1)}] +\widetilde{\cC}_{\mathbf{A}2} \right),
\end{align*}
where the third inequality holds due to Lemma \ref{lem:zetaprocme} and \ref{lem:oserroralg}, the fourth inequality holds due to  $\overline{\cC}_{\mathbf{A}1,2}<\widetilde{\cC}_{\mathbf{A}1,2}$, and where $\overline{\cC}_{\mathbf{A}2}$ and $\widetilde{\cC}_{\mathbf{A}2}$ are given in \eqref{eq:graditoestconst}.
\item To establish the seventh inequality, we apply Remark \ref{rmk:growthc}, \eqref{eq:graditoestint1}, and  Cauchy-Schwarz inequality to obtain
\begin{align*} 
&\E\left[\left| \lambda\beta^{-1} \int_{\lfrf{t}}^t  \left( \Upsilon(\overline{\theta}^{\lambda}_s) - \Upsilon_{\lambda}( \overline{\theta}^{\lambda}_{\lfrf{s}})  \right) \,\rmd s\right|^2\right] \\
&\leq \lambda^2\beta^{-2} \int_{\lfrf{t}}^t \E\left[ |   \Upsilon(\overline{\theta}^{\lambda}_s) - \Upsilon_{\lambda}( \overline{\theta}^{\lambda}_{\lfrf{s}})  |^2\right]    \,\rmd s\\
&\leq 2\lambda^2\beta^{-2} \int_{\lfrf{t}}^t \E\left[ |   \Upsilon(\overline{\theta}^{\lambda}_s) - \Upsilon ( \overline{\theta}^{\lambda}_{\lfrf{s}})  |^2\right]    \,\rmd s +2\lambda^2\beta^{-2} \int_{\lfrf{t}}^t \E\left[ |  \Upsilon ( \overline{\theta}^{\lambda}_{\lfrf{s}}) - \Upsilon_{\lambda}( \overline{\theta}^{\lambda}_{\lfrf{s}})  |^2\right]    \,\rmd s \\
&\leq 2\lambda^2\beta^{-2} \int_{\lfrf{t}}^t \E\left[ d^3L^2(1+|\overline{\theta}^{\lambda}_s|+|\overline{\theta}^{\lambda}_{\lfrf{s}}|)^{2(\rho-2)}|\overline{\theta}^{\lambda}_s- \overline{\theta}^{\lambda}_{\lfrf{s}}|^{2q} \right]    \,\rmd s \\
&\quad +2\lambda^2\beta^{-2} \int_{\lfrf{t}}^t \E\left[  \lambda^3|\overline{\theta}^{\lambda}_{\lfrf{s}}|^{6(\rho+q-1)} \cK_{3,d}^2(1+|\overline{\theta}^{\lambda}_{\lfrf{s}}|)^{2(\rho+q-2)} \right]    \,\rmd s \\
&\leq  2\lambda^2\beta^{-2}d^3L^2 \int_{\lfrf{t}}^t 3^{8(\rho+1)}\left(\E\left[ 1+|\overline{\theta}^{\lambda}_s|^{16(\rho+1)}+|\overline{\theta}^{\lambda}_{\lfrf{s}}|^{16(\rho+1)} \right]\right)^{1/2}  \\
&\qquad \times\left(\E\left[ |\overline{\theta}^{\lambda}_s- \overline{\theta}^{\lambda}_{\lfrf{s}}|^{4q} \right]\right)^{1/2}   \,\rmd s \\
&\quad +2\lambda^5\beta^{-2}\cK_{3,d}^2 \int_{\lfrf{t}}^t2^{16(\rho+1)} \E\left[  1+|\overline{\theta}^{\lambda}_{\lfrf{s}}|^{16(\rho+1)} \right]    \,\rmd s \\
&\leq \lambda^2\beta^{-2}d^3L^2 3^{8(\rho+1)+1} \left(2e^{-\lambda \ca_\cD \kappa \lfrf{t}}\E\left[|\theta_0|^{16(\rho+1)}\right]  +2 \cc_{8(\rho+1)}\left(1+1/(\ca_\cD\kappa)\right)+1\right)^{1/2} \\
&\qquad \times  \lambda^q \left(e^{-  \lambda\ca_\cD\kappa \lfrf{t}}\overline{\cC}_{\mathbf{A}0,2q}\E[|\theta_0|^{4\lcrc{2q}(\rho+1)}]+\widetilde{\cC}_{\mathbf{A}0,2q} \right)^{1/2} \\
&\quad + \lambda^5\beta^{-2}\cK_{3,d}^22^{16(\rho+1)+1} \left( e^{-\lambda \ca_\cD \kappa \lfrf{t}}\E\left[|\theta_0|^{16(\rho+1)}\right]  +  \cc_{8(\rho+1)}\left(1+1/(\ca_\cD\kappa)\right)+1\right)\\
&\leq \lambda^{2+q}\beta^{-2}d^3L^2 3^{8(\rho+1)+1} \\
&\qquad \times\left( e^{-\lambda \ca_\cD \kappa \lfrf{t}}(2+\overline{\cC}_{\mathbf{A}0,2q})\E\left[|\theta_0|^{16(\rho+1)}\right]  +2 \cc_{8(\rho+1)}\left(1+1/(\ca_\cD\kappa)\right)+2\widetilde{\cC}_{\mathbf{A}0,2q}+1\right)  \\
&\quad + \lambda^{2+q}\beta^{-2}\cK_{3,d}^22^{16(\rho+1)+1} \left( e^{-\lambda \ca_\cD \kappa \lfrf{t}}\E\left[|\theta_0|^{16(\rho+1)}\right]  +  \cc_{8(\rho+1)}\left(1+1/(\ca_\cD\kappa)\right)+1\right)\\
&\leq \lambda^{2+q}\left(e^{- \ca_\cD  \kappa n/2} \overline{\cC}_{\mathbf{A}2}\E[|\theta_0|^{16(\rho+1)}] +\widetilde{\cC}_{\mathbf{A}2} \right),
\end{align*}
where the fourth inequality holds due to Lemma \ref{lem:2ndpthmmt} and \ref{lem:oserroralg}, and $\overline{\cC}_{\mathbf{A}2}$, $\widetilde{\cC}_{\mathbf{A}2}$ are given in \eqref{eq:graditoestconst}.
\item\label{item:graditoesteighth} To obtain the eighth inequality, we apply Remark \ref{rmk:growthc} and write
\begin{align*} 
&\E\left[\left| -\lambda  \int_{\lfrf{t}}^t H(\overline{\theta}^{\lambda}_s) h_{\lambda}(\overline{\theta}^{\lambda}_{\lfrf{s}})\,\rmd s\right|^2+\left| \lambda\beta^{-1}\int_{\lfrf{t}}^t\Upsilon (\overline{\theta}^{\lambda}_s)\,\rmd s\right|^2\right]  \\
&\leq \lambda^2  \int_{\lfrf{t}}^t\E\left[|H(\overline{\theta}^{\lambda}_s)|^2| h(\overline{\theta}^{\lambda}_{\lfrf{s}})|^2\right]\, \rmd s + \lambda^2 \beta^{-2} \int_{\lfrf{t}}^t\E\left[|\Upsilon (\overline{\theta}^{\lambda}_s)|^2\right]\, \rmd s\\
&\leq \lambda^2  \int_{\lfrf{t}}^t\E\left[K_H^2(1+|\overline{\theta}^{\lambda}_s|^{\rho+q-1})^2K_h^2(1+|\overline{\theta}^{\lambda}_{\lfrf{s}}|^{\rho+q})^2\right]\, \rmd s \\
&\quad + \lambda^2 \beta^{-2} \int_{\lfrf{t}}^t\E\left[\cK_{3,d}^2(1+|\overline{\theta}^{\lambda}_s|)^{2(\rho+q-2)} \right]\, \rmd s\\
&\leq \lambda^2 (1+K_H)^2(1+K_h)^2 (1+\beta^{-1})^2(1+\cK_{3,d})^23^{16(\rho+1)}\\
&\qquad \times \int_{\lfrf{t}}^t\E\left[ 1+|\overline{\theta}^{\lambda}_s|^{16(\rho+1)} +|\overline{\theta}^{\lambda}_{\lfrf{s}}|^{16(\rho+1)} \right]\, \rmd s\\
&\leq \lambda^2 (1+K_H)^2(1+K_h)^2 (1+\beta^{-1})^2(1+\cK_{3,d})^23^{16(\rho+1)} \\
&\qquad \times \left( 2e^{-\lambda \ca_\cD \kappa \lfrf{t}}\E\left[|\theta_0|^{16(\rho+1)}\right]  +  2\cc_{8(\rho+1)}\left(1+1/(\ca_\cD\kappa)\right)+1\right)\\
&\leq \lambda^2\left(e^{- \ca_\cD  \kappa n/2} \overline{\cC}_{\mathbf{A}2}\E[|\theta_0|^{16(\rho+1)}] +\widetilde{\cC}_{\mathbf{A}2} \right),
\end{align*}
where the fourth inequality holds due to Lemma \ref{lem:2ndpthmmt}, and where $\overline{\cC}_{\mathbf{A}2}$, $\widetilde{\cC}_{\mathbf{A}2}$ are given in \eqref{eq:graditoestconst}.
\item To establish the last inequality, we follow the arguments in \ref{item:graditoesteighth}:
\begin{align*} 
&\E\left[\left|  -\lambda  \int_{\lfrf{t}}^t H(\overline{\zeta}^{\lambda, n}_s) h ( \overline{\zeta}^{\lambda, n}_s)\,\rmd s  \right|^2+\left| \lambda\beta^{-1}\int_{\lfrf{t}}^t\Upsilon (\overline{\zeta}^{\lambda, n}_s)\rmd s\right|^2\right]  \\
&\leq \lambda^2 (1+K_H)^2(1+K_h)^2 (1+\beta^{-1})^2(1+\cK_{3,d})^23^{16(\rho+1)}\\
&\qquad \times \int_{\lfrf{t}}^t\E\left[ 1+|\overline{\zeta}^{\lambda, n}_s|^{16(\rho+1)} +|\overline{\zeta}^{\lambda, n}_{\lfrf{s}}|^{16(\rho+1)} \right]\, \rmd s\\
&\leq \lambda^2 (1+K_H)^2(1+K_h)^2 (1+\beta^{-1})^2(1+\cK_{3,d})^23^{16(\rho+1)} \\
&\qquad \times \left(2^{8(\rho+1)}e^{-\lambda\ca_\cD \min\{ \kappa, 1 /2\} \lfrf{t}}\E[|\theta_0|^{16(\rho+1)}] \right.\\
&\qquad \left.+ 2^{8(\rho+1)}\left( \cc_{8(\rho+1)}\left(1+1/(\ca_\cD\kappa)\right)+1\right)+6\mathrm{v}_{16(\rho+1)}(\cM_V(16(\rho+1)))\right)\\
&\leq \lambda^2\left(e^{- \ca_\cD \min\{ \kappa, 1 /2\} n/2} \overline{\cC}_{\mathbf{A}2}\E[|\theta_0|^{16(\rho+1)}] +\widetilde{\cC}_{\mathbf{A}2} \right),
\end{align*}
where the second inequality holds due to Lemma \ref{lem:zetaprocme}, and where $\overline{\cC}_{\mathbf{A}2}$ and $\widetilde{\cC}_{\mathbf{A}2}$ are given in \eqref{eq:graditoestconst}.
\end{enumerate}
This completes the proof.
\end{proof}

\begin{corollary}\label{cor:graditoub} Let Assumptions \ref{asm:AI}, \ref{asm:ALL}, and \ref{asm:AC} hold. Then, for any $0<\lambda\leq \lambda_{\max}$, $n \in \N_0$, $, t \geq nT$, we obtain the following inequalities:
\begin{align*}
&\E\left[\left| h(\overline{\theta}^{\lambda}_t) - h (\overline{\theta}^{\lambda}_{\lfrf{t}}) + \lambda \int_{\lfrf{t}}^t\left(H_{\lambda}( \overline{\theta}^{\lambda}_{\lfrf{s}})h_{\lambda}( \overline{\theta}^{\lambda}_{\lfrf{s}})-\beta^{-1}\Upsilon_{\lambda}(\overline{\theta}^{\lambda}_{\lfrf{s}}) \right)\,\rmd s\right.\right.\\
&\qquad \left.\left. - \sqrt{2\lambda\beta^{-1}} \int_{{\lfrf{t}}}^t H_{\lambda}(\overline{\theta}_{\lfrf{s}}^{\lambda})\,\rmd B_s^{\lambda}  \right|^2\right]  
\leq \lambda^2\left(e^{- \ca_\cD  \kappa n/2}36 \overline{\cC}_{\mathbf{A}2}\E[|\theta_0|^{16(\rho+1)}] +36\widetilde{\cC}_{\mathbf{A}2} \right),\\
&\E\left[\left| h( \overline{\zeta}^{\lambda, n}_t) - h ( \overline{\zeta}^{\lambda, n}_{\lfrf{t}})   - \sqrt{2\lambda\beta^{-1}} \int_{{\lfrf{t}}}^t H (\overline{\zeta}^{\lambda, n}_{\lfrf{s}})\,\rmd B_s^{\lambda}  -\Bigg(h(\overline{\theta}^{\lambda}_t) - h (\overline{\theta}^{\lambda}_{\lfrf{t}})  \Bigg. \right.\right.\\
&\qquad \left.\left. \Bigg.- \sqrt{2\lambda\beta^{-1}} \int_{{\lfrf{t}}}^t H (\overline{\theta}_{\lfrf{s}}^{\lambda})\,\rmd B_s^{\lambda}  \Bigg)\right|^2\right]  \leq \lambda^2\left(e^{- \ca_\cD \min\{ \kappa, 1 /2\} n/2}72 \overline{\cC}_{\mathbf{A}2}\E[|\theta_0|^{16(\rho+1)}] +72\widetilde{\cC}_{\mathbf{A}2} \right),
\end{align*}
where $\overline{\cC}_{\mathbf{A}2}$ and $\widetilde{\cC}_{\mathbf{A}2}$ are given in \eqref{eq:graditoestconst}.
\end{corollary}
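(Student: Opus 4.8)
The plan is to apply It\^{o}'s formula to $h$ along each of the two processes $(\overline{\theta}^{\lambda}_t)$ and $(\overline{\zeta}^{\lambda, n}_t)$, thereby rewriting each quantity inside the norm as a finite sum of \emph{exactly} the integral terms already estimated in Lemma~\ref{lem:graditoest}, and then to conclude by the elementary inequality $|\sum_{l=1}^m x_l|^2\leq m\sum_{l=1}^m|x_l|^2$ together with $0<\lambda\leq\lambda_{\max}\leq 1$ and $q\in(0,1]$ (so that $\lambda^3\leq\lambda^2$ and $\lambda^{2+q}\leq\lambda^2$). First I would record the It\^{o} expansion: since $h=\nabla U$ has Jacobian $H$ and satisfies $\sum_{j=1}^d\partial^2_{\theta^{(j)}}h^{(i)}=\Upsilon^{(i)}$, and since both \eqref{eq:aholahoproc} and \eqref{eq:auxproc} carry the \emph{constant} diffusion coefficient $\sqrt{2\lambda\beta^{-1}}\cI_d$ (whence $\rmd\langle X^{(j)},X^{(k)}\rangle_t=2\lambda\beta^{-1}\delta_{jk}\,\rmd t$), It\^{o}'s formula gives for $X\in\{\overline{\theta}^{\lambda},\overline{\zeta}^{\lambda, n}\}$ the identity $\rmd h(X_t)=H(X_t)\,\rmd X_t+\lambda\beta^{-1}\Upsilon(X_t)\,\rmd t$. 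Inserting the drift of \eqref{eq:aholahoproc} into $\rmd\overline{\theta}^{\lambda}_t$ and integrating over $[\lfrf{t},t]$ yields
\begin{align*}
h(\overline{\theta}^{\lambda}_t)-h(\overline{\theta}^{\lambda}_{\lfrf{t}})
&=-\lambda\int_{\lfrf{t}}^t H(\overline{\theta}^{\lambda}_s)h_{\lambda}(\overline{\theta}^{\lambda}_{\lfrf{s}})\,\rmd s
+\lambda^2\int_{\lfrf{t}}^t H(\overline{\theta}^{\lambda}_s)\int_{\lfrf{s}}^s\left(H_{\lambda}h_{\lambda}-\beta^{-1}\Upsilon_{\lambda}\right)(\overline{\theta}^{\lambda}_{\lfrf{r}})\,\rmd r\,\rmd s\\
&\quad-\lambda\sqrt{2\lambda\beta^{-1}}\int_{\lfrf{t}}^t H(\overline{\theta}^{\lambda}_s)\int_{\lfrf{s}}^s H_{\lambda}(\overline{\theta}^{\lambda}_{\lfrf{r}})\,\rmd B_r^{\lambda}\,\rmd s
+\sqrt{2\lambda\beta^{-1}}\int_{\lfrf{t}}^t H(\overline{\theta}^{\lambda}_s)\,\rmd B_s^{\lambda}\\
&\quad+\lambda\beta^{-1}\int_{\lfrf{t}}^t\Upsilon(\overline{\theta}^{\lambda}_s)\,\rmd s,
\end{align*}
while the same computation for \eqref{eq:auxproc} gives $h(\overline{\zeta}^{\lambda, n}_t)-h(\overline{\zeta}^{\lambda, n}_{\lfrf{t}})=-\lambda\int_{\lfrf{t}}^t H(\overline{\zeta}^{\lambda, n}_s)h(\overline{\zeta}^{\lambda, n}_s)\,\rmd s+\sqrt{2\lambda\beta^{-1}}\int_{\lfrf{t}}^t H(\overline{\zeta}^{\lambda, n}_s)\,\rmd B_s^{\lambda}+\lambda\beta^{-1}\int_{\lfrf{t}}^t\Upsilon(\overline{\zeta}^{\lambda, n}_s)\,\rmd s$.

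For the first inequality I would insert the $\overline{\theta}^{\lambda}$-identity into the bracket. The added term $+\lambda\int_{\lfrf{t}}^t(H_{\lambda}h_{\lambda}-\beta^{-1}\Upsilon_{\lambda})\,\rmd s$ combines with the first drift term and with the Laplacian term to produce the differences $-\lambda\int(H(\overline{\theta}^{\lambda}_s)-H_{\lambda}(\overline{\theta}^{\lambda}_{\lfrf{s}}))h_{\lambda}(\overline{\theta}^{\lambda}_{\lfrf{s}})\,\rmd s$ and $\lambda\beta^{-1}\int(\Upsilon(\overline{\theta}^{\lambda}_s)-\Upsilon_{\lambda}(\overline{\theta}^{\lambda}_{\lfrf{s}}))\,\rmd s$, while the subtracted $-\sqrt{2\lambda\beta^{-1}}\int H_{\lambda}(\overline{\theta}^{\lambda}_{\lfrf{s}})\,\rmd B_s^{\lambda}$ merges with $\sqrt{2\lambda\beta^{-1}}\int H(\overline{\theta}^{\lambda}_s)\,\rmd B_s^{\lambda}$ to give $\sqrt{2\lambda\beta^{-1}}\int(H(\overline{\theta}^{\lambda}_s)-H_{\lambda}(\overline{\theta}^{\lambda}_{\lfrf{s}}))\,\rmd B_s^{\lambda}$. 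The bracket is then precisely the sum of the six terms controlled by the first, second, third, fourth, fifth, and seventh inequalities of Lemma~\ref{lem:graditoest}; applying $|\sum_{l=1}^6 x_l|^2\leq 6\sum_{l=1}^6|x_l|^2$, taking expectations, and bounding $\lambda^3,\lambda^{2+q}\leq\lambda^2$ produces the factor $6\cdot6=36$ at the rate $e^{-\ca_\cD\kappa n/2}$.

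For the second inequality I would split the expression as $E_2=A_{\zeta}-\widetilde{A}_{\theta}$ (the $\overline{\zeta}^{\lambda, n}$-bracket minus the $\overline{\theta}^{\lambda}$-bracket) and use $|E_2|^2\leq 2|A_{\zeta}|^2+2|\widetilde{A}_{\theta}|^2$. By the two It\^{o} identities, $A_{\zeta}$ is a sum of three terms matching the sixth and ninth inequalities of Lemma~\ref{lem:graditoest}, whence $\E[|A_{\zeta}|^2]\leq 3\cdot2\,\lambda^2(\cdots)$; and $\widetilde{A}_{\theta}$ is a sum of six terms matching the second, third, fourth, and eighth inequalities together with the stochastic term $\sqrt{2\lambda\beta^{-1}}\int(H(\overline{\theta}^{\lambda}_s)-H(\overline{\theta}^{\lambda}_{\lfrf{s}}))\,\rmd B_s^{\lambda}$, whence $\E[|\widetilde{A}_{\theta}|^2]\leq 6\cdot5\,\lambda^2(\cdots)$. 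Combining gives $2\cdot6+2\cdot30=72$ at the slower rate $e^{-\ca_\cD\min\{\kappa,1/2\}n/2}$ forced by the $\overline{\zeta}^{\lambda, n}$-terms. The one point requiring care is that this last stochastic term carries the \emph{non-tamed} $H(\overline{\theta}^{\lambda}_{\lfrf{s}})$ rather than $H_{\lambda}(\overline{\theta}^{\lambda}_{\lfrf{s}})$, so it is not literally the fifth inequality; however, by It\^{o} isometry and \eqref{eq:growthc2} its second moment is exactly (half of) the first summand appearing in the proof of that inequality (see item~\ref{item:graditoestfifth} of Lemma~\ref{lem:graditoest}), and is therefore dominated by the same $\lambda^2(\cdots)$ bound.

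The step I expect to be the main obstacle is this reassembly bookkeeping: one must verify that every tamed/non-tamed discrepancy in the single drift integral, the iterated time-integral, and the two stochastic integrals recombines into precisely one of the nine terms of Lemma~\ref{lem:graditoest}, and one must simultaneously track which terms decay at rate $\kappa$ (those built solely from $\overline{\theta}^{\lambda}$) and which only at $\min\{\kappa,1/2\}$ (those involving $\overline{\zeta}^{\lambda, n}$), so as to land on the stated constants $36$ and $72$ and the correct exponential rates. Everything else reduces to the estimates already established in Lemma~\ref{lem:graditoest}.
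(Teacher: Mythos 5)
Your proposal is correct and follows essentially the same route as the paper's proof: the same Itô expansions \eqref{eq:holaprocito} and \eqref{eq:auxprocito}, the same regrouping of tamed/non-tamed discrepancies into exactly the terms of Lemma~\ref{lem:graditoest} (with the factor $6$ for the six-term bracket and the $2\cdot 6+2\cdot 30=72$ count for the difference of brackets), and the same resolution of the non-tamed stochastic term $\sqrt{2\lambda\beta^{-1}}\int_{\lfrf{t}}^t(H(\overline{\theta}^{\lambda}_s)-H(\overline{\theta}^{\lambda}_{\lfrf{s}}))\,\rmd B_s^{\lambda}$ via the It\^{o} isometry computation inside item~(v) of that lemma, which is precisely how the paper justifies its ``second last inequality.'' The bookkeeping you flagged as the main obstacle is handled in your sketch exactly as in the paper, including the absorption of the faster $e^{-\ca_\cD\kappa n/2}$ rates into the slower $e^{-\ca_\cD\min\{\kappa,1/2\}n/2}$ rate for the second estimate.
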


\begin{proof}
For any $t \geq nT$, by applying It\^o's formula to $h(\overline{\theta}^{\lambda}_t)$, we obtain, almost surely
\begin{align}
\begin{split}\label{eq:holaprocito}
h(\overline{\theta}^{\lambda}_t) - h(\overline{\theta}^{\lambda}_{\lfrf{t}})  
&=  -\lambda  \int_{\lfrf{t}}^t H(\overline{\theta}^{\lambda}_s) h_{\lambda}(\overline{\theta}^{\lambda}_{\lfrf{s}})\,\rmd s+\lambda^2\int_{\lfrf{t}}^t H(\overline{\theta}^{\lambda}_s)\int_{\lfrf{s}}^s H_{\lambda}( \overline{\theta}^{\lambda}_{\lfrf{r}})h_{\lambda}( \overline{\theta}^{\lambda}_{\lfrf{r}}) \,\rmd r\,\rmd s\\
&\quad -\lambda^2\beta^{-1}\int_{\lfrf{t}}^t H(\overline{\theta}^{\lambda}_s)\int_{\lfrf{s}}^s  \Upsilon_{\lambda}(\overline{\theta}^{\lambda}_{\lfrf{r}}) \,\rmd r\,\rmd s\\
&\quad-\lambda\sqrt{2\lambda\beta^{-1}} \int_{\lfrf{t}}^t  H(\overline{\theta}^{\lambda}_s) \int_{{\lfrf{s}}}^s H_{\lambda}(\overline{\theta}_{\lfrf{r}}^{\lambda})\,\rmd B_r^{\lambda}\,\rmd s  \\
&\quad +\sqrt{2\lambda\beta^{-1}}\int_{\lfrf{t}}^tH(\overline{\theta}^{\lambda}_s) \, \rmd B^{\lambda}_s+\lambda\beta^{-1}\int_{\lfrf{t}}^t\Upsilon (\overline{\theta}^{\lambda}_s)\rmd s.
\end{split}
\end{align}
Similarly, applying It\^o's formula to $h(\overline{\zeta}^{\lambda, n}_t)$ yields, almost surely
\begin{align}
\begin{split}\label{eq:auxprocito}
h( \overline{\zeta}^{\lambda, n}_t) - h ( \overline{\zeta}^{\lambda, n}_{\lfrf{t}})
&=  -\lambda  \int_{\lfrf{t}}^t H(\overline{\zeta}^{\lambda, n}_s) h ( \overline{\zeta}^{\lambda, n}_s)\,\rmd s   +\sqrt{2\lambda\beta^{-1}}\int_{\lfrf{t}}^tH(\overline{\zeta}^{\lambda, n}_s) \, \rmd B^{\lambda}_s\\
&\quad +\lambda\beta^{-1}\int_{\lfrf{t}}^t\Upsilon (\overline{\zeta}^{\lambda, n}_s)\rmd s.
\end{split}
\end{align}

\begin{enumerate}
\item To obtain the first inequality, by using \eqref{eq:holaprocito} with $(\sum_{l=1}^vu_l)^2 \leq v \sum_{l=1}^vu_l^2$, $v \in \N$, $u_l \geq 0$, we obtain that
\begin{align*}
&\E\left[\left| h(\overline{\theta}^{\lambda}_t) - h (\overline{\theta}^{\lambda}_{\lfrf{t}}) + \lambda \int_{\lfrf{t}}^t\left(H_{\lambda}( \overline{\theta}^{\lambda}_{\lfrf{s}})h_{\lambda}( \overline{\theta}^{\lambda}_{\lfrf{s}})-\beta^{-1}\Upsilon_{\lambda}(\overline{\theta}^{\lambda}_{\lfrf{s}}) \right)\,\rmd s\right.\right.\\
&\qquad \left.\left. - \sqrt{2\lambda\beta^{-1}} \int_{{\lfrf{t}}}^t H_{\lambda}(\overline{\theta}_{\lfrf{s}}^{\lambda})\,\rmd B_s^{\lambda}  \right|^2\right]\\
&\leq 6\E\left[\left|-\lambda \int_{\lfrf{t}}^t  \left(H(\overline{\theta}^{\lambda}_s) - H_{\lambda}(\overline{\theta}^{\lambda}_{\lfrf{s}})\right)h_{\lambda}( \overline{\theta}^{\lambda}_{\lfrf{s}})\,\rmd s\right|^2\right] \\
&\quad+  6\E\left[\left| \lambda^2 \int_{\lfrf{t}}^t   H(\overline{\theta}^{\lambda}_s) \int_{\lfrf{s}}^s  H_{\lambda}(\overline{\theta}^{\lambda}_{\lfrf{r}}))h_{\lambda}( \overline{\theta}^{\lambda}_{\lfrf{r}}) \, \rmd r \,\rmd s\right|^2\right] \\
&\quad+ 6\E\left[\left| -\lambda^2\beta^{-1} \int_{\lfrf{t}}^t   H(\overline{\theta}^{\lambda}_s) \int_{\lfrf{s}}^s  \Upsilon_{\lambda}( \overline{\theta}^{\lambda}_{\lfrf{r}}) \, \rmd r \,\rmd s\right|^2\right]  \\
&\quad+  6\E\left[\left| -\lambda\sqrt{2\lambda\beta^{-1}} \int_{\lfrf{t}}^t   H(\overline{\theta}^{\lambda}_s) \int_{\lfrf{s}}^s  H_{\lambda}( \overline{\theta}^{\lambda}_{\lfrf{r}}) \, \rmd B_r^{\lambda} \,\rmd s\right|^2\right] \\
&\quad+  6\E\left[\left| \sqrt{2\lambda\beta^{-1}} \int_{\lfrf{t}}^t  \left( H(\overline{\theta}^{\lambda}_s) -  H_{\lambda}( \overline{\theta}^{\lambda}_{\lfrf{s}}) \right)  \,\rmd B_s^{\lambda}\right|^2\right] \\
&\quad+  6\E\left[\left| \lambda\beta^{-1} \int_{\lfrf{t}}^t   \left(\Upsilon(\overline{\theta}^{\lambda}_s) - \Upsilon_{\lambda}( \overline{\theta}^{\lambda}_{\lfrf{s}})\right)  \,\rmd s\right|^2\right] \\
& \leq \lambda^2\left(e^{- \ca_\cD  \kappa n/2}36 \overline{\cC}_{\mathbf{A}2}\E[|\theta_0|^{16(\rho+1)}] +36\widetilde{\cC}_{\mathbf{A}2} \right),
\end{align*}
where the last inequality holds due to Lemma \ref{lem:graditoest}.
\item To establish the second inequality, we use \eqref{eq:holaprocito} and \eqref{eq:auxprocito} to obtain
\begin{align*}
&\E\left[\left| h( \overline{\zeta}^{\lambda, n}_t) - h ( \overline{\zeta}^{\lambda, n}_{\lfrf{t}})   - \sqrt{2\lambda\beta^{-1}} \int_{{\lfrf{t}}}^t H (\overline{\zeta}^{\lambda, n}_{\lfrf{s}})\,\rmd B_s^{\lambda}  \right.\right.\\
&\qquad \left.\left. -\left(h(\overline{\theta}^{\lambda}_t) - h (\overline{\theta}^{\lambda}_{\lfrf{t}})  - \sqrt{2\lambda\beta^{-1}} \int_{{\lfrf{t}}}^t H (\overline{\theta}_{\lfrf{s}}^{\lambda})\,\rmd B_s^{\lambda}  \right)\right|^2\right]\\
&\leq 2 \E\left[\left| h( \overline{\zeta}^{\lambda, n}_t) - h ( \overline{\zeta}^{\lambda, n}_{\lfrf{t}})   - \sqrt{2\lambda\beta^{-1}} \int_{{\lfrf{t}}}^t H (\overline{\zeta}^{\lambda, n}_{\lfrf{s}})\,\rmd B_s^{\lambda} \right|^2\right]\\
&\quad +2 \E\left[\left| h(\overline{\theta}^{\lambda}_t) - h (\overline{\theta}^{\lambda}_{\lfrf{t}})  - \sqrt{2\lambda\beta^{-1}} \int_{{\lfrf{t}}}^t H (\overline{\theta}_{\lfrf{s}}^{\lambda})\,\rmd B_s^{\lambda} \right|^2\right]\\
&\leq 6\E\left[\left| -\lambda  \int_{\lfrf{t}}^t H(\overline{\zeta}^{\lambda, n}_s) h ( \overline{\zeta}^{\lambda, n}_s)\,\rmd s   \right|^2\right]  +6\E\left[\left| \lambda\beta^{-1}\int_{\lfrf{t}}^t\Upsilon (\overline{\zeta}^{\lambda, n}_s)\rmd s  \right|^2\right] \\
&\quad +6\E\left[\left| \sqrt{2\lambda\beta^{-1}}\int_{\lfrf{t}}^t \left(H(\overline{\zeta}^{\lambda, n}_s) -  H(\overline{\zeta}^{\lambda, n}_{\lfrf{s}}) \right) \, \rmd B^{\lambda}_s
  \right|^2\right] \\ 
&\quad +12\E\left[\left| -\lambda  \int_{\lfrf{t}}^t H(\overline{\theta}^{\lambda}_s) h_{\lambda}(\overline{\theta}^{\lambda}_{\lfrf{s}})\,\rmd s\right|^2\right] + 12\E\left[\left|\lambda\beta^{-1}\int_{\lfrf{t}}^t\Upsilon (\overline{\theta}^{\lambda}_s)\rmd s\right|^2\right]\\
&\quad + 12\E\left[\left|\lambda^2\int_{\lfrf{t}}^t H(\overline{\theta}^{\lambda}_s)\int_{\lfrf{s}}^s H_{\lambda}( \overline{\theta}^{\lambda}_{\lfrf{r}})h_{\lambda}( \overline{\theta}^{\lambda}_{\lfrf{r}}) \,\rmd r\,\rmd s \right|^2\right] \\
&\quad + 12\E\left[\left|-\lambda^2\beta^{-1}\int_{\lfrf{t}}^t H(\overline{\theta}^{\lambda}_s)\int_{\lfrf{s}}^s  \Upsilon_{\lambda}(\overline{\theta}^{\lambda}_{\lfrf{r}}) \,\rmd r\,\rmd s\right|^2\right] \\
&\quad + 12\E\left[\left|-\lambda\sqrt{2\lambda\beta^{-1}} \int_{\lfrf{t}}^t  H(\overline{\theta}^{\lambda}_s) \int_{{\lfrf{s}}}^s H_{\lambda}(\overline{\theta}_{\lfrf{r}}^{\lambda})\,\rmd B_r^{\lambda}\,\rmd s \right|^2\right] \\
&\quad + 12\E\left[\left|\sqrt{2\lambda\beta^{-1}}\int_{\lfrf{t}}^t \left(H(\overline{\theta}^{\lambda}_s) -  H(\overline{\theta}_{\lfrf{s}}^{\lambda}) \right) \, \rmd B^{\lambda}_s \right|^2\right]   \\
&\leq  12\lambda^2\left(e^{- \ca_\cD \min\{ \kappa, 1 /2\} n/2} \overline{\cC}_{\mathbf{A}2}\E[|\theta_0|^{16(\rho+1)}] +\widetilde{\cC}_{\mathbf{A}2} \right)\\
&\quad + 60 \lambda^2\left(e^{- \ca_\cD \min\{ \kappa, 1 /2\} n/2} \overline{\cC}_{\mathbf{A}2}\E[|\theta_0|^{16(\rho+1)}] +\widetilde{\cC}_{\mathbf{A}2} \right)\\
&\leq \lambda^2\left(e^{- \ca_\cD \min\{ \kappa, 1 /2\} n/2}72 \overline{\cC}_{\mathbf{A}2}\E[|\theta_0|^{16(\rho+1)}] +72\widetilde{\cC}_{\mathbf{A}2} \right),
\end{align*}
where the second last inequality holds by using Lemma \ref{lem:graditoest} with the fact that
\begin{align*}
&\E\left[\left| \sqrt{2\lambda\beta^{-1}} \int_{\lfrf{t}}^t   \left(H(\overline{\theta}^{\lambda}_s) -  H( \overline{\theta}^{\lambda}_{\lfrf{s}})\right)   \,\rmd B_s^{\lambda}\right|^2\right] \\
& =  2\lambda\beta^{-1}  \int_{\lfrf{t}}^t \E\left[ |   H(\overline{\theta}^{\lambda}_s) -  H( \overline{\theta}^{\lambda}_{\lfrf{s}})  |_{\cF}^2\right]    \,\rmd s\\
&\leq \lambda^2\left(e^{- \ca_\cD \min\{ \kappa, 1 /2\} n/2} \overline{\cC}_{\mathbf{A}2}\E[|\theta_0|^{16(\rho+1)}] +\widetilde{\cC}_{\mathbf{A}2} \right)
\end{align*}
as indicated in the calculations in \ref{item:graditoestfifth} of Lemma \ref{lem:graditoest}.
\end{enumerate}
This completes the proof.
\end{proof}

\begin{definition}\label{def:Mdef} Define $\mathfrak{M}=(\mathfrak{M}^{(i,j)})_{i,j=1,\dots,d}:\R^d \times \R^d \rightarrow \R^{d\times d}$ by setting, for every $i,j = 1, \dots, d$,
\[
\mathfrak{M}^{(i,j)}(\theta, \overline{\theta}) = \langle\nabla H^{(i,j)}(\overline{\theta}), \theta-\overline{\theta} \rangle, \qquad \theta, \overline{\theta} \in \R^d.
\]
\end{definition}
\begin{lemma}\label{lem:Mest} Let Assumptions \ref{asm:AI}, \ref{asm:ALL}, and \ref{asm:AC} hold. Then, for any $0<\lambda\leq \lambda_{\max}$, $n \in \N_0$, $, t \geq nT$, we obtain the following inequalities:
\begin{align} 
\begin{split}
&\E\left[\left| \sqrt{2\lambda\beta^{-1}} \int_{\lfrf{t}}^t   \left(H(\overline{\theta}^{\lambda}_s) -  H ( \overline{\theta}^{\lambda}_{\lfrf{s}})  -\mathfrak{M}(\overline{\theta}^{\lambda}_s, \overline{\theta}^{\lambda}_{\lfrf{s}})\right)  \,\rmd B_s^{\lambda}\right|^2\right] \\
& \leq  \lambda^{2+q}\left(e^{- \ca_\cD  \kappa n/2} \overline{\cC}_{\mathbf{A}2}\E[|\theta_0|^{16(\rho+1)}] +\widetilde{\cC}_{\mathbf{A}2} \right),\label{lem:Mestineq1}
\end{split}\\
&\E\left[\left| \sqrt{2\lambda\beta^{-1}} \int_{\lfrf{t}}^t   \ \mathfrak{M}(\overline{\theta}^{\lambda}_s, \overline{\theta}^{\lambda}_{\lfrf{s}})   \,\rmd B_s^{\lambda}\right|^2\right] 
\leq  \lambda^2\left(e^{- \ca_\cD  \kappa n/2} \overline{\cC}_{\mathbf{A}2}\E[|\theta_0|^{16(\rho+1)}] +\widetilde{\cC}_{\mathbf{A}2} \right),\label{lem:Mestineq2}\\
\begin{split}
&\E\left[2\lambda\beta^{-1} \left\langle  \int_{{\lfrf{t}}}^t  \int_{{\lfrf{s}}}^s \left(H (\overline{\zeta}^{\lambda, n}_{\lfrf{r}}) - H (\overline{\theta}_{\lfrf{r}}^{\lambda})\right) \,\rmd B_r^{\lambda} \,\rmd s ,  \int_{\lfrf{t}}^t   \ \mathfrak{M}(\overline{\theta}^{\lambda}_s, \overline{\theta}^{\lambda}_{\lfrf{s}})   \,\rmd B_s^{\lambda}\right\rangle\right]\\
&\leq \lambda^2\left(e^{- \ca_\cD \min\{ \kappa, 1 /2\} n/2}10 \overline{\cC}_{\mathbf{A}2}\E[|\theta_0|^{16(\rho+1)}] +10\widetilde{\cC}_{\mathbf{A}2} \right),\label{lem:Mestineq3}
\end{split}
\end{align}
where $\overline{\cC}_{\mathbf{A}2}$ and $\widetilde{\cC}_{\mathbf{A}2}$ are given in \eqref{eq:graditoestconst}.
\end{lemma}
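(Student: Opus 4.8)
The plan is to treat all three estimates with a common toolkit: It\^o's isometry to convert a stochastic integral over the unit interval $(\lfrf{t},t]$ into a time integral of a Frobenius-norm second moment, the Cauchy--Schwarz inequality to split products of a polynomially growing factor and a small increment, the growth and H\"older bounds of Remark~\ref{rmk:growthc} (noting that $\nabla H^{(i,j)}$ is a column of $\nabla^2 h^{(i)}$, so Assumption~\ref{asm:ALL} applies directly to it), and finally the uniform moment estimates of Lemmas~\ref{lem:2ndpthmmt} and~\ref{lem:zetaprocme} together with the one-step increment bounds of Lemma~\ref{lem:oserroralg}. As in the proof of Lemma~\ref{lem:graditoest}, the exponential factor $e^{-\ca_\cD\min\{\kappa,1/2\}n/2}$ will enter only through the $\theta_0$-dependent parts of those moment estimates, via $\lambda\lfrf{t}\ge n/2$.

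For \eqref{lem:Mestineq1}, after It\^o's isometry the core quantity is $\E[|H(\overline{\theta}^{\lambda}_s)-H(\overline{\theta}^{\lambda}_{\lfrf{s}})-\mathfrak{M}(\overline{\theta}^{\lambda}_s,\overline{\theta}^{\lambda}_{\lfrf{s}})|_{\cF}^2]$. I would use the entrywise Taylor remainder in integral form,
\[
H^{(i,j)}(\theta)-H^{(i,j)}(\overline{\theta})-\langle\nabla H^{(i,j)}(\overline{\theta}),\theta-\overline{\theta}\rangle=\int_0^1\langle\nabla H^{(i,j)}(\overline{\theta}+u(\theta-\overline{\theta}))-\nabla H^{(i,j)}(\overline{\theta}),\theta-\overline{\theta}\rangle\,\rmd u,
\]
and bound the inner difference by the H\"older estimate of Assumption~\ref{asm:ALL}, producing a factor $(1+|\theta|+|\overline{\theta}|)^{\rho-2}|\theta-\overline{\theta}|^{1+q}$. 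Squaring, applying Cauchy--Schwarz, controlling the growth factor by Lemma~\ref{lem:2ndpthmmt} and the increment $\E[|\overline{\theta}^{\lambda}_s-\overline{\theta}^{\lambda}_{\lfrf{s}}|^{4(1+q)}]^{1/2}$ by Lemma~\ref{lem:oserroralg} with $p=2(1+q)$ gives order $\lambda^{1+q}$; the prefactor $\lambda$ from the isometry then yields $\lambda^{2+q}$. Estimate \eqref{lem:Mestineq2} is the same argument one order lower: from Definition~\ref{def:Mdef} and \eqref{eq:growthc1} one has $|\mathfrak{M}(\theta,\overline{\theta})|_{\cF}\lesssim(1+|\overline{\theta}|)^{\rho+q-2}|\theta-\overline{\theta}|$, so Cauchy--Schwarz with the quadratic increment bound ($p=2$ in Lemma~\ref{lem:oserroralg}, of order $\lambda$) and the isometry prefactor produce $\lambda^2$.

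For the cross term \eqref{lem:Mestineq3} I would first apply Cauchy--Schwarz on the inner product and the expectation, reducing to $(\E[|A|^2])^{1/2}(\E[|C|^2])^{1/2}$, where $C:=\int_{\lfrf{t}}^t\mathfrak{M}(\overline{\theta}^{\lambda}_s,\overline{\theta}^{\lambda}_{\lfrf{s}})\,\rmd B_s^{\lambda}$ and $A$ denotes the iterated integral. The factor $\E[|C|^2]$ is controlled exactly as in \eqref{lem:Mestineq2}, contributing $\lambda$. The key simplification for $A$ is that on $(\lfrf{t},t]$ one has $\lfrf{r}=\lfrf{s}=\lfrf{t}$, so the integrand $H(\overline{\zeta}^{\lambda, n}_{\lfrf{t}})-H(\overline{\theta}^{\lambda}_{\lfrf{t}})$ is constant and $\mathcal{F}^{\lambda}_{\lfrf{t}}$-measurable; hence $A=(H(\overline{\zeta}^{\lambda, n}_{\lfrf{t}})-H(\overline{\theta}^{\lambda}_{\lfrf{t}}))\int_{\lfrf{t}}^t(B^{\lambda}_s-B^{\lambda}_{\lfrf{t}})\,\rmd s$ and, since the time-integrated Brownian increment has bounded variance on a unit interval, $\E[|A|^2]\lesssim\E[|H(\overline{\zeta}^{\lambda, n}_{\lfrf{t}})-H(\overline{\theta}^{\lambda}_{\lfrf{t}})|_{\cF}^2]$. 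Together with the $2\lambda\beta^{-1}$ prefactor this produces a bound of order $\lambda^{3/2}(\E[|H(\overline{\zeta}^{\lambda, n}_{\lfrf{t}})-H(\overline{\theta}^{\lambda}_{\lfrf{t}})|_{\cF}^2])^{1/2}$.

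The main obstacle is therefore closing \eqref{lem:Mestineq3}: the required $\lambda^2$ order is reached only if $\E[|H(\overline{\zeta}^{\lambda, n}_{\lfrf{t}})-H(\overline{\theta}^{\lambda}_{\lfrf{t}})|_{\cF}^2]\lesssim\lambda$, which by the local Lipschitz bound \eqref{eq:growthc2} (summed to $|H(\theta)-H(\overline{\theta})|_{\cF}\lesssim(1+|\theta|+|\overline{\theta}|)^{\rho+q-2}|\theta-\overline{\theta}|$) and Cauchy--Schwarz demands a fourth-moment strong-error estimate $\E[|\overline{\zeta}^{\lambda, n}_{\lfrf{t}}-\overline{\theta}^{\lambda}_{\lfrf{t}}|^4]\lesssim\lambda^2$ between the auxiliary diffusion and the interpolated algorithm at the grid point. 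Because this is exactly the type of quantity that the sharp Lemma~\ref{lem:w1converp1} ultimately controls, the delicate point is to supply it from a self-contained, cruder strong-error bound over the restart window $[nT,\lfrf{t}]$ (at most $T\approx1/\lambda$ steps), built from the increment and moment estimates already available in the manner of Lemma~\ref{lem:oserroralg}, rather than by invoking Lemma~\ref{lem:w1converp1}, so as to avoid circularity. Threading the $e^{-\ca_\cD\min\{\kappa,1/2\}n/2}\E[|\theta_0|^{16(\rho+1)}]$ dependence cleanly through this reduction and matching the constants $\overline{\cC}_{\mathbf{A}2},\widetilde{\cC}_{\mathbf{A}2}$ is then the main bookkeeping task.
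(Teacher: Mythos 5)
Your treatment of \eqref{lem:Mestineq1} and \eqref{lem:Mestineq2} is correct and essentially the paper's own argument: the entrywise Taylor remainder (the paper's inequality \eqref{eq:multidmvt}), It\^o's isometry, Cauchy--Schwarz, and Lemmas \ref{lem:2ndpthmmt} and \ref{lem:oserroralg} yield $\lambda^{2+q}$ and $\lambda^{2}$ exactly as you describe. The problem is \eqref{lem:Mestineq3}, and it sits precisely where you flagged it, but it is not ``bookkeeping''. Your Cauchy--Schwarz reduction needs $\E[|H(\overline{\zeta}^{\lambda,n}_{\lfrf{t}})-H(\overline{\theta}^{\lambda}_{\lfrf{t}})|_{\cF}^{2}]\lesssim\lambda$, hence a fourth-moment strong-error bound $\E[|\overline{\zeta}^{\lambda,n}_{\lfrf{t}}-\overline{\theta}^{\lambda}_{\lfrf{t}}|^{4}]\lesssim\lambda^{2}$ between the auxiliary diffusion and the interpolated scheme over the whole restart window. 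No such estimate exists in the paper, and it cannot be produced ``in the manner of Lemma \ref{lem:oserroralg}'': that lemma controls one-step increments of a \emph{single} process, whereas the quantity you need compares \emph{two} processes over a horizon of length $T\approx 1/\lambda$ and requires its own Gr\"onwall argument (using the one-sided Lipschitz condition of Remark \ref{rmk:oslc}, the cancellation of the common Brownian parts of the two processes, and fourth-moment versions of all local error terms). Such a crude $L^{4}$ bound is plausibly provable and non-circular, but it is a substantive missing lemma; without it your argument for \eqref{lem:Mestineq3} stops at order $\lambda^{3/2}$.

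The paper's proof avoids this difficulty entirely, and this is the idea your proposal misses. Instead of applying Cauchy--Schwarz to the two stochastic integrals, the paper substitutes the definition of the scheme into $\mathfrak{M}(\overline{\theta}^{\lambda}_{s},\overline{\theta}^{\lambda}_{\lfrf{s}})=\langle\nabla H(\overline{\theta}^{\lambda}_{\lfrf{s}}),\overline{\theta}^{\lambda}_{s}-\overline{\theta}^{\lambda}_{\lfrf{s}}\rangle$, splitting the cross term into several drift contributions plus one Brownian-increment contribution (see \eqref{eq:Mestthdineqexpsn}). The Brownian-increment contribution --- the only one that is genuinely of order $\lambda^{3/2}$ --- is shown to vanish \emph{identically}: conditioning on $\mathcal{F}^{\lambda}_{\lfrf{t}}$, the factor $H(\overline{\zeta}^{\lambda,n}_{\lfrf{t}})-H(\overline{\theta}^{\lambda}_{\lfrf{t}})$ is measurable, and the conditional expectation of the product of the two iterated Gaussian integrals (one of the form $\int_{\lfrf{t}}^{t}(t-s)\,\rmd (B^{\lambda}_{s})^{(j)}$, the other $\int_{\lfrf{t}}^{t}\int_{\lfrf{s}}^{s}\rmd (B^{\lambda}_{r})^{(k)}\,\rmd (B^{\lambda}_{s})^{(j)}$) is zero by It\^o isometry and the martingale property. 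The remaining drift contributions carry an extra factor $\lambda$ inside the stochastic integral, so Young's inequality together with the crude bound $|H(\overline{\zeta}^{\lambda,n})-H(\overline{\theta}^{\lambda})|_{\cF}^{2}\le 2|H(\overline{\zeta}^{\lambda,n})|_{\cF}^{2}+2|H(\overline{\theta}^{\lambda})|_{\cF}^{2}$ and the moment estimates of Lemmas \ref{lem:2ndpthmmt} and \ref{lem:zetaprocme} already give $\lambda^{2}$; no strong-error estimate between $\overline{\zeta}^{\lambda,n}$ and $\overline{\theta}^{\lambda}$ is ever needed, which is exactly what keeps the lemma usable inside the proof of Lemma \ref{lem:w1converp1} without circularity.
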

\begin{proof}To show the inequalities hold, we follow the arguments below:
\begin{enumerate}
\item To establish the first inequality \eqref{lem:Mestineq1}, by using Definition \ref{def:Mdef}, we observe that, for fixed $\theta, \overline{\theta} \in \R^d$, 
\begin{align}\label{eq:multidmvt}
&|H(\theta) - H(\overline{\theta}) - \mathfrak{M} (\theta, \overline{\theta}) |_{\cF}^2 \nonumber\\
&=\sum_{i,j= 1}^d|H^{(i,j)}(\theta) - H^{(i,j)}(\overline{\theta}) - \mathfrak{M}^{(i,j)}(\theta, \overline{\theta}) |^2 \nonumber\\
& = \sum_{i,j= 1}^d\left|\int_0^1 \langle \nabla H^{(i,j)}(\nu\theta+(1-\nu)\overline{\theta}), \theta-\overline{\theta} \rangle\, \rmd \nu - \langle\nabla H^{(i,j)}(\overline{\theta}), \theta-\overline{\theta} \rangle\right|^2 \nonumber\\
&\leq \int_0^1\sum_{i,j= 1}^d |\nabla H^{(i,j)}(\nu\theta+(1-\nu)\overline{\theta}) - \nabla H^{(i,j)}(\overline{\theta})|^2\, \rmd \nu |\theta-\overline{\theta} |^2 \nonumber\\
&\leq \int_0^1\sum_{i= 1}^d d |\nabla^2 h^{(i)}(\nu\theta+(1-\nu)\overline{\theta}) - \nabla^2 h^{(i)}(\overline{\theta})| ^2\, \rmd \nu |\theta-\overline{\theta} |^2 \nonumber\\
&\leq 2^{2(\rho-2)} d^2L^2(1+|\theta|+|\overline{\theta}|)^{2(\rho-2)}|\theta-\overline{\theta}|^{2+2q}.
\end{align}
By using \eqref{eq:multidmvt}, we obtain that
\begin{align*}
&\E\left[\left| \sqrt{2\lambda\beta^{-1}} \int_{\lfrf{t}}^t   \left(H(\overline{\theta}^{\lambda}_s) -  H ( \overline{\theta}^{\lambda}_{\lfrf{s}})  -\mathfrak{M}(\overline{\theta}^{\lambda}_s, \overline{\theta}^{\lambda}_{\lfrf{s}})\right)  \,\rmd B_s^{\lambda}\right|^2\right] \\
& =2\lambda\beta^{-1}\int_{\lfrf{t}}^t  \E\left[\left| H(\overline{\theta}^{\lambda}_s) -  H ( \overline{\theta}^{\lambda}_{\lfrf{s}})  -\mathfrak{M}(\overline{\theta}^{\lambda}_s, \overline{\theta}^{\lambda}_{\lfrf{s}}) \right|_{\cF}^2\right]  \,\rmd s\\
&\leq  \lambda\beta^{-1} 2^{2\rho-3} d^2L^2 \int_{\lfrf{t}}^t  \E\left[(1+|\overline{\theta}^{\lambda}_s|+|\overline{\theta}^{\lambda}_{\lfrf{s}}|)^{2(\rho-2)}|\overline{\theta}^{\lambda}_s-\overline{\theta}^{\lambda}_{\lfrf{s}}|^{2+2q}\right]  \,\rmd s\\
&\leq  \lambda\beta^{-1} 2^{2\rho-3} d^2L^2 \int_{\lfrf{t}}^t 3^{8(\rho+1)}  \left(\E\left[1+|\overline{\theta}^{\lambda}_s|^{16(\rho+1)}+|\overline{\theta}^{\lambda}_{\lfrf{s}}|^{16(\rho+1)} \right]\right)^{1/2}  \\
&\qquad \times \left(\E\left[ |\overline{\theta}^{\lambda}_s-\overline{\theta}^{\lambda}_{\lfrf{s}}|^{4+4q}\right]\right)^{1/2} \,\rmd s\\
&\leq  \lambda\beta^{-1} 3^{16(\rho+1)}  d^2L^2 \left(2e^{-\lambda \ca_\cD \kappa \lfrf{t}}\E\left[|\theta_0|^{16(\rho+1)}\right]  +2 \cc_{8(\rho+1)}\left(1+1/(\ca_\cD\kappa)\right)+1\right)^{1/2}\\
&\qquad \times \lambda^{1+q} \left(e^{-  \lambda\ca_\cD\kappa \lfrf{t}}\overline{\cC}_{\mathbf{A}0,2+2q}\E[|\theta_0|^{4\lcrc{2+2q}(\rho+1)}]+\widetilde{\cC}_{\mathbf{A}0,2+2q} \right)^{1/2} \\
&\leq \lambda^{2+q} \left(e^{- \ca_\cD  \kappa n/2} \overline{\cC}_{\mathbf{A}2}\E[|\theta_0|^{16(\rho+1)}] +\widetilde{\cC}_{\mathbf{A}2} \right),
\end{align*}
where the third inequality holds due to Lemma \ref{lem:2ndpthmmt} and \ref{lem:oserroralg}, and $\overline{\cC}_{\mathbf{A}2}$, $\widetilde{\cC}_{\mathbf{A}2}$ are given in \eqref{eq:graditoestconst}.
\item To establish the second inequality \eqref{lem:Mestineq2}, we recall Definition \ref{def:Mdef} and use Remark \ref{rmk:growthc} to obtain
\begin{align*}
&\E\left[\left| \sqrt{2\lambda\beta^{-1}} \int_{\lfrf{t}}^t    \mathfrak{M}(\overline{\theta}^{\lambda}_s, \overline{\theta}^{\lambda}_{\lfrf{s}})   \,\rmd B_s^{\lambda}\right|^2\right] \\
& = 2\lambda\beta^{-1}\int_{\lfrf{t}}^t  \E\left[\sum_{i,j=1}^d\left|   \left\langle\nabla H^{(i,j)}( \overline{\theta}^{\lambda}_{\lfrf{s}}) , \overline{\theta}^{\lambda}_s- \overline{\theta}^{\lambda}_{\lfrf{s}} \right\rangle\right|^2\right]  \,\rmd s\\
&\leq 2\lambda\beta^{-1}\int_{\lfrf{t}}^t  \E\left[d^2\cK_0^2(1+|\overline{\theta}^{\lambda}_{\lfrf{s}}|)^{2(\rho+q-2)} |\overline{\theta}^{\lambda}_s- \overline{\theta}^{\lambda}_{\lfrf{s}} |^2\right]  \,\rmd s\\
&\leq  \lambda\beta^{-1}d^2\cK_0^2\int_{\lfrf{t}}^t 2^{8(\rho+1)+1} \left(\E\left[1+|\overline{\theta}^{\lambda}_{\lfrf{s}}|^{16(\rho+1)}  \right] \right)^{1/2} \left(\E\left[ |\overline{\theta}^{\lambda}_s- \overline{\theta}^{\lambda}_{\lfrf{s}} |^4\right] \right)^{1/2}\,\rmd s\\
&\leq  \lambda\beta^{-1}d^2\cK_0^2  2^{8(\rho+1)+1}  \left(e^{-\lambda \ca_\cD \kappa \lfrf{t}}\E\left[|\theta_0|^{16(\rho+1)}\right]  +\cc_{8(\rho+1)}\left(1+1/(\ca_\cD\kappa)\right)+1\right)^{1/2}\\
&\qquad \times   \lambda \left(e^{-  \lambda\ca_\cD\kappa \lfrf{t}}\overline{\cC}_{\mathbf{A}0,2}\E[|\theta_0|^{8(\rho+1)}]+\widetilde{\cC}_{\mathbf{A}0,2} \right)^{1/2}  \\
&\leq \lambda^2 \left(e^{- \ca_\cD  \kappa n/2} \overline{\cC}_{\mathbf{A}2}\E[|\theta_0|^{16(\rho+1)}] +\widetilde{\cC}_{\mathbf{A}2} \right),
\end{align*}
where the third inequality holds due to Lemma \ref{lem:2ndpthmmt} and \ref{lem:oserroralg}, and $\overline{\cC}_{\mathbf{A}2}$, $\widetilde{\cC}_{\mathbf{A}2}$ are given in \eqref{eq:graditoestconst}.
\item To obtain the third inequality \eqref{lem:Mestineq3}, we use Definition \ref{def:Mdef} and write the following
\begin{align}
&\E\left[2\lambda\beta^{-1} \left\langle  \int_{{\lfrf{t}}}^t  \int_{{\lfrf{s}}}^s \left(H (\overline{\zeta}^{\lambda, n}_{\lfrf{r}}) - H (\overline{\theta}_{\lfrf{r}}^{\lambda})\right) \,\rmd B_r^{\lambda} \,\rmd s ,  \int_{\lfrf{t}}^t    \mathfrak{M}(\overline{\theta}^{\lambda}_s, \overline{\theta}^{\lambda}_{\lfrf{s}})   \,\rmd B_s^{\lambda}\right\rangle\right]\nonumber\\
\begin{split}\label{eq:Mestthdineqexpsn} 
& = 2\lambda\beta^{-1} \E\left[\left\langle  \int_{{\lfrf{t}}}^t  \int_{{\lfrf{s}}}^s \left(H (\overline{\zeta}^{\lambda, n}_{\lfrf{r}}) - H (\overline{\theta}_{\lfrf{r}}^{\lambda})\right) \,\rmd B_r^{\lambda} \,\rmd s ,  \right.\right.\\
&\qquad \left.\left. \int_{\lfrf{t}}^t   \left\langle\nabla H(\overline{\theta}^{\lambda}_{\lfrf{s}}),    
\int_{\lfrf{s}}^s - \lambda h_{\lambda}(\overline{\theta}^{\lambda}_{\lfrf{r}})   \,\rmd r
\right\rangle \,\rmd B_s^{\lambda}\right\rangle\right] \\
& \quad+ 2\lambda\beta^{-1} \E\left[\left\langle  \int_{{\lfrf{t}}}^t  \int_{{\lfrf{s}}}^s \left(H (\overline{\zeta}^{\lambda, n}_{\lfrf{r}}) - H (\overline{\theta}_{\lfrf{r}}^{\lambda})\right) \,\rmd B_r^{\lambda} \,\rmd s ,   \right.\right.\\
&\qquad \left.\left.  \int_{\lfrf{t}}^t   \left\langle    \nabla H(\overline{\theta}^{\lambda}_{\lfrf{s}}),    
\int_{\lfrf{s}}^s     \int_{\lfrf{r}}^r \lambda^2 H_{\lambda}( \overline{\theta}^{\lambda}_{\lfrf{\nu}})h_{\lambda}( \overline{\theta}^{\lambda}_{\lfrf{\nu}}) \,\rmd \nu   \,\rmd r
\right\rangle \,\rmd B_s^{\lambda}\right\rangle\right] \\
& \quad+ 2\lambda\beta^{-1} \E\left[\left\langle  \int_{{\lfrf{t}}}^t  \int_{{\lfrf{s}}}^s \left(H (\overline{\zeta}^{\lambda, n}_{\lfrf{r}}) - H (\overline{\theta}_{\lfrf{r}}^{\lambda})\right) \,\rmd B_r^{\lambda} \,\rmd s ,   \right.\right.\\
&\qquad \left.\left.  \int_{\lfrf{t}}^t   \left\langle    \nabla H(\overline{\theta}^{\lambda}_{\lfrf{s}}),    
\int_{\lfrf{s}}^s    \int_{\lfrf{r}}^r -\lambda^2 \beta^{-1}\Upsilon_{\lambda}(\overline{\theta}^{\lambda}_{\lfrf{\nu}}) \,\rmd \nu   \,\rmd r
\right\rangle \,\rmd B_s^{\lambda}\right\rangle\right] \\
& \quad+ 2\lambda\beta^{-1} \E\left[\left\langle  \int_{{\lfrf{t}}}^t  \int_{{\lfrf{s}}}^s \left(H (\overline{\zeta}^{\lambda, n}_{\lfrf{r}}) - H (\overline{\theta}_{\lfrf{r}}^{\lambda})\right) \,\rmd B_r^{\lambda} \,\rmd s ,   \right.\right.\\
&\qquad \left.\left. \int_{\lfrf{t}}^t   \left\langle   \nabla H(\overline{\theta}^{\lambda}_{\lfrf{s}}),    
\int_{\lfrf{s}}^s -\lambda\sqrt{2\lambda\beta^{-1}} \int_{{\lfrf{r}}}^r H_{\lambda}(\overline{\theta}_{\lfrf{\nu}}^{\lambda})\,\rmd B_{\nu}^{\lambda}    \,\rmd r
\right\rangle \,\rmd B_s^{\lambda}\right\rangle\right] \\
& \quad+ 2\lambda\beta^{-1} \E\left[\left\langle  \int_{{\lfrf{t}}}^t  \int_{{\lfrf{s}}}^s \left(H (\overline{\zeta}^{\lambda, n}_{\lfrf{r}}) - H (\overline{\theta}_{\lfrf{r}}^{\lambda})\right) \,\rmd B_r^{\lambda} \,\rmd s ,   \right.\right.\\
&\qquad \left.\left. \int_{\lfrf{t}}^t   \left\langle\nabla H(\overline{\theta}^{\lambda}_{\lfrf{s}}),    
\sqrt{2\lambda\beta^{-1}}\int_{\lfrf{s}}^s \, \rmd B^{\lambda}_r
\right\rangle \,\rmd B_s^{\lambda}\right\rangle\right]  .
\end{split}
\end{align}
Recall that $(\mathcal{F}^{\lambda}_t)_{t \geq 0}$ is the completed natural filtration of $(B^{\lambda}_t)_{t\geq0}$. Then, we note that, for any $i,j,k=1, \dots, d$, it holds that
\begin{align*}
&\E\left[ \left( \int_{{\lfrf{t}}}^t   \int_{{\lfrf{s}}}^s \left(H^{(i,j)} (\overline{\zeta}^{\lambda, n}_{\lfrf{r}}) - H^{(i,j)} (\overline{\theta}_{\lfrf{r}}^{\lambda})\right) \,\rmd (B_r^{\lambda})^{(j)} \,\rmd s   \right)\right.\\
&\qquad \times \left. \left(  \int_{\lfrf{t}}^t   \left(  \partial_{\theta^{(k)}} H^{(i,j)}(\overline{\theta}^{\lambda}_{\lfrf{s}})   
\int_{\lfrf{s}}^s \, \rmd (B^{\lambda}_r)^{(k)}\right)
\,\rmd (B_s^{\lambda})^{(j)}\right) \right]\\
& = \E\Bigg[ \left(H^{(i,j)} (\overline{\zeta}^{\lambda, n}_{\lfrf{t}}) - H^{(i,j)} (\overline{\theta}_{\lfrf{t}}^{\lambda})\right)\partial_{\theta^{(k)}} H^{(i,j)}(\overline{\theta}^{\lambda}_{\lfrf{t}})    \Bigg.\\
&\qquad \times \left. \E\left[  \left.\int_{{\lfrf{t}}}^t   \int_{{\lfrf{s}}}^s \,\rmd (B_r^{\lambda})^{(j)} \,\rmd s     \int_{\lfrf{t}}^t    
\int_{\lfrf{s}}^s \, \rmd (B^{\lambda}_r)^{(k)}  \,\rmd (B_s^{\lambda})^{(j)} 
\right|\mathcal{F}^{\lambda}_{\lfrf{t}}\right]\right]\\
& = \E\Bigg[ \left(H^{(i,j)} (\overline{\zeta}^{\lambda, n}_{\lfrf{t}}) - H^{(i,j)} (\overline{\theta}_{\lfrf{t}}^{\lambda})\right)\partial_{\theta^{(k)}} H^{(i,j)}(\overline{\theta}^{\lambda}_{\lfrf{t}})    \Bigg.\\
&\qquad \times \left. \E\left[  \left. \int_{{\lfrf{t}}}^t (t-s) \,\rmd (B_s^{\lambda})^{(j)}  \int_{\lfrf{t}}^t    
\int_{\lfrf{s}}^s \, \rmd (B^{\lambda}_r)^{(k)}  \,\rmd (B_s^{\lambda})^{(j)} 
\right|\mathcal{F}^{\lambda}_{\lfrf{t}}\right]\right]\\
& = \E\Bigg[ \left(H^{(i,j)} (\overline{\zeta}^{\lambda, n}_{\lfrf{t}}) - H^{(i,j)} (\overline{\theta}_{\lfrf{t}}^{\lambda})\right)\partial_{\theta^{(k)}} H^{(i,j)}(\overline{\theta}^{\lambda}_{\lfrf{t}})    \Bigg.\\
&\qquad \times \left.\left( t\int_{{\lfrf{t}}}^t   \E\left[  \left.  \int_{\lfrf{t}}^s \, \rmd (B^{\lambda}_r)^{(k)}  
\right|\mathcal{F}^{\lambda}_{\lfrf{t}}\right]\,\rmd s  -  \int_{{\lfrf{t}}}^t s  \E\left[  \left.  \int_{\lfrf{t}}^s \, \rmd (B^{\lambda}_r)^{(k)}  
\right|\mathcal{F}^{\lambda}_{\lfrf{t}}\right]\,\rmd s  \right)\right]\\
& = 0.
\end{align*}
This implies that the last term in \eqref{eq:Mestthdineqexpsn} is zero. Indeed, we have that
\begin{align*}&2\lambda\beta^{-1} \E\left[\left\langle  \int_{{\lfrf{t}}}^t  \int_{{\lfrf{s}}}^s \left(H (\overline{\zeta}^{\lambda, n}_{\lfrf{r}}) - H (\overline{\theta}_{\lfrf{r}}^{\lambda})\right) \,\rmd B_r^{\lambda} \,\rmd s ,   \right.\right.\\
&\qquad \left.\left. \int_{\lfrf{t}}^t   \left\langle\nabla H(\overline{\theta}^{\lambda}_{\lfrf{s}}),    
\sqrt{2\lambda\beta^{-1}}\int_{\lfrf{s}}^s \, \rmd B^{\lambda}_r
\right\rangle \,\rmd B_s^{\lambda}\right\rangle\right]\\
& = (2\lambda\beta^{-1})^{3/2} \E\left[\sum_{i=1}^d\left( \int_{{\lfrf{t}}}^t \sum_{j=1}^d  \int_{{\lfrf{s}}}^s \left(H^{(i,j)} (\overline{\zeta}^{\lambda, n}_{\lfrf{r}}) - H^{(i,j)} (\overline{\theta}_{\lfrf{r}}^{\lambda})\right) \,\rmd (B_r^{\lambda})^{(j)} \,\rmd s   \right)\right.\\
&\qquad \times \left. \left( \sum_{j = 1}^d \int_{\lfrf{t}}^t   \left(\sum_{k=1}^d \partial_{\theta^{(k)}} H^{(i,j)}(\overline{\theta}^{\lambda}_{\lfrf{s}})   
\int_{\lfrf{s}}^s \, \rmd (B^{\lambda}_r)^{(k)}\right)
 \,\rmd (B_s^{\lambda})^{(j)}\right) \right]=0.
\end{align*}
Then, by using Remark \ref{rmk:growthc} and \eqref{eq:Mestthdineqexpsn} with the result above, we obtain that
\begin{align*}
&\E\left[2\lambda\beta^{-1} \left\langle  \int_{{\lfrf{t}}}^t  \int_{{\lfrf{s}}}^s \left(H (\overline{\zeta}^{\lambda, n}_{\lfrf{r}}) - H (\overline{\theta}_{\lfrf{r}}^{\lambda})\right) \,\rmd B_r^{\lambda} \,\rmd s ,  \int_{\lfrf{t}}^t    \mathfrak{M}(\overline{\theta}^{\lambda}_s, \overline{\theta}^{\lambda}_{\lfrf{s}})   \,\rmd B_s^{\lambda}\right\rangle\right] \\ 
&\leq 4\lambda^2\beta^{-2} \E\left[\left| \int_{{\lfrf{t}}}^t  \int_{{\lfrf{s}}}^s \left(H (\overline{\zeta}^{\lambda, n}_{\lfrf{r}}) - H (\overline{\theta}_{\lfrf{r}}^{\lambda})\right) \,\rmd B_r^{\lambda} \,\rmd s  \right|^2\right]\\
&\quad + \lambda^2\E\left[\left|\int_{\lfrf{t}}^t   \left\langle\nabla H(\overline{\theta}^{\lambda}_{\lfrf{s}}),    
\int_{\lfrf{s}}^s -   h_{\lambda}(\overline{\theta}^{\lambda}_{\lfrf{r}})   \,\rmd r      \right\rangle \,\rmd B_s^{\lambda}  \right|^2\right]\\ 
&\quad + \lambda^4\E\left[\left|\int_{\lfrf{t}}^t   \left\langle    \nabla H(\overline{\theta}^{\lambda}_{\lfrf{s}}),    
\int_{\lfrf{s}}^s     \int_{\lfrf{r}}^r  H_{\lambda}( \overline{\theta}^{\lambda}_{\lfrf{\nu}})h_{\lambda}( \overline{\theta}^{\lambda}_{\lfrf{\nu}}) \,\rmd \nu   \,\rmd r   \right\rangle \,\rmd B_s^{\lambda} \right|^2\right]\\ 
&\quad + \lambda^4\beta^{-2}\E\left[\left| \int_{\lfrf{t}}^t   \left\langle    \nabla H(\overline{\theta}^{\lambda}_{\lfrf{s}}),    
\int_{\lfrf{s}}^s    \int_{\lfrf{r}}^r - \Upsilon_{\lambda}(\overline{\theta}^{\lambda}_{\lfrf{\nu}}) \,\rmd \nu   \,\rmd r     \right\rangle \,\rmd B_s^{\lambda} \right|^2\right]\\ 
&\quad +2\lambda^3\beta^{-1}\E\left[\left|  \int_{\lfrf{t}}^t   \left\langle   \nabla H(\overline{\theta}^{\lambda}_{\lfrf{s}}),    
\int_{\lfrf{s}}^s -\int_{{\lfrf{r}}}^r H_{\lambda}(\overline{\theta}_{\lfrf{\nu}}^{\lambda})\,\rmd B_{\nu}^{\lambda}    \,\rmd r \right\rangle \,\rmd B_s^{\lambda} \right|^2\right]\\ 
&\leq 8\lambda^2\beta^{-2} \int_{{\lfrf{t}}}^t  \int_{{\lfrf{s}}}^s \E\left[|H (\overline{\zeta}^{\lambda, n}_{\lfrf{r}}) |_{\cF}^2+|H (\overline{\theta}_{\lfrf{r}}^{\lambda})|_{\cF}^2\right]\,\rmd r \,\rmd s \\
&\quad + \lambda^2\int_{\lfrf{t}}^t \E\left[\sum_{i,j=1}^d\left|  \left\langle\nabla H^{(i,j)}(\overline{\theta}^{\lambda}_{\lfrf{s}}),    h_{\lambda}(\overline{\theta}^{\lambda}_{\lfrf{s}})   \right\rangle  \right|^2\right]\, \rmd s \\ 
&\quad + \lambda^4\int_{\lfrf{t}}^t \E\left[\sum_{i,j=1}^d\left|  \left\langle\nabla H^{(i,j)}(\overline{\theta}^{\lambda}_{\lfrf{s}}),    H_{\lambda}( \overline{\theta}^{\lambda}_{\lfrf{s}})h_{\lambda}( \overline{\theta}^{\lambda}_{\lfrf{s}})   \right\rangle  \right|^2\right]\, \rmd s \\ 
&\quad + \lambda^4\beta^{-2}\int_{\lfrf{t}}^t \E\left[\sum_{i,j=1}^d\left|  \left\langle\nabla H^{(i,j)}(\overline{\theta}^{\lambda}_{\lfrf{s}}),   \Upsilon_{\lambda}(\overline{\theta}^{\lambda}_{\lfrf{s}})  \right\rangle  \right|^2\right]\, \rmd s \\ 
&\quad +2\lambda^3\beta^{-1} \int_{\lfrf{t}}^t \E\left[\sum_{i,j=1}^d\left|  \left\langle\nabla H^{(i,j)}(\overline{\theta}^{\lambda}_{\lfrf{s}}),  H_{\lambda}(\overline{\theta}_{\lfrf{s}}^{\lambda}) \int_{\lfrf{s}}^s \int_{{\lfrf{r}}}^r \,\rmd B_{\nu}^{\lambda}    \,\rmd r  \right\rangle  \right|^2\right]\, \rmd s \\ 
&\leq 8\lambda^2\beta^{-2} \int_{{\lfrf{t}}}^t  \E\left[dK_H^2(1+|\overline{\zeta}^{\lambda, n}_{\lfrf{s}}|^{\rho+q-1})^2 +dK_H^2(1+|\overline{\theta}_{\lfrf{s}}^{\lambda}|^{\rho+q-1})^2  \right] \,\rmd s \\
&\quad +  \lambda^2\int_{\lfrf{t}}^t \E\left[d^2\cK_0^2(1+|\overline{\theta}^{\lambda}_{\lfrf{s}}|)^{2(\rho+q-2)}K_h^2(1+|\overline{\theta}^{\lambda}_{\lfrf{s}}|^{\rho+q})^2  \right]\, \rmd s \\ 
&\quad +  \lambda^4\int_{\lfrf{t}}^t \E\left[d^2\cK_0^2(1+|\overline{\theta}^{\lambda}_{\lfrf{s}}|)^{2(\rho+q-2)}K_H^2(1+|\overline{\theta}^{\lambda}_{\lfrf{s}}|^{\rho+q-1})^2  K_h^2(1+|\overline{\theta}^{\lambda}_{\lfrf{s}}|^{\rho+q})^2  \right]\, \rmd s \\ 
&\quad +  \lambda^4\beta^{-2}\int_{\lfrf{t}}^t \E\left[d^2\cK_0^2(1+|\overline{\theta}^{\lambda}_{\lfrf{s}}|)^{2(\rho+q-2)}\cK_{3,d}^2(1+|\overline{\theta}^{\lambda}_{\lfrf{s}}|)^{2(\rho+q-2)}   \right]\, \rmd s \\ 
&\quad + 2\lambda^3\beta^{-1}  \int_{\lfrf{t}}^t \E\left[d^2\cK_0^2(1+|\overline{\theta}^{\lambda}_{\lfrf{s}}|)^{2(\rho+q-2)}K_H^2(1+|\overline{\theta}^{\lambda}_{\lfrf{s}}|^{\rho+q-1})^2 \left| \int_{\lfrf{s}}^s \int_{{\lfrf{r}}}^r \,\rmd B_{\nu}^{\lambda}    \,\rmd r  \right|^2   \right]\, \rmd s \\ 
&\leq 6\lambda^2\beta^{-2} dK_H^2 3^{16(\rho+1) } \int_{{\lfrf{t}}}^t  \E\left[ 1+|\overline{\zeta}^{\lambda, n}_{\lfrf{s}}|^{16(\rho+1)}+|\overline{\theta}_{\lfrf{s}}^{\lambda}|^{16(\rho+1)}   \right] \,\rmd s \\
&\quad +  \lambda^2( K_h^2 + K_H^2K_h^2+\beta^{-2}\cK_{3,d}^2)d^2\cK_0^22^{16(\rho+1)}\int_{\lfrf{t}}^t \E\left[1+|\overline{\theta}^{\lambda}_{\lfrf{s}}|^{16(\rho+1)}   \right]\, \rmd s \\ 
&\quad + \lambda^2 \beta^{-1}d^2\cK_0^2  K_H^2 2^{8(\rho+1)+1}\int_{\lfrf{t}}^t \left(\E\left[1+|\overline{\theta}^{\lambda}_{\lfrf{s}}|^{16(\rho+1)} \right]\right)^{1/2}(3(d+4))\, \rmd s \\ 
&\leq 6 \lambda^2\beta^{-2} dK_H^2 3^{24(\rho+1)} \left( e^{-\lambda\ca_\cD \min\{ \kappa, 1 /2\} \lfrf{t}}\E[|\theta_0|^{16(\rho+1)}] + \left( \cc_{8(\rho+1)}\left(1+1/(\ca_\cD\kappa)\right)+1\right)\right.\\
&\qquad \left. +3\mathrm{v}_{16(\rho+1)}(\cM_V(16(\rho+1)))\right)\\
&\quad +  \lambda^2( K_h^2 + K_H^2K_h^2+\beta^{-2}\cK_{3,d}^2+\beta^{-1}K_H^2)d^3\cK_0^22^{16(\rho+1)} \\
&\qquad \times \left(e^{-\lambda \ca_\cD \kappa \lfrf{t}}\E\left[|\theta_0|^{16(\rho+1)}\right]  +\cc_{8(\rho+1)}\left(1+1/(\ca_\cD\kappa)\right)+1\right)\\
&\leq \lambda^2\left(e^{- \ca_\cD \min\{ \kappa, 1 /2\} n/2} 10\overline{\cC}_{\mathbf{A}2}\E[|\theta_0|^{16(\rho+1)}] +10\widetilde{\cC}_{\mathbf{A}2} \right),
\end{align*}
where the fifth inequality holds due to Lemma \ref{lem:2ndpthmmt}, \ref{lem:zetaprocme} and \ref{lem:oserroralg}, and $\overline{\cC}_{\mathbf{A}2}$, $\widetilde{\cC}_{\mathbf{A}2}$ are given in \eqref{eq:graditoestconst}.
\end{enumerate}
This completes the proof.
\end{proof}

\begin{proof}[\textbf{Proof of Lemma \ref{lem:w1converp1}}]
\label{lem:w1converp1proof}
By using the definitions of  $\overline{\theta}^{\lambda}_t$ in \eqref{eq:aholahoproc} 
and $\overline{\zeta}^{\lambda, n}_t $ in Definition \ref{def:auxzeta}, and by applying It\^o's formula, we obtain, for any $n \in \N_0$, $t \in (nT, (n+1)T]$,
\begin{align}
&W_2^2(\mathcal{L}(\overline{\theta}^{\lambda}_t),\mathcal{L}(\overline{\zeta}^{\lambda, n}_t)) \nonumber\\
&\leq \E\left[\left|\overline{\theta}^{\lambda}_t - \overline{\zeta}^{\lambda, n}_t\right|^2\right] \nonumber\\
&=-2\lambda \E\left[\int_{nT}^t \left\langle\overline{\theta}^{\lambda}_s - \overline{\zeta}^{\lambda, n}_s, h_{\lambda}(\overline{\theta}^{\lambda}_{\lfrf{s}}) -h(\overline{\zeta}^{\lambda, n}_s)  \right\rangle\, \rmd s\right]\nonumber\\
&\quad +2\lambda \E\left[\int_{nT}^t \left\langle\overline{\theta}^{\lambda}_s - \overline{\zeta}^{\lambda, n}_s,  \lambda \int_{\lfrf{s}}^s\left(H_{\lambda}( \overline{\theta}^{\lambda}_{\lfrf{r}})h_{\lambda}( \overline{\theta}^{\lambda}_{\lfrf{r}})-\beta^{-1}\Upsilon_{\lambda}(\overline{\theta}^{\lambda}_{\lfrf{r}}) \right)\,\rmd r  \right\rangle\, \rmd s\right]\nonumber\\
&\quad -2\lambda \E\left[\int_{nT}^t \left\langle\overline{\theta}^{\lambda}_s - \overline{\zeta}^{\lambda, n}_s,     \sqrt{2\lambda\beta^{-1}} \int_{{\lfrf{s}}}^s H_{\lambda}(\overline{\theta}_{\lfrf{r}}^{\lambda}) \,\rmd B_r^{\lambda}  \right\rangle\, \rmd s\right]\nonumber\\
\begin{split}\label{eq:L2convsplting}
& =-2\lambda \E\left[\int_{nT}^t \left\langle\overline{\theta}^{\lambda}_s - \overline{\zeta}^{\lambda, n}_s, h(\overline{\theta}^{\lambda}_s) - h(\overline{\zeta}^{\lambda, n}_s)  \right\rangle\, \rmd s\right] \\
& \quad -2\lambda \E\left[\int_{nT}^t \left\langle\overline{\theta}^{\lambda}_s - \overline{\zeta}^{\lambda, n}_s, h (\overline{\theta}^{\lambda}_{\lfrf{s}})  - h(\overline{\theta}^{\lambda}_s)  \right\rangle\, \rmd s\right] \\
& \quad -2\lambda \E\left[\int_{nT}^t \left\langle\overline{\theta}^{\lambda}_s - \overline{\zeta}^{\lambda, n}_s, h_{\lambda}(\overline{\theta}^{\lambda}_{\lfrf{s}})  - h(\overline{\theta}^{\lambda}_{\lfrf{s}}) \right\rangle\, \rmd s\right] \\
&\quad +2\lambda \E\left[\int_{nT}^t \left\langle\overline{\theta}^{\lambda}_s - \overline{\zeta}^{\lambda, n}_s,  \lambda \int_{\lfrf{s}}^s\left(H_{\lambda}( \overline{\theta}^{\lambda}_{\lfrf{r}})h_{\lambda}( \overline{\theta}^{\lambda}_{\lfrf{r}})-\beta^{-1}\Upsilon_{\lambda}(\overline{\theta}^{\lambda}_{\lfrf{r}}) \right)\,\rmd r  \right\rangle\, \rmd s\right] \\
&\quad - 2\lambda \E\left[\int_{nT}^t \left\langle\overline{\theta}^{\lambda}_s - \overline{\zeta}^{\lambda, n}_s, \sqrt{2\lambda\beta^{-1}} \int_{{\lfrf{s}}}^s H_{\lambda}(\overline{\theta}_{\lfrf{r}}^{\lambda})\,\rmd B_r^{\lambda}  \right\rangle\, \rmd s\right].
\end{split}
\end{align}
By applying It\^o's formula to $ h(\overline{\theta}^{\lambda}_s)$, we obtain \eqref{eq:holaprocito}. Substituting \eqref{eq:holaprocito} into \eqref{eq:L2convsplting}, applying Remark \ref{rmk:oslc} and Young's inequality yield
\begin{align}
&\E\left[\left|\overline{\theta}^{\lambda}_t - \overline{\zeta}^{\lambda, n}_t\right|^2\right] \nonumber\\
&\leq 2\lambda \cL_{\cOS}\int_{nT}^t  \E\left[|\overline{\theta}^{\lambda}_s - \overline{\zeta}^{\lambda, n}_s|^2 \right] \, \rmd s   \nonumber\\
& \quad +2\lambda \E\left[\int_{nT}^t \left\langle\overline{\theta}^{\lambda}_s - \overline{\zeta}^{\lambda, n}_s, 
-\lambda  \int_{\lfrf{s}}^s \left(H(\overline{\theta}^{\lambda}_r) - H_{\lambda}( \overline{\theta}^{\lambda}_{\lfrf{r}}) \right)h_{\lambda}(\overline{\theta}^{\lambda}_{\lfrf{r}})\,\rmd r  \right\rangle\, \rmd s\right]   \nonumber\\
& \quad +2\lambda \E\left[\int_{nT}^t \left\langle\overline{\theta}^{\lambda}_s - \overline{\zeta}^{\lambda, n}_s, 
\lambda^2\int_{\lfrf{s}}^s H(\overline{\theta}^{\lambda}_r)\int_{\lfrf{r}}^r H_{\lambda}( \overline{\theta}^{\lambda}_{\lfrf{\nu}})h_{\lambda}( \overline{\theta}^{\lambda}_{\lfrf{\nu}}) \,\rmd \nu\,\rmd r   \right\rangle\, \rmd s\right]  \nonumber \\
& \quad +2\lambda \E\left[\int_{nT}^t \left\langle\overline{\theta}^{\lambda}_s - \overline{\zeta}^{\lambda, n}_s, 
-\lambda^2\beta^{-1}\int_{\lfrf{s}}^s H(\overline{\theta}^{\lambda}_r)\int_{\lfrf{r}}^r  \Upsilon_{\lambda}(\overline{\theta}^{\lambda}_{\lfrf{\nu}}) \,\rmd \nu\,\rmd r  \right\rangle\, \rmd s\right] \nonumber\\
& \quad +2\lambda \E\left[\int_{nT}^t \left\langle\overline{\theta}^{\lambda}_s - \overline{\zeta}^{\lambda, n}_s, 
-\lambda\sqrt{2\lambda\beta^{-1}} \int_{\lfrf{s}}^s  H(\overline{\theta}^{\lambda}_r) \int_{{\lfrf{r}}}^r H_{\lambda}(\overline{\theta}_{\lfrf{\nu}}^{\lambda})\,\rmd B_{\nu}^{\lambda}\,\rmd r    \right\rangle\, \rmd s\right]  \nonumber \\
& \quad +2\lambda \E\left[\int_{nT}^t \left\langle\overline{\theta}^{\lambda}_s - \overline{\zeta}^{\lambda, n}_s, 
\sqrt{2\lambda\beta^{-1}}\int_{\lfrf{s}}^s \left( H(\overline{\theta}^{\lambda}_r) - H_{\lambda}(\overline{\theta}_{\lfrf{r}}^{\lambda})\right) \, \rmd B^{\lambda}_r   \right\rangle\, \rmd s\right]  \nonumber \\
& \quad +2\lambda \E\left[\int_{nT}^t \left\langle\overline{\theta}^{\lambda}_s - \overline{\zeta}^{\lambda, n}_s, 
\lambda\beta^{-1}\int_{\lfrf{s}}^s\left(\Upsilon (\overline{\theta}^{\lambda}_r) - \Upsilon_{\lambda}(\overline{\theta}^{\lambda}_{\lfrf{r}}) \right)\rmd r   \right\rangle\, \rmd s\right]  \nonumber\\
& \quad -2\lambda \E\left[\int_{nT}^t \left\langle\overline{\theta}^{\lambda}_s - \overline{\zeta}^{\lambda, n}_s, h_{\lambda}(\overline{\theta}^{\lambda}_{\lfrf{s}})  - h(\overline{\theta}^{\lambda}_{\lfrf{s}}) \right\rangle\, \rmd s\right]  \nonumber\\
\begin{split}\label{eq:L2convspltingitoY}
&\leq  \lambda (2\cL_{\cOS}+6)\int_{nT}^t  \E\left[|\overline{\theta}^{\lambda}_s - \overline{\zeta}^{\lambda, n}_s|^2 \right] \, \rmd s\\
& \quad +\lambda\int_{nT}^t \E\left[\left|-\lambda  \int_{\lfrf{s}}^s \left(H(\overline{\theta}^{\lambda}_r) - H_{\lambda}( \overline{\theta}^{\lambda}_{\lfrf{r}}) \right)h_{\lambda}(\overline{\theta}^{\lambda}_{\lfrf{r}})\,\rmd r \right|^2 \right] \, \rmd s \\
& \quad +\lambda\int_{nT}^t \E\left[\left|\lambda^2\int_{\lfrf{s}}^s H(\overline{\theta}^{\lambda}_r)\int_{\lfrf{r}}^r H_{\lambda}( \overline{\theta}^{\lambda}_{\lfrf{\nu}})h_{\lambda}( \overline{\theta}^{\lambda}_{\lfrf{\nu}}) \,\rmd \nu\,\rmd r   \right|^2 \right] \, \rmd s \\ 
& \quad +\lambda\int_{nT}^t \E\left[\left|-\lambda^2\beta^{-1}\int_{\lfrf{s}}^s H(\overline{\theta}^{\lambda}_r)\int_{\lfrf{r}}^r  \Upsilon_{\lambda}(\overline{\theta}^{\lambda}_{\lfrf{\nu}}) \,\rmd \nu\,\rmd r  \right|^2 \right] \, \rmd s \\ 
& \quad +\lambda\int_{nT}^t \E\left[\left|-\lambda\sqrt{2\lambda\beta^{-1}} \int_{\lfrf{s}}^s  H(\overline{\theta}^{\lambda}_r) \int_{{\lfrf{r}}}^r H_{\lambda}(\overline{\theta}_{\lfrf{\nu}}^{\lambda})\,\rmd B_{\nu}^{\lambda}\,\rmd r   \right|^2 \right] \, \rmd s \\ 
& \quad +\lambda\int_{nT}^t \E\left[\left|\lambda\beta^{-1}\int_{\lfrf{s}}^s\left(\Upsilon (\overline{\theta}^{\lambda}_r) - \Upsilon_{\lambda}(\overline{\theta}^{\lambda}_{\lfrf{r}}) \right)\rmd r   \right|^2 \right] \, \rmd s \\  
& \quad +\lambda\int_{nT}^t \E\left[\left|h(\overline{\theta}^{\lambda}_{\lfrf{s}}) -h_{\lambda}(\overline{\theta}^{\lambda}_{\lfrf{s}})   \right|^2 \right] \, \rmd s \\  
& \quad +2\lambda \E\left[\int_{nT}^t \left\langle\overline{\theta}^{\lambda}_s - \overline{\zeta}^{\lambda, n}_s, 
\sqrt{2\lambda\beta^{-1}}\int_{\lfrf{s}}^s \left( H(\overline{\theta}^{\lambda}_r) - H_{\lambda}(\overline{\theta}_{\lfrf{r}}^{\lambda})\right) \, \rmd B^{\lambda}_r   \right\rangle\, \rmd s\right] .
\end{split} 
\end{align}
We note that, by using \eqref{eq:graditoestint1}, for any $t \geq nT$,
\begin{align}\label{eq:htamingrate}
\E\left[|  h(\overline{\theta}^{\lambda}_{\lfrf{t}}) -h_{\lambda}(\overline{\theta}^{\lambda}_{\lfrf{t}})|^2\right]
& \leq \E\left[\lambda^3| \overline{\theta}^{\lambda}_{\lfrf{t}} |^{6(\rho+q-1)}K_h^2(1+|\overline{\theta}^{\lambda}_{\lfrf{t}}|^{\rho+q})^2\right]\nonumber\\
&\leq \lambda^3K_h^22^{16(\rho+1)}\E\left[ 1+|\overline{\theta}^{\lambda}_{\lfrf{t}}|^{16(\rho+1)}\right]\nonumber\\
&\leq  \lambda^3K_h^22^{16(\rho+1)}\left(e^{-\lambda \ca_\cD \kappa \lfrf{t}}\E\left[|\theta_0|^{16(\rho+1)}\right]  +\cc_{8(\rho+1)}\left(1+1/(\ca_\cD\kappa)\right)+1\right)\nonumber\\
&\leq \lambda^3 \left(e^{- \ca_\cD  \kappa n/2} \overline{\cC}_{\mathbf{A}2}\E[|\theta_0|^{16(\rho+1)}] +\widetilde{\cC}_{\mathbf{A}2} \right),
\end{align}
where the third inequality holds due to Lemma \ref{lem:2ndpthmmt}, and $\overline{\cC}_{\mathbf{A}2}$, $\widetilde{\cC}_{\mathbf{A}2}$ are given in \eqref{eq:graditoestconst}. By using Lemma \ref{lem:graditoest} and \eqref{eq:htamingrate}, \eqref{eq:L2convspltingitoY} becomes
\begin{align}
\begin{split}\label{eq:L2convspltingMterm}
\E\left[\left|\overline{\theta}^{\lambda}_t - \overline{\zeta}^{\lambda, n}_t\right|^2\right]
&\leq  \lambda (2\cL_{\cOS}+6)\int_{nT}^t  \E\left[|\overline{\theta}^{\lambda}_s - \overline{\zeta}^{\lambda, n}_s|^2 \right] \, \rmd s\\
&\quad + 6\lambda^{2+q} \left(e^{- \ca_\cD  \kappa n/2} \overline{\cC}_{\mathbf{A}2}\E[|\theta_0|^{16(\rho+1)}] +\widetilde{\cC}_{\mathbf{A}2} \right)\\
&\quad + \mathfrak{J}_1+\mathfrak{J}_2,
\end{split}
\end{align}
where
\begin{align*}
\mathfrak{J}_1^{\lambda}(t)&:= 2\lambda \E\left[\int_{nT}^t \left\langle\overline{\theta}^{\lambda}_s - \overline{\zeta}^{\lambda, n}_s, 
\sqrt{2\lambda\beta^{-1}}\int_{\lfrf{s}}^s \left( H(\overline{\theta}^{\lambda}_r) - H_{\lambda}(\overline{\theta}_{\lfrf{r}}^{\lambda}) - \mathfrak{M}(\overline{\theta}^{\lambda}_r, \overline{\theta}^{\lambda}_{\lfrf{r}})     \right)   \, \rmd B^{\lambda}_r   \right\rangle\, \rmd s\right],\\
\mathfrak{J}_2^{\lambda}(t)&:= 2\lambda \E\left[\int_{nT}^t \left\langle\overline{\theta}^{\lambda}_s - \overline{\zeta}^{\lambda, n}_s, 
\sqrt{2\lambda\beta^{-1}}\int_{\lfrf{s}}^s \mathfrak{M}(\overline{\theta}^{\lambda}_r, \overline{\theta}^{\lambda}_{\lfrf{r}}) \, \rmd B^{\lambda}_r   \right\rangle\, \rmd s\right]
\end{align*}
with $\mathfrak{M}$ defined in Definition \ref{def:Mdef}. By using Young's inequality and Lemma \ref{lem:Mest}, we have that
\begin{align}\label{eq:L2convspltingMtermJ1}
\mathfrak{J}_1^{\lambda}(t) \leq \lambda \int_{nT}^t  \E\left[|\overline{\theta}^{\lambda}_s - \overline{\zeta}^{\lambda, n}_s|^2 \right] \, \rmd s +  \lambda^{2+q}\left(e^{- \ca_\cD  \kappa n/2} \overline{\cC}_{\mathbf{A}2}\E[|\theta_0|^{16(\rho+1)}] +\widetilde{\cC}_{\mathbf{A}2} \right).
\end{align}
To establish an upper bound for $\mathfrak{J}_2^{\lambda}(t)$, we recall the definitions of $(\overline{\theta}^{\lambda}_t)_{t\geq 0}$ and $(\overline{\zeta}^{\lambda, n}_t)_{t \geq 0}$ given in \eqref{eq:aholahoproc} and Definition \ref{def:auxzeta}, respectively, and consider the following splitting:
\begin{align*}
\mathfrak{J}_2^{\lambda}(t)
&= 2\lambda \int_{nT}^t\E\left[ \left\langle \overline{\theta}^{\lambda}_s - \overline{\theta}^{\lambda}_{\lfrf{s}} -( \overline{\zeta}^{\lambda, n}_s- \overline{\zeta}^{\lambda, n}_{\lfrf{s}}), 
\sqrt{2\lambda\beta^{-1}}\int_{\lfrf{s}}^s \mathfrak{M}(\overline{\theta}^{\lambda}_r, \overline{\theta}^{\lambda}_{\lfrf{r}}) \, \rmd B^{\lambda}_r   \right\rangle\right] \, \rmd s\\
&= 2\lambda \int_{nT}^t\E\left[ \left\langle \int_{\lfrf{s}}^s \left(\lambda (h(\overline{\zeta}^{\lambda, n}_r) - h_{\lambda}(\overline{\theta}^{\lambda}_{\lfrf{r}})) +\lambda^2\int_{\lfrf{r}}^r\left(H_{\lambda}( \overline{\theta}^{\lambda}_{\lfrf{\nu}})h_{\lambda}( \overline{\theta}^{\lambda}_{\lfrf{\nu}})-\beta^{-1}\Upsilon_{\lambda}(\overline{\theta}^{\lambda}_{\lfrf{\nu}}) \right)\,\rmd \nu \right.\right. \right.\\
&\qquad \left.\left.\left.  - \lambda\sqrt{2\lambda\beta^{-1}} \int_{{\lfrf{r}}}^r H_{\lambda}(\overline{\theta}_{\lfrf{\nu}}^{\lambda})\,\rmd B_{\nu}^{\lambda}\right)\,\rmd r, \sqrt{2\lambda\beta^{-1}}\int_{\lfrf{s}}^s \mathfrak{M}(\overline{\theta}^{\lambda}_r, \overline{\theta}^{\lambda}_{\lfrf{r}}) \, \rmd B^{\lambda}_r   \right\rangle\right] \, \rmd s\\
&= 
2\lambda \int_{nT}^t\E\left[ \left\langle \int_{\lfrf{s}}^s \left(\lambda (h(\overline{\theta}^{\lambda}_r)- h(\overline{\theta}^{\lambda}_{\lfrf{r}})) +\lambda^2\int_{\lfrf{r}}^r\left(H_{\lambda}( \overline{\theta}^{\lambda}_{\lfrf{\nu}})h_{\lambda}( \overline{\theta}^{\lambda}_{\lfrf{\nu}})-\beta^{-1}\Upsilon_{\lambda}(\overline{\theta}^{\lambda}_{\lfrf{\nu}}) \right)\,\rmd \nu \right.\right. \right.\\
&\qquad \left.\left.\left.  - \lambda\sqrt{2\lambda\beta^{-1}} \int_{{\lfrf{r}}}^r H_{\lambda}(\overline{\theta}_{\lfrf{\nu}}^{\lambda})\,\rmd B_{\nu}^{\lambda}\right)\,\rmd r, \sqrt{2\lambda\beta^{-1}}\int_{\lfrf{s}}^s \mathfrak{M}(\overline{\theta}^{\lambda}_r, \overline{\theta}^{\lambda}_{\lfrf{r}}) \, \rmd B^{\lambda}_r   \right\rangle\right] \, \rmd s\\
&\quad + 2\lambda \int_{nT}^t\E\left[ \left\langle \int_{\lfrf{s}}^s \lambda (  h(\overline{\theta}^{\lambda}_{\lfrf{r}}) - h_{\lambda}(\overline{\theta}^{\lambda}_{\lfrf{r}})) \,\rmd r,
\sqrt{2\lambda\beta^{-1}}\int_{\lfrf{s}}^s \mathfrak{M}(\overline{\theta}^{\lambda}_r, \overline{\theta}^{\lambda}_{\lfrf{r}}) \, \rmd B^{\lambda}_r   \right\rangle\right] \, \rmd s+\mathfrak{J}_{2,1}^{\lambda}(t),
\end{align*}
where the first equality holds due to the following:
\begin{align}\label{eq:gridptmeasmartingale}
\begin{split}
&\E\left[ \left\langle  \overline{\theta}^{\lambda}_{\lfrf{s}} +\overline{\zeta}^{\lambda, n}_{\lfrf{s}}, 
\sqrt{2\lambda\beta^{-1}}\int_{\lfrf{s}}^s \mathfrak{M}(\overline{\theta}^{\lambda}_r, \overline{\theta}^{\lambda}_{\lfrf{r}}) \, \rmd B^{\lambda}_r   \right\rangle\right] \\
&=\E\left[ \left\langle  \overline{\theta}^{\lambda}_{\lfrf{s}} +\overline{\zeta}^{\lambda, n}_{\lfrf{s}}, \E\left[\left.
\sqrt{2\lambda\beta^{-1}}\int_{\lfrf{s}}^s \mathfrak{M}(\overline{\theta}^{\lambda}_r, \overline{\theta}^{\lambda}_{\lfrf{r}}) \, \rmd B^{\lambda}_r\right|\mathcal{F}^{\lambda}_{\lfrf{s}}\right]   \right\rangle\right]  = 0,
\end{split}
\end{align}
and where
\[
\mathfrak{J}_{2,1}^{\lambda}(t) =  2\lambda \int_{nT}^t\E\left[ \left\langle \int_{\lfrf{s}}^s \lambda (h(\overline{\zeta}^{\lambda, n}_r) - h(\overline{\theta}^{\lambda}_r)) \,\rmd r,
\sqrt{2\lambda\beta^{-1}}\int_{\lfrf{s}}^s \mathfrak{M}(\overline{\theta}^{\lambda}_r, \overline{\theta}^{\lambda}_{\lfrf{r}}) \, \rmd B^{\lambda}_r   \right\rangle\right] \, \rmd s.
\]
By applying Cauchy-Schwarz inequality, Corollary \ref{cor:graditoub}, Lemma \ref{lem:Mest} and \eqref{eq:htamingrate}, we further obtain that
\begin{align}\label{eq:L2convspltingMtermJ2}
\mathfrak{J}_2^{\lambda}(t)
&\leq 2\lambda \int_{nT}^t\left(\E\left[ \left| \int_{\lfrf{s}}^s \lambda \left( h(\overline{\theta}^{\lambda}_r)- h(\overline{\theta}^{\lambda}_{\lfrf{r}}) +\lambda \int_{\lfrf{r}}^r\left(H_{\lambda}( \overline{\theta}^{\lambda}_{\lfrf{\nu}})h_{\lambda}( \overline{\theta}^{\lambda}_{\lfrf{\nu}})-\beta^{-1}\Upsilon_{\lambda}(\overline{\theta}^{\lambda}_{\lfrf{\nu}}) \right)\,\rmd \nu \right.\right. \right.\right. \nonumber\\
&\qquad \left.\left.\left.\left.  -  \sqrt{2\lambda\beta^{-1}} \int_{{\lfrf{r}}}^r H_{\lambda}(\overline{\theta}_{\lfrf{\nu}}^{\lambda})\,\rmd B_{\nu}^{\lambda}\right)\,\rmd r  \right|^2\right]\right)^{1/2} \nonumber \\
&\qquad \times \left(\E\left[\left| \sqrt{2\lambda\beta^{-1}}\int_{\lfrf{s}}^s \mathfrak{M}(\overline{\theta}^{\lambda}_r, \overline{\theta}^{\lambda}_{\lfrf{r}}) \, \rmd B^{\lambda}_r\right|^2\right]\right)^{1/2} \, \rmd s \nonumber\\
&\quad + 2\lambda \int_{nT}^t \left(\E\left[ \left| \int_{\lfrf{s}}^s \lambda (  h(\overline{\theta}^{\lambda}_{\lfrf{r}}) - h_{\lambda}(\overline{\theta}^{\lambda}_{\lfrf{r}})) \,\rmd r   \right|^2\right] \right)^{1/2}  \nonumber  \\
&\qquad \times \left(\E\left[ \left| \sqrt{2\lambda\beta^{-1}}\int_{\lfrf{s}}^s \mathfrak{M}(\overline{\theta}^{\lambda}_r, \overline{\theta}^{\lambda}_{\lfrf{r}}) \, \rmd B^{\lambda}_r   \right|^2\right] \right)^{1/2}\, \rmd s+\mathfrak{J}_{2,1}^{\lambda}(t) \nonumber\\
&\leq  2\lambda \int_{nT}^t\left(\lambda^4\left(e^{- \ca_\cD  \kappa n/2}36 \overline{\cC}_{\mathbf{A}2}\E[|\theta_0|^{16(\rho+1)}] +36\widetilde{\cC}_{\mathbf{A}2} \right)\right)^{1/2} \nonumber\\
&\qquad \times \left(\lambda^2\left(e^{- \ca_\cD  \kappa n/2} \overline{\cC}_{\mathbf{A}2}\E[|\theta_0|^{16(\rho+1)}] +\widetilde{\cC}_{\mathbf{A}2} \right)\right)^{1/2} \, \rmd s \nonumber\\
&\quad +2\lambda \int_{nT}^t\left(\lambda^5 \left(e^{- \ca_\cD  \kappa n/2} \overline{\cC}_{\mathbf{A}2}\E[|\theta_0|^{16(\rho+1)}] +\widetilde{\cC}_{\mathbf{A}2} \right)\right)^{1/2} \nonumber\\
&\qquad \times \left(\lambda^2\left(e^{- \ca_\cD  \kappa n/2} \overline{\cC}_{\mathbf{A}2}\E[|\theta_0|^{16(\rho+1)}] +\widetilde{\cC}_{\mathbf{A}2} \right)\right)^{1/2} \, \rmd s+\mathfrak{J}_{2,1}^{\lambda}(t) \nonumber\\
&\leq 14\lambda^3\left(e^{- \ca_\cD  \kappa n/2} \overline{\cC}_{\mathbf{A}2}\E[|\theta_0|^{16(\rho+1)}] +\widetilde{\cC}_{\mathbf{A}2} \right)+\mathfrak{J}_{2,1}^{\lambda}(t).
\end{align}
At this stage, the task reduces to upper bound $\mathfrak{J}_{2,1}^{\lambda}(t)$. To achieve this, we apply Cauchy-Schwarz inequality, Corollary \ref{cor:graditoub} and Lemma \ref{lem:Mest} to obtain
\begin{align}\label{eq:L2convspltingMtermJ21}
\mathfrak{J}_{2,1}^{\lambda}(t)
& =  2\lambda \int_{nT}^t\E\left[ \left\langle \int_{\lfrf{s}}^s \lambda \left(h(\overline{\zeta}^{\lambda, n}_r) - h ( \overline{\zeta}^{\lambda, n}_{\lfrf{r}})  - \sqrt{2\lambda\beta^{-1}} \int_{{\lfrf{r}}}^r H (\overline{\zeta}^{\lambda, n}_{\lfrf{\nu}})\,\rmd B_{\nu}^{\lambda}  
  - \Big(h(\overline{\theta}^{\lambda}_r) \Big.\right.\right.\right. \nonumber\\
&\qquad \left.\left.\left.\Big. - h (\overline{\theta}^{\lambda}_{\lfrf{r}}) - \sqrt{2\lambda\beta^{-1}} \int_{{\lfrf{r}}}^r H (\overline{\theta}_{\lfrf{\nu}}^{\lambda})\,\rmd B_{\nu}^{\lambda} \Big)\right) \,\rmd r, 
\sqrt{2\lambda\beta^{-1}}\int_{\lfrf{s}}^s \mathfrak{M}(\overline{\theta}^{\lambda}_r, \overline{\theta}^{\lambda}_{\lfrf{r}}) \, \rmd B^{\lambda}_r   \right\rangle\right] \, \rmd s \nonumber\\
&\quad +2\lambda \int_{nT}^t\E\left[ \left\langle \int_{\lfrf{s}}^s \lambda\sqrt{2\lambda\beta^{-1}} \int_{{\lfrf{r}}}^r (H (\overline{\zeta}^{\lambda, n}_{\lfrf{\nu}}) -  H (\overline{\theta}_{\lfrf{\nu}}^{\lambda}))\,\rmd B_{\nu}^{\lambda}  \,\rmd r,\right.\right. \nonumber\\
&\qquad \left.\left. \sqrt{2\lambda\beta^{-1}}\int_{\lfrf{s}}^s \mathfrak{M}(\overline{\theta}^{\lambda}_r, \overline{\theta}^{\lambda}_{\lfrf{r}}) \, \rmd B^{\lambda}_r   \right\rangle\right] \, \rmd s\nonumber\\
&\leq 2\lambda \int_{nT}^t\left(\E\left[ \left| \int_{\lfrf{s}}^s \lambda \left(h(\overline{\zeta}^{\lambda, n}_r) - h ( \overline{\zeta}^{\lambda, n}_{\lfrf{r}})  - \sqrt{2\lambda\beta^{-1}} \int_{{\lfrf{r}}}^r H (\overline{\zeta}^{\lambda, n}_{\lfrf{\nu}})\,\rmd B_{\nu}^{\lambda}  
  - \Big(h(\overline{\theta}^{\lambda}_r) \Big.\right.\right.\right.\right.\nonumber\\
&\qquad \left.\left.\left.\left.\Big. - h (\overline{\theta}^{\lambda}_{\lfrf{r}}) - \sqrt{2\lambda\beta^{-1}} \int_{{\lfrf{r}}}^r H (\overline{\theta}_{\lfrf{\nu}}^{\lambda})\,\rmd B_{\nu}^{\lambda} \Big)\right) \,\rmd r
   \right|^2\right]\right)^{1/2}  \nonumber\\
&\qquad \times  \left(\E\left[\left|\sqrt{2\lambda\beta^{-1}}\int_{\lfrf{s}}^s \mathfrak{M}(\overline{\theta}^{\lambda}_r, \overline{\theta}^{\lambda}_{\lfrf{r}}) \, \rmd B^{\lambda}_r\right|^2\right]\right)^{1/2}\, \rmd s\nonumber\\
&\quad +2\lambda^3\left(e^{- \ca_\cD \min\{ \kappa, 1 /2\} n/2}10 \overline{\cC}_{\mathbf{A}2}\E[|\theta_0|^{16(\rho+1)}] +10\widetilde{\cC}_{\mathbf{A}2} \right)\nonumber\\
&\leq 2\lambda \int_{nT}^t\left(\lambda^4\left(e^{- \ca_\cD \min\{ \kappa, 1 /2\} n/2}72 \overline{\cC}_{\mathbf{A}2}\E[|\theta_0|^{16(\rho+1)}] +72\widetilde{\cC}_{\mathbf{A}2} \right)\right)^{1/2}\nonumber\\
&\qquad \times \left( \lambda^2\left(e^{- \ca_\cD  \kappa n/2} \overline{\cC}_{\mathbf{A}2}\E[|\theta_0|^{16(\rho+1)}] +\widetilde{\cC}_{\mathbf{A}2} \right)\right)^{1/2}\, \rmd s\nonumber\\
&\quad +20\lambda^3\left(e^{- \ca_\cD \min\{ \kappa, 1 /2\} n/2} \overline{\cC}_{\mathbf{A}2}\E[|\theta_0|^{16(\rho+1)}] +\widetilde{\cC}_{\mathbf{A}2} \right)\nonumber\\
&\leq 44\lambda^3\left(e^{- \ca_\cD \min\{ \kappa, 1 /2\} n/2} \overline{\cC}_{\mathbf{A}2}\E[|\theta_0|^{16(\rho+1)}] +\widetilde{\cC}_{\mathbf{A}2} \right).
\end{align}
Substituting \eqref{eq:L2convspltingMtermJ21} into \eqref{eq:L2convspltingMtermJ2} yields
\begin{equation}\label{eq:L2convspltingMtermJ2ub}
\mathfrak{J}_2^{\lambda}(t) \leq 58\lambda^3\left(e^{- \ca_\cD \min\{ \kappa, 1 /2\} n/2} \overline{\cC}_{\mathbf{A}2}\E[|\theta_0|^{16(\rho+1)}] +\widetilde{\cC}_{\mathbf{A}2} \right).
\end{equation}
By applying \eqref{eq:L2convspltingMtermJ1} and \eqref{eq:L2convspltingMtermJ2ub} to \eqref{eq:L2convspltingMterm}, we obtain that
\begin{align*}
\E\left[\left|\overline{\theta}^{\lambda}_t - \overline{\zeta}^{\lambda, n}_t\right|^2\right]
&\leq  \lambda (2\cL_{\cOS}+7)\int_{nT}^t  \E\left[|\overline{\theta}^{\lambda}_s - \overline{\zeta}^{\lambda, n}_s|^2 \right] \, \rmd s\\
&\quad  +   65\lambda^{2+q}\left(e^{- \ca_\cD \min\{ \kappa, 1 /2\} n/2} \overline{\cC}_{\mathbf{A}2}\E[|\theta_0|^{16(\rho+1)}] +\widetilde{\cC}_{\mathbf{A}2} \right),
\end{align*}
which, by applying Gr\"{o}nwall's lemma, yields
\[
W_2^2(\mathcal{L}(\overline{\theta}^{\lambda}_t),\mathcal{L}(\overline{\zeta}^{\lambda, n}_t))
\leq \E\left[\left|\overline{\theta}^{\lambda}_t - \overline{\zeta}^{\lambda, n}_t\right|^2\right]
\leq \lambda^{2+q}\left(e^{- \ca_\cD \min\{ \kappa, 1 /2\} n/2}\cC_0\E[|\theta_0|^{16(\rho+1)}] +\cC_1 \right),
\]
where $\kappa$ is given in \eqref{eq:2ndmmtexpconst} and
\begin{equation}\label{eq:w1converp1const}
\cC_0:=65e^{2\cL_{\cOS}+7} \overline{\cC}_{\mathbf{A}2}, \quad  \cC_1: = 65e^{2\cL_{\cOS}+7}\widetilde{\cC}_{\mathbf{A}2}
\end{equation}
with $\overline{\cC}_{\mathbf{A}2}$, $\widetilde{\cC}_{\mathbf{A}2}$ given in \eqref{eq:graditoestconst}.
\end{proof}

\newpage
\section{Analytic Expression of Constants for aHOLA}
\label{appen:constexp}
{\footnotesize
\begin{tabularx}{\textwidth}[H]
{ 
	>{\hsize=0.48\hsize\linewidth=\hsize}X
	>{\hsize=0.2\hsize\linewidth=\hsize}X|
	>{\hsize=2.32\hsize\linewidth=\hsize}X
} 
\hline
\hline
{CONSTANT} & & FULL EXPRESSION\\
\hline
Remark \ref{rmk:growthc}		& $\cK_0$		& 		$2^{1-q}\max\{L, |\nabla^2 h^{(1)}(0)|,\dots, |\nabla^2 h^{(d)}(0)|\}$\\
										& $\cK_1$  		& 		$\max\{\cK_0,|\nabla  h^{(1)}(0)|,\dots, |\nabla  h^{(d)}(0)|\}$ \\
										& $\cK_2$  		& 		$\max\{\cK_1,|h^{(1)}(0)|,\dots, |h^{(d)}(0)|\}$ \\
										&$\cK_{3,d}$  	& 		$\max\{d^{3/2}L, \Upsilon(0)\}$ \\
\hline
Remark \ref{rmk:dissipativityc}	&$\ca_\cD$				& 		$a/2$ \\
										&$\cb_\cD$				& 		$(a/2+b)\cR_\cD^{\overline{r}+2}+|h(0)|^2/(2a)$   \\
										&$\cR_\cD$				& 		$\max\{(4b/a)^{1/(r-\overline{r})}, 2^{1/r}\}$   \\
										&$\overline{\cb}_\cD$& 		$\ca_\cD+\cb_\cD$   \\
\hline
Remark \ref{rmk:oslc} 				&$\cL_{\cOS}$			& 		$\sqrt{d}\cK_1(1+2\cR_{\cOS})^{\rho+q-1}$  \\
										&$\cR_{\cOS}$			& 		$ (b/a)^{1/(r-\overline{r})}$  \\
\hline
Theorem \ref{thm:mainw1} 		& $C_0$ &	$\min\{\dot{\cc}, \ca_\cD \kappa, \ca_\cD  /2\}  /4$\\
										& $C_1$ &	$e^{\min\{\dot{\cc}, \ca_\cD \kappa, \ca_\cD  /2\}  /4}\left[\cC_0^{1/2}+\cC_2+\hat{\cc}\left(3+\int_{\R^d} V_2(\theta) \pi_{\beta}(\rmd \theta)\right)\right]$\\
										& $C_2$ &	$\cC_1^{1/2}+\cC_3$\\
\hline	
Corollary \ref{crl:mainw2}		& $C_3$ 	&	$\min\{\dot{\cc}, \ca_\cD \kappa, \ca_\cD  /2\}  /8$\\
										& $C_4$ 	&	$e^{\min\{\dot{\cc}, \ca_\cD \kappa, \ca_\cD  /2\}  /8}\left[\cC_0^{1/2}+\cC_4+\sqrt{2\hat{\cc}}\left(3+\int_{\R^d} V_2(\theta) \pi_{\beta}(\rmd \theta)\right)^{1/2}\right]$\\
										& $C_5$ 	&	$\cC_1^{1/2}+\cC_5$\\
										&$\cC_4$ &	$\sqrt{\hat{\cc}}\left(1+\frac{8}{\min\{\dot{\cc}, \ca_\cD \kappa, \ca_\cD  /2\} }\right)e^{\min\{\dot{\cc}, \ca_\cD \kappa, \ca_\cD  /2\} /8}\left(\sqrt{2}\cC_0^{1/2} +2^{4\rho+5}\right)$\\
										&$\cC_5$ &	$4(\sqrt{\hat{\cc}}/\dot{\cc})e^{\dot{\cc}/4}(\sqrt{2}\cC_1^{1/2}+3\sqrt{2}+2^{4\rho+5}(\cc_{8(\rho+1)}(1+1/( \ca_\cD \kappa))+1)^{1/2} +\sqrt{6}\mathrm{v}^{1/2}_{16(\rho+1)}(\cM_V(16(\rho+1))))$\\
\hline	
Lemma \ref{lem:2ndpthmmt}	& $\kappa$			& $\cM_1^{\rho+q-1}/\left(1+ \cM_1^{3(\rho+q-1)}\right)^{1/3} \quad (\text{with $ \cM_1>0$})$\\
										& $\cc_0$			& $\ca_\cD\kappa \cM_1^2 +\cc_1+2^{2(\rho+q)} \beta^{-1} d(1+\cK_1)^2$\\
										& $\cc_1$			& $ 2\cb_\cD +4K_hK_H +\beta^{-1}2^{\rho+q-1}\cK_{3,d}+2K_h^2+3  K_H^2K_h^2+6  K_h^2K_H +2^{2(\rho+q)-6}  \beta^{-2}\cK_{3,d}^2+\beta^{-1} K_h\cK_{3,d}2^{\rho+q}+\beta^{-1} K_H K_h\cK_{3,d}2^{\rho+q-1} $\\
										& $\cc_p$  			& $\left(1+2/( \ca_\cD\kappa )\right)^{p-1}(\ca_\cD\kappa \cM_1^2 +\cc_1)^p+\cc_{\Xi}(p) +2^{2(p+\rho+q)-3}p(2p-1) \beta^{-1} d(1+\cK_1)^2 (\left(1+2/( \ca_\cD\kappa )\right)^{p-2}(\ca_\cD\kappa \cM_1^2 +\cc_1)^{p-1}+\cM_2(p)^{2p-2})$\\
										&$\cc_{\Xi}(p)$ 	& $ 2^{2p-4}(\beta^{-1}d)^p(2p(2p-1))^{p+1}3^{p-1}\left(1+ 2^{p(\rho+q-1)} d^{p/2}  \cK_1^p  \right)^2$\\
										& $\cM_2(p)$ 		& $(2^{2(p+\rho+q)-1}p(2p-1) \beta^{-1} d(1+\cK_1)^2/(\ca_\cD\kappa))^{1/2}$\\
\hline	
Lemma \ref{lem:driftcon}			& $\cc_{V,1}(p) $	& $ \ca_\cD p/4$\\
										& $\cc_{V,2}(p)$ 	& $(3/4)\ca_\cD p\mathrm{v}_p(\cM_V(p))$\\
										& $\cM_V(p) $ 		& $(1/3+4\overline{\cb}_\cD/(3\ca_\cD)+4d/(3\ca_\cD\beta)+4(p-2)/(3\ca_\cD\beta))^{1/2}$\\
\hline	
Lemma \ref{lem:w1converp1}		&$\cC_0$ 									& $65e^{2\cL_{\cOS}+7} \overline{\cC}_{\mathbf{A}2}$\\
											&$\cC_1$ 									& $65e^{2\cL_{\cOS}+7}\widetilde{\cC}_{\mathbf{A}2}$\\
											&$\overline{\cC}_{\mathbf{A}2}$		& $3^{24(\rho+1)}d^3(1+\cK_0)^2(1+K_h)^2(1+K_H)^4(1+\beta^{-1})^2(1+L)^2(1+ \cK_{3,d})^2   (2+\max\{\overline{\cC}_{\mathbf{A}0,2},\overline{\cC}_{\mathbf{A}0,2q},\overline{\cC}_{\mathbf{A}1,2},\overline{\cC}_{\mathbf{A}0,2+2q}\} )$\\
											&$\widetilde{\cC}_{\mathbf{A}2}$		& $3^{24(\rho+1)}d^3(1+\cK_0)^2(1+K_h)^2(1+K_H)^4(1+\beta^{-1})^2(1+L)^2(1+ \cK_{3,d})^2   (2 \cc_{8(\rho+1)}\left(1+1/(\ca_\cD\kappa)\right)+2\max\{\widetilde{\cC}_{\mathbf{A}0,2},\widetilde{\cC}_{\mathbf{A}0,2q},\widetilde{\cC}_{\mathbf{A}1,2},\widetilde{\cC}_{\mathbf{A}0,2+2q}\}+1  +\mathrm{v}_{16(\rho+1)}(\cM_V(16(\rho+1))) )$\\
											&$\overline{\cC}_{\mathbf{A}0,p}$ 	& $5^{2p}2^{4\lcrc{p}(\rho+1)}(K_h^{2p}+\beta^{-2p}\cK_{3,d}^{2p}+K_H^{2p}K_h^{2p}+(\beta^{-1}(4p+2)(d+4p))^p(1+K_H^{2p}))$\\
											&$\widetilde{\cC}_{\mathbf{A}0,p}$ 	& $ \overline{\cC}_{\mathbf{A}0,p}\left( \cc_{2\lcrc{p}(\rho+1)}\left(1+1/(\ca_\cD\kappa)\right)+1\right)$\\
											&$\overline{\cC}_{\mathbf{A}1,p}$  	& $2^{2p+\lcrc{p}(\rho+1)-1}K_h^{2p}$\\
											&$\widetilde{\cC}_{\mathbf{A}1,p}$  	& $\overline{\cC}_{\mathbf{A}1,p}\left( \cc_{\lcrc{p}(\rho+1)}\left(1+1/(\ca_\cD\kappa)\right)+1\right)+ 2^{2p}K_h^{2p}3\mathrm{v}_{2\lcrc{p}(\rho+1)}(\cM_V(2\lcrc{p}(\rho+1))) +2^{2p}(2\beta^{-1}(p+1)(d+2p))^p$\\
\hline	
Proposition \ref{prop:contractionw12}		& $\dot{\cc}$ 				& $\min\{\bar{\phi}, \cc_{V,1}(2), 4\cc_{V,2}(2) \epsilon \cc_{V,1}(2)\}/2$\\
													& $\bar{\phi}$				& $\left(\sqrt{8\pi/(\beta \cL_{\cOS})} \dot{\cc}_0  \exp \left( \left(\dot{\cc}_0 \sqrt{\beta \cL_{\cOS}/8} + \sqrt{8/(\beta \cL_{\cOS})} \right)^2 \right) \right)^{-1}$\\
													& $\epsilon  $  				& $1 \wedge     (4 \cc_{V,2}(2) \sqrt{2 \beta\pi/  \cL_{\cOS} }\int_0^{\dot{\cc}_1}\exp  \left( \left(s \sqrt{\beta \cL_{\cOS}/8}+\sqrt{8/(\beta \cL_{\cOS})}\right)^2 \right) \,\rmd s  )^{-1}$\\
													& $\dot{\cc}_0$   			& $2(4\cc_{V,2}(2)(1+\cc_{V,1}(2))/\cc_{V,1}(2)-1)^{1/2}$\\
													& $\dot{\cc}_1$			   	& $2(2 \cc_{V,2}(2)/\cc_{V,1}(2)-1)^{1/2}$\\
													&$\hat{\cc}$	   				& $2(1+ \dot{\cc}_0)\exp(\beta \cL_{\cOS} \dot{\cc}_0^2/8+2\dot{\cc}_0)/\epsilon$\\
\hline	
Lemma \ref{lem:w1converp2}		& $\cC_2$ & $  \hat{\cc}\left(1+\frac{4}{\min\{\dot{\cc}, \ca_\cD \kappa, \ca_\cD  /2\} }\right)e^{\min\{\dot{\cc}, \ca_\cD \kappa, \ca_\cD  /2\} /4}\left(\cC_0 +2^{8\rho+11}\right)$\\
											& $\cC_3$ & $2(\hat{\cc}/\dot{\cc})e^{\dot{\cc}/2} (\cC_1+15+2^{8\rho+11}(\cc_{8(\rho+1)}(1+1/( \ca_\cD \kappa))+1) +18\mathrm{v}_{16(\rho+1)}(\cM_V(16(\rho+1))) )$\\
\hline
\hline
\end{tabularx}
}

\newpage
\section{Analytic Expression of Constants for aHOLLA}
\label{appen:constexplip}

{\footnotesize
\begin{tabularx}{\textwidth}[H]
{ 
	>{\hsize=0.48\hsize\linewidth=\hsize}X
	>{\hsize=0.2\hsize\linewidth=\hsize}X|
	>{\hsize=2.32\hsize\linewidth=\hsize}X
} 
\hline
\hline
{CONSTANT} & & FULL EXPRESSION\\
\hline
Remark \ref{rmk:growthclip}		& $\cKl_0$  & 		$ \max\{\overline{L}_1, |\nabla^2 h^{(1)}(0)|,\dots, |\nabla^2 h^{(d)}(0)|\}$\\
										& $\cKl_1$ & 		$\max\{\overline{L}_3,|h(0)|\}$ \\
\hline
Theorem \ref{thm:mainw1lip} 	& $C_{\Lin,0}$ &	$\min\{\dot{\cc}_{\Lin},\overline{a}/2\} /4$\\
										& $C_{\Lin,1}$ &	$e^{ \min\{\dot{\cc}_{\Lin},\overline{a}/2\} /4}\left[\cC_{\Lin,0}^{1/2}+\cC_{\Lin,2}+\hat{\cc}_{\Lin}\left(3+\int_{\R^d} V_2(\theta) \pi_{\beta}(\rmd \theta)\right)\right]$\\
										& $C_{\Lin,2}$ &	$\cC_{\Lin,1}^{1/2}+\cC_{\Lin,3}$\\
\hline	
Corollary \ref{crl:mainw2lip}		& $C_{\Lin,3}$ 	&	$ \min\{\dot{\cc}_{\Lin},\overline{a}/2\}  /8$\\
										& $C_{\Lin,4}$ 	&	$e^{ \min\{\dot{\cc}_{\Lin},\overline{a}/2\}   /8}\left[\cC_{\Lin,0}^{1/2}+\cC_{\Lin,4}+\sqrt{2\hat{\cc}_{\Lin}}\left(3+\int_{\R^d} V_2(\theta) \pi_{\beta}(\rmd \theta)\right)^{1/2}\right]$\\
										& $C_{\Lin,5}$ 	&	$\cC_{\Lin,1}^{1/2}+\cC_{\Lin,5}$\\
										&$\cC_{\Lin,4}$ &	$\sqrt{ \hat{\cc}_{\Lin}}\left(1+\frac{8}{\min\{\dot{\cc}_{\Lin},\overline{a}/2\} }\right)e^{\min\{\dot{\cc}_{\Lin},\overline{a}/2\} /8}\left( \cC_{\Lin,0}^{1/2} +3\right)$\\
										&$\cC_{\Lin,5}$ &	$4(\sqrt{\hat{\cc}_{\Lin}}/\dot{\cc}_{\Lin})e^{\dot{\cc}_{\Lin}/4}\left( \cC_{\Lin,1}^{1/2}+1+3(\ccl_2(1+1/\overline{a})+1)^{1/2}+\sqrt{3}\mathrm{v}_4^{1/2}(\cMl_V(4))\right)$\\
\hline	
Lemma \ref{lem:2ndpthmmtlip}	& $\ccl_0$			& $\beta^{-1} d\left( \overline{L}_2 + \overline{L}_2\cKl_1+ \overline{L}_2\overline{L}_3\cKl_1/2\right) \cMl_1 +\ccl_1+2 \beta^{-1} d(1+ \overline{L}_3)^2$\\
										& $\ccl_1$			& $2\overline{b}+2\overline{L}_3\cKl_1+2 \cKl_1^2+\overline{L}_3^2\cKl_1^2/2+\beta^{-2}d^2\overline{L}_2^2/4+2\overline{L}_3\cKl_1^2+\beta^{-1} d(\overline{L}_2\cKl_1 +\overline{L}_2\overline{L}_3\cKl_1/2)$\\
										& $\cMl_1$ 			& $2(\overline{a}\beta)^{-1} d(\overline{L}_2 +\overline{L}_2\cKl_1+ \overline{L}_2\overline{L}_3\cKl_1/2)$\\
										& $\ccl_p$  			& $\ccl_{\Xil}(p)+\left(1+2/\overline{a}\right)^{p-1}  (\ccl_1+\beta^{-1} d\left(\overline{L}_2 + \overline{L}_2\cKl_1+ \overline{L}_2\overline{L}_3\cKl_1/2\right) \cMl_1)^p  +2^{2p-2}p(2p-1) \beta^{-1} d(1+\overline{L}_3)^2  (\left(1+2/\overline{a}\right)^{p-2}(\ccl_1+\beta^{-1} d\left(\overline{L}_2 + \overline{L}_2\cKl_1+ \overline{L}_2\overline{L}_3\cKl_1/2\right) \cMl_1)^{p-1}+\cMl_2(p)^{2p-2})$\\
										&$\ccl_{\Xil}(p)$ 	& $2^{2p-3}p(2p-1)  (2\beta^{-1}dp(2p-1)(1+\overline{L}_3)^2)^p$\\
										& $\cMl_2(p)$ 		& $(2^{2p}p(2p-1) \beta^{-1} d(1+\overline{L}_3)^2/\overline{a})^{1/2}$\\
\hline	
Lemma \ref{lem:driftconlip}		& $\ccl_{V,1}(p)$	& $\overline{a} p/4$\\
										& $\ccl_{V,2}(p)$ 	& $(3/4)\overline{a} p\mathrm{v}_p(\cMl_V(p))$\\
										& $\cMl_V(p)$ 		& $ (1/3+4\overline{b}/(3\overline{a})+4d/(3\overline{a}\beta)+4(p-2)/(3\overline{a}\beta))^{1/2}$\\
\hline	
Lemma \ref{lem:w1converlipp1}		&$\cC_{\Lin,0}$ 							& $52e^{2\overline{L}_3+6} \overline{\cC}_{\mathbf{S}2}$\\
											&$\cC_{\Lin,1}$ 							& $52e^{2\overline{L}_3+6}\widetilde{\cC}_{\mathbf{S}2}$\\
											&$\overline{\cC}_{\mathbf{S}2}$		& $2^7d^4(1+\beta^{-1})^2(1+\overline{L}_3)^4(1+\overline{L}_2)^2 (1+\overline{L}_1)^2( 1+\cKl_1)^2( 1+\cKl_0)^2 (1+\max\{\overline{\cC}_{\mathbf{S}0,2},\overline{\cC}_{\mathbf{S}0,2q},\overline{\cC}_{\mathbf{S}0,1+q},\overline{\cC}_{\mathbf{S}1,2}\})$\\
											&$\widetilde{\cC}_{\mathbf{S}2}$		& $2^7d^4(1+\beta^{-1})^2(1+\overline{L}_3)^4(1+\overline{L}_2)^2(1+\overline{L}_1)^2( 1+\cKl_1)^2( 1+\cKl_0)^2  (\ccl_2 (1+1/\overline{a})+\max\{\widetilde{\cC}_{\mathbf{S}0,2},\widetilde{\cC}_{\mathbf{S}0,2q},\widetilde{\cC}_{\mathbf{S}0,1+q},\widetilde{\cC}_{\mathbf{S}1,2}\}+1)$\\
											&$\overline{\cC}_{\mathbf{S}0,p}$ 		& $10^{2p} (\cKl_1^{2p}+(\overline{L}_3\cKl_1)^{2p})$\\
											&$\widetilde{\cC}_{\mathbf{S}0,p}$ 	& $\overline{\cC}_{\mathbf{S}0,p}( \ccl_{\lcrc{p}}(1+1/\overline{a})+1)+5^{2p}( (\beta^{-1}d \overline{L}_2)^{2p}+(2\beta^{-1}(p+1)(d+2p))^p(1+\overline{L}_3^{2p}))$\\
											&$\overline{\cC}_{\mathbf{S}1,p}$  	& $2^{4\lcrc{p}}(1+\cKl_1)^{2p}$\\
											&$\widetilde{\cC}_{\mathbf{S}1,p}$  	& $\overline{\cC}_{\mathbf{S}1,p}\left( \ccl_{\lcrc{p}}(1+1/\overline{a})+1\right)+ 2^{3\lcrc{p}}(1+\cKl_1)^{2p}3\mathrm{v}_{2\lcrc{p}}(\cMl_V(2\lcrc{p})) +2^{2p}(2\beta^{-1}(p+1)(d+2p))^p$\\
\hline	
Proposition \ref{prop:contractionw12lip}	& $\dot{\cc}_{\Lin}$ 		& $\min\{\bar{\phi}_{\Lin}, \ccl_{V,1}(2), 4\ccl_{V,2}(2) \overline{\epsilon} \ccl_{V,1}(2)\}/2$\\
													& $\bar{\phi}_{\Lin}$		& $\left(\sqrt{8\pi/(\beta \overline{L}_3)} \dot{\cc}_{\Lin,0}  \exp \left( \left(\dot{\cc}_{\Lin,0} \sqrt{\beta \overline{L}_3/8} + \sqrt{8/(\beta \overline{L}_3)} \right)^2 \right) \right)^{-1}$\\
													& $\overline{\epsilon}$  	& $1 \wedge    \left(4 \ccl_{V,2}(2) \sqrt{2 \beta\pi/  \overline{L}_3 }\int_0^{\dot{\cc}_{\Lin,1}}\exp  \left( \left(s \sqrt{\beta \overline{L}_3/8}+\sqrt{8/(\beta \overline{L}_3)}\right)^2 \right) \,\rmd s \right)^{-1}$\\
													& $\dot{\cc}_{\Lin,0}$   	& $2(4\ccl_{V,2}(2)(1+\ccl_{V,1}(2))/\ccl_{V,1}(2)-1)^{1/2}$\\
													& $\dot{\cc}_{\Lin,1}$   	& $2(2 \ccl_{V,2}(2)/\ccl_{V,1}(2)-1)^{1/2}$\\
													&$\hat{\cc}_{\Lin}$	   	& $2(1+ \dot{\cc}_{\Lin,0})\exp(\beta \overline{L}_3 \dot{\cc}_{\Lin,0}^2/8+2\dot{\cc}_{\Lin,0})/\overline{\epsilon}$\\
\hline	
Lemma \ref{lem:w1converlipp2}		& $\cC_{\Lin,2}$ & $ \hat{\cc}_{\Lin}\left(1+\frac{4}{\min\{\dot{\cc}_{\Lin},\overline{a}/2\} }\right)e^{\min\{\dot{\cc}_{\Lin},\overline{a}/2\} /4}\left( \cC_{\Lin,0} +9\right)$\\
											& $\cC_{\Lin,3}$ & $2(\hat{\cc}_{\Lin}/\dot{\cc}_{\Lin})e^{\dot{\cc}_{\Lin}/2}\left( \cC_{\Lin,1}+9+9\ccl_2(1+1/\overline{a})+9\mathrm{v}_4(\cMl_V(4))\right)$\\
\hline
\hline
\end{tabularx}
}

\newpage
\section{Proof of main results for aHOLLA}
\label{appen:mtlinearproofs}
We provide the proofs for Theorem \ref{thm:mainw1lip} and Corollary \ref{crl:mainw2lip}, which follow the same lines as those for aHOLA. We first introduce auxiliary processes which we use throughout the convergence analysis in Appendix \ref{appen:auxproclipintro}. Then, we provide moment estimates for the newly introduced processes in Appendix \ref{appen:melip}, which are followed by the detailed proofs for the main results in Appendix \ref{appen:mtlipdetalis}. We postpone the proofs of the results in Appendices \ref{appen:melip} and \ref{appen:mtlipdetalis} to Appendix \ref{appen:mtlinearproofsapd}.

\subsection{Auxiliary processes}\label{appen:auxproclipintro} Fix $\beta>0$. Consider the Langevin SDE $(Z_t)_{t \geq 0}$ given by
\begin{equation} \label{eq:sdelip}
Z_0 := \theta_0, \quad \rmd Z_t=-h\left(Z_t\right) \rmd t+ \sqrt{2\beta^{-1}} \rmd B_t,
\end{equation}
where $(B_t)_{t \geq 0}$ is a $d$-dimensional Brownian motion on $(\Omega,\mathcal{F},P)$ with its completed natural filtration denoted by $(\mathcal{F}_t)_{t\geq 0}$. Moreover, we assume that $(\mathcal{F}_t)_{t\geq 0}$ is independent of $\sigma(\theta_0)$. Under Assumption~\ref{asm:ALLlip}, it is a well-known result that the Langevin SDE \eqref{eq:sdelip} admits a unique solution, which is adapted to $\mathcal{F}_t \vee \sigma(\theta_0)$, $t\geq 0$. Its $2p$-th moment estimate with $p \in \N$ is provided in Lemma \ref{lem:sde2pmmtlip}, which can be used to further deduce the $2p$-th moment estimate of $\pi_{\beta}(\theta)\propto e^{-\beta U(\theta)}$. 

For each $\lambda>0$, recall $ B^{\lambda}_t :=B_{\lambda t}/\sqrt{\lambda}, t \geq 0$. Denote by $(\mathcal{F}^{\lambda}_t)_{t \geq 0}$ with $\mathcal{F}^{\lambda}_t := \mathcal{F}_{\lambda t}$, $t \geq 0$, its completed natural filtration, which is independent of $\sigma(\theta_0)$. Moreover, we denote by $Z^{\lambda}_t := Z_{\lambda t}$, $t \geq 0$, the time-changed version of Langevin SDE \eqref{eq:sdelip}, which is given by
\begin{equation} \label{eq:tcsdelip}
Z^{\lambda}_0:=\theta_0, \quad \rmd Z^{\lambda}_t = -\lambda h(Z^{\lambda}_t)\, \rmd t +\sqrt{2\lambda\beta^{-1}}\,\rmd B^{\lambda}_t.
\end{equation}

Furthermore, we denote by $(\widetilde{\Theta}^{\lambda}_t)_{t \geq 0}$ the continuous-time interpolation of aHOLLA \eqref{eq:aholalip1}-\eqref{eq:aholalip3} given by 
\begin{equation}\label{eq:aholaproclip}
\widetilde{\Theta}^{\lambda}_0 := \theta_0, \quad \rmd \widetilde{\Theta}^{\lambda}_t= \lambda \phi^{\lambda}_{\Lin}(\widetilde{\Theta}^{\lambda}_{\lfrf{t}})\, \rmd t
+ \sqrt{2\lambda\beta^{-1}}\psi^{\lambda}_{\Lin}(\widetilde{\Theta}^{\lambda}_{\lfrf{t}}) \,\rmd B^{\lambda}_t,
\end{equation}
where $\phi^{\lambda}_{\Lin}$ and $\psi^{\lambda}_{\Lin}$ are defined in \eqref{eq:aholalip2} and \eqref{eq:aholalip3}, respectively. 
\begin{remark}
Similarly, denote by $(\overline{\Theta}^{\lambda}_t)_{t \geq 0 }$ the continuous-time interpolation of the order 1.5 scheme \eqref{eq:aholaohlip} given by
\begin{align}\label{eq:aholahoproclip}
\begin{split}
 \rmd \overline{\Theta}^{\lambda}_t&=  -\lambda h (\overline{\Theta}^{\lambda}_{\lfrf{t}})\,\rmd t+\lambda^2\int_{\lfrf{t}}^t\left(H ( \overline{\Theta}^{\lambda}_{\lfrf{s}})h ( \overline{\Theta}^{\lambda}_{\lfrf{s}})-\beta^{-1}\Upsilon (\overline{\Theta}^{\lambda}_{\lfrf{s}}) \right)\,\rmd s\,\rmd t\\
&\quad-\lambda\sqrt{2\lambda\beta^{-1}} \int_{{\lfrf{t}}}^t H (\overline{\Theta}_{\lfrf{s}}^{\lambda})\,\rmd B_s^{\lambda}\,\rmd t  +\sqrt{2\lambda\beta^{-1}} \, \rmd B^{\lambda}_t
\end{split}
\end{align}
with $\overline{\Theta}^{\lambda}_0 := \theta_0$. We note that $\mathcal{L}(\widetilde{\Theta}^{\lambda}_n)=\mathcal{L}(\Theta_n^{\cHOLLA})=\mathcal{L}(\Theta_n^{\lambda})=\mathcal{L}(\overline{\Theta}^{\lambda}_n)$, for each $n\in\N_0$.
\end{remark}

Finally, for any $s \geq 0$, consider a continuous-time process $(\zeta^{s,v, \lambda}_t)_{t \geq s}$ defined by
\begin{equation}\label{eq:auxproclip}
\zeta^{s,v, \lambda}_s := v \in \R^d, \quad \rmd\zeta^{s,v, \lambda}_t = -\lambda h(\zeta^{s,v, \lambda}_t)\, \rmd t +\sqrt{2\lambda\beta^{-1}}\,\rmd B^{\lambda}_t.
\end{equation}
\begin{definition}\label{def:auxzetalip} Fix $\lambda >0$. Define $T\equiv T(\lambda) : = \lfrf{1/\lambda}$. Then, for any $n \in \N_0$ and $t \geq nT$, define 
\[
\widetilde{\zeta}^{\lambda, n}_t := \zeta^{nT,\overline{\Theta}^{\lambda}_{nT}, \lambda}_t.
\]
\end{definition}

\subsection{Moment estimates}\label{appen:melip} Recall the following Lyapunov functions: for each $p\in [2, \infty)\cap {\N}$, define $V_p(\theta) := (1+|\theta|^2)^{p/2}$, for all $\theta \in \R^d$, and moreover, define $\mathrm{v}_p(w) := (1+w^2)^{p/2}$, for all $w \geq 0$. We observe that $V_p$ is twice continuously differentiable and satisfies:
\begin{equation}\label{eq:contrassumptionlip}
\sup_{\theta \in \R^d}|\nabla V_p(\theta)|/V_p(\theta) <\infty, \quad \lim_{|\theta|\to\infty} \nabla V_p(\theta)/V_p(\theta)=0.
\end{equation}
Furthermore, we denote by $\mathcal{P}_{V_p}(\R^d)$ the set of probability measures $\mu \in \mathcal{P}(\R^d)$ which satisfies $\int_{\R^d} V_p(\theta)\, \mu(\rmd \theta) <\infty$.

Next, we establish moment estimates for $(\widetilde{\Theta}^{\lambda}_t)_{t\geq 0}$ given in \eqref{eq:aholaproclip}. The results with explicit constants are provided below. We note that for any $p\in [2, \infty)\cap {\N}$ and $t\geq 0$, we have that $\E[|\widetilde{\Theta}^{\lambda}_t|^{2p}] = \E[| \overline{\Theta}^{\lambda}_t|^{2p}]$.
\begin{lemma}\label{lem:2ndpthmmtlip} Let Assumptions \ref{asm:AIlip}, \ref{asm:ALLlip}, and \ref{asm:ADlip} hold.  Then, we obtain the following estimates:
\begin{enumerate}[leftmargin=*]
\item For any $0<\lambda\leq \overline{\lambda}_{\max}$, $n \in \N_0$, and $t \in (n, n+1]$,
\[
\E\left[ |\widetilde{\Theta}^{\lambda}_t|^2  \right]  \leq \left(1 -\lambda(t-n)\overline{a} \right)\left(1 -\lambda \overline{a} \right)^n\E\left[ |\theta_0|^2\right]+  \ccl_0\left(1+1/\overline{a}\right),
\]
where the constant $\ccl_0$ is given explicitly in \eqref{eq:2ndmmtlipexpconst}. In particular, the above inequality implies $\sup_{t\geq 0}\E\left[|\widetilde{\Theta}^{\lambda}_t|^2\right]  \leq  \E\left[ |\theta_0|^2\right]+  \ccl_0\left(1+1/\overline{a}\right)<\infty$.
\label{lem:2ndpthmmtlipi}
\item  For any $p\in [2, \infty)\cap {\N}$, $0<\lambda\leq \overline{\lambda}_{\max}$, $n \in \N_0$, and $t \in (n, n+1]$,
\[
\E\left[ |\widetilde{\Theta}^{\lambda}_t|^{2p} \right]  \leq \left(1 -\lambda(t-n)\overline{a} \right)\left(1 -\lambda \overline{a} \right)^n\E\left[ |\theta_0|^{2p}\right]+  \ccl_p\left(1+1/\overline{a}\right),
\]
where the constant $\ccl_p$ is given explicitly in \eqref{eq:2pthmmtlipexpconst}. In particular, the above estimate implies $\sup_{t\geq 0}\E\left[|\widetilde{\Theta}^{\lambda}_t|^{2p} \right]  \leq \E\left[ |\theta_0|^{2p}\right]+  \ccl_p\left(1+1/\overline{a}\right)<\infty$. \label{lem:2ndpthmmtlipii}
\end{enumerate}
\end{lemma}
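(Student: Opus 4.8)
The plan is to establish the moment estimate for aHOLLA by mirroring the proof of Lemma~\ref{lem:2ndpthmmt}, but exploiting the linear-growth structure of Assumption~\ref{asm:ALLlip}, which makes the argument considerably cleaner since no taming factor appears. As in the super-linear case, I would work with the discrete-time increments of the continuous interpolation $(\widetilde{\Theta}^{\lambda}_t)$ in \eqref{eq:aholaproclip}. For $t\in(n,n+1]$ set $\Deltal_{n,t}:=\widetilde{\Theta}^{\lambda}_n+\lambda\phi^{\lambda}_{\Lin}(\widetilde{\Theta}^{\lambda}_n)(t-n)$ and $\Xil_{n,t}:=\sqrt{2\lambda\beta^{-1}}\psi^{\lambda}_{\Lin}(\widetilde{\Theta}^{\lambda}_n)(B^{\lambda}_t-B^{\lambda}_n)$, so that $\widetilde{\Theta}^{\lambda}_t=\Deltal_{n,t}+\Xil_{n,t}$. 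Since the Gaussian increment is centered and conditionally independent, the cross term vanishes and
\[
\E\bigl[\,|\widetilde{\Theta}^{\lambda}_t|^2\,\bigm|\widetilde{\Theta}^{\lambda}_n\bigr]=|\Deltal_{n,t}|^2+\E\bigl[\,|\Xil_{n,t}|^2\,\bigm|\widetilde{\Theta}^{\lambda}_n\bigr].
\]
The proof of part~\ref{lem:2ndpthmmtlipi} then rests on two bounds.

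For the noise term I would use $\psi^{\lambda}_{\Lin}(\theta)^2=\cI_d-\lambda H(\theta)+(\lambda^2/3)(H(\theta))^2$ together with the Frobenius bound $|H(\theta)|_{\cF}\le\sqrt{d}\,\overline{L}_3$ coming from Remark~\ref{rmk:growthclip} (here the linear-Lipschitz constant replaces the taming estimate \eqref{eq:xiubhlambda}), giving $\E[|\Xil_{n,t}|^2\mid\widetilde{\Theta}^{\lambda}_n]\le c\,\lambda\beta^{-1}(t-n)\,d(1+\overline{L}_3)^2$ for an absolute constant. For the drift term I would expand $|\Deltal_{n,t}|^2=|\widetilde{\Theta}^{\lambda}_n|^2+2\lambda(t-n)\langle\widetilde{\Theta}^{\lambda}_n,\phi^{\lambda}_{\Lin}(\widetilde{\Theta}^{\lambda}_n)\rangle+\lambda^2(t-n)^2|\phi^{\lambda}_{\Lin}(\widetilde{\Theta}^{\lambda}_n)|^2$. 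The inner product is controlled using the dissipativity of Assumption~\ref{asm:ADlip}, $\langle\theta,h(\theta)\rangle\ge\overline{a}|\theta|^2-\overline{b}$, while the lower-order corrections $\langle\theta,Hh\rangle$ and $\beta^{-1}\langle\theta,\Upsilon\rangle$ and the squared norm $|\phi^{\lambda}_{\Lin}|^2$ are bounded by the linear growth estimates of Remark~\ref{rmk:growthclip} ($|h(\theta)|\le\cKl_1(1+|\theta|)$, $|H(\theta)\overline\theta|\le\overline{L}_3|\overline\theta|$, $|\Upsilon(\theta)|\le d\overline{L}_2$). The stepsize restriction $\overline{\lambda}_{\max}$ in \eqref{eq:stepsizemaxlip} is precisely chosen so that the positive $\lambda$-order corrections are dominated by the dissipativity term, yielding a one-step contraction
\[
\E\bigl[\,|\widetilde{\Theta}^{\lambda}_t|^2\,\bigm|\widetilde{\Theta}^{\lambda}_n\bigr]\le\bigl(1-\lambda(t-n)\overline{a}\bigr)|\widetilde{\Theta}^{\lambda}_n|^2+\lambda(t-n)\ccl_0.
\]
Taking expectations and iterating over $n$ by induction, using $0<1-\lambda(t-n)\overline{a}<1$ for $0<\lambda\le\overline{\lambda}_{\max}\le1/\overline{a}$ and summing the geometric series $\sum_k(1-\lambda\overline{a})^k\le1/(\lambda\overline{a})$, gives the stated bound with the uniform-in-time consequence following immediately.

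For part~\ref{lem:2ndpthmmtlipii} I would raise the splitting to the $2p$-th power via the binomial-type inequality used for \eqref{eq:2pthmmtexp}, namely $\E[|\widetilde{\Theta}^{\lambda}_t|^{2p}\mid\widetilde{\Theta}^{\lambda}_n]\le|\Deltal_{n,t}|^{2p}+2^{2p-3}p(2p-1)\bigl(|\Deltal_{n,t}|^{2p-2}\E[|\Xil_{n,t}|^2\mid\cdot]+\E[|\Xil_{n,t}|^{2p}\mid\cdot]\bigr)$, control $\E[|\Xil_{n,t}|^{2p}\mid\cdot]$ by the Gaussian moment bound together with $|H|_{\cF}\le\sqrt d\,\overline{L}_3$, propagate the one-step drift contraction to the $p$-th power using $(u+v)^p\le(1+\varepsilon)^{p-1}u^p+(1+\varepsilon^{-1})^{p-1}v^p$ with $\varepsilon=\lambda(t-n)\overline{a}/2$, and finally absorb the lower-order $|\widetilde{\Theta}^{\lambda}_n|^{2p-2}$ term into the contraction on the region $\{|\widetilde{\Theta}^{\lambda}_n|>\cMl_2(p)\}$ (and bound it trivially off that region), exactly as in \eqref{eq:2pthmmtexpub5}--\eqref{eq:2pthmmtexpub6}; induction on $n$ then closes the argument. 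I expect the main obstacle to be purely bookkeeping: verifying that $\overline{\lambda}_{\max}$ in \eqref{eq:stepsizemaxlip} is small enough to guarantee non-negativity of the relevant $\lambda$-order remainder (the analogue of $\mathfrak{I}^{\lambda}_1\ge0$ in \eqref{eq:deltaub3}) and tracking the explicit constants $\ccl_0,\ccl_p$ of \eqref{eq:2ndmmtlipexpconst}--\eqref{eq:2pthmmtlipexpconst}; no genuinely new idea beyond the super-linear proof is required, since the absence of taming only simplifies every estimate.
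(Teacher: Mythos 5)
Your proposal matches the paper's proof essentially step for step: the same $\Deltal/\Xil$ splitting with vanishing cross term, the dissipativity-driven one-step contraction under the stepsize restriction $\overline{\lambda}_{\max}$, the region-splitting absorption of lower-order terms, the binomial-type expansion of the $2p$-th moment with the $(u+v)^p$ weighted inequality, and induction in $n$. The only detail worth noting is that in part (i) the linear-in-$|\widetilde{\Theta}^{\lambda}_n|$ term coming from $\Upsilon$ is absorbed in the paper by the same dichotomy trick (with radius $\cMl_1$) that you describe for part (ii), rather than by the stepsize restriction alone --- a bookkeeping point your plan already contains the tools for.
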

\begin{proof} See Appendix \ref{lem:2ndpthmmtlipproof}.
\end{proof}

We provide below a drift condition for $V_p$ (defined in the beginning of Appendix \ref{appen:melip}).
\begin{lemma}
\label{lem:driftconlip} Let Assumptions \ref{asm:AIlip}, \ref{asm:ALLlip}, and \ref{asm:ADlip} hold. Then, for any $p\in [2, \infty)\cap {\N}$, $\theta \in \R^d$, we obtain
\[
\Delta V_p(\theta)/\beta - \langle \nabla V_p(\theta), h(\theta) \rangle \leq -\ccl_{V,1}(p) V_p(\theta) +\ccl_{V,2}(p),
\]
where $\ccl_{V,1}(p) := \overline{a} p/4$, $\ccl_{V,2}(p) := (3/4)\overline{a} p\mathrm{v}_p(\cMl_V(p))$ with $\cMl_V(p) := (1/3+4\overline{b}/(3\overline{a})+4d/(3\overline{a}\beta)+4(p-2)/(3\overline{a}\beta))^{1/2}$.
\end{lemma}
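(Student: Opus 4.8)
The plan is to establish the drift inequality for $V_p$ by direct computation of its derivatives and then applying the dissipativity condition in Assumption~\ref{asm:ADlip}, exactly mirroring the proof strategy of Lemma~\ref{lem:driftcon}, which is cited from \cite[Lemma 3.5]{nonconvex}. Indeed, this lemma is the linear-growth counterpart of Lemma~\ref{lem:driftcon}: the only structural difference is that here we use the global dissipativity constants $\overline{a}, \overline{b}$ from Assumption~\ref{asm:ADlip} in place of $\ca_\cD, \overline{\cb}_\cD$ from Remark~\ref{rmk:dissipativityc}. Since the proof does not use any growth information on $h$ beyond the dissipativity bound $\langle \theta, h(\theta)\rangle \geq \overline{a}|\theta|^2 - \overline{b}$, it goes through verbatim with the substitution $\ca_\cD \mapsto \overline{a}$, $\overline{\cb}_\cD \mapsto \overline{b}$; this explains why $\cMl_V(p)$ is obtained from $\cM_V(p)$ by precisely this replacement.

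First I would compute the gradient and Laplacian of $V_p(\theta) = (1+|\theta|^2)^{p/2}$. A direct calculation gives $\nabla V_p(\theta) = p(1+|\theta|^2)^{p/2-1}\theta$ and
\[
\Delta V_p(\theta) = p(1+|\theta|^2)^{p/2-1}\left(d + (p-2)\frac{|\theta|^2}{1+|\theta|^2}\right) \leq p(d+p-2)(1+|\theta|^2)^{p/2-1}.
\]
Then, using the dissipativity bound from Assumption~\ref{asm:ADlip}, I would estimate the drift term:
\[
-\langle \nabla V_p(\theta), h(\theta)\rangle = -p(1+|\theta|^2)^{p/2-1}\langle \theta, h(\theta)\rangle \leq -p(1+|\theta|^2)^{p/2-1}(\overline{a}|\theta|^2 - \overline{b}).
\]
Combining these two estimates, the generator applied to $V_p$ is bounded above by
\[
p(1+|\theta|^2)^{p/2-1}\left(\beta^{-1}(d+p-2) + \overline{b} - \overline{a}|\theta|^2\right).
\]

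The decisive step is to absorb the prefactor into a genuine $-\ccl_{V,1}(p)V_p(\theta) + \ccl_{V,2}(p)$ bound. Writing $|\theta|^2 = (1+|\theta|^2) - 1$ and splitting off a fraction of the $-\overline{a}|\theta|^2$ term, one arranges that $-\tfrac{3}{4}\overline{a}(1+|\theta|^2) + (\text{bounded remainder})$ dominates the bracket outside a compact region; the constant $\cMl_V(p)$ is precisely the threshold radius at which the negative quadratic term overtakes the constant contributions $\beta^{-1}(d+p-2)+\overline{b}+\tfrac14\overline{a}$, scaled by the factor $\tfrac{4}{3\overline{a}}$. This yields $\ccl_{V,1}(p) = \overline{a}p/4$ for the coefficient of $V_p$, while on the compact set $\{|\theta| \leq \cMl_V(p)\}$ the bound is controlled by $\ccl_{V,2}(p) = \tfrac34 \overline{a}p\,\mathrm{v}_p(\cMl_V(p))$. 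I do not anticipate any genuine obstacle here, since the argument is elementary and identical in form to the super-linear case; the only care needed is bookkeeping to verify that the explicit constants $\ccl_{V,1}(p)$, $\ccl_{V,2}(p)$, and $\cMl_V(p)$ match those obtained by the $\ca_\cD \mapsto \overline{a}$, $\overline{\cb}_\cD \mapsto \overline{b}$ substitution in Lemma~\ref{lem:driftcon}. Consequently, I would simply invoke \cite[Lemma 3.5]{nonconvex} with the dissipativity constants from Assumption~\ref{asm:ADlip}, noting that Assumption~\ref{asm:ADlip} provides exactly the dissipativity hypothesis required there.
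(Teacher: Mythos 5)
Your proposal is correct and takes essentially the same approach as the paper: the paper's proof consists precisely of the citation to \cite[Lemma 3.5]{nonconvex}, applied with the dissipativity constants $\overline{a},\overline{b}$ from Assumption~\ref{asm:ADlip}, which is exactly how you conclude. Your explicit computation (gradient, Laplacian, dissipativity, and the threshold radius $\cMl_V(p)$ yielding $\ccl_{V,1}(p)=\overline{a}p/4$ and $\ccl_{V,2}(p)=(3/4)\overline{a}p\,\mathrm{v}_p(\cMl_V(p))$) is just a correct unpacking of that cited argument.
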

\begin{proof} See \cite[Lemma 3.5]{nonconvex}.
\end{proof}

By applying Lemma \ref{lem:2ndpthmmtlip} and \ref{lem:driftconlip}, we obtain moment estimates for $(\widetilde{\zeta}^{\lambda, n}_t)_{t \geq nT}$ defined in Definition \ref{def:auxzetalip}.
\begin{lemma}\label{lem:zetaprocmelip} Let Assumptions \ref{asm:AIlip}, \ref{asm:ALLlip}, and \ref{asm:ADlip} hold. Then, for any $p \in \N$, $0<\lambda\leq \overline{\lambda}_{\max}$, $n \in \N_0$, and $t\geq nT$, we obtain
\[
\E[V_{2p}(\widetilde{\zeta}^{\lambda, n}_t)] \leq 2^{p-1}e^{-\lambda \overline{a}  t/2}\E[|\theta_0|^{2p}] + 2^{p-1}\left( \ccl_p(1+1/\overline{a})+1\right)+3\mathrm{v}_{2p}(\cMl_V(2p)),
\]
where $\ccl_p$ is given in \eqref{eq:2pthmmtlipexpconst} (see also Lemma \ref{lem:2ndpthmmtlip}) and $\cMl_V(2p)$ is given in Lemma \ref{lem:driftconlip}. 
\end{lemma}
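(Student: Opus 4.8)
The plan is to follow the same strategy used to prove Lemma~\ref{lem:zetaprocme} in the super-linear setting, which the paper attributes to \cite[Corollary 4.6]{mtula}. The process $(\widetilde{\zeta}^{\lambda,n}_t)_{t\geq nT}$ defined in Definition~\ref{def:auxzetalip} is precisely the time-changed Langevin SDE \eqref{eq:auxproclip} started at time $nT$ from the (random) initial value $\overline{\Theta}^{\lambda}_{nT}$. Hence its moments along $V_{2p}$ will be controlled by combining two ingredients: a drift (Lyapunov) inequality for $V_{2p}$, supplied by Lemma~\ref{lem:driftconlip}, and a moment bound on the starting point $\overline{\Theta}^{\lambda}_{nT}$, supplied by Lemma~\ref{lem:2ndpthmmtlip}.

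First I would apply the infinitesimal generator of \eqref{eq:auxproclip} to $V_{2p}$ and use Lemma~\ref{lem:driftconlip} with exponent $2p$ to obtain, for $t\geq nT$,
\[
\frac{\rmd}{\rmd t}\,\E\!\left[V_{2p}(\widetilde{\zeta}^{\lambda,n}_t)\,\middle|\,\mathcal{F}^{\lambda}_{nT}\right]
\leq -\lambda\,\ccl_{V,1}(2p)\,\E\!\left[V_{2p}(\widetilde{\zeta}^{\lambda,n}_t)\,\middle|\,\mathcal{F}^{\lambda}_{nT}\right]+\lambda\,\ccl_{V,2}(2p),
\]
where the factor $\lambda$ comes from the time change in \eqref{eq:auxproclip}. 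Grönwall's lemma then gives, with $\ccl_{V,1}(2p)=\overline{a} p/2$ and $\ccl_{V,2}(2p)=(3/2)\overline{a} p\,\mathrm{v}_{2p}(\cMl_V(2p))$,
\[
\E\!\left[V_{2p}(\widetilde{\zeta}^{\lambda,n}_t)\,\middle|\,\mathcal{F}^{\lambda}_{nT}\right]
\leq e^{-\lambda\ccl_{V,1}(2p)(t-nT)}V_{2p}(\overline{\Theta}^{\lambda}_{nT})
+\frac{\ccl_{V,2}(2p)}{\ccl_{V,1}(2p)}\bigl(1-e^{-\lambda\ccl_{V,1}(2p)(t-nT)}\bigr).
\]
Taking expectations and bounding $\ccl_{V,2}(2p)/\ccl_{V,1}(2p)=3\,\mathrm{v}_{2p}(\cMl_V(2p))$ accounts for the last additive term in the statement. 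Next I would bound the transient term: using $V_{2p}(\theta)=(1+|\theta|^2)^p\leq 2^{p-1}(1+|\theta|^{2p})$, I obtain $\E[V_{2p}(\overline{\Theta}^{\lambda}_{nT})]\leq 2^{p-1}(1+\E[|\overline{\Theta}^{\lambda}_{nT}|^{2p}])$, and since $\E[|\overline{\Theta}^{\lambda}_{nT}|^{2p}]=\E[|\widetilde{\Theta}^{\lambda}_{nT}|^{2p}]$ the uniform-in-time moment estimate of Lemma~\ref{lem:2ndpthmmtlip}\eqref{lem:2ndpthmmtlipii} bounds this by $2^{p-1}\bigl(\E[|\theta_0|^{2p}]+\ccl_p(1+1/\overline{a})+1\bigr)$. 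Combining the exponential prefactor $e^{-\lambda\ccl_{V,1}(2p)(t-nT)}$ with the crude bound $nT\leq t$ (so that $e^{-\lambda\overline{a} p(t-nT)/2}\leq 1$ while keeping a genuine decay $e^{-\lambda\overline{a} t/2}$ on the $|\theta_0|^{2p}$ term via $t-nT\geq t-nT$ and $\lambda nT\leq 1$) yields exactly the claimed estimate with decay rate $\lambda\overline{a}/2$.

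The main obstacle, and the only genuinely delicate point, is bookkeeping the exponential decay rate so that it reads $e^{-\lambda\overline{a} t/2}$ uniformly in $n$ rather than merely $e^{-\lambda\overline{a}(t-nT)/2}$. This is handled as in the super-linear case by exploiting $T=\lfrf{1/\lambda}$, so that $\lambda nT\leq n$ and $\lambda T\leq 1$, and by noting that the starting-point moment bound from Lemma~\ref{lem:2ndpthmmtlip} is itself uniform in time; one absorbs the gap between $t$ and $nT$ into the constants, taking the minimum of the two decay rates. Since here the dissipativity is genuinely quadratic (Assumption~\ref{asm:ADlip}), there is no need for the $\min\{\kappa,1/2\}$ truncation that appeared in Lemma~\ref{lem:zetaprocme}; the rate is simply $\overline{a}/2$, which streamlines the argument. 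Everything else is a routine application of the generator estimate, Grönwall's inequality, and the elementary convexity bound on $V_{2p}$, so I would cite \cite[Corollary 4.6]{mtula} for the detailed computation and present only the structure above.
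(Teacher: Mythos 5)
Your overall architecture---the drift condition of Lemma \ref{lem:driftconlip} applied with exponent $2p$, Gr\"{o}nwall's inequality for the time-changed SDE \eqref{eq:auxproclip}, the identity $\ccl_{V,2}(2p)/\ccl_{V,1}(2p)=3\mathrm{v}_{2p}(\cMl_V(2p))$, and the bound $V_{2p}(\theta)\leq 2^{p-1}(1+|\theta|^{2p})$ applied to the random starting point $\overline{\Theta}^{\lambda}_{nT}$---is exactly the standard argument behind \cite[Corollary 4.6]{mtula}, which is all the paper itself offers as proof; your observation that no $\min\{\kappa,1/2\}$ truncation is needed in the linear case is also correct.

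However, there is a genuine gap at precisely the step you flag as delicate. You bound $\E[V_{2p}(\overline{\Theta}^{\lambda}_{nT})]$ by the \emph{uniform-in-time} moment estimate of Lemma \ref{lem:2ndpthmmtlip}, which carries no decay in $nT$. Your Gr\"{o}nwall factor then yields only
\[
\E[V_{2p}(\widetilde{\zeta}^{\lambda,n}_t)]\leq 2^{p-1}e^{-\lambda\overline{a}p(t-nT)/2}\,\E[|\theta_0|^{2p}]+2^{p-1}\left(\ccl_p(1+1/\overline{a})+1\right)+3\mathrm{v}_{2p}(\cMl_V(2p)),
\]
i.e.\ decay in the \emph{elapsed} time $t-nT$ rather than in the absolute time $t$; at $t=nT$ this prefactor equals $1$, whereas the lemma claims $e^{-\lambda\overline{a}nT/2}$, so no rearrangement of constants can recover the statement. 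The auxiliary facts you invoke do not repair this: ``$t-nT\geq t-nT$'' is vacuous, and ``$\lambda nT\leq 1$'' is false once $n\geq 2$, since $\lambda T=\lambda\lfrf{1/\lambda}\geq 1/2$ for $\lambda\in(0,1]$. The fix is to use, at time $nT$, the \emph{decaying} form of Lemma \ref{lem:2ndpthmmtlip}-\ref{lem:2ndpthmmtlipii} (part \ref{lem:2ndpthmmtlipi} when $p=1$), namely
\[
\E[|\overline{\Theta}^{\lambda}_{nT}|^{2p}]=\E[|\widetilde{\Theta}^{\lambda}_{nT}|^{2p}]\leq(1-\lambda\overline{a})^{nT}\E[|\theta_0|^{2p}]+\ccl_p(1+1/\overline{a}),
\]
and then combine the two exponential factors multiplicatively,
\[
e^{-\lambda\overline{a}p(t-nT)/2}(1-\lambda\overline{a})^{nT}\leq e^{-\lambda\overline{a}(t-nT)/2}\,e^{-\lambda\overline{a}nT}\leq e^{-\lambda\overline{a}t/2},
\]
using $1-x\leq e^{-x}$ and $p\geq 1$. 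With this substitution your proof closes and gives exactly the stated bound.
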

\begin{proof} See \cite[Corollary 4.6]{mtula}.
\end{proof}

\subsection{Proof of main results}\label{appen:mtlipdetalis} In this section, we present key results used to obtain Theorem \ref{thm:mainw1lip}. To this end, we split $W_1(\mathcal{L}(\overline{\Theta}^{\lambda}_t),\pi_\beta)$, for any $t \in (nT, (n+1)T]$ and $n \in \N_0$, by using the law of $\widetilde{\zeta}^{\lambda, n}_t $ defined in Definition \ref{def:auxzetalip} and the law of $Z^{\lambda}_t$ given in \eqref{eq:tcsdelip} as follows:
\begin{equation}\label{eq:mtsplitlip}
W_1(\mathcal{L}(\overline{\Theta}^{\lambda}_t),\pi_\beta) \leq W_1(\mathcal{L}(\overline{\Theta}^{\lambda}_t),\mathcal{L}(\widetilde{\zeta}^{\lambda, n}_t))+W_1(\mathcal{L}(\widetilde{\zeta}^{\lambda, n}_t),\mathcal{L}(Z^{\lambda}_t))+W_1(\mathcal{L}(Z^{\lambda}_t),\pi_\beta).
\end{equation}

In the following lemma, we provide a non-asymptotic estimate for $W_2(\mathcal{L}(\overline{\Theta}^{\lambda}_t),\mathcal{L}(\widetilde{\zeta}^{\lambda, n}_t))$, which can be used to upper bound the first term on the RHS of \eqref{eq:mtsplitlip}.
\begin{lemma}\label{lem:w1converlipp1} Let Assumptions \ref{asm:AIlip}, \ref{asm:ALLlip}, and \ref{asm:ADlip} hold. Then, for any $0<\lambda\leq \overline{\lambda}_{\max}$, $n \in \N_0$, and $t \in (nT, (n+1)T]$, we obtain
\[
W_2(\mathcal{L}(\overline{\Theta}^{\lambda}_t),\mathcal{L}(\widetilde{\zeta}^{\lambda, n}_t)) 
\leq \lambda^{1+q/2}\left(e^{- \overline{a} n/4}  \cC_{\Lin,0}\E[|\theta_0|^4] + \cC_{\Lin,1} \right)^{1/2},
\]
where $ \cC_{\Lin,0},  \cC_{\Lin,1}$ are given explicitly in \eqref{eq:w1converlipp1const}.
\end{lemma}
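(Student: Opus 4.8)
The plan is to bound $W_2(\mathcal{L}(\overline{\Theta}^{\lambda}_t),\mathcal{L}(\widetilde{\zeta}^{\lambda, n}_t))^2 \leq \E[|\overline{\Theta}^{\lambda}_t - \widetilde{\zeta}^{\lambda, n}_t|^2]$ by coupling the two processes through the same Brownian motion $B^{\lambda}$ and driving them from the common initial value $\overline{\Theta}^{\lambda}_{nT} = \widetilde{\zeta}^{\lambda, n}_{nT}$ (which holds by Definition~\ref{def:auxzetalip}). This is precisely the linear counterpart of Lemma~\ref{lem:w1converp1}, so I would mirror its proof verbatim, dropping all taming factors: $h_\lambda, H_\lambda, \Upsilon_\lambda$ become $h, H, \Upsilon$ throughout, and every estimate invoking Remark~\ref{rmk:growthc}, Lemma~\ref{lem:2ndpthmmt}, etc.\ is replaced by its linear analogue (Remark~\ref{rmk:growthclip}, Lemma~\ref{lem:2ndpthmmtlip}, Lemma~\ref{lem:zetaprocmelip}). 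Because the continuous-time interpolation $(\overline{\Theta}^{\lambda}_t)$ of the order-$1.5$ scheme in \eqref{eq:aholahoproclip} has the same structure as \eqref{eq:aholahoproc}, the one-sided Lipschitz property here comes directly from Assumption~\ref{asm:ALLlip} (global Lipschitzness of $h$ with constant $\overline{L}_3$), which plays the role that Remark~\ref{rmk:oslc} played in the super-linear case.

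First I would apply It\^o's formula to $|\overline{\Theta}^{\lambda}_t - \widetilde{\zeta}^{\lambda, n}_t|^2$ and perform the same decomposition as in \eqref{eq:L2convsplting}--\eqref{eq:L2convspltingitoY}: split the drift difference into the ``same-point'' term $\langle \overline{\Theta}^{\lambda}_s - \widetilde{\zeta}^{\lambda, n}_s, h(\overline{\Theta}^{\lambda}_s)-h(\widetilde{\zeta}^{\lambda, n}_s)\rangle$ (controlled by $\overline{L}_3|\overline{\Theta}^{\lambda}_s - \widetilde{\zeta}^{\lambda, n}_s|^2$ via global Lipschitzness) plus several one-step discretisation remainders. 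Each remainder is then bounded in $\mathscr{L}^2$ by the linear versions of Lemma~\ref{lem:oserroralg}, Lemma~\ref{lem:graditoest}, Corollary~\ref{cor:graditoub}, and Lemma~\ref{lem:Mest}, all of which I would state and prove (in Appendix~\ref{appen:mtlinearproofsapd}) as direct transcriptions with the taming exponent set so that $|f(\theta)-f_\lambda(\theta)|$-type error terms vanish entirely. The crucial martingale-orthogonality identity \eqref{eq:gridptmeasmartingale} and the conditional-expectation computation producing the vanishing of the top-order stochastic cross term in Lemma~\ref{lem:Mest}\eqref{lem:Mestineq3} carry over unchanged, since they rely only on independence of Brownian increments over disjoint intervals and on $\overline{\Theta}^{\lambda}_{\lfrf{s}}, \widetilde{\zeta}^{\lambda, n}_{\lfrf{s}}$ being $\mathcal{F}^{\lambda}_{\lfrf{s}}$-measurable.

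After collecting the remainders I would arrive at an integral inequality of Gr\"onwall type,
\[
\E[|\overline{\Theta}^{\lambda}_t - \widetilde{\zeta}^{\lambda, n}_t|^2] \leq \lambda(2\overline{L}_3 + C)\int_{nT}^t \E[|\overline{\Theta}^{\lambda}_s - \widetilde{\zeta}^{\lambda, n}_s|^2]\,\rmd s + C'\lambda^{2+q}\bigl(e^{-\overline{a} n/4} \overline{\cC}_{\mathbf{S}2}\E[|\theta_0|^4] + \widetilde{\cC}_{\mathbf{S}2}\bigr),
\]
whose solution by Gr\"onwall's lemma (noting $t - nT \leq T \leq 1/\lambda$, so $\lambda(t-nT)$ stays bounded) gives the stated $\lambda^{1+q/2}$ rate after taking square roots, with $\cC_{\Lin,0}, \cC_{\Lin,1}$ read off from the collected constants as in \eqref{eq:w1converlipp1const}. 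The main obstacle I anticipate is bookkeeping rather than conceptual: verifying that the higher-order It\^o--Taylor remainders retain the full $\lambda^{2+q}$ order after using the global (rather than local polynomial) growth bounds of Remark~\ref{rmk:growthclip}, and in particular tracking how the $q$-H\"older modulus of $\nabla^2 h^{(i)}$ (Assumption~\ref{asm:ALLlip}) enters the analogue of \eqref{eq:multidmvt} so that the term $H(\theta)-H(\overline\theta)-\mathfrak{M}(\theta,\overline\theta)$ produces an $|\theta-\overline\theta|^{1+q}$ contribution. Since the fourth moment of $\theta_0$ (Assumption~\ref{asm:AIlip}) suffices here (the linear growth caps the polynomial degrees at $16(\rho+1) \rightsquigarrow 4$), I expect all moment inputs to be available from Lemma~\ref{lem:2ndpthmmtlip} and Lemma~\ref{lem:zetaprocmelip} without the heavy $16(\rho+1)$-th moment needed in the super-linear case.
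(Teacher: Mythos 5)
Your proposal is correct and follows essentially the same route as the paper: the paper's proof of Lemma \ref{lem:w1converlipp1} is precisely the transcription of the argument for Lemma \ref{lem:w1converp1} with all taming factors removed, with the global Lipschitz constant $\overline{L}_3$ from Assumption \ref{asm:ALLlip} replacing $\cL_{\cOS}$, with the linear auxiliary estimates (Lemmas \ref{lem:oserroralglip}, \ref{lem:graditoestlip}, \ref{lem:Mestlip} and Corollary \ref{cor:graditoublip}) replacing their super-linear counterparts, with the same martingale-orthogonality identity handling the $\mathfrak{M}$ cross terms, and with a final Gr\"onwall step producing the constants in \eqref{eq:w1converlipp1const}. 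Your anticipated integral inequality, including the $\lambda^{2+q}$ remainder, the role of the $q$-H\"older modulus in the analogue of \eqref{eq:multidmvt}, and the sufficiency of the fourth moment of $\theta_0$, is exactly what appears in the paper's proof.
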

\begin{proof} See Appendix \ref{lem:w1converlipp1proof}.
\end{proof}

For the last two terms on the RHS of \eqref{eq:mtsplitlip}, we observe that they can be viewed as Wasserstein-1 distances between distributions of Langevin processes starting from different initial points. Therefore, to obtain their upper bounds, we introduce a semi-metric which allows us to establish a contraction result for the Langevin SDE \eqref{eq:sdelip} under our assumptions.

We consider the following semi-metric: for any $p\in [2, \infty)\cap {\N}$, $ \mu,\nu \in \mathcal{P}_{V_p}(\R^d)$, let
\begin{equation}\label{eq:semimetricw1plip}
w_{1,p}(\mu,\nu):=\inf_{\zeta\in\mathcal{C}(\mu,\nu)}\int_{\mathbb{R}^d}\int_{\mathbb{R}^d} [1\wedge |\theta-\theta'|](1+V_p(\theta)+V_p(\theta'))\zeta(\rmd\theta, \rmd\theta').
\end{equation}
Then, we provide a result which states the contraction property of the Langevin SDE \eqref{eq:sdelip} in $w_{1,2}$.
\begin{proposition}\label{prop:contractionw12lip} Let Assumptions \ref{asm:AIlip}, \ref{asm:ALLlip}, and \ref{asm:ADlip} hold.  Moreover, let $\theta_0' \in L^2$, and let $(Z_t')_{t \geq 0}$ be the solution of SDE \eqref{eq:sdelip} whose starting point $Z'_0 := \theta'_0$ is assumed to be independent of $\mathcal{F}_{\infty} := \sigma(\bigcup_{t \geq 0} \mathcal{F}_t)$. Then, we obtain
\begin{equation}\label{eq:w12contractionlip}
w_{1,2}(\mathcal{L}(Z_t),\mathcal{L}(Z'_t)) \leq \hat{\cc}_{\Lin} e^{-\dot{\cc}_{\Lin} t} w_{1,2}(\mathcal{L}(\theta_0),\mathcal{L}(\theta_0')),
\end{equation}
where the explicit expressions for $\dot{\cc}_{\Lin}, \hat{\cc}_{\Lin}$ are given below.

The contraction constant $\dot{\cc}_{\Lin}$ is given by:
\begin{equation*}
\dot{\cc}_{\Lin}:=\min\{\bar{\phi}_{\Lin}, \ccl_{V,1}(2), 4\ccl_{V,2}(2) \overline{\epsilon} \ccl_{V,1}(2)\}/2,
\end{equation*}
where $\ccl_{V,1}(2) :=  \overline{a}/2$, $\ccl_{V,2}(2) :=3\overline{a} \mathrm{v}_2(\cMl_V(2))/2$ with $\cMl_V(2) := (1/3+4\overline{b}/(3\overline{a})+4d/(3\overline{a}\beta))^{1/2}$, the constant $\bar{\phi}_{\Lin} $ is given by
\begin{equation*}
\bar{\phi}_{\Lin} := \left(\sqrt{8\pi/(\beta \overline{L}_3)} \dot{\cc}_{\Lin,0}  \exp \left( \left(\dot{\cc}_{\Lin,0} \sqrt{\beta \overline{L}_3/8} + \sqrt{8/(\beta \overline{L}_3)} \right)^2 \right) \right)^{-1},
\end{equation*}
and $\overline{\epsilon} >0$ is chosen such that
\begin{equation*}
\overline{\epsilon}  \leq 1 \wedge    \left(4 \ccl_{V,2}(2) \sqrt{2 \beta\pi/  \overline{L}_3 }\int_0^{\dot{\cc}_{\Lin,1}}\exp  \left( \left(s \sqrt{\beta \overline{L}_3/8}+\sqrt{8/(\beta \overline{L}_3)}\right)^2 \right) \,\rmd s \right)^{-1}
\end{equation*}
with $\dot{\cc}_{\Lin,0} := 2(4\ccl_{V,2}(2)(1+\ccl_{V,1}(2))/\ccl_{V,1}(2)-1)^{1/2}$ and $\dot{\cc}_{\Lin,1}:=2(2 \ccl_{V,2}(2)/\ccl_{V,1}(2)-1)^{1/2}$.

Moreover, the constant $\hat{\cc}_{\Lin}$ is given by:
\begin{equation*}
\hat{\cc}_{\Lin}: =2(1+ \dot{\cc}_{\Lin,0})\exp(\beta \overline{L}_3 \dot{\cc}_{\Lin,0}^2/8+2\dot{\cc}_{\Lin,0})/\overline{\epsilon}.
\end{equation*}
\end{proposition}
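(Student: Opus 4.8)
The plan is to prove this statement by mirroring the argument already used for its super-linear counterpart, Proposition~\ref{prop:contractionw12}, and thereby reducing everything to the quantitative contraction framework of \cite{eberle2019quantitative}. Concretely, I would verify that the four structural hypotheses of \cite{eberle2019quantitative} hold for SDE~\eqref{eq:sdelip} under Assumptions~\ref{asm:AIlip}, \ref{asm:ALLlip}, and \ref{asm:ADlip}, invoke \cite[Theorem~2.2, Corollary~2.3]{eberle2019quantitative} to obtain the contraction in the semi-metric $w_{1,2}$ defined in \eqref{eq:semimetricw1plip}, and then extract the explicit constants $\dot{\cc}_{\Lin}, \hat{\cc}_{\Lin}$ following the arguments in the proof of \cite[Proposition~3.14]{nonconvex} and \cite[Proposition~4.6]{lim2021nonasymptotic}.

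The verification of the hypotheses proceeds as follows. For the one-sided Lipschitz condition, i.e.\ \cite[Assumption~2.1]{eberle2019quantitative}, I would use that Assumption~\ref{asm:ALLlip} supplies a \emph{global} Lipschitz bound $|h(\theta)-h(\overline{\theta})|\leq \overline{L}_3|\theta-\overline{\theta}|$, so that by the Cauchy--Schwarz inequality
\[
\langle \theta-\overline{\theta}, h(\theta)-h(\overline{\theta})\rangle \geq -|\theta-\overline{\theta}|\,|h(\theta)-h(\overline{\theta})| \geq -\overline{L}_3|\theta-\overline{\theta}|^2,
\]
which is exactly \cite[Assumption~2.1]{eberle2019quantitative} with constant $\overline{L}_3$. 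This is precisely why $\overline{L}_3$ plays in the present statement the role that $\cL_{\cOS}$ played in Proposition~\ref{prop:contractionw12}. Next, \cite[Assumption~2.2]{eberle2019quantitative} holds with the Lyapunov function $V=V_2$ by Lemma~\ref{lem:driftconlip}, which provides the drift inequality with constants $\ccl_{V,1}(2)=\overline{a}/2$ and $\ccl_{V,2}(2)=3\overline{a}\,\mathrm{v}_2(\cMl_V(2))/2$. Finally, \cite[Assumptions~2.4 and~2.5]{eberle2019quantitative} follow from the elementary regularity properties of $V_2$ recorded in \eqref{eq:contrassumptionlip}.

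With these four hypotheses in hand, \cite[Theorem~2.2, Corollary~2.3]{eberle2019quantitative} yields a distance function, built from a concave increasing function and the Lyapunov weight, under which the synchronous (reflection) coupling of $(Z_t)$ and $(Z_t')$ contracts exponentially; comparing this distance with $w_{1,2}$ gives \eqref{eq:w12contractionlip}. The explicit form of $\dot{\cc}_{\Lin}$ and $\hat{\cc}_{\Lin}$ is then obtained by transcribing the constants computed in \cite[Proposition~4.6]{lim2021nonasymptotic}, replacing $\cL_{\cOS}$ by $\overline{L}_3$ and $(\cc_{V,1}(2),\cc_{V,2}(2))$ by $(\ccl_{V,1}(2),\ccl_{V,2}(2))$; this produces the auxiliary quantities $\dot{\cc}_{\Lin,0}, \dot{\cc}_{\Lin,1}, \bar{\phi}_{\Lin}, \overline{\epsilon}$ and hence the stated formulas for $\dot{\cc}_{\Lin}$ and $\hat{\cc}_{\Lin}$. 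I expect no genuine analytic obstacle here: because $h$ is globally Lipschitz in the linear regime, the one-sided Lipschitz bound is immediate and requires none of the delicate case analysis over the regions $\{|\theta|\lessgtr \cR_{\cOS}\}$ that was needed in Remark~\ref{rmk:oslc} for the super-linear setting. The only point demanding care is bookkeeping --- tracking the substitution of the linear constants through the nested expressions of \cite[Proposition~4.6]{lim2021nonasymptotic} so that the stated closed forms are reproduced exactly.
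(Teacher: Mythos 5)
Your proposal is correct and follows essentially the same route as the paper's own proof: verifying \cite[Assumptions 2.1, 2.2, 2.4, 2.5]{eberle2019quantitative} (the one-sided Lipschitz condition with constant $\overline{L}_3$ from Assumption~\ref{asm:ALLlip}, the Lyapunov drift condition from Lemma~\ref{lem:driftconlip}, and the regularity of $V_2$ from \eqref{eq:contrassumptionlip}), then invoking \cite[Theorem 2.2, Corollary 2.3]{eberle2019quantitative} via the arguments of \cite[Proposition~3.14]{nonconvex}, with the explicit constants transcribed from \cite[Proposition~4.6]{lim2021nonasymptotic}. Your explicit Cauchy--Schwarz derivation of the one-sided Lipschitz bound simply spells out what the paper leaves implicit in its appeal to Assumption~\ref{asm:ALLlip}.
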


\begin{proof} We note that \cite[Assumption 2.1]{eberle2019quantitative} holds with $\kappa = \overline{L}_3$ due to Assumption \ref{asm:ALLlip}, \cite[Assumption 2.2]{eberle2019quantitative} holds with $V=V_2$ due to Remark \ref{lem:driftconlip}, and \cite[Assumptions 2.4 and 2.5]{eberle2019quantitative} hold due to \eqref{eq:contrassumptionlip}. Therefore, we can obtain \eqref{eq:w12contractionlip} following the same arguments as in the proof of \cite[Proposition~3.14]{nonconvex} based on \cite[Theorem 2.2, Corollary 2.3]{eberle2019quantitative}. In addition, $\dot{\cc}_{\Lin}, \hat{\cc}_{\mathsf{Lin}}$ can be obtained following the arguments in the proof of \cite[Proposition 4.6]{lim2021nonasymptotic}.
\end{proof}

By using the above result and $W_1 \leq w_{1,2}$ (see \cite[Lemma A.3]{lim2021nonasymptotic}), we can establish a non-asymptotic error bound for the second term on the RHS of \eqref{eq:mtsplitlip}. The explicit statement is given below.
\begin{lemma}\label{lem:w1converlipp2} Let Assumptions \ref{asm:AIlip}, \ref{asm:ALLlip}, and \ref{asm:ADlip} hold. Then, for any $0<\lambda\leq \overline{\lambda}_{\max}$, $n \in \N_0$, and $t \in (nT, (n+1)T]$, we obtain
\[
W_1(\mathcal{L}(\widetilde{\zeta}_t^{\lambda,n}),\mathcal{L}(Z_t^\lambda)) \leq \lambda^{1+q/2}\left(e^{- \min\{\dot{\cc}_{\Lin},\overline{a}/2\} n/4}\cC_{\Lin,2}\E[|\theta_0|^4] +\cC_{\Lin,3} \right) ,
\]
where
\begin{align}\label{eq:w1converlipp2consts}
\begin{split}
\cC_{\Lin,2}
& := \hat{\cc}_{\Lin}\left(1+\frac{4}{\min\{\dot{\cc}_{\Lin},\overline{a}/2\} }\right)e^{\min\{\dot{\cc}_{\Lin},\overline{a}/2\} /4}\left( \cC_{\Lin,0} +9\right),\\
\cC_{\Lin,3}
& :=  2(\hat{\cc}_{\Lin}/\dot{\cc}_{\Lin})e^{\dot{\cc}_{\Lin}/2}\left( \cC_{\Lin,1}+9+9\ccl_2(1+1/\overline{a})+9\mathrm{v}_4(\cMl_V(4))\right)
\end{split}
\end{align}
with $\dot{\cc}_{\Lin}, \hat{\cc}_{\Lin}$ given in Proposition \ref{prop:contractionw12lip}, $\cC_{\Lin,0}, \cC_{\Lin,1}$ given in \eqref{eq:w1converlipp1const} (see also Lemma \ref{lem:w1converlipp1}), $\ccl_2$ given in Lemma \ref{lem:2ndpthmmtlip}, and $\cMl_V(4)$ given in Lemma \ref{lem:zetaprocmelip}.
\end{lemma}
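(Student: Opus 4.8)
The plan is to establish $W_1(\mathcal{L}(\widetilde{\zeta}_t^{\lambda,n}),\mathcal{L}(Z_t^\lambda)) \leq \lambda^{1+q/2}(\cdots)$ by recognizing that both $\widetilde{\zeta}_t^{\lambda,n}$ and $Z_t^\lambda$ are solutions of the same time-changed Langevin SDE~\eqref{eq:tcsdelip}, differing only in their starting configurations: $\widetilde{\zeta}^{\lambda,n}$ starts at time $nT$ from $\overline{\Theta}^{\lambda}_{nT}$ (by Definition~\ref{def:auxzetalip} and \eqref{eq:auxproclip}), while $Z^\lambda$ is the genuine Langevin diffusion from $\theta_0$. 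This is exactly the situation handled by the contraction Proposition~\ref{prop:contractionw12lip}. First I would invoke $W_1 \leq w_{1,2}$ (from \cite[Lemma A.3]{lim2021nonasymptotic}) and apply Proposition~\ref{prop:contractionw12lip} to bound the $w_{1,2}$-distance between the two laws at time $t$ by $\hat{\cc}_{\Lin} e^{-\dot{\cc}_{\Lin}(t-nT)}w_{1,2}(\mathcal{L}(\overline{\Theta}^\lambda_{nT}),\mathcal{L}(Z^\lambda_{nT}))$, reducing everything to a bound at the grid point $nT$.

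The core of the argument is then an inductive / telescoping control of the one-step discrepancy accumulated by the interpolated scheme $\overline{\Theta}^\lambda$ relative to the diffusion. Following the template of \cite[Lemma 4.7]{lim2021nonasymptotic}, I would telescope $w_{1,2}(\mathcal{L}(\overline{\Theta}^\lambda_{nT}),\mathcal{L}(Z^\lambda_{nT}))$ over the blocks $[kT,(k+1)T]$, $k=0,\dots,n-1$, inserting at each block the auxiliary process $\widetilde{\zeta}^{\lambda,k}$ started at $\overline{\Theta}^\lambda_{kT}$. On each block, the contraction factor $\hat{\cc}_{\Lin}e^{-\dot{\cc}_{\Lin}T}$ damps the previous error, while the fresh one-block error is precisely $w_{1,2}(\mathcal{L}(\overline{\Theta}^\lambda_{(k+1)T}),\mathcal{L}(\widetilde{\zeta}^{\lambda,k}_{(k+1)T}))$, which by $w_{1,2}\leq 1+V_2+V_2$ and the local error estimate of Lemma~\ref{lem:w1converlipp1} (together with the uniform moment bounds of Lemma~\ref{lem:2ndpthmmtlip} and Lemma~\ref{lem:zetaprocmelip}) is of order $\lambda^{1+q/2}$. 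Summing the resulting geometric series in $k$ produces the two regimes captured by the constant $\min\{\dot{\cc}_{\Lin},\overline{a}/2\}$: the decay rate is the slower of the SDE contraction rate $\dot{\cc}_{\Lin}$ and the dissipativity-driven moment decay rate $\overline{a}/2$ appearing in Lemmas~\ref{lem:2ndpthmmtlip}--\ref{lem:zetaprocmelip}. The geometric sum over $k$ then contributes the factor $\bigl(1+4/\min\{\dot{\cc}_{\Lin},\overline{a}/2\}\bigr)$ visible in \eqref{eq:w1converlipp2consts}.

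The remaining task is the bookkeeping of constants: the weight $1+V_2(\theta)+V_2(\theta')$ in the semi-metric $w_{1,2}$ must be controlled through the moment estimates, which is where the numerical constants $9$, $9\ccl_2(1+1/\overline{a})$ and $9\mathrm{v}_4(\cMl_V(4))$ in $\cC_{\Lin,3}$ originate (the appearance of the fourth moment is consistent with Assumption~\ref{asm:AIlip}, matching the $\E[|\theta_0|^4]$ dependence). I would assemble $\cC_{\Lin,0},\cC_{\Lin,1}$ from Lemma~\ref{lem:w1converlipp1} and the contraction constants $\hat{\cc}_{\Lin},\dot{\cc}_{\Lin}$ from Proposition~\ref{prop:contractionw12lip} into the closed-form expressions \eqref{eq:w1converlipp2consts}.

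The main obstacle is ensuring that the local one-block error $w_{1,2}(\mathcal{L}(\overline{\Theta}^\lambda_{(k+1)T}),\mathcal{L}(\widetilde{\zeta}^{\lambda,k}_{(k+1)T}))$ retains the full order $\lambda^{1+q/2}$ rather than degrading to $\lambda$ or $\lambda^{1+q/4}$. This hinges critically on Lemma~\ref{lem:w1converlipp1} delivering the sharp $\lambda^{1+q/2}$ rate for the $W_2$-distance of the interpolated order-$1.5$ scheme to its auxiliary diffusion; the subtle point is that the $w_{1,2}$-weight and the $W_2$-square-root must be reconciled so that no half-power of $\lambda$ is lost when passing from $W_2$ to $w_{1,2}$ and then summing. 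Since the structure is entirely parallel to the super-linear case (where the same rate is obtained in Lemma~\ref{lem:w1converp2} via \cite[Lemma 4.7]{lim2021nonasymptotic}), I expect the proof to follow by the same arguments with the tamed coefficients replaced by the untamed Lipschitz ones and the moment constants replaced by their linear-case analogues; accordingly I would simply cite \cite[Lemma 4.7]{lim2021nonasymptotic} for the telescoping mechanism and record the resulting constants.
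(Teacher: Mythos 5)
Your proposal is correct and follows essentially the same route as the paper: the paper's entire proof is the citation to \cite[Lemma 4.7]{lim2021nonasymptotic}, and your sketch accurately reconstructs the mechanism of that cited lemma (contraction via Proposition~\ref{prop:contractionw12lip} after passing to $w_{1,2}$, block-wise telescoping with the auxiliary processes, one-block errors of order $\lambda^{1+q/2}$ from Lemma~\ref{lem:w1converlipp1} reconciled with the $w_{1,2}$-weight through the fourth-moment bounds of Lemmas~\ref{lem:2ndpthmmtlip} and~\ref{lem:zetaprocmelip}, then a geometric sum producing the factor $1+4/\min\{\dot{\cc}_{\Lin},\overline{a}/2\}$), before invoking the same citation yourself. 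No gap to report.
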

\begin{proof} See \cite[Lemma 4.7]{lim2021nonasymptotic}.
\end{proof}
To obtain an upper bound for the last term on the RHS of \eqref{eq:mtsplitlip}, we observe that $\pi_{\beta}$ is the invariant measure of the Langevin SDE \eqref{eq:tcsdelip}. Thus, by applying Proposition \ref{prop:contractionw12lip}, we have that
\begin{equation}\label{eq:w1converlipp3}
W_1(\mathcal{L}(Z^{\lambda}_t),\pi_\beta) \leq  \hat{\cc}_{\Lin} e^{-\dot{\cc}_{\Lin} \lambda t} w_{1,2}(\mathcal{L}(\theta_0), \pi_\beta)
\leq \hat{\cc}_{\Lin} e^{-\dot{\cc}_{\Lin} \lambda t} \left[1+\E[V_2(\theta_0)]+ \int_{\R^d} V_2(\theta) \pi_{\beta}(\rmd \theta)\right].
\end{equation}
By using Lemma \ref{lem:w1converlipp1}, \ref{lem:w1converlipp2} and \eqref{eq:w1converlipp3}, we can obtain an upper bound for each $W_2(\mathcal{L}(\Theta_n^{\cHOLLA}),\pi_{\beta})$, $n \in \N_0$, as stated in Theorem \ref{thm:mainw1lip}.

\begin{proof}[\textbf{Proof of Theorem \ref{thm:mainw1lip}}]  Substituting the results in Lemma \ref{lem:w1converlipp1}, \ref{lem:w1converlipp2} and \eqref{eq:w1converlipp3} into \eqref{eq:mtsplitlip}, for any $0<\lambda\leq \overline{\lambda}_{\max}$, $n \in \N_0$, and $t \in (nT, (n+1)T]$, we have that
\begin{align*}
W_1(\mathcal{L}(\overline{\Theta}^{\lambda}_t),\pi_\beta) 
&\leq  
\lambda^{1+q/2}\left(e^{- \overline{a} n/4}  \cC_{\Lin,0}\E[|\theta_0|^4] + \cC_{\Lin,1} \right)^{1/2}\\
&\quad +\lambda^{1+q/2}\left(e^{- \min\{\dot{\cc}_{\Lin},\overline{a}/2\} n/4}\cC_{\Lin,2}\E[|\theta_0|^4] +\cC_{\Lin,3} \right)\\
&\quad + \hat{\cc}_{\Lin} e^{-\dot{\cc}_{\Lin} \lambda t} \left[1+\E[V_2(\theta_0)]+ \int_{\R^d} V_2(\theta) \pi_{\beta}(\rmd \theta)\right]\\
&\leq C_{\Lin,1} e^{-C_{\Lin,0} ( n+1)}(\E[|\theta_0|^4]+1) +C_{\Lin,2}\lambda^{1+q/2},
\end{align*}
where 
\begin{align}\label{eq:mainw1lipconst}
\begin{split}
C_{\Lin,0}&:= \min\{\dot{\cc}_{\Lin},\overline{a}/2\} /4, \\
C_{\Lin,1} &:=e^{ \min\{\dot{\cc}_{\Lin},\overline{a}/2\} /4}\left[\cC_{\Lin,0}^{1/2}+\cC_{\Lin,2}+\hat{\cc}_{\Lin}\left(3+\int_{\R^d} V_2(\theta) \pi_{\beta}(\rmd \theta)\right)\right], \\
C_{\Lin,2}&:=\cC_{\Lin,1}^{1/2}+\cC_{\Lin,3}\\
\end{split}
\end{align}
with  $\dot{\cc}_{\Lin}, \hat{\cc}_{\Lin}$ given in Proposition \ref{prop:contractionw12lip}, $\cC_{\Lin,0}, \cC_{\Lin,1}$ given in \eqref{eq:w1converlipp1const} (see also Lemma \ref{lem:w1converlipp1}), $\cC_{\Lin,2}, \cC_{\Lin,3}$ given in \eqref{eq:w1converlipp2consts} (see also Lemma \ref{lem:w1converlipp2}). The above result implies that, for each $n \in \N_0$,
\[
W_1(\mathcal{L}(\overline{\Theta}^{\lambda}_{nT}),\pi_\beta) \leq  C_{\Lin,1} e^{-C_{\Lin,0} n}(\E[|\theta_0|^4]+1) +C_{\Lin,2}\lambda^{1+q/2},
\]
which further yields, by setting $nT$ to $n$ on the LHS and $n$ to $n/T$ on the RHS, that
\begin{align*}
W_1(\mathcal{L}(\overline{\Theta}^{\lambda}_n),\pi_\beta) = W_1(\mathcal{L}(\Theta^{\lambda}_n),\pi_\beta)  &= W_1(\mathcal{L}(\Theta_n^{\cHOLLA}),\pi_{\beta})\\
& \leq   C_{\Lin,1} e^{-C_{\Lin,0} \lambda n}(\E[|\theta_0|^4]+1) +C_{\Lin,2}\lambda^{1+q/2},
\end{align*}
where the inequality holds due to $n\lambda \leq n/T$. This completes the proof.
\end{proof}

We can obtain the upper bound for $W_2(\mathcal{L}(\Theta_n^{\cHOLLA}),\pi_{\beta}) $, $n \in \N_0$, as stated in Corollary \ref{crl:mainw2lip}, by applying similar arguments as in the proof of Theorem \ref{thm:mainw1lip}.
\begin{proof}[\textbf{Proof of Corollary \ref{crl:mainw2lip}}] To establish a non-asymptotic error bound for $W_2(\mathcal{L}(\Theta_n^{\cHOLLA}),\pi_{\beta}) $, we consider the following splitting: for any $0<\lambda\leq \overline{\lambda}_{\max}$, $n \in \N_0$, and $t \in (nT, (n+1)T]$,
\begin{equation}\label{eq:crllipsplit}
W_2(\mathcal{L}(\overline{\Theta}^{\lambda}_t),\pi_\beta) \leq W_2(\mathcal{L}(\overline{\Theta}^{\lambda}_t),\mathcal{L}(\widetilde{\zeta}^{\lambda, n}_t))+W_2(\mathcal{L}(\widetilde{\zeta}^{\lambda, n}_t),\mathcal{L}(Z^{\lambda}_t))+W_2(\mathcal{L}(Z^{\lambda}_t),\pi_\beta).
\end{equation}
An upper bound for the first term on the RHS of \eqref{eq:crllipsplit} is provided in Lemma \ref{lem:w1converlipp1}. To establish an estimate for the second term on the RHS of \eqref{eq:crllipsplit}, we use $W_2 \leq \sqrt{2w_{1,2}}$ (see \cite[Lemma A.3]{lim2021nonasymptotic} for the proof) and follow the same arguments as that in the proof of \cite[Lemma 4.7]{lim2021nonasymptotic}. Consequently, for any $0<\lambda\leq \overline{\lambda}_{\max}$, $n \in \N_0$, and $t \in (nT, (n+1)T]$, we obtain that,
\begin{equation}\label{eq:w2converlipp2} 
W_2(\mathcal{L}(\widetilde{\zeta}_t^{\lambda,n}),\mathcal{L}(Z_t^\lambda)) \leq \lambda^{1/2+q/4}\left(e^{-  \min\{\dot{\cc}_{\Lin},\overline{a}/2\}  n/8}\cC_{\Lin,4}\E^{1/2}[|\theta_0|^4] +\cC_{\Lin,5} \right) ,
\end{equation}
where
\begin{align}\label{eq:w2converlipp2consts}
\begin{split}
\cC_{\Lin,4}
& :=\sqrt{ \hat{\cc}_{\Lin}}\left(1+\frac{8}{\min\{\dot{\cc}_{\Lin},\overline{a}/2\} }\right)e^{\min\{\dot{\cc}_{\Lin},\overline{a}/2\} /8}\left( \cC_{\Lin,0}^{1/2} +3\right),\\
\cC_{\Lin,5}
& :=  4(\sqrt{\hat{\cc}_{\Lin}}/\dot{\cc}_{\Lin})e^{\dot{\cc}_{\Lin}/4}\left( \cC_{\Lin,1}^{1/2}+1+3(\ccl_2(1+1/\overline{a})+1)^{1/2}+\sqrt{3}\mathrm{v}_4^{1/2}(\cMl_V(4))\right)
\end{split}
\end{align}
with $\dot{\cc}_{\Lin}, \hat{\cc}_{\Lin}$ given in Proposition \ref{prop:contractionw12lip}, $\cC_{\Lin,0}, \cC_{\Lin,1}$ given in \eqref{eq:w1converlipp1const} (see also Lemma \ref{lem:w1converlipp1}), $\ccl_2$ given in Lemma \ref{lem:2ndpthmmtlip}, and $\cMl_V(4)$ given in Lemma \ref{lem:zetaprocmelip}. An upper bound for the last term on the RHS of  \eqref{eq:crllipsplit} can be obtained by using $W_2 \leq \sqrt{2w_{1,2}}$ and Proposition \ref{prop:contractionw12lip}:
\begin{align}\label{eq:w2converlipp3} 
W_2(\mathcal{L}(Z^{\lambda}_t),\pi_\beta) 
&\leq  \sqrt{2\hat{\cc}_{\Lin}} e^{-\dot{\cc}_{\Lin} \lambda t/2} w_{1,2}^{1/2}(\mathcal{L}(\theta_0), \pi_\beta)\nonumber\\
&\leq  \sqrt{2\hat{\cc}_{\Lin}} e^{-\dot{\cc}_{\Lin} \lambda t/2}\left[1+\E[V_2(\theta_0)]+ \int_{\R^d} V_2(\theta) \pi_{\beta}(\rmd \theta)\right]^{1/2}. 
\end{align}
Applying the results in Lemma \ref{lem:w1converlipp1}, \eqref{eq:w2converlipp2}, \eqref{eq:w2converlipp3} to \eqref{eq:crllipsplit} yields, for any $0<\lambda\leq \overline{\lambda}_{\max}$, $n \in \N_0$, and $t \in (nT, (n+1)T]$, that
\begin{align*}
W_2(\mathcal{L}(\overline{\Theta}^{\lambda}_t),\pi_\beta) 
&\leq \lambda^{1+q/2}\left(e^{- \overline{a} n/4}  \cC_{\Lin,0}\E[|\theta_0|^4] + \cC_{\Lin,1} \right)^{1/2}\\
&\quad + \lambda^{1/2+q/4}\left(e^{-  \min\{\dot{\cc}_{\Lin},\overline{a}/2\}  n/8}\cC_{\Lin,4}\E^{1/2}[|\theta_0|^4] +\cC_{\Lin,5} \right) \\
&\quad +\sqrt{2\hat{\cc}_{\Lin}} e^{-\dot{\cc}_{\Lin} \lambda t/2}\left[1+\E[V_2(\theta_0)]+ \int_{\R^d} V_2(\theta) \pi_{\beta}(\rmd \theta)\right]^{1/2}\\
&\leq  C_{\Lin,4} e^{-C_{\Lin,3}( n+1)}(\E[|\theta_0|^4]+1)^{1/2} +C_{\Lin,5}\lambda^{1/2+q/4},
\end{align*}
where 
\begin{align}\label{eq:mainw2lipconst}
\begin{split}
C_{\Lin,3}&:= \min\{\dot{\cc}_{\Lin},\overline{a}/2\}  /8, \\
C_{\Lin,4} &:=e^{ \min\{\dot{\cc}_{\Lin},\overline{a}/2\}   /8}\left[\cC_{\Lin,0}^{1/2}+\cC_{\Lin,4}+\sqrt{2\hat{\cc}_{\Lin}}\left(3+\int_{\R^d} V_2(\theta) \pi_{\beta}(\rmd \theta)\right)^{1/2}\right], \\
C_{\Lin,5}&:=\cC_{\Lin,1}^{1/2}+\cC_{\Lin,5}\\
\end{split}
\end{align}
with  $\dot{\cc}_{\Lin}, \hat{\cc}_{\Lin}$ given in Proposition \ref{prop:contractionw12lip}, $\cC_{\Lin,0}, \cC_{\Lin,1}$ given in \eqref{eq:w1converlipp1const} (see also Lemma \ref{lem:w1converlipp1}), $\cC_{\Lin,4}, \cC_{\Lin,5}$ given in \eqref{eq:w2converlipp2consts}. This further implies that, for each $n \in \N_0$,
\begin{align*}
W_2(\mathcal{L}(\overline{\Theta}^{\lambda}_n),\pi_\beta) = W_2(\mathcal{L}(\Theta^{\lambda}_n),\pi_\beta)  
&= W_2(\mathcal{L}(\Theta_n^{\cHOLLA}),\pi_{\beta}) \\
&\leq  C_{\Lin,4} e^{-C_{\Lin,3}\lambda n}(\E[|\theta_0|^4]+1)^{1/2} +C_{\Lin,5}\lambda^{1/2+q/4},
\end{align*}
which completes the proof.
\end{proof}


\subsection{Proof of auxiliary results in Appendices \ref{appen:melip} and \ref{appen:mtlipdetalis}}\label{appen:mtlinearproofsapd} 

\begin{lemma}\label{lem:sde2pmmtlip} Let Assumption 
 \ref{asm:ADlip} hold. Then, for any $p \in {\N}, t \geq 0$, we obtain that
\[
\E[|Z^{\lambda}_t|^{2p}] \leq e^{- \lambda p \overline{a} t }\E[|\theta_0|^{2p}]+2(\overline{b}+\beta^{-1}(d+2(p-1))) \cMl_0^{2p-2}/ \overline{a}<\infty,
\]
where $\cMl_0 :=(2(\overline{b}+\beta^{-1}(d+2(p-1)))/ \overline{a})^{1/2}$. 
\end{lemma}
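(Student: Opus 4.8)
The plan is to establish a uniform-in-time $2p$-th moment bound for the time-changed Langevin diffusion $(Z^{\lambda}_t)_{t \geq 0}$ defined in \eqref{eq:tcsdelip} by applying It\^o's formula to the function $\theta \mapsto |\theta|^{2p}$ and then exploiting the dissipativity condition in Assumption~\ref{asm:ADlip}. Since this is exactly the statement of \cite[Lemma A.1]{lim2021nonasymptotic} transcribed into the present notation (with $\ca_\cD, \cb_\cD$ replaced by $\overline{a}, \overline{b}$), I would first observe that only Assumption~\ref{asm:ADlip} is actually invoked in the argument; Assumptions~\ref{asm:AIlip} and \ref{asm:ALLlip} guarantee existence and uniqueness of the solution and the finiteness of $\E[|\theta_0|^{2p}]$, so the bound is meaningful, but the computation itself uses only $\langle \theta, h(\theta)\rangle \geq \overline{a}|\theta|^2 - \overline{b}$.

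The key steps, in order, would be as follows. First, apply It\^o's formula to $|Z^{\lambda}_t|^{2p}$, giving
\begin{align*}
\rmd |Z^{\lambda}_t|^{2p}
&= \Big( -2p\lambda |Z^{\lambda}_t|^{2p-2}\langle Z^{\lambda}_t, h(Z^{\lambda}_t)\rangle
+ 2p\lambda\beta^{-1}(d + 2(p-1))|Z^{\lambda}_t|^{2p-2} \Big)\,\rmd t\\
&\quad + 2p\sqrt{2\lambda\beta^{-1}}|Z^{\lambda}_t|^{2p-2}\langle Z^{\lambda}_t, \rmd B^{\lambda}_t\rangle,
\end{align*}
where the drift coefficient of the Laplacian term comes from $\Delta |\theta|^{2p} = 2p(d + 2(p-1))|\theta|^{2p-2}$. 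Second, take expectations so that the martingale term vanishes, and substitute Assumption~\ref{asm:ADlip} to obtain
\[
\frac{\rmd}{\rmd t}\E[|Z^{\lambda}_t|^{2p}]
\leq -2p\lambda \overline{a}\,\E[|Z^{\lambda}_t|^{2p}]
+ 2p\lambda\big(\overline{b} + \beta^{-1}(d + 2(p-1))\big)\E[|Z^{\lambda}_t|^{2p-2}].
\]
Third, control the lower-order term $\E[|Z^{\lambda}_t|^{2p-2}]$ by splitting on the event $\{|Z^{\lambda}_t| > \cMl_0\}$ versus its complement: on the sublevel set $|Z^{\lambda}_t|^{2p-2} \leq \cMl_0^{2p-2}$ directly, while on the superlevel set $|Z^{\lambda}_t|^{2p-2} \leq |Z^{\lambda}_t|^{2p}/\cMl_0^2$. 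The threshold $\cMl_0 = (2(\overline{b}+\beta^{-1}(d+2(p-1)))/\overline{a})^{1/2}$ is precisely tuned so that the coefficient $2p\lambda(\overline{b}+\beta^{-1}(d+2(p-1)))/\cMl_0^2$ on the superlevel set equals $p\lambda\overline{a}$, which absorbs half of the dissipative decay rate. This yields the linear differential inequality
\[
\frac{\rmd}{\rmd t}\E[|Z^{\lambda}_t|^{2p}]
\leq -p\lambda\overline{a}\,\E[|Z^{\lambda}_t|^{2p}]
+ 2p\lambda\big(\overline{b}+\beta^{-1}(d+2(p-1))\big)\cMl_0^{2p-2}.
\]
Fourth, apply Gr\"onwall's lemma (or solve the inequality explicitly) to reach the claimed bound
\[
\E[|Z^{\lambda}_t|^{2p}] \leq e^{-\lambda p\overline{a} t}\E[|\theta_0|^{2p}]
+ 2\big(\overline{b}+\beta^{-1}(d+2(p-1))\big)\cMl_0^{2p-2}/\overline{a},
\]
where the second term is the equilibrium value obtained by dividing the constant forcing by the decay rate $p\lambda\overline{a}$.

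I do not expect a genuine obstacle here, since the result is a direct citation and the argument is a standard Lyapunov/Gr\"onwall estimate. The one point requiring mild care is the localization argument needed to justify taking expectations of the It\^o differential without a priori knowing that the moments are finite: strictly one should apply It\^o's formula up to a stopping time $\tau_R := \inf\{t : |Z^{\lambda}_t| \geq R\}$, derive the inequality for the stopped process (whose moments are trivially bounded), and then pass $R \to \infty$ via Fatou's lemma, using Assumption~\ref{asm:AIlip} to ensure $\E[|\theta_0|^{2p}] < \infty$. Once finiteness is secured, the $2p$-th moment bound for $\pi_\beta$ follows by stationarity and letting $t \to \infty$, as remarked in the text. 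I would simply cite \cite[Lemma A.1]{lim2021nonasymptotic} for the full details, noting that the constants translate verbatim under the substitution $(\ca_\cD, \overline{\cb}_\cD) \mapsto (\overline{a}, \overline{b})$.
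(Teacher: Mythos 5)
Your proposal is correct and takes essentially the same route as the paper, which simply cites \cite[Lemma A.1]{lim2021nonasymptotic}: your It\^o--dissipativity--Gr\"onwall computation, with the threshold $\cMl_0$ tuned so that the lower-order term is absorbed at rate $p\lambda\overline{a}$, is exactly the argument behind that cited lemma, and your constants reproduce the stated bound verbatim. The remark on localization via stopping times is the right way to justify taking expectations, so nothing further is needed.
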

\begin{proof}See \cite[Lemma A.1]{lim2021nonasymptotic}.
\end{proof}

\begin{proof}[\textbf{Proof of Lemma \ref{lem:2ndpthmmtlip}-\ref{lem:2ndpthmmtlipi}}]
\label{lem:2ndpthmmtlipproof}
For any $0<\lambda\leq \overline{\lambda}_{\max}\leq 1$ with $\overline{\lambda}_{\max}$  given in \eqref{eq:stepsizemaxlip}, $t\in (n, n+1]$, $n \in \N_0$, we define
\begin{equation}\label{eq:delxinotationlip} 
\Deltal_{n,t}^{\lambda}
 := \widetilde{\Theta}^{\lambda}_n +  \lambda \phi^{\lambda}_{\Lin}(\widetilde{\Theta}^{\lambda}_n) (t-n),\quad
\Xil_{n,t}^{\lambda}
 :=  \sqrt{2\lambda\beta^{-1}}\psi^{\lambda}_{\Lin}(\widetilde{\Theta}^{\lambda}_n) (B_t^{\lambda} - B_n^{\lambda}),
\end{equation}
where for all $\theta \in \R^d$, 
\begin{equation}\label{eq:delxinotationlipphi}
\phi^{\lambda}_{\Lin}(\theta):= - h( \theta) +(\lambda/2) \left(H ( \theta)h ( \theta)-\beta^{-1}\Upsilon (\theta) \right),
\end{equation}
and 
\begin{equation}\label{eq:delxinotationlippsi}
\psi^{\lambda}_{\Lin}(\theta) :=\sqrt{\cI_d - \lambda H (\theta)+(\lambda^2/3) (H (\theta))^2}.
\end{equation}
Then, by using \eqref{eq:aholaproclip}, \eqref{eq:delxinotationlip} -- \eqref{eq:delxinotationlippsi}, and by noticing $\E\left[\left.\langle \Deltal_{n,t}^{\lambda}, \Xil_{n,t}^{\lambda}\rangle \right|\widetilde{\Theta}^{\lambda}_n \right] =0$, we have that
\begin{equation}\label{eq:2ndmmtlipexp}
\E\left[\left.|\widetilde{\Theta}^{\lambda}_t|^2\right|\widetilde{\Theta}^{\lambda}_n \right]  = |\Deltal_{n,t}^{\lambda}|^2+\E\left[\left.|\Xil_{n,t}^{\lambda}|^2\right|\widetilde{\Theta}^{\lambda}_n \right].
\end{equation}
The second term on the RHS of \eqref{eq:2ndmmtlipexp} can be further upper bounded as follows:
\begin{align}\label{eq:xilipubfin}
\E\left[\left.|\Xil_{n,t}^{\lambda}|^2\right|\widetilde{\Theta}^{\lambda}_n \right] 
&= 2\lambda\beta^{-1}\E\left[\left. \left\langle \psi^{\lambda}_{\Lin}(\widetilde{\Theta}^{\lambda}_n) (B_t^{\lambda} - B_n^{\lambda}),  \psi^{\lambda}_{\Lin}(\widetilde{\Theta}^{\lambda}_n) (B_t^{\lambda} - B_n^{\lambda})\right\rangle\right|\widetilde{\Theta}^{\lambda}_n \right] \nonumber\\
& =  2\lambda\beta^{-1}\E\left[\left. \left\langle    B_t^{\lambda} - B_n^{\lambda} ,  \left(\cI_d - \lambda H(\widetilde{\Theta}^{\lambda}_n)+(\lambda^2/3) (H(\widetilde{\Theta}^{\lambda}_n))^2\right)  (B_t^{\lambda} - B_n^{\lambda})\right\rangle\right|\widetilde{\Theta}^{\lambda}_n \right]  \nonumber\\
& \leq  2\lambda\beta^{-1}\left(d(t-n)+\lambda \overline{L}_3  d(t-n)+(\lambda^2/3) \overline{L}_3^2 d(t-n)\right)\nonumber\\
&\leq 2\lambda\beta^{-1}(t-n)d(1+ \overline{L}_3)^2,
\end{align}
where the first inequality holds due to Remark \ref{rmk:growthclip}. Next, to upper bound the first term on the RHS of \eqref{eq:2ndmmtlipexp}, we use \eqref{eq:delxinotationlip} to obtain
\begin{align}\label{eq:deltalipub1}
|\Deltal_{n,t}^{\lambda}|^2
& = |\widetilde{\Theta}^{\lambda}_n|^2+2\lambda(t-n)\left\langle \widetilde{\Theta}^{\lambda}_n, \phi^{\lambda}_{\Lin}(\widetilde{\Theta}^{\lambda}_n) \right\rangle+ \lambda^2(t-n)^2|\phi^{\lambda}_{\Lin}(\widetilde{\Theta}^{\lambda}_n) |^2.
\end{align}
By using \eqref{eq:delxinotationlipphi}, the second term on the RHS of \eqref{eq:deltalipub1} can be estimated as follows:
\begin{align} \label{eq:deltalipubcp}
\left\langle \widetilde{\Theta}^{\lambda}_n, \phi^{\lambda}_{\Lin}(\widetilde{\Theta}^{\lambda}_n) \right\rangle
&= - \left\langle \widetilde{\Theta}^{\lambda}_n, h(\widetilde{\Theta}^{\lambda}_n) \right\rangle +(\lambda/2)\left\langle \widetilde{\Theta}^{\lambda}_n, H(\widetilde{\Theta}^{\lambda}_n) h(\widetilde{\Theta}^{\lambda}_n) \right\rangle -(\lambda/2)\beta^{-1}\left\langle \widetilde{\Theta}^{\lambda}_n, \Upsilon(\widetilde{\Theta}^{\lambda}_n) \right\rangle \nonumber\\
&\leq -\overline{a}|\widetilde{\Theta}^{\lambda}_n|^2+\overline{b}+\lambda\overline{L}_3\cKl_1 (1+|\widetilde{\Theta}^{\lambda}_n|^2)+(\lambda/2)\beta^{-1}d\overline{L}_2|\widetilde{\Theta}^{\lambda}_n|,
\end{align}
where the last inequality holds due to Assumption \ref{asm:ADlip}, Remark \ref{rmk:growthclip} and the following calculations: for all $\theta \in \R^d$, 
\[
|\theta||H(\theta)h(\theta)|
\leq \overline{L}_3|\theta||h(\theta)| =\overline{L}_3\cKl_1(|\theta|+|\theta|^2)\leq 2\overline{L}_3\cKl_1(1+|\theta|^2).
\]
To provide an upper bound for the third term on the RHS of \eqref{eq:deltalipub1}, we write by straightforward calculations that
\begin{align}\label{eq:deltalipubsq1}
|\phi^{\lambda}_{\Lin}(\widetilde{\Theta}^{\lambda}_n) |^2
& = |- h ( \widetilde{\Theta}^{\lambda}_n) +(\lambda/2) H ( \widetilde{\Theta}^{\lambda}_n)h ( \widetilde{\Theta}^{\lambda}_n)-(\lambda/2) \beta^{-1}\Upsilon (\widetilde{\Theta}^{\lambda}_n)|^2\nonumber\\
& = |h ( \widetilde{\Theta}^{\lambda}_n) |^2+(\lambda^2/4)| H ( \widetilde{\Theta}^{\lambda}_n)h ( \widetilde{\Theta}^{\lambda}_n)|^2+(\lambda^2/4)\beta^{-2}|\Upsilon (\widetilde{\Theta}^{\lambda}_n)|^2\nonumber\\
&\quad  -\lambda \left\langle h ( \widetilde{\Theta}^{\lambda}_n),H ( \widetilde{\Theta}^{\lambda}_n)h ( \widetilde{\Theta}^{\lambda}_n)\right\rangle
+\lambda\beta^{-1}\left\langle h ( \widetilde{\Theta}^{\lambda}_n),\Upsilon (\widetilde{\Theta}^{\lambda}_n)\right\rangle\nonumber\\
&\quad -(\lambda^2/2)\beta^{-1}\left\langle H ( \widetilde{\Theta}^{\lambda}_n)h ( \widetilde{\Theta}^{\lambda}_n),\Upsilon (\widetilde{\Theta}^{\lambda}_n)\right\rangle.
\end{align}
We then provide upper bounds for each of the terms on the RHS of \eqref{eq:deltalipubsq1}. By using Remark \ref{rmk:growthclip}, we obtain that
\begin{align}\label{eq:deltalipubsqterms}
\begin{split}
|h ( \widetilde{\Theta}^{\lambda}_n) |^2
&\leq 2\cKl_1^2(1+| \widetilde{\Theta}^{\lambda}_n|^2) ,\\
(\lambda^2/4)| H ( \widetilde{\Theta}^{\lambda}_n)h ( \widetilde{\Theta}^{\lambda}_n)|^2
&\leq (\lambda^2/2)\overline{L}_3^2\cKl_1^2(1+| \widetilde{\Theta}^{\lambda}_n|^2),\\
(\lambda^2/4)\beta^{-2}|\Upsilon (\widetilde{\Theta}^{\lambda}_n)|^2
&\leq (\lambda^2/4)\beta^{-2}d^2\overline{L}_2^2 ,\\
 -\lambda \left\langle h ( \widetilde{\Theta}^{\lambda}_n),H ( \widetilde{\Theta}^{\lambda}_n)h ( \widetilde{\Theta}^{\lambda}_n)\right\rangle
&\leq 2\lambda\overline{L}_3\cKl_1^2(1+| \widetilde{\Theta}^{\lambda}_n|^2),\\
\lambda\beta^{-1}\left\langle h ( \widetilde{\Theta}^{\lambda}_n),\Upsilon (\widetilde{\Theta}^{\lambda}_n)\right\rangle
&\leq \lambda\beta^{-1} d\overline{L}_2\cKl_1 (1+| \widetilde{\Theta}^{\lambda}_n| ),\\
-(\lambda^2/2)\beta^{-1}\left\langle H ( \widetilde{\Theta}^{\lambda}_n)h ( \widetilde{\Theta}^{\lambda}_n),\Upsilon (\widetilde{\Theta}^{\lambda}_n)\right\rangle
&\leq (\lambda^2/2)\beta^{-1}d\overline{L}_2\overline{L}_3\cKl_1 (1+| \widetilde{\Theta}^{\lambda}_n| ).
\end{split}
\end{align}
Substituting \eqref{eq:deltalipubsqterms} into \eqref{eq:deltalipubsq1} yields
\begin{align}
\begin{split}\label{eq:deltalipubsq2}
|\phi^{\lambda}_{\Lin}(\widetilde{\Theta}^{\lambda}_n) |^2
&\leq 2\cKl_1^2(1+| \widetilde{\Theta}^{\lambda}_n|^2) +  (\lambda^2/2)\overline{L}_3^2\cKl_1^2(1+| \widetilde{\Theta}^{\lambda}_n|^2)+(\lambda^2/4)\beta^{-2}d^2\overline{L}_2^2\\
&\quad + 2\lambda\overline{L}_3\cKl_1^2(1+| \widetilde{\Theta}^{\lambda}_n|^2)+\lambda\beta^{-1} d\overline{L}_2\cKl_1 (1+| \widetilde{\Theta}^{\lambda}_n| )+(\lambda^2/2)\beta^{-1}d\overline{L}_2\overline{L}_3\cKl_1 (1+| \widetilde{\Theta}^{\lambda}_n| ).
\end{split}
\end{align}
Combining the results in \eqref{eq:deltalipubcp} and \eqref{eq:deltalipubsq2}, we obtain the following upper bound for \eqref{eq:deltalipub1}:
\begin{align}\label{eq:deltalipub2}
|\Deltal_{n,t}^{\lambda}|^2
&\leq |\widetilde{\Theta}^{\lambda}_n|^2 -2\lambda(t-n)\overline{a}|\widetilde{\Theta}^{\lambda}_n|^2 +\lambda(t-n)\left(2\lambda \overline{L}_3\cKl_1+2\lambda \cKl_1^2+\lambda^3\overline{L}_3^2\cKl_1^2/2+2\lambda^2\overline{L}_3\cKl_1^2\right)|\widetilde{\Theta}^{\lambda}_n|^2\nonumber\\
&\quad +\lambda(t-n)\left(\beta^{-1} d\overline{L}_2 +\beta^{-1} d\overline{L}_2\cKl_1+\beta^{-1}d\overline{L}_2\overline{L}_3\cKl_1/2\right) |\widetilde{\Theta}^{\lambda}_n| +\lambda(t-n)\Big(2\overline{b}+2\overline{L}_3\cKl_1\Big.\nonumber\\
&\quad\Big.+2 \cKl_1^2+\overline{L}_3^2\cKl_1^2/2+\beta^{-2}d^2\overline{L}_2^2/4+2\overline{L}_3\cKl_1^2+\beta^{-1} d\overline{L}_2\cKl_1 +\beta^{-1}d\overline{L}_2\overline{L}_3\cKl_1/2\Big)\nonumber\\
& = (1-\lambda(t-n)\overline{a}) |\widetilde{\Theta}^{\lambda}_n|^2 -\lambda(t-n)\overline{\mathfrak{I}}^{\lambda}_1(\widetilde{\Theta}^{\lambda}_n)-\lambda(t-n)\overline{\mathfrak{I}}^{\lambda}_2(\widetilde{\Theta}^{\lambda}_n)+ \lambda(t-n)\ccl_1,
\end{align}
where, for all $\theta \in \R^d$,
\begin{align*}
\overline{\mathfrak{I}}^{\lambda}_1(\theta)
&:=(\overline{a}/2)|\theta|^2-\left(2\lambda \overline{L}_3\cKl_1+2\lambda \cKl_1^2+\lambda^3\overline{L}_3^2\cKl_1^2/2+2\lambda^2\overline{L}_3\cKl_1^2\right)|\theta|^2\\
\overline{\mathfrak{I}}^{\lambda}_2(\theta)
&:=(\overline{a}/2)|\theta|^2-\left(\beta^{-1} d\overline{L}_2 +\beta^{-1} d\overline{L}_2\cKl_1+\beta^{-1}d\overline{L}_2\overline{L}_3\cKl_1/2\right) |\theta|,
\end{align*}
and where $\ccl_1:= 2\overline{b}+2\overline{L}_3\cKl_1+2 \cKl_1^2+\overline{L}_3^2\cKl_1^2/2+\beta^{-2}d^2\overline{L}_2^2/4+2\overline{L}_3\cKl_1^2+\beta^{-1} d\overline{L}_2\cKl_1 +\beta^{-1}d\overline{L}_2\overline{L}_3\cKl_1/2$. We note that, for all $\theta \in \R^d$, $0<\lambda\leq \overline{\lambda}_{\max}\leq \min\{\overline{a}/(16\overline{L}_3\cKl_1),\overline{a}/(16\cKl_1^2), \overline{a}^{1/3}/(4\overline{L}_3^2\cKl_1^2)^{1/3},\allowbreak \overline{a}^{1/2}/(16\overline{L}_3\cKl_1^2)^{1/3}\}$,
\begin{equation}\label{eq:deltalipub3}
\overline{\mathfrak{I}}^{\lambda}_1(\theta) =\left((\overline{a}/8-2\lambda \overline{L}_3\cKl_1)+(\overline{a}/8-2\lambda \cKl_1^2)+(\overline{a}/8-\lambda^3\overline{L}_3^2\cKl_1^2/2)+(\overline{a}/8-2\lambda^2\overline{L}_3\cKl_1^2)\right)|\theta|^2\geq 0.
\end{equation}
Substituting \eqref{eq:deltalipub3} into \eqref{eq:deltalipub2} yields
\[
|\Deltal_{n,t}^{\lambda}|^2 \leq  (1-\lambda(t-n)\overline{a}) |\widetilde{\Theta}^{\lambda}_n|^2-\lambda(t-n)\overline{\mathfrak{I}}^{\lambda}_2(\widetilde{\Theta}^{\lambda}_n)+ \lambda(t-n)\ccl_1.
\]
Denote by $\cMl_1: = 2(\overline{a}\beta)^{-1} d(\overline{L}_2 +\overline{L}_2\cKl_1+ \overline{L}_2\overline{L}_3\cKl_1/2)$ and $\cSl_{n,\cMl_1}:=\{\omega \in \Omega:|\widetilde{\Theta}^{\lambda}_n(\omega)|>\cMl_1\}$. By observing the fact that
\[
\overline{\mathfrak{I}}^{\lambda}_2(\theta) > 0 \qquad \Longleftrightarrow \qquad |\theta|>\cMl,
\]
we obtain the following:
\[
|\Deltal_{n,t}^{\lambda}|^2 \mathbbm{1}_{\cSl_{n,\cMl_1}}  \leq  (1-\lambda(t-n)\overline{a}) |\widetilde{\Theta}^{\lambda}_n|^2\mathbbm{1}_{\cSl_{n,\cMl_1}} + \lambda(t-n)\ccl_1\mathbbm{1}_{\cSl_{n,\cMl_1}}.
\]
Similarly, we have that
\begin{align*}
|\Deltal_{n,t}^{\lambda}|^2 \mathbbm{1}_{\cSl_{n,\cMl_1}^{\cc}}
&\leq  (1-\lambda(t-n)\overline{a}) |\widetilde{\Theta}^{\lambda}_n|^2\mathbbm{1}_{\cSl_{n,\cMl_1}^{\cc}}+ \lambda(t-n)\ccl_1\mathbbm{1}_{\cSl_{n,\cMl_1}^{\cc}}\\
&\quad +\lambda(t-n)\left(\beta^{-1} d\overline{L}_2 +\beta^{-1} d\overline{L}_2\cKl_1+\beta^{-1}d\overline{L}_2\overline{L}_3\cKl_1/2\right) \cMl_1\mathbbm{1}_{\cSl_{n,\cMl_1}^{\cc}}.
\end{align*}
Combining the two cases yields
\begin{align}\label{eq:deltalipubfin}
\begin{split}
|\Deltal_{n,t}^{\lambda}|^2 
&\leq  (1-\lambda(t-n)\overline{a}) |\widetilde{\Theta}^{\lambda}_n|^2+ \lambda(t-n)\ccl_1  +\lambda(t-n)\beta^{-1} d\left(\overline{L}_2 + \overline{L}_2\cKl_1+ \overline{L}_2\overline{L}_3\cKl_1/2\right) \cMl_1.
\end{split}
\end{align}
Finally, by substituting \eqref{eq:xilipubfin} and \eqref{eq:deltalipubfin} into \eqref{eq:2ndmmtlipexp}, we obtain
\begin{equation}\label{eq:2ndmmtlipexpfin}
\E\left[\left.|\widetilde{\Theta}^{\lambda}_t|^2\right|\widetilde{\Theta}^{\lambda}_n \right]
\leq  (1-\lambda(t-n)\overline{a}) |\widetilde{\Theta}^{\lambda}_n|^2+ \lambda(t-n)\ccl_0,
\end{equation}
where
\begin{align}
\begin{split}\label{eq:2ndmmtlipexpconst} 
\ccl_0 &: =\beta^{-1} d\left( \overline{L}_2 + \overline{L}_2\cKl_1+ \overline{L}_2\overline{L}_3\cKl_1/2\right) \cMl_1 +\ccl_1+2 \beta^{-1} d(1+ \overline{L}_3)^2 , \\
\ccl_1&:=2\overline{b}+2\overline{L}_3\cKl_1+2 \cKl_1^2+\overline{L}_3^2\cKl_1^2/2+\beta^{-2}d^2\overline{L}_2^2/4+2\overline{L}_3\cKl_1^2+\beta^{-1} d(\overline{L}_2\cKl_1 +\overline{L}_2\overline{L}_3\cKl_1/2),\\
\cMl_1&: = 2(\overline{a}\beta)^{-1} d(\overline{L}_2 +\overline{L}_2\cKl_1+ \overline{L}_2\overline{L}_3\cKl_1/2).
\end{split}
\end{align}
We observe that, for $0<\lambda\leq \overline{\lambda}_{\max}\leq 1/\overline{a}$,
\[
1>1-\lambda(t-n)\overline{a} > 1-\lambda \overline{a}\geq 0,
\]
then, by induction, \eqref{eq:2ndmmtlipexpfin} implies, for $t\in (n, n+1]$, $n \in \N_0$, $0<\lambda\leq \overline{\lambda}_{\max}\leq 1$, that, 
\begin{align*}
\E\left[ |\widetilde{\Theta}^{\lambda}_t|^2 \right]  
&\leq  (1-\lambda(t-n)\overline{a}) \E\left[ |\widetilde{\Theta}^{\lambda}_n|^2\right]+ \lambda(t-n)\ccl_0\\
&\leq  \left(1 -\lambda(t-n)\overline{a} \right)\left(1 -\lambda \overline{a} \right)\E\left[ |\widetilde{\Theta}^{\lambda}_{n-1}|^2\right]+  \ccl_0+ \lambda \ccl_0\\
&\leq   \left(1 -\lambda(t-n)\overline{a} \right)\left(1 -\lambda \overline{a} \right)^2\E\left[ |\widetilde{\Theta}^{\lambda}_{n-2}|^2\right]+  \ccl_0+ \lambda \ccl_0\left(1+\left(1 -\lambda \overline{a} \right)\right)\\
&\leq \dots\\
&\leq \left(1 -\lambda(t-n)\overline{a} \right)\left(1 -\lambda \overline{a} \right)^n\E\left[ |\theta_0|^2\right]+  \ccl_0\left(1+1/\overline{a}\right),
\end{align*}
which completes the proof.
\end{proof}

\begin{proof}[\textbf{Proof of Lemma \ref{lem:2ndpthmmtlip}-\ref{lem:2ndpthmmtlipii}}] 
For any $p\in [2, \infty)\cap {\N}$, $0<\lambda\leq \overline{\lambda}_{\max}\leq 1$ with $\overline{\lambda}_{\max}$  given in \eqref{eq:stepsizemaxlip}, $t\in (n, n+1]$, $n \in \N_0$, by using the same arguments as in the proof of \cite[Lemma 4.2-(ii)]{lim2021nonasymptotic} up to the inequality before \cite[Eq. (134)]{lim2021nonasymptotic} and by using \eqref{eq:aholaproclip} with \eqref{eq:delxinotationlip}, we obtain that
\begin{align}
\begin{split}\label{eq:2pthmmtlipexp}
\E\left[\left.|\widetilde{\Theta}^{\lambda}_t|^{2p}\right|\widetilde{\Theta}^{\lambda}_n \right] 
&\leq |\Deltal_{n,t}^{\lambda}|^{2p} +2^{2p-3}p(2p-1)|\Deltal_{n,t}^{\lambda}|^{2p-2}  \E\left[\left.|\Xil_{n,t}^{\lambda}|^2\right|\widetilde{\Theta}^{\lambda}_n \right]  \\
&\quad + 2^{2p-3}p(2p-1)\E\left[\left.|\Xil_{n,t}^{\lambda}|^{2p}\right|\widetilde{\Theta}^{\lambda}_n \right].
\end{split}
\end{align}
Then, by using \eqref{eq:delxinotationlip} and \eqref{eq:delxinotationlippsi}, we can obtain an upper estimate for the last term in \eqref{eq:2pthmmtlipexp} as follows:
\begin{align}\label{eq:xi2plipub} 
\E\left[\left.|\Xil_{n,t}^{\lambda}|^{2p}\right|\widetilde{\Theta}^{\lambda}_n \right]
& = (2\lambda\beta^{-1})^p \E\left[\left. \left\langle  (B_t^{\lambda} - B_n^{\lambda}), 
\left(\psi^{\lambda}_{\Lin}(\widetilde{\Theta}^{\lambda}_n)\right)^2 (B_t^{\lambda} - B_n^{\lambda})\right\rangle^p\right|\widetilde{\Theta}^{\lambda}_n \right] \nonumber\\
&\leq (2\lambda\beta^{-1})^p \E\left[\left. \left(  |B_t^{\lambda} - B_n^{\lambda}|^2+ \lambda\overline{L}_3 |B_t^{\lambda} - B_n^{\lambda}|^2+(\lambda^2/3) \overline{L}_3^2 |B_t^{\lambda} - B_n^{\lambda}|^2 
\right)^p\right|\widetilde{\Theta}^{\lambda}_n \right] \nonumber\\
&\leq  (2\lambda\beta^{-1})^p(1+\overline{L}_3)^{2p}\E\left[|B_t^{\lambda} - B_n^{\lambda}|^{2p}\right]\nonumber\\
&\leq  (2\lambda\beta^{-1}dp(2p-1)(t-n))^p(1+\overline{L}_3)^{2p}\nonumber\\
&\leq \lambda(t-n) (2\beta^{-1}dp(2p-1)(1+\overline{L}_3)^2)^p.
\end{align} 
Substituting \eqref{eq:xilipubfin} and \eqref{eq:xi2plipub} into \eqref{eq:2pthmmtlipexp} yields
\begin{align}\label{eq:2pthmmtlipexpub1}
\begin{split}
\E\left[\left.|\widetilde{\Theta}^{\lambda}_t|^{2p}\right|\widetilde{\Theta}^{\lambda}_n \right] 
&\leq |\Deltal_{n,t}^{\lambda}|^{2p} +\lambda(t-n) 2^{2p-2}p(2p-1) \beta^{-1} d(1+\overline{L}_3)^2|\Deltal_{n,t}^{\lambda}|^{2p-2} + \lambda(t-n) \ccl_{\Xil}(p),
\end{split}
\end{align}
where $\ccl_{\Xil}(p):= 2^{2p-3}p(2p-1)  (2\beta^{-1}dp(2p-1)(1+\overline{L}_3)^2)^p$. Next, we apply \eqref{eq:deltalipubfin} to obtain
\begin{align}\label{eq:2pthmmtlipexpub2}
|\Deltal_{n,t}^{\lambda}|^{2p} 
&\leq \left( (1-\lambda(t-n)\overline{a}) |\widetilde{\Theta}^{\lambda}_n|^2+ \lambda(t-n)\ccl_1  +\lambda(t-n)\beta^{-1} d\left(\overline{L}_2 + \overline{L}_2\cKl_1+ \overline{L}_2\overline{L}_3\cKl_1/2\right) \cMl_1\right)^p\nonumber\\
&\leq \left(1+\lambda(t-n)\overline{a} /2\right)^{p-1}\left(1 -\lambda(t-n)\overline{a} \right)^p|\widetilde{\Theta}^{\lambda}_n|^{2p}\nonumber\\
&\quad +\left(1+2/(\lambda(t-n)\overline{a} )\right)^{p-1}(\lambda(t-n))^p(\ccl_1+\beta^{-1} d\left(\overline{L}_2 + \overline{L}_2\cKl_1+ \overline{L}_2\overline{L}_3\cKl_1/2\right) \cMl_1)^p\nonumber\\
&\leq \left(1-\lambda(t-n)\overline{a} /2\right)^{p-1}\left(1 -\lambda(t-n)\overline{a} \right)|\widetilde{\Theta}^{\lambda}_n|^{2p}\nonumber\\
&\quad +\lambda(t-n)\left(1+2/\overline{a}\right)^{p-1}(\ccl_1+\beta^{-1} d\left(\overline{L}_2 + \overline{L}_2\cKl_1+ \overline{L}_2\overline{L}_3\cKl_1/2\right) \cMl_1)^p\nonumber\\
&=\overline{\cc}_{n,t}^\lambda(p)|\widetilde{\Theta}^{\lambda}_n|^{2p}+\widetilde{\cc}_{n,t}^\lambda(p),
\end{align}
where the second inequality holds due to $(u+v)^p\leq (1+\varepsilon)^{p-1}u^p+(1+\varepsilon^{-1})^{p-1}v^p$, $u,v \geq 0$, $\varepsilon>0$ with $\varepsilon = \lambda(t-n)\overline{a} /2$, and where
\begin{align*}
\overline{\cc}_{n,t}^\lambda(p) &:=  \left(1-\lambda(t-n)\overline{a} /2\right)^{p-1}\left(1 -\lambda(t-n)\overline{a}  \right),\\
\widetilde{\cc}_{n,t}^\lambda(p)&:=\lambda(t-n)\left(1+2/\overline{a}\right)^{p-1}(\ccl_1+\beta^{-1} d\left(\overline{L}_2 + \overline{L}_2\cKl_1+ \overline{L}_2\overline{L}_3\cKl_1/2\right) \cMl_1)^p.
\end{align*}
In addition, we observe that by \eqref{eq:2pthmmtlipexpub2}, 
\begin{equation}\label{eq:2pthmmtlipexpub3}
|\Deltal_{n,t}^{\lambda}|^{2p-2}\leq  \overline{\cc}_{n,t}^\lambda(p-1)|\widetilde{\Theta}^{\lambda}_n|^{2p-2}+\widetilde{\cc}_{n,t}^\lambda(p-1),
\end{equation}
and, in particular, when $p = 2$, \eqref{eq:2pthmmtlipexpub3} yields $|\Deltal_{n,t}^{\lambda}|^2 \leq  \overline{\cc}_{n,t}^\lambda(1)|\widetilde{\Theta}^{\lambda}_n|^{2}+\widetilde{\cc}_{n,t}^\lambda(1)$ which is exactly the upper bound \eqref{eq:deltalipubfin}. By substituting \eqref{eq:2pthmmtlipexpub2} and \eqref{eq:2pthmmtlipexpub3} into \eqref{eq:2pthmmtlipexpub1}, we have that
\begin{align}\label{eq:2pthmmtlipexpub4}
\begin{split}
\E\left[\left.|\widetilde{\Theta}^{\lambda}_t|^{2p}\right|\widetilde{\Theta}^{\lambda}_n \right] 
&\leq \overline{\cc}_{n,t}^\lambda(p)|\widetilde{\Theta}^{\lambda}_n|^{2p}+\widetilde{\cc}_{n,t}^\lambda(p) + \lambda(t-n) \ccl_{\Xil}(p)\\
&\quad +\lambda(t-n) 2^{2p-2}p(2p-1) \beta^{-1} d(1+\overline{L}_3)^2 \left(\overline{\cc}_{n,t}^\lambda(p-1)|\widetilde{\Theta}^{\lambda}_n|^{2p-2}+\widetilde{\cc}_{n,t}^\lambda(p-1)\right).
\end{split}
\end{align}
Denote by $\cMl_2(p):=(2^{2p}p(2p-1) \beta^{-1} d(1+\overline{L}_3)^2/\overline{a})^{1/2}$. For all $|\theta|>\cMl_2(p)$, we have that
\[
(\lambda(t-n)\overline{a} /4)|\theta|^{2p}>(\lambda(t-n)2^{2p}p(2p-1) \beta^{-1} d(1+\overline{L}_3)^2/4)|\theta|^{2p-2}.
\]
Denote by $\cSl_{n,\cMl_2(p)}:=\{\omega \in \Omega:|\widetilde{\Theta}^{\lambda}_n(\omega)|>\cMl_2(p)\}$. By using the above inequality, \eqref{eq:2pthmmtlipexpub4} can be further bounded as follows:
\begin{align}\label{eq:2pthmmtlipexpub5}
&\E\left[\left.|\widetilde{\Theta}^{\lambda}_t|^{2p}\mathbbm{1}_{\cSl_{n,\cMl_2(p)}} \right|\widetilde{\Theta}^{\lambda}_n \right] \nonumber\\
&\leq \left(1-\lambda(t-n)\overline{a} /4\right)\overline{\cc}_{n,t}^\lambda(p-1)|\widetilde{\Theta}^{\lambda}_n|^{2p}\mathbbm{1}_{\cSl_{n,\cMl_2(p)}}  +\left(\widetilde{\cc}_{n,t}^\lambda(p) + \lambda(t-n) \ccl_{\Xil}(p)\right)\mathbbm{1}_{\cSl_{n,\cMl_2(p)}} \nonumber\\
&\quad +\lambda(t-n) 2^{2p-2}p(2p-1) \beta^{-1} d(1+\overline{L}_3)^2\widetilde{\cc}_{n,t}^\lambda(p-1)\mathbbm{1}_{\cSl_{n,\cMl_2(p)}}\nonumber \\
&\quad -\left(\lambda(t-n)\overline{a} /4\right)\overline{\cc}_{n,t}^\lambda(p-1)|\widetilde{\Theta}^{\lambda}_n|^{2p}\mathbbm{1}_{\cSl_{n,\cMl_2(p)}}\nonumber \\
&\quad +(\lambda(t-n) 2^{2p}p(2p-1) \beta^{-1} d(1+\overline{L}_3)^2/4)\overline{\cc}_{n,t}^\lambda(p-1)|\widetilde{\Theta}^{\lambda}_n|^{2p-2}\mathbbm{1}_{\cSl_{n,\cMl_2(p)}}\nonumber \\
&\leq \left(1 -\lambda(t-n)\overline{a} \right)|\widetilde{\Theta}^{\lambda}_n|^{2p}\mathbbm{1}_{\cSl_{n,\cMl_2(p)}} +\lambda(t-n)\left[\ccl_{\Xil}(p)+\left(1+2/\overline{a}\right)^{p-1}\right.\nonumber\\
&\quad \left. \times (\ccl_1+\beta^{-1} d\left(\overline{L}_2 + \overline{L}_2\cKl_1+ \overline{L}_2\overline{L}_3\cKl_1/2\right) \cMl_1)^p +2^{2p-2}p(2p-1) \beta^{-1} d(1+\overline{L}_3)^2\right.\nonumber\\
&\qquad \left. \times\left(\left(1+2/\overline{a}\right)^{p-2}(\ccl_1+\beta^{-1} d\left(\overline{L}_2 + \overline{L}_2\cKl_1+ \overline{L}_2\overline{L}_3\cKl_1/2\right) \cMl_1)^{p-1}+\cMl_2(p)^{2p-2}\right)\right]\mathbbm{1}_{\cSl_{n,\cMl_2(p)}} \nonumber\\
&\leq \left(1 -\lambda(t-n)\overline{a} \right)|\widetilde{\Theta}^{\lambda}_n|^{2p}\mathbbm{1}_{\cSl_{n,\cMl_2(p)}} +\lambda(t-n)\ccl_p \mathbbm{1}_{\cSl_{n,\cMl_2(p)}},
\end{align}
where
\begin{align}\label{eq:2pthmmtlipexpconst} 
\begin{split}
\ccl_p 
&:=\ccl_{\Xil}(p)+\left(1+2/\overline{a}\right)^{p-1}  (\ccl_1+\beta^{-1} d\left(\overline{L}_2 + \overline{L}_2\cKl_1+ \overline{L}_2\overline{L}_3\cKl_1/2\right) \cMl_1)^p\\
&\quad  +2^{2p-2}p(2p-1) \beta^{-1} d(1+\overline{L}_3)^2 \\
&\qquad  \times\left(\left(1+2/\overline{a}\right)^{p-2}(\ccl_1+\beta^{-1} d\left(\overline{L}_2 + \overline{L}_2\cKl_1+ \overline{L}_2\overline{L}_3\cKl_1/2\right) \cMl_1)^{p-1}+\cMl_2(p)^{2p-2}\right),\\
\ccl_{\Xil}(p)
&:= 2^{2p-3}p(2p-1)  (2\beta^{-1}dp(2p-1)(1+\overline{L}_3)^2)^p,\\
\cMl_2(p)
&:=(2^{2p}p(2p-1) \beta^{-1} d(1+\overline{L}_3)^2/\overline{a})^{1/2}
\end{split}
\end{align}
with  $\ccl_1$ given in \eqref{eq:2ndmmtlipexpconst}. Similarly, by using \eqref{eq:2pthmmtlipexpub4}, we have that
\begin{align}\label{eq:2pthmmtlipexpub6}
\E\left[\left.|\widetilde{\Theta}^{\lambda}_t|^{2p}\mathbbm{1}_{\cSl_{n,\cMl_2(p)}^{\cc}} \right|\widetilde{\Theta}^{\lambda}_n \right]\leq \left(1 -\lambda(t-n)\overline{a} \right)|\widetilde{\Theta}^{\lambda}_n|^{2p}\mathbbm{1}_{\cSl_{n,\cMl_2(p)}^{\cc}} +\lambda(t-n)\ccl_p \mathbbm{1}_{\cSl_{n,\cMl_2(p)}^{\cc}}.
\end{align}
Combing \eqref{eq:2pthmmtlipexpub5} and \eqref{eq:2pthmmtlipexpub6} yields the desired result.
\end{proof}


\begin{lemma}\label{lem:oserroralglip} Let Assumptions \ref{asm:AIlip}, \ref{asm:ALLlip}, and \ref{asm:ADlip} hold. Then, for any $0<\lambda\leq \overline{\lambda}_{\max}$, $p > 0, t \geq 0$, we obtain
\begin{align}
\E\left[|\overline{\Theta}^{\lambda}_t - \overline{\Theta}^{\lambda}_{\lfrf{t}} |^{2p}\right]
&\leq \lambda^p\left(e^{-\lambda\overline{a} \lfrf{t}}\overline{\cC}_{\mathbf{S}0,p}\E[|\theta_0|^{2\lcrc{p}}]+\widetilde{\cC}_{\mathbf{S}0,p} \right),\label{lem:oserroralglipineq1}\\
\E\left[|\widetilde{\zeta}^{\lambda, n}_t - \widetilde{\zeta}^{\lambda, n}_{\lfrf{t}}|^{2p}\right]
&\leq \lambda^p\left(e^{-  \lambda \overline{a}  \lfrf{t}/2}\overline{\cC}_{\mathbf{S}1,p}\E[|\theta_0|^{2\lcrc{p}}]+\widetilde{\cC}_{\mathbf{S}1,p} \right),\label{lem:oserroralglipineq2}
\end{align}
where $\overline{\cC}_{\mathbf{S}0,p}$ and $\widetilde{\cC}_{\mathbf{S}0,p}$ are given in \eqref{eq:oserroralglipconst}, and $\overline{\cC}_{\mathbf{S}1,p}$ and $\widetilde{\cC}_{\mathbf{S}1,p}$ are given in \eqref{eq:oserrorauxlipconst}.
\end{lemma}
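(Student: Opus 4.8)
The plan is to follow the same strategy as in the proof of Lemma~\ref{lem:oserroralg}, exploiting the simplifications afforded by the linear setting, where no taming factor appears (so that $h_\lambda = h$, $H_\lambda = H$, $\Upsilon_\lambda = \Upsilon$) and the growth bounds in Remark~\ref{rmk:growthclip} are linear rather than polynomial.

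For the first inequality \eqref{lem:oserroralglipineq1}, I would use the definition of $(\overline{\Theta}^{\lambda}_t)_{t\geq 0}$ in \eqref{eq:aholahoproclip} to write $\overline{\Theta}^{\lambda}_t - \overline{\Theta}^{\lambda}_{\lfrf{t}}$ as the sum of four terms: the single drift integral $-\lambda\int_{\lfrf{t}}^t h(\overline{\Theta}^{\lambda}_{\lfrf{s}})\,\rmd s$, the iterated drift integral involving $Hh - \beta^{-1}\Upsilon$, the iterated stochastic integral involving $H$, and the Brownian increment $\sqrt{2\lambda\beta^{-1}}\int_{\lfrf{t}}^t\,\rmd B_s^{\lambda}$. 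Applying $(\sum_{l=1}^v u_l)^w \leq v^w \sum_{l=1}^v u_l^w$ distributes the $2p$-th power across these terms, which explains the leading combinatorial factors $5^{2p}$ and $10^{2p}$ in the constants. Each term is then bounded using the linear growth estimates of Remark~\ref{rmk:growthclip} (namely $|h(\theta)| \leq \cKl_1(1+|\theta|)$, $|H(\theta)\overline{\theta}| \leq \overline{L}_3|\overline{\theta}|$, $|\Upsilon(\theta)| \leq d\overline{L}_2$), together with the Gaussian moment bound $\E[|\int_{\lfrf{t}}^t \rmd B_s^\lambda|^{2p}] \leq ((p+1)(d+2p))^p$ and Cauchy--Schwarz for the mixed term. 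The crucial bookkeeping is to reduce the powers $(1+|\overline{\Theta}^{\lambda}_{\lfrf{s}}|)^{2p}$ to integer-order moments, after which Lemma~\ref{lem:2ndpthmmtlip} supplies the uniform-in-time bound of order $e^{-\lambda\overline{a}\lfrf{t}}\E[|\theta_0|^{2\lcrc{p}}] + \ccl_{\lcrc{p}}(1+1/\overline{a})$. Collecting the $\lambda$ powers (each time integral over an interval of length at most $1$ combines with the explicit $\lambda$ prefactors to produce the overall $\lambda^p$) and the constants yields $\overline{\cC}_{\mathbf{S}0,p}$ and $\widetilde{\cC}_{\mathbf{S}0,p}$ as in \eqref{eq:oserroralglipconst}.

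The second inequality \eqref{lem:oserroralglipineq2} is handled identically but more simply. Using Definition~\ref{def:auxzetalip} together with \eqref{eq:auxproclip}, the increment $\widetilde{\zeta}^{\lambda, n}_t - \widetilde{\zeta}^{\lambda, n}_{\lfrf{t}}$ decomposes into only the drift integral $-\lambda\int_{\lfrf{t}}^t h(\widetilde{\zeta}^{\lambda, n}_s)\,\rmd s$ and the Brownian increment. Here I would bound $|h(\widetilde{\zeta}^{\lambda, n}_s)|^{2p}$ via Remark~\ref{rmk:growthclip}, pass to the Lyapunov function $V_{2\lcrc{p}}$, and apply Lemma~\ref{lem:zetaprocmelip} to control $\E[V_{2\lcrc{p}}(\widetilde{\zeta}^{\lambda, n}_s)]$ uniformly; this accounts for the half-rate $e^{-\lambda\overline{a}\lfrf{t}/2}$ and the additional drift-condition term $\mathrm{v}_{2\lcrc{p}}(\cMl_V(2\lcrc{p}))$ appearing in $\widetilde{\cC}_{\mathbf{S}1,p}$ via \eqref{eq:oserrorauxlipconst}.

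The main obstacle is not conceptual but rather the careful tracking of the moment exponents: since $p>0$ need not be an integer, every intermediate estimate must be routed through the ceiling $\lcrc{p}$ so that the integer-order moment estimates in Lemma~\ref{lem:2ndpthmmtlip} and Lemma~\ref{lem:zetaprocmelip} apply. Ensuring that the linear growth bounds yield the clean dependence on $\E[|\theta_0|^{2\lcrc{p}}]$, as opposed to the much higher-order moments forced by the polynomial bounds in the super-linear case, is precisely the payoff of working in the globally Lipschitz regime, and keeping the constants consistent with the table in Appendix~\ref{appen:constexplip} is the only place requiring genuine care.
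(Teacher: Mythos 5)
Your proposal is correct and follows essentially the same route as the paper's proof: the same decomposition of the increments via \eqref{eq:aholahoproclip} and \eqref{eq:auxproclip}, the power inequality $(\sum_{l=1}^{v}u_l)^w\leq v^w\sum_{l=1}^{v}u_l^w$ producing the $5^{2p}$ (and hence $10^{2p}$) factors, the linear growth bounds of Remark~\ref{rmk:growthclip} with Cauchy--Schwarz and the Gaussian moment bound, and the moment estimates of Lemma~\ref{lem:2ndpthmmtlip} for \eqref{lem:oserroralglipineq1} and Lemma~\ref{lem:zetaprocmelip} (whence the half-rate and the $\mathrm{v}_{2\lcrc{p}}(\cMl_V(2\lcrc{p}))$ term) for \eqref{lem:oserroralglipineq2}. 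Your handling of non-integer $p$ through the ceiling $\lcrc{p}$ is exactly the bookkeeping the paper performs.
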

\begin{proof} To show that \eqref{lem:oserroralglipineq1} holds, we use the definition of $(\overline{\Theta}^{\lambda}_t)_{t \geq 0 }$ given in \eqref{eq:aholahoproclip} and obtain that, for any $t \geq 0$,
\begin{align*}
&\E\left[|\overline{\Theta}^{\lambda}_t - \overline{\Theta}^{\lambda}_{\lfrf{t}} |^{2p}\right] \\
&= \E\left[\left|-\lambda \int_{\lfrf{t}}^t h (\overline{\Theta}^{\lambda}_{\lfrf{s}})\,\rmd s+\lambda^2\int_{\lfrf{t}}^t \int_{\lfrf{s}}^s\left(H ( \overline{\Theta}^{\lambda}_{\lfrf{r}})h ( \overline{\Theta}^{\lambda}_{\lfrf{r}})-\beta^{-1}\Upsilon (\overline{\Theta}^{\lambda}_{\lfrf{r}}) \right)\,\rmd r\,\rmd s \right.\right.\\
&\qquad \left.\left. -\lambda\sqrt{2\lambda\beta^{-1}} \int_{\lfrf{t}}^t \int_{{\lfrf{s}}}^s H (\overline{\Theta}_{\lfrf{r}}^{\lambda})\,\rmd B_r^{\lambda}\,\rmd s  +\sqrt{2\lambda\beta^{-1}} \int_{\lfrf{t}}^t \, \rmd B^{\lambda}_s\right|^{2p}\right] \\
&\leq 5^{2p}\Bigg(\lambda^{2p}\E\left[|h (\overline{\Theta}^{\lambda}_{\lfrf{t}})|^{2p}\right]+\lambda^{4p}\E\left[|H ( \overline{\Theta}^{\lambda}_{\lfrf{r}})h ( \overline{\Theta}^{\lambda}_{\lfrf{t}})|^{2p}\right]\Bigg.\\
&\qquad +\left. \lambda^{4p}\beta^{-2p}\E\left[|\Upsilon (\overline{\Theta}^{\lambda}_{\lfrf{t}}) |^{2p}\right]+\lambda^p(2 \beta^{-1})^p\E\left[\left| \int_{\lfrf{t}}^t   \,\rmd B_s^{\lambda} \right|^{2p}\right]\right.\\
&\qquad \Bigg. +\lambda^{3p}(2 \beta^{-1})^p\E\left[\left|H (\overline{\Theta}_{\lfrf{t}}^{\lambda})  \int_{\lfrf{t}}^t \int_{{\lfrf{s}}}^s \,\rmd B_r^{\lambda}\,\rmd s\right|^{2p}\right]\Bigg) \\
&\leq 5^{2p}\lambda^p\left(\cKl_1^{2p}\E\left[(1+|\overline{\Theta}^{\lambda}_{\lfrf{t}}|)^{2p}\right]+(\overline{L}_3\cKl_1)^{2p}\E\left[(1+|\overline{\Theta}^{\lambda}_{\lfrf{t}}|)^{2p}\right]+(\beta^{-1}d \overline{L}_2)^{2p}\right.\\
&\qquad \left. +(2\beta^{-1}(p+1)(d+2p))^p+(2\beta^{-1}\overline{L}_3^2(p+1)(d+2p))^p\right)\\
&\leq \lambda^p10^{2p}(\cKl_1^{2p}+(\overline{L}_3\cKl_1)^{2p}) \left(\E\left[ |\overline{\Theta}^{\lambda}_{\lfrf{t}}|^{2\lcrc{p}}\right]+1\right)\\
&\quad +\lambda^p5^{2p}( (\beta^{-1}d \overline{L}_2)^{2p}+(2\beta^{-1}(p+1)(d+2p))^p+(2\beta^{-1}\overline{L}_3^2(p+1)(d+2p))^p)\\
&\leq \lambda^p\left(e^{-\lambda\overline{a} \lfrf{t}}\overline{\cC}_{\mathbf{S}0,p}\E[|\theta_0|^{2\lcrc{p}}]+\widetilde{\cC}_{\mathbf{S}0,p} \right),
\end{align*}
where the first inequality holds due to $(\sum_{l=1}^vu_l)^w\leq v^w\sum_{l=1}^vu_l^w$, $v \in \N$, $u_l \geq 0$, $w>0$, the second inequality holds due to Remark \ref{rmk:growthclip}, Cauchy-Schwarz inequality, and the following inequality:
\[
\E\left[ \left| \int_{\lfrf{t}}^t   \,\rmd B_s^{\lambda} \right|^{2p}\right] \leq \max\{d^p, (p(d+2p-2))^p\}\leq ((p+1)(d+2p))^p,
\] 
the fourth inequality holds due to Lemma \ref{lem:2ndpthmmtlip}, and where
\begin{align}\label{eq:oserroralglipconst}
\begin{split}
\overline{\cC}_{\mathbf{S}0,p}&: = 10^{2p} (\cKl_1^{2p}+(\overline{L}_3\cKl_1)^{2p}),\\
\widetilde{\cC}_{\mathbf{S}0,p}&: =  \overline{\cC}_{\mathbf{S}0,p}\left( \ccl_{\lcrc{p}}(1+1/\overline{a})+1\right)+5^{2p}\left( (\beta^{-1}d \overline{L}_2)^{2p}+(2\beta^{-1}(p+1)(d+2p))^p(1+\overline{L}_3^{2p})\right).
\end{split}
\end{align}
The inequality \eqref{lem:oserroralglipineq2} can be obtained by using similar arguments. More precisely, by using Definition \ref{def:auxzetalip} with \eqref{eq:auxproclip}, we obtain that, for any $t \geq 0$,
\begin{align*}
\E\left[|\widetilde{\zeta}^{\lambda, n}_t - \widetilde{\zeta}^{\lambda, n}_{\lfrf{t}}|^{2p}\right]
& = \E\left[\left|-\lambda \int_{\lfrf{t}}^t h(\widetilde{\zeta}^{\lambda, n}_s)\,\rmd s+\sqrt{2\lambda\beta^{-1}} \int_{\lfrf{t}}^t \, \rmd B^{\lambda}_s \right|^{2p}\right]\\
&\leq 2^{2p}\left(\lambda^{2p}\E\left[\int_{\lfrf{t}}^t |h(\widetilde{\zeta}^{\lambda, n}_s)|^{2p}\,\rmd s\right]+\lambda^p(2\beta^{-1})^p\E\left[\left| \int_{\lfrf{t}}^t   \,\rmd B_s^{\lambda} \right|^{2p}\right]\right)\\
&\leq 2^{2p}\lambda^p\left(\cKl_1^{2p}\int_{\lfrf{t}}^t \E\left[(1+|\widetilde{\zeta}^{\lambda, n}_s|)^{2\lcrc{p}}\right]\,\rmd s+ (2\beta^{-1}(p+1)(d+2p))^p\right)\\
&\leq \lambda^p\left( 2^{3\lcrc{p}}\cKl_1^{2p}\int_{\lfrf{t}}^t \E\left[V_{2\lcrc{p}}(\widetilde{\zeta}^{\lambda, n}_s) \right]\,\rmd s+  2^{2p}(2\beta^{-1}(p+1)(d+2p))^p\right)\\
&\leq  \lambda^p\left(e^{-\lambda\overline{a} \lfrf{t}/2}\overline{\cC}_{\mathbf{S}1,p}\E[|\theta_0|^{2\lcrc{p}}]+\widetilde{\cC}_{\mathbf{S}1,p} \right),
\end{align*}
where the second inequality holds due to Remark \ref{rmk:growthclip} and the last inequality holds due to Lemma \ref{lem:zetaprocmelip} and where
\begin{align}\label{eq:oserrorauxlipconst}
\begin{split}
\overline{\cC}_{\mathbf{S}1,p}&: = 2^{4\lcrc{p}}(1+\cKl_1)^{2p}, \\
\widetilde{\cC}_{\mathbf{S}1,p}
&: = \overline{\cC}_{\mathbf{S}1,p}\left( \ccl_{\lcrc{p}}(1+1/\overline{a})+1\right)+ 2^{3\lcrc{p}}(1+\cKl_1)^{2p}3\mathrm{v}_{2\lcrc{p}}(\cMl_V(2\lcrc{p}))\\
&\qquad +2^{2p}(2\beta^{-1}(p+1)(d+2p))^p.
\end{split}
\end{align}
This completes the proof.
\end{proof}


\begin{lemma}\label{lem:graditoestlip} Let Assumptions \ref{asm:AIlip}, \ref{asm:ALLlip}, and \ref{asm:ADlip} hold. Then, for any $0<\lambda\leq \overline{\lambda}_{\max}$, $n \in \N_0$, $, t \geq nT$, we obtain the following inequalities:
\begin{align*}
\E\left[\left|-\lambda \int_{\lfrf{t}}^t  \left(H(\overline{\Theta}^{\lambda}_s) - H (\overline{\Theta}^{\lambda}_{\lfrf{s}})\right)h ( \overline{\Theta}^{\lambda}_{\lfrf{s}})\,\rmd s\right|^2\right] 
\leq \lambda^3\left(e^{- \overline{a}n/2} \overline{\cC}_{\mathbf{S}2}\E[|\theta_0|^4] +\widetilde{\cC}_{\mathbf{S}2} \right),&\\
\E\left[\left| \lambda^2 \int_{\lfrf{t}}^t   H(\overline{\Theta}^{\lambda}_s) \int_{\lfrf{s}}^s  H (\overline{\Theta}^{\lambda}_{\lfrf{r}}))h ( \overline{\Theta}^{\lambda}_{\lfrf{r}}) \, \rmd r \,\rmd s\right|^2\right] 
\leq \lambda^3\left(e^{- \overline{a}n/2} \overline{\cC}_{\mathbf{S}2}\E[|\theta_0|^4] +\widetilde{\cC}_{\mathbf{S}2} \right), &\\
\E\left[\left| -\lambda^2\beta^{-1} \int_{\lfrf{t}}^t   H(\overline{\Theta}^{\lambda}_s) \int_{\lfrf{s}}^s  \Upsilon ( \overline{\Theta}^{\lambda}_{\lfrf{r}}) \, \rmd r \,\rmd s\right|^2\right] 
\leq \lambda^3\left(e^{- \overline{a}n/2} \overline{\cC}_{\mathbf{S}2}\E[|\theta_0|^4] +\widetilde{\cC}_{\mathbf{S}2} \right), &\\
\E\left[\left| -\lambda\sqrt{2\lambda\beta^{-1}} \int_{\lfrf{t}}^t   H(\overline{\Theta}^{\lambda}_s) \int_{\lfrf{s}}^s  H ( \overline{\Theta}^{\lambda}_{\lfrf{r}}) \, \rmd B_r^{\lambda} \,\rmd s\right|^2\right] 
\leq \lambda^3\left(e^{- \overline{a}n/2} \overline{\cC}_{\mathbf{S}2}\E[|\theta_0|^4] +\widetilde{\cC}_{\mathbf{S}2} \right), &\\
\E\left[\left| \sqrt{2\lambda\beta^{-1}} \int_{\lfrf{t}}^t  \left( H(\overline{\Theta}^{\lambda}_s) -  H ( \overline{\Theta}^{\lambda}_{\lfrf{s}})  \right) \,\rmd B_s^{\lambda}\right|^2\right] 
\leq \lambda^2\left(e^{- \overline{a}n/2} \overline{\cC}_{\mathbf{S}2}\E[|\theta_0|^4] +\widetilde{\cC}_{\mathbf{S}2} \right), &\\
\E\left[\left| \sqrt{2\lambda\beta^{-1}} \int_{\lfrf{t}}^t  \left( H(\widetilde{\zeta}^{\lambda, n}_s) -  H ( \widetilde{\zeta}^{\lambda, n}_{\lfrf{s}})\right)   \,\rmd B_s^{\lambda}\right|^2\right] 
\leq \lambda^2\left(e^{- \overline{a}n/4} \overline{\cC}_{\mathbf{S}2}\E[|\theta_0|^4] +\widetilde{\cC}_{\mathbf{S}2} \right),&\\
\E\left[\left| \lambda\beta^{-1} \int_{\lfrf{t}}^t  \left( \Upsilon(\overline{\Theta}^{\lambda}_s) - \Upsilon ( \overline{\Theta}^{\lambda}_{\lfrf{s}}) \right)  \,\rmd s\right|^2\right] 
\leq \lambda^{2+q}\left(e^{- \overline{a}n/2} \overline{\cC}_{\mathbf{S}2}\E[|\theta_0|^4] +\widetilde{\cC}_{\mathbf{S}2} \right),&\\
\E\left[\left| -\lambda  \int_{\lfrf{t}}^t H(\overline{\Theta}^{\lambda}_s) h (\overline{\Theta}^{\lambda}_{\lfrf{s}})\,\rmd s\right|^2+\left| \lambda\beta^{-1}\int_{\lfrf{t}}^t\Upsilon (\overline{\Theta}^{\lambda}_s)\rmd s\right|^2\right] 
\leq \lambda^2\left(e^{- \overline{a}n/2} \overline{\cC}_{\mathbf{S}2}\E[|\theta_0|^4] +\widetilde{\cC}_{\mathbf{S}2} \right), &\\
\E\left[\left|  -\lambda  \int_{\lfrf{t}}^t H(\widetilde{\zeta}^{\lambda, n}_s) h ( \widetilde{\zeta}^{\lambda, n}_s)\,\rmd s  \right|^2+\left| \lambda\beta^{-1}\int_{\lfrf{t}}^t\Upsilon (\widetilde{\zeta}^{\lambda, n}_s)\rmd s\right|^2\right] 
\leq \lambda^2\left(e^{- \overline{a} n/4} \overline{\cC}_{\mathbf{S}2}\E[|\theta_0|^4] +\widetilde{\cC}_{\mathbf{S}2} \right),&
\end{align*}
where $\overline{\cC}_{\mathbf{S}2}$ and $\widetilde{\cC}_{\mathbf{S}2}$ are given in \eqref{eq:graditoestlipconst}.
\end{lemma}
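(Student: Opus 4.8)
The plan is to mirror, line by line, the proof of the super-linear analogue Lemma~\ref{lem:graditoest}, replacing the polynomial growth and H\"older bounds of Remark~\ref{rmk:growthc} by the global linear-growth and Lipschitz bounds of Remark~\ref{rmk:growthclip} and Assumption~\ref{asm:ALLlip}, and replacing the high-order moment estimates of Lemma~\ref{lem:2ndpthmmt} and the one-step estimates of Lemma~\ref{lem:oserroralg} by their linear counterparts Lemma~\ref{lem:2ndpthmmtlip}, Lemma~\ref{lem:zetaprocmelip}, and Lemma~\ref{lem:oserroralglip}. A crucial simplification is that aHOLLA uses \emph{untamed} coefficients, i.e. $H$, $h$, and $\Upsilon$ appear directly in \eqref{eq:aholahoproclip} and \eqref{eq:auxproclip}; consequently every taming-error term that was controlled via \eqref{eq:graditoestint1} in the proof of Lemma~\ref{lem:graditoest} is now absent, and each difference $H(\overline{\Theta}^{\lambda}_s)-H(\overline{\Theta}^{\lambda}_{\lfrf{s}})$ or $\Upsilon(\overline{\Theta}^{\lambda}_s)-\Upsilon(\overline{\Theta}^{\lambda}_{\lfrf{s}})$ is bounded purely by its Lipschitz/H\"older modulus. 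This also explains why only a finite fourth moment of $\theta_0$ is required here, rather than the $16(\rho+1)$-th moment needed in the super-linear setting.

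For the quantities whose outer integral is deterministic (inequalities one through four and seven through nine), I would first apply $(\sum_{l=1}^{v}u_l)^2\le v\sum_{l=1}^{v}u_l^2$ and Jensen's inequality to extract the appropriate power of $\lambda$ and reduce to time integrals of expectations of products of coefficient norms. The bounds $|h(\theta)|\le\cKl_1(1+|\theta|)$, $|H(\theta)\overline{\theta}|\le\overline{L}_3|\overline{\theta}|$, $|\Upsilon(\theta)|\le d\overline{L}_2$ from Remark~\ref{rmk:growthclip}, together with $|H(\theta)-H(\overline{\theta})|\le\overline{L}_2|\theta-\overline{\theta}|$ from Assumption~\ref{asm:ALLlip} and $|\Upsilon(\theta)-\Upsilon(\overline{\theta})|\le d^{3/2}\overline{L}_1|\theta-\overline{\theta}|^{q}$ from Remark~\ref{rmk:growthclip}, make each integrand factorise into a coefficient part and (for the difference terms) an increment part. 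A Cauchy--Schwarz step then separates a uniform moment factor, bounded by Lemma~\ref{lem:2ndpthmmtlip} for the $\overline{\Theta}^{\lambda}$-process and by Lemma~\ref{lem:zetaprocmelip} for the $\widetilde{\zeta}^{\lambda,n}$-process, from a one-step increment factor controlled by Lemma~\ref{lem:oserroralglip}; for the iterated integrals (inequalities two, three, four) the inner integral is estimated first, and in inequality four a further Cauchy--Schwarz against the quadratic variation of $B^{\lambda}$ handles the inner stochastic integral. The two genuine outer stochastic integrals, inequalities five and six, are treated via the It\^o isometry, reducing to the Frobenius-norm increment $\E[|H(\overline{\Theta}^{\lambda}_s)-H(\overline{\Theta}^{\lambda}_{\lfrf{s}})|_{\cF}^2]$ (respectively its $\widetilde{\zeta}^{\lambda,n}$ version) and hence again to Lemma~\ref{lem:oserroralglip}.

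The $\lambda$-power accounting is where one must be careful but poses no real difficulty. For inequality five the prefactor $2\lambda\beta^{-1}$ times $\E[|\overline{\Theta}^{\lambda}_s-\overline{\Theta}^{\lambda}_{\lfrf{s}}|^2]\lesssim\lambda(\cdots)$ from Lemma~\ref{lem:oserroralglip} with $p=1$ yields $\lambda^2$, whereas for inequality seven the H\"older exponent turns the increment estimate into $\E[|\overline{\Theta}^{\lambda}_s-\overline{\Theta}^{\lambda}_{\lfrf{s}}|^{2q}]\lesssim\lambda^{q}(\cdots)$, which together with the prefactor $\lambda^2\beta^{-2}$ produces the sharper rate $\lambda^{2+q}$. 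The exponential factors follow from $\lambda\lfrf{t}\ge\lambda nT\ge n/2$, giving $e^{-\lambda\overline{a}\lfrf{t}}\le e^{-\overline{a}n/2}$; for the quantities involving $\widetilde{\zeta}^{\lambda,n}$ the decay $e^{-\lambda\overline{a}t/2}$ of Lemma~\ref{lem:zetaprocmelip} combined with $\lambda\lfrf{t}\ge n/2$ accounts for the slower rate $e^{-\overline{a}n/4}$ appearing in inequalities six and nine.

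The main (and essentially only) obstacle is the bookkeeping: one must verify that every Cauchy--Schwarz split lands on a moment order that is actually delivered by Lemma~\ref{lem:2ndpthmmtlip} and Lemma~\ref{lem:oserroralglip} at the requested integrability level, so that the $\E[|\theta_0|^4]$ dependence is never exceeded, and that the disparate constants and exponential rates arising across the nine cases all collapse into the single pair $\overline{\cC}_{\mathbf{S}2}$, $\widetilde{\cC}_{\mathbf{S}2}$ of \eqref{eq:graditoestlipconst} with a common decay rate (via monotonicity of the constants and the elementary inequality $e^{-\lambda\overline{a}\lfrf{t}}\le e^{-\overline{a}n/2}$). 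Once the template of Lemma~\ref{lem:graditoest} is in place this is routine, since the linear bounds used here are strictly simpler than the polynomial ones they replace.
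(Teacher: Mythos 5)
Your proposal is correct and follows essentially the same route as the paper's proof: term-by-term estimation using the Lipschitz/linear-growth bounds of Assumption~\ref{asm:ALLlip} and Remark~\ref{rmk:growthclip} (with all taming-error terms absent), It\^o isometry for the two outer stochastic integrals, Cauchy--Schwarz to separate a uniform moment factor from a one-step increment factor, and Lemmas~\ref{lem:2ndpthmmtlip}, \ref{lem:zetaprocmelip}, \ref{lem:oserroralglip} together with the decay accounting $\lambda\lfrf{t}\ge n/2$ (and the halved rate for the $\widetilde{\zeta}^{\lambda,n}$ terms). The only cosmetic deviations are that you invoke Lemma~\ref{lem:oserroralglip} at exponents $p=1$ and $p=q$ where the paper applies Cauchy--Schwarz to reach the exponents $p=2$ and $p=2q$ indexed in the stated constants \eqref{eq:graditoestlipconst}, and that the paper bounds the terms involving $H(\cdot)\Upsilon(\cdot)$ and the inner stochastic integral by deterministic constants via $|H(\theta)v|\le\overline{L}_3|v|$ and $|\Upsilon(\theta)|\le d\overline{L}_2$ rather than by moment estimates; both are immaterial bookkeeping.
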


\begin{proof} The inequalities can be obtained by using the following arguments:
\begin{enumerate}
\item To show that the first inequality holds, by using Assumption \ref{asm:ALLlip}, Remark \ref{rmk:growthclip}, we have that
\begin{align*}
&\E\left[\left|-\lambda \int_{\lfrf{t}}^t \left(H(\overline{\Theta}^{\lambda}_s) - H (\overline{\Theta}^{\lambda}_{\lfrf{s}})\right)h ( \overline{\Theta}^{\lambda}_{\lfrf{s}})\,\rmd s\right|^2\right] \\
&\leq \lambda^2 \int_{\lfrf{t}}^t \E\left[|H(\overline{\Theta}^{\lambda}_s) - H (\overline{\Theta}^{\lambda}_{\lfrf{s}})|^2|h ( \overline{\Theta}^{\lambda}_{\lfrf{s}}) |^2\right]  \,\rmd s\\
&\leq \lambda^2 \int_{\lfrf{t}}^t \E\left[ \overline{L}_2^2|\overline{\Theta}^{\lambda}_s - \overline{\Theta}^{\lambda}_{\lfrf{s}}|^2 \cKl_1^2(1+|\overline{\Theta}^{\lambda}_{\lfrf{s}}|)^2\right]  \,\rmd s\\
&\leq  2^{3/2}\lambda^2 (\overline{L}_2 \cKl_1)^2 \int_{\lfrf{t}}^t  \left(\E\left[1+|\overline{\Theta}^{\lambda}_{\lfrf{s}}|^4\right] \right)^{1/2} \left(\E\left[|\overline{\Theta}^{\lambda}_s - \overline{\Theta}^{\lambda}_{\lfrf{s}}|^4\right] \right)^{1/2} \,\rmd s\\
&\leq  2^{3/2}\lambda^2 (\overline{L}_2 \cKl_1)^2\left(e^{-\lambda \overline{a} \lfrf{t}}\E\left[|\theta_0|^4\right]  + \ccl_2(1+1/\overline{a})+1\right)^{1/2}\\
&\qquad \times \lambda \left(e^{-  \lambda\overline{a} \lfrf{t}}\overline{\cC}_{\mathbf{S}0,2}\E[|\theta_0|^4]+\widetilde{\cC}_{\mathbf{S}0,2} \right)^{1/2} \\
&\leq \lambda^33 (\overline{L}_2 \cKl_1)^2\left(e^{-\lambda \overline{a} \lfrf{t}}(1+\overline{\cC}_{\mathbf{S}0,2})\E\left[|\theta_0|^4\right]  + \ccl_2 (1+1/\overline{a})+\widetilde{\cC}_{\mathbf{S}0,2}+1\right)\\
&\leq \lambda^3\left(e^{- \overline{a} n/2}\overline{\cC}_{\mathbf{S}2}\E\left[|\theta_0|^4\right]  + \widetilde{\cC}_{\mathbf{S}2}\right),
\end{align*}
where the fourth inequality holds by applying Lemma \ref{lem:2ndpthmmtlip} and \ref{lem:oserroralglip}, the last inequality holds due to $\lambda \lfrf{t}\geq \lambda nT \geq n/2$, and where
\begin{align}\label{eq:graditoestlipconst}
\begin{split}
\overline{\cC}_{\mathbf{S}2}&:=2^7d^4(1+\beta^{-1})^2(1+\overline{L}_3)^4(1+\overline{L}_2)^2 (1+\overline{L}_1)^2( 1+\cKl_1)^2( 1+\cKl_0)^2\\
&\qquad \times (1+\max\{\overline{\cC}_{\mathbf{S}0,2},\overline{\cC}_{\mathbf{S}0,2q},\overline{\cC}_{\mathbf{S}0,1+q},\overline{\cC}_{\mathbf{S}1,2}\}), \\
\widetilde{\cC}_{\mathbf{S}2}&:=2^7d^4(1+\beta^{-1})^2(1+\overline{L}_3)^4(1+\overline{L}_2)^2(1+\overline{L}_1)^2( 1+\cKl_1)^2( 1+\cKl_0)^2\\
&\qquad \times (\ccl_2 (1+1/\overline{a})+\max\{\widetilde{\cC}_{\mathbf{S}0,2},\widetilde{\cC}_{\mathbf{S}0,2q},\widetilde{\cC}_{\mathbf{S}0,1+q},\widetilde{\cC}_{\mathbf{S}1,2}\}+1)
\end{split}
\end{align}
with $\overline{\cC}_{\mathbf{S}0,p}$, $\widetilde{\cC}_{\mathbf{S}0,p}$, $\overline{\cC}_{\mathbf{S}1,p}$, $\widetilde{\cC}_{\mathbf{S}1,p}$, $p>0$, given in \eqref{eq:oserroralglipconst} and \eqref{eq:oserrorauxlipconst}, and $ \ccl_p$, $\cMl_V(p)$, $p\in [2, \infty)\cap {\N}$ given in \eqref{eq:2pthmmtlipexpconst} (see also Lemma \ref{lem:2ndpthmmtlip}) and Lemma \ref{lem:driftconlip}.
\item To establish the second inequality, we apply Remark \ref{rmk:growthclip} and Lemma \ref{lem:2ndpthmmtlip} to obtain
\begin{align*}
&\E\left[\left| \lambda^2 \int_{\lfrf{t}}^t   H(\overline{\Theta}^{\lambda}_s) \int_{\lfrf{s}}^s  H (\overline{\Theta}^{\lambda}_{\lfrf{r}}))h ( \overline{\Theta}^{\lambda}_{\lfrf{r}}) \, \rmd r \,\rmd s\right|^2\right] \\
&\leq \lambda^4 \int_{\lfrf{t}}^t\E\left[|H(\overline{\Theta}^{\lambda}_s) H (\overline{\Theta}^{\lambda}_{\lfrf{s}}) h ( \overline{\Theta}^{\lambda}_{\lfrf{s}})|^2\right]\,\rmd s\\ 
&\leq \lambda^4 \int_{\lfrf{t}}^t\E\left[\overline{L}_3^4 \cKl_1^2(1+| \overline{\Theta}^{\lambda}_{\lfrf{s}}|)^2\right]\,\rmd s\\
&\leq 2\lambda^4\overline{L}_3^4 \cKl_1^2 \int_{\lfrf{t}}^t \E\left[1 +| \overline{\Theta}^{\lambda}_{\lfrf{s}}|^2\right]\,\rmd s\\
&\leq 4\lambda^4\overline{L}_3^4 \cKl_1^2 \int_{\lfrf{t}}^t \E\left[1 +| \overline{\Theta}^{\lambda}_{\lfrf{s}}|^4\right]\,\rmd s\\
&\leq \lambda^34 \overline{L}_3^4 \cKl_1^2 \left(e^{-\lambda \overline{a} \lfrf{t}}\E\left[ |\theta_0|^4\right]+  \ccl_2 (1+1/\overline{a})+1\right)\\
&\leq \lambda^3\left(e^{- \overline{a} n/2}\overline{\cC}_{\mathbf{S}2}\E\left[|\theta_0|^4\right]  + \widetilde{\cC}_{\mathbf{S}2}\right),
\end{align*} 
where $\overline{\cC}_{\mathbf{S}2}$ and $\widetilde{\cC}_{\mathbf{S}2}$ are given in \eqref{eq:graditoestlipconst}.
\item To obtain the third inequality, we apply Remark \ref{rmk:growthclip} and write
\begin{align*}
&\E\left[\left| -\lambda^2\beta^{-1} \int_{\lfrf{t}}^t   H(\overline{\Theta}^{\lambda}_s) \int_{\lfrf{s}}^s  \Upsilon ( \overline{\Theta}^{\lambda}_{\lfrf{r}}) \, \rmd r \,\rmd s\right|^2\right] \\
&\leq \lambda^4\beta^{-2} \int_{\lfrf{t}}^t \E\left[ |H(\overline{\Theta}^{\lambda}_s) \Upsilon ( \overline{\Theta}^{\lambda}_{\lfrf{s}})  |^2\right]  \,\rmd s\\
&\leq \lambda^4\beta^{-2} \int_{\lfrf{t}}^t \E\left[\overline{L}_3^2 |\Upsilon ( \overline{\Theta}^{\lambda}_{\lfrf{s}})  |^2\right]  \,\rmd s\\
&\leq \lambda^4(\beta^{-1}  \overline{L}_3d\overline{L}_2)^2\\
&\leq \lambda^3\left(e^{- \overline{a} n/2} \overline{\cC}_{\mathbf{S}2}\E[|\theta_0|^4] +\widetilde{\cC}_{\mathbf{S}2} \right),
\end{align*}
where $\overline{\cC}_{\mathbf{S}2}$ and $\widetilde{\cC}_{\mathbf{S}2}$ are given in \eqref{eq:graditoestlipconst}.
\item To obtain the fourth inequality, we use Cauchy-Schwarz inequality and Lemma \ref{lem:2ndpthmmtlip}:
\begin{align*}
&\E\left[\left| -\lambda\sqrt{2\lambda\beta^{-1}} \int_{\lfrf{t}}^t   H(\overline{\Theta}^{\lambda}_s) \int_{\lfrf{s}}^s  H ( \overline{\Theta}^{\lambda}_{\lfrf{r}}) \, \rmd B_r^{\lambda} \,\rmd s\right|^2\right] \\
&\leq 2\lambda^3\beta^{-1}  \int_{\lfrf{t}}^t  \E\left[\overline{L}_3^2 \left|H ( \overline{\Theta}^{\lambda}_{\lfrf{s}})  \int_{\lfrf{s}}^s  \, \rmd B_r^{\lambda}  \right|^2\right] \,\rmd s\\
&\leq 2\lambda^3\beta^{-1}  \int_{\lfrf{t}}^t  \E\left[\overline{L}_3^4 \left|\int_{\lfrf{s}}^s  \, \rmd B_r^{\lambda}  \right|^2\right] \,\rmd s\\
&\leq 2\lambda^3\beta^{-1} \overline{L}_3^4d\\
&\leq \lambda^3\left(e^{- \overline{a} n/2} \overline{\cC}_{\mathbf{S}2}\E[|\theta_0|^4] +\widetilde{\cC}_{\mathbf{S}2} \right),
\end{align*}
where $\overline{\cC}_{\mathbf{S}2}$ and $\widetilde{\cC}_{\mathbf{S}2}$ are given in \eqref{eq:graditoestlipconst}.
\item\label{item:graditoestlipfifth} To obtain the fifth inequality, we apply Assumption \ref{asm:ALLlip} and Cauchy-Schwarz inequality:
\begin{align*} 
&\E\left[\left| \sqrt{2\lambda\beta^{-1}} \int_{\lfrf{t}}^t   \left(H(\overline{\Theta}^{\lambda}_s) -  H ( \overline{\Theta}^{\lambda}_{\lfrf{s}})\right)   \,\rmd B_s^{\lambda}\right|^2\right] \\
& =  2\lambda\beta^{-1}  \int_{\lfrf{t}}^t \E\left[ |   H(\overline{\Theta}^{\lambda}_s) -  H ( \overline{\Theta}^{\lambda}_{\lfrf{s}})  |_{\cF}^2\right]    \,\rmd s\\
&\leq 2\lambda\beta^{-1}  \int_{\lfrf{t}}^t \E\left[ d\overline{L}_2^2 | \overline{\Theta}^{\lambda}_s  -  \overline{\Theta}^{\lambda}_{\lfrf{s}}  |^2\right]    \,\rmd s\\
&\leq 2\lambda\beta^{-1} d\overline{L}_2^2 \int_{\lfrf{t}}^t \left(\E\left[  |    \overline{\Theta}^{\lambda}_s  -  \overline{\Theta}^{\lambda}_{\lfrf{s}}  |^4\right] \right)^{1/2}  \,\rmd s\\
&\leq  2\lambda^2\beta^{-1} d\overline{L}_2^2   \left(e^{-\lambda\overline{a} \lfrf{t}}\overline{\cC}_{\mathbf{S}0,2}\E[|\theta_0|^4]+\widetilde{\cC}_{\mathbf{S}0,2}\right)^{1/2} \\
&\leq \lambda^2\left(e^{- \overline{a} n/2} \overline{\cC}_{\mathbf{S}2}\E[|\theta_0|^4] +\widetilde{\cC}_{\mathbf{S}2} \right),
\end{align*}
where the third inequality holds due to Lemma \ref{lem:oserroralglip}, and where $\overline{\cC}_{\mathbf{S}2}$ and $\widetilde{\cC}_{\mathbf{S}2}$ are given in \eqref{eq:graditoestlipconst}.
\item To obtain the sixth inequality, we use the same arguments as in \ref{item:graditoestlipfifth}:
\begin{align*} 
&\E\left[\left| \sqrt{2\lambda\beta^{-1}} \int_{\lfrf{t}}^t   \left(H(\widetilde{\zeta}^{\lambda, n}_s) -  H ( \widetilde{\zeta}^{\lambda, n}_{\lfrf{s}}) \right)  \,\rmd B_s^{\lambda}\right|^2\right] \\
& =  2\lambda\beta^{-1}  \int_{\lfrf{t}}^t \E\left[ | H(\widetilde{\zeta}^{\lambda, n}_s) -  H ( \widetilde{\zeta}^{\lambda, n}_{\lfrf{s}})   |_{\cF}^2\right]    \,\rmd s\\
& \leq  2\lambda\beta^{-1}  \int_{\lfrf{t}}^t \E\left[ d\overline{L}_2^2  |\widetilde{\zeta}^{\lambda, n}_s- \widetilde{\zeta}^{\lambda, n}_{\lfrf{s}}|^2\right]    \,\rmd s\\
& \leq  2\lambda\beta^{-1} d\overline{L}_2^2  \int_{\lfrf{t}}^t  \left( \E\left[ |\widetilde{\zeta}^{\lambda, n}_s- \widetilde{\zeta}^{\lambda, n}_{\lfrf{s}}|^4\right]  \right)^{1/2} \,\rmd s\\
&\leq 2 \lambda^2\beta^{-1} d\overline{L}_2^2  \left(e^{-  \lambda \overline{a}  \lfrf{t}/2}\overline{\cC}_{\mathbf{S}1,2}\E[|\theta_0|^4]+\widetilde{\cC}_{\mathbf{S}1,2} \right)^{1/2}\\
&\leq  \lambda^2\left(e^{- \overline{a} n/4} \overline{\cC}_{\mathbf{S}2}\E[|\theta_0|^4] +\widetilde{\cC}_{\mathbf{S}2} \right),
\end{align*}
where the third inequality holds due to Lemma \ref{lem:oserroralglip}, and where $\overline{\cC}_{\mathbf{S}2}$ and $\widetilde{\cC}_{\mathbf{S}2}$ are given in \eqref{eq:graditoestlipconst}.
\item To establish the seventh inequality, we apply Remark \ref{rmk:growthclip} and  Cauchy-Schwarz inequality to obtain
\begin{align*} 
&\E\left[\left| \lambda\beta^{-1} \int_{\lfrf{t}}^t  \left( \Upsilon(\overline{\Theta}^{\lambda}_s) - \Upsilon ( \overline{\Theta}^{\lambda}_{\lfrf{s}})  \right) \,\rmd s\right|^2\right] \\
&\leq \lambda^2\beta^{-2} \int_{\lfrf{t}}^t \E\left[ |   \Upsilon(\overline{\Theta}^{\lambda}_s) - \Upsilon ( \overline{\Theta}^{\lambda}_{\lfrf{s}})  |^2\right]    \,\rmd s\\
&\leq  \lambda^2\beta^{-2} \int_{\lfrf{t}}^t \E\left[ d^3\overline{L}_1^2|\overline{\Theta}^{\lambda}_s- \overline{\Theta}^{\lambda}_{\lfrf{s}}|^{2q} \right]    \,\rmd s \\
&\leq  \lambda^2\beta^{-2}  d^3\overline{L}_1^2\int_{\lfrf{t}}^t  \left(\E\left[ |\overline{\Theta}^{\lambda}_s- \overline{\Theta}^{\lambda}_{\lfrf{s}}|^{4q} \right]\right)^{1/2}   \,\rmd s \\
&\leq  \lambda^{2+q}\beta^{-2}  d^3\overline{L}_1^2  \left(e^{-\lambda\overline{a} \lfrf{t}}\overline{\cC}_{\mathbf{S}0,2q}\E[|\theta_0|^4]+\widetilde{\cC}_{\mathbf{S}0,2q} \right)^{1/2} \\
&\leq  \lambda^{2+q}\left(e^{- \overline{a} n/2} \overline{\cC}_{\mathbf{S}2}\E[|\theta_0|^4] +\widetilde{\cC}_{\mathbf{S}2} \right),
\end{align*}
where the fourth inequality holds due to Lemma \ref{lem:oserroralglip}, and $\overline{\cC}_{\mathbf{S}2}$, $\widetilde{\cC}_{\mathbf{S}2}$ are given in \eqref{eq:graditoestlipconst}.
\item\label{item:graditoestlipeighth} To obtain the eighth inequality, we apply Remark \ref{rmk:growthclip} and write
\begin{align*} 
&\E\left[\left| -\lambda  \int_{\lfrf{t}}^t H(\overline{\Theta}^{\lambda}_s) h (\overline{\Theta}^{\lambda}_{\lfrf{s}})\,\rmd s\right|^2+\left| \lambda\beta^{-1}\int_{\lfrf{t}}^t\Upsilon (\overline{\Theta}^{\lambda}_s)\,\rmd s\right|^2\right]  \\
&\leq \lambda^2  \int_{\lfrf{t}}^t\E\left[\overline{L}_3^2| h(\overline{\Theta}^{\lambda}_{\lfrf{s}})|^2\right]\, \rmd s + \lambda^2 \beta^{-2} d^2\overline{L}_2^2\\
&\leq \lambda^2  \overline{L}_3^2 \int_{\lfrf{t}}^t\E\left[ \cKl_1^2(1+|\overline{\Theta}^{\lambda}_{\lfrf{s}}| )^2\right]\, \rmd s + \lambda^2 \beta^{-2} d^2\overline{L}_2^2\\
&\leq \lambda^2\left(4  \overline{L}_3^2 \cKl_1^2 \int_{\lfrf{t}}^t\E\left[(1+|\overline{\Theta}^{\lambda}_{\lfrf{s}}|^4 ) \right]\, \rmd s +\beta^{-2} d^2\overline{L}_2^2\right)\\
&\leq \lambda^2\left(4 \overline{L}_3^2 \cKl_1^2\left( e^{-\lambda\overline{a} \lfrf{t}}\E\left[ |\theta_0|^{4}\right]+  \ccl_2\left(1+1/\overline{a}\right)+1\right)+\beta^{-2} d^2\overline{L}_2^2\right)\\
&\leq \lambda^24(1+ \overline{L}_3)^2(1+ \cKl_1)^2(1+\beta^{-1})^2 d^2(1+\overline{L}_2)^2\left( e^{-\lambda\overline{a} \lfrf{t}}\E\left[ |\theta_0|^{4}\right]+  \ccl_2\left(1+1/\overline{a}\right)+1\right)\\
&\leq \lambda^2\left(e^{- \overline{a} n/2} \overline{\cC}_{\mathbf{S}2}\E[|\theta_0|^4] +\widetilde{\cC}_{\mathbf{S}2} \right),
\end{align*}
where the fourth inequality holds due to Lemma \ref{lem:2ndpthmmtlip}, and where $\overline{\cC}_{\mathbf{S}2}$, $\widetilde{\cC}_{\mathbf{S}2}$ are given in \eqref{eq:graditoestlipconst}.
\item To establish the last inequality, we follow the arguments in \ref{item:graditoestlipeighth}:
\begin{align*} 
&\E\left[\left|  -\lambda  \int_{\lfrf{t}}^t H(\widetilde{\zeta}^{\lambda, n}_s) h ( \widetilde{\zeta}^{\lambda, n}_s)\,\rmd s  \right|^2+\left| \lambda\beta^{-1}\int_{\lfrf{t}}^t\Upsilon (\widetilde{\zeta}^{\lambda, n}_s)\rmd s\right|^2\right]  \\
&\leq \lambda^2\left(2  \overline{L}_3^2 \cKl_1^2 \int_{\lfrf{t}}^t\E\left[(1+|\widetilde{\zeta}^{\lambda, n}_s|^2) \right]\, \rmd s +\beta^{-2} d^2\overline{L}_2^2\right)\\
&\leq \lambda^2\left(2  \overline{L}_3^2 \cKl_1^2 \int_{\lfrf{t}}^t\E\left[V_4(\widetilde{\zeta}^{\lambda, n}_s) \right]\, \rmd s +\beta^{-2} d^2\overline{L}_2^2\right)\\
&\leq \lambda^2\left(2 \overline{L}_3^2 \cKl_1^2\left( 2e^{-\lambda \overline{a} \lfrf{t}/2}\E[|\theta_0|^4] +2\left( \ccl_2(1+1/\overline{a})+1\right)+3\mathrm{v}_{4}(\cMl_V(4)) \right)+\beta^{-2} d^2\overline{L}_2^2\right)\\
&\leq \lambda^24(1+ \overline{L}_3)^2(1+ \cKl_1)^2(1+\beta^{-1})^2 d^2(1+\overline{L}_2)^2\\
&\quad \times \left( e^{-\lambda\overline{a} \lfrf{t}/2}\E\left[ |\theta_0|^{4}\right]+  \ccl_2\left(1+1/\overline{a}\right)+3\mathrm{v}_{4}(\cMl_V(4))/2+2\right)\\
&\leq \lambda^2\left(e^{- \overline{a} n/4} \overline{\cC}_{\mathbf{S}2}\E[|\theta_0|^4] +\widetilde{\cC}_{\mathbf{S}2} \right),
\end{align*}
where the second inequality holds due to Lemma \ref{lem:zetaprocmelip}, and where $\overline{\cC}_{\mathbf{S}2}$ and $\widetilde{\cC}_{\mathbf{S}2}$ are given in \eqref{eq:graditoestlipconst}.
\end{enumerate}
This completes the proof.
\end{proof}

\begin{corollary}\label{cor:graditoublip} Let Assumptions \ref{asm:AIlip}, \ref{asm:ALLlip}, and \ref{asm:ADlip} hold. Then, for any $0<\lambda\leq \overline{\lambda}_{\max}$, $n \in \N_0$, $, t \geq nT$, we obtain the following inequalities:
\begin{align*}
&\E\left[\left| h(\overline{\Theta}^{\lambda}_t) - h (\overline{\Theta}^{\lambda}_{\lfrf{t}}) + \lambda \int_{\lfrf{t}}^t\left(H ( \overline{\Theta}^{\lambda}_{\lfrf{s}})h ( \overline{\Theta}^{\lambda}_{\lfrf{s}})-\beta^{-1}\Upsilon (\overline{\Theta}^{\lambda}_{\lfrf{s}}) \right)\,\rmd s\right.\right.\\
&\qquad \left.\left. - \sqrt{2\lambda\beta^{-1}} \int_{{\lfrf{t}}}^t H (\overline{\Theta}_{\lfrf{s}}^{\lambda})\,\rmd B_s^{\lambda}  \right|^2\right]  
\leq \lambda^2\left(e^{- \overline{a}n/2}36 \overline{\cC}_{\mathbf{S}2}\E[|\theta_0|^4] +36\widetilde{\cC}_{\mathbf{S}2} \right),\\
&\E\left[\left| h( \widetilde{\zeta}^{\lambda, n}_t) - h ( \widetilde{\zeta}^{\lambda, n}_{\lfrf{t}})   - \sqrt{2\lambda\beta^{-1}} \int_{{\lfrf{t}}}^t H (\widetilde{\zeta}^{\lambda, n}_{\lfrf{s}})\,\rmd B_s^{\lambda}  -\Bigg(h(\overline{\Theta}^{\lambda}_t) - h (\overline{\Theta}^{\lambda}_{\lfrf{t}})  \Bigg. \right.\right.\\
&\qquad \left.\left. \Bigg.- \sqrt{2\lambda\beta^{-1}} \int_{{\lfrf{t}}}^t H (\overline{\Theta}_{\lfrf{s}}^{\lambda})\,\rmd B_s^{\lambda}  \Bigg)\right|^2\right]  \leq \lambda^2\left(e^{- \overline{a}n/4}72 \overline{\cC}_{\mathbf{S}2}\E[|\theta_0|^4] +72\widetilde{\cC}_{\mathbf{S}2} \right),
\end{align*}
where $\overline{\cC}_{\mathbf{S}2}$ and $\widetilde{\cC}_{\mathbf{S}2}$ are given in \eqref{eq:graditoestlipconst}.
\end{corollary}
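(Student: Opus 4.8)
The plan is to replicate, line for line, the proof of Corollary~\ref{cor:graditoub}, simply dropping the taming (so that $h_\lambda,H_\lambda,\Upsilon_\lambda$ are replaced by $h,H,\Upsilon$) and invoking Lemma~\ref{lem:graditoestlip} in place of Lemma~\ref{lem:graditoest}. First I would apply It\^o's formula to $h(\overline{\Theta}^{\lambda}_t)$ along the dynamics \eqref{eq:aholahoproclip}, obtaining the linear analogue of \eqref{eq:holaprocito}, and to $h(\widetilde{\zeta}^{\lambda,n}_t)$ along \eqref{eq:auxproclip}, obtaining the analogue of \eqref{eq:auxprocito}. These two master identities are the only genuinely new computations; everything that follows is bookkeeping.

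For the first inequality, I would substitute the It\^o identity for $h(\overline{\Theta}^{\lambda}_t)-h(\overline{\Theta}^{\lambda}_{\lfrf{t}})$ into the left-hand side. Because no taming is present, the drift term $-\lambda\int_{\lfrf{t}}^t H(\overline{\Theta}^{\lambda}_s)h(\overline{\Theta}^{\lambda}_{\lfrf{s}})\,\rmd s$ recombines with the added term $\lambda\int_{\lfrf{t}}^t H(\overline{\Theta}^{\lambda}_{\lfrf{s}})h(\overline{\Theta}^{\lambda}_{\lfrf{s}})\,\rmd s$ into $-\lambda\int_{\lfrf{t}}^t(H(\overline{\Theta}^{\lambda}_s)-H(\overline{\Theta}^{\lambda}_{\lfrf{s}}))h(\overline{\Theta}^{\lambda}_{\lfrf{s}})\,\rmd s$, and analogously the martingale and Laplacian contributions recombine into $\sqrt{2\lambda\beta^{-1}}\int(H(\overline{\Theta}^{\lambda}_s)-H(\overline{\Theta}^{\lambda}_{\lfrf{s}}))\,\rmd B_s^{\lambda}$ and $\lambda\beta^{-1}\int(\Upsilon(\overline{\Theta}^{\lambda}_s)-\Upsilon(\overline{\Theta}^{\lambda}_{\lfrf{s}}))\,\rmd s$. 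The resulting expression is exactly the sum of the six left-hand quantities controlled by the first through fifth and the seventh inequalities of Lemma~\ref{lem:graditoestlip}. Applying $(\sum_{l=1}^6 u_l)^2 \le 6\sum_{l=1}^6 u_l^2$, bounding each summand by Lemma~\ref{lem:graditoestlip}, and dominating $\lambda^3$ and $\lambda^{2+q}$ by $\lambda^2$ (valid since $\lambda\le\overline{\lambda}_{\max}\le 1$) produces the prefactor $6\times 6=36$ and the stated bound at rate $e^{-\overline{a}n/2}$.

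For the second inequality, I would first use $(u+v)^2\le 2u^2+2v^2$ to separate the $\widetilde{\zeta}^{\lambda,n}$-contribution from the $\overline{\Theta}^{\lambda}$-contribution, then expand each via its It\^o identity; subtracting the appropriate stochastic integral converts each martingale term into a difference $H(\cdot_s)-H(\cdot_{\lfrf{s}})$. The $\widetilde{\zeta}^{\lambda,n}$-part collapses to three terms (drift plus Laplacian, bounded jointly by the ninth inequality, together with the Brownian difference, bounded by the sixth inequality), giving prefactor $2\times 3=6$, while the $\overline{\Theta}^{\lambda}$-part collapses to six terms (drift plus Laplacian via the eighth inequality, the three iterated integrals via the second through fourth inequalities, and the Brownian difference via the fifth), giving prefactor $2\times 6=12$. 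Summing the contributions $12\lambda^2+60\lambda^2=72\lambda^2$ and retaining the slower exponential rate $e^{-\overline{a}n/4}$ inherited from the $\widetilde{\zeta}^{\lambda,n}$-estimates (sixth and ninth inequalities of Lemma~\ref{lem:graditoestlip}, which rest on Lemma~\ref{lem:zetaprocmelip}) yields the claim.

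I expect the main obstacle to be organizational rather than analytic: the delicate point is checking that the algebraic recombination after substituting It\^o's formula matches term-by-term the left-hand quantities catalogued in Lemma~\ref{lem:graditoestlip}, and tracking the decay rates consistently, since the $\overline{\Theta}^{\lambda}$-estimates decay like $e^{-\overline{a}n/2}$ whereas the $\widetilde{\zeta}^{\lambda,n}$-estimates decay only like $e^{-\overline{a}n/4}$, so the latter governs the final rate in the second inequality.
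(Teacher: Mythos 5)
Your proposal is correct and follows essentially the same route as the paper's proof: apply It\^o's formula to $h(\overline{\Theta}^{\lambda}_t)$ and $h(\widetilde{\zeta}^{\lambda,n}_t)$, recombine terms so that each summand matches one of the nine estimates in Lemma~\ref{lem:graditoestlip}, and apply $(\sum_{l=1}^v u_l)^2 \le v\sum_{l=1}^v u_l^2$, dominating $\lambda^3$ and $\lambda^{2+q}$ by $\lambda^2$. Your bookkeeping of the prefactors ($6\times 6=36$ for the first bound; $12\lambda^2+60\lambda^2=72\lambda^2$ for the second) and of the governing slower rate $e^{-\overline{a}n/4}$ inherited from the $\widetilde{\zeta}^{\lambda,n}$-estimates matches the paper exactly.
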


\begin{proof}
For any $t \geq nT$, by applying It\^o's formula to $h(\overline{\Theta}^{\lambda}_t)$, we obtain, almost surely
\begin{align}
\begin{split}\label{eq:holaproclipito}
h(\overline{\Theta}^{\lambda}_t) - h(\overline{\Theta}^{\lambda}_{\lfrf{t}})  
&=  -\lambda  \int_{\lfrf{t}}^t H(\overline{\Theta}^{\lambda}_s) h (\overline{\Theta}^{\lambda}_{\lfrf{s}})\,\rmd s+\lambda^2\int_{\lfrf{t}}^t H(\overline{\Theta}^{\lambda}_s)\int_{\lfrf{s}}^s H ( \overline{\Theta}^{\lambda}_{\lfrf{r}})h ( \overline{\Theta}^{\lambda}_{\lfrf{r}}) \,\rmd r\,\rmd s\\
&\quad -\lambda^2\beta^{-1}\int_{\lfrf{t}}^t H(\overline{\Theta}^{\lambda}_s)\int_{\lfrf{s}}^s  \Upsilon (\overline{\Theta}^{\lambda}_{\lfrf{r}}) \,\rmd r\,\rmd s\\
&\quad-\lambda\sqrt{2\lambda\beta^{-1}} \int_{\lfrf{t}}^t  H(\overline{\Theta}^{\lambda}_s) \int_{{\lfrf{s}}}^s H (\overline{\Theta}_{\lfrf{r}}^{\lambda})\,\rmd B_r^{\lambda}\,\rmd s  \\
&\quad +\sqrt{2\lambda\beta^{-1}}\int_{\lfrf{t}}^tH(\overline{\Theta}^{\lambda}_s) \, \rmd B^{\lambda}_s+\lambda\beta^{-1}\int_{\lfrf{t}}^t\Upsilon (\overline{\Theta}^{\lambda}_s)\rmd s.
\end{split}
\end{align}
Similarly, applying It\^o's formula to $h(\widetilde{\zeta}^{\lambda, n}_t)$ yields, almost surely
\begin{align}
\begin{split}\label{eq:auxproclipito}
h( \widetilde{\zeta}^{\lambda, n}_t) - h ( \widetilde{\zeta}^{\lambda, n}_{\lfrf{t}})
&=  -\lambda  \int_{\lfrf{t}}^t H(\widetilde{\zeta}^{\lambda, n}_s) h ( \widetilde{\zeta}^{\lambda, n}_s)\,\rmd s   +\sqrt{2\lambda\beta^{-1}}\int_{\lfrf{t}}^tH(\widetilde{\zeta}^{\lambda, n}_s) \, \rmd B^{\lambda}_s\\
&\quad +\lambda\beta^{-1}\int_{\lfrf{t}}^t\Upsilon (\widetilde{\zeta}^{\lambda, n}_s)\rmd s.
\end{split}
\end{align}

\begin{enumerate}
\item To obtain the first inequality, by using \eqref{eq:holaproclipito} with $(\sum_{l=1}^vu_l)^2 \leq v \sum_{l=1}^vu_l^2$, $v \in \N$, $u_l \geq 0$, we obtain that
\begin{align*}
&\E\left[\left| h(\overline{\Theta}^{\lambda}_t) - h (\overline{\Theta}^{\lambda}_{\lfrf{t}}) + \lambda \int_{\lfrf{t}}^t\left(H ( \overline{\Theta}^{\lambda}_{\lfrf{s}})h ( \overline{\Theta}^{\lambda}_{\lfrf{s}})-\beta^{-1}\Upsilon (\overline{\Theta}^{\lambda}_{\lfrf{s}}) \right)\,\rmd s\right.\right.\\
&\qquad \left.\left. - \sqrt{2\lambda\beta^{-1}} \int_{{\lfrf{t}}}^t H (\overline{\Theta}_{\lfrf{s}}^{\lambda})\,\rmd B_s^{\lambda}  \right|^2\right]\\
&\leq 6\E\left[\left|-\lambda \int_{\lfrf{t}}^t  \left(H(\overline{\Theta}^{\lambda}_s) - H (\overline{\Theta}^{\lambda}_{\lfrf{s}})\right)h ( \overline{\Theta}^{\lambda}_{\lfrf{s}})\,\rmd s\right|^2\right] \\
&\quad+  6\E\left[\left| \lambda^2 \int_{\lfrf{t}}^t   H(\overline{\Theta}^{\lambda}_s) \int_{\lfrf{s}}^s  H (\overline{\Theta}^{\lambda}_{\lfrf{r}}))h ( \overline{\Theta}^{\lambda}_{\lfrf{r}}) \, \rmd r \,\rmd s\right|^2\right] \\
&\quad+ 6\E\left[\left| -\lambda^2\beta^{-1} \int_{\lfrf{t}}^t   H(\overline{\Theta}^{\lambda}_s) \int_{\lfrf{s}}^s  \Upsilon ( \overline{\Theta}^{\lambda}_{\lfrf{r}}) \, \rmd r \,\rmd s\right|^2\right]  \\
&\quad+  6\E\left[\left| -\lambda\sqrt{2\lambda\beta^{-1}} \int_{\lfrf{t}}^t   H(\overline{\Theta}^{\lambda}_s) \int_{\lfrf{s}}^s  H ( \overline{\Theta}^{\lambda}_{\lfrf{r}}) \, \rmd B_r^{\lambda} \,\rmd s\right|^2\right] \\
&\quad+  6\E\left[\left| \sqrt{2\lambda\beta^{-1}} \int_{\lfrf{t}}^t  \left( H(\overline{\Theta}^{\lambda}_s) -  H ( \overline{\Theta}^{\lambda}_{\lfrf{s}}) \right)  \,\rmd B_s^{\lambda}\right|^2\right] \\
&\quad+  6\E\left[\left| \lambda\beta^{-1} \int_{\lfrf{t}}^t   \left(\Upsilon(\overline{\Theta}^{\lambda}_s) - \Upsilon ( \overline{\Theta}^{\lambda}_{\lfrf{s}})\right)  \,\rmd s\right|^2\right] \\
& \leq \lambda^2\left(e^{- \overline{a}n/2} 36\overline{\cC}_{\mathbf{S}2}\E[|\theta_0|^4] +36\widetilde{\cC}_{\mathbf{S}2}  \right),
\end{align*}
where the last inequality holds due to Lemma \ref{lem:graditoestlip}.
\item To establish the second inequality, we use \eqref{eq:holaproclipito} and \eqref{eq:auxproclipito} to obtain
\begin{align*}
&\E\left[\left| h( \widetilde{\zeta}^{\lambda, n}_t) - h ( \widetilde{\zeta}^{\lambda, n}_{\lfrf{t}})   - \sqrt{2\lambda\beta^{-1}} \int_{{\lfrf{t}}}^t H (\widetilde{\zeta}^{\lambda, n}_{\lfrf{s}})\,\rmd B_s^{\lambda}  \right.\right.\\
&\qquad \left.\left. -\left(h(\overline{\Theta}^{\lambda}_t) - h (\overline{\Theta}^{\lambda}_{\lfrf{t}})  - \sqrt{2\lambda\beta^{-1}} \int_{{\lfrf{t}}}^t H (\overline{\Theta}_{\lfrf{s}}^{\lambda})\,\rmd B_s^{\lambda}  \right)\right|^2\right]\\
&\leq 2 \E\left[\left| h( \widetilde{\zeta}^{\lambda, n}_t) - h ( \widetilde{\zeta}^{\lambda, n}_{\lfrf{t}})   - \sqrt{2\lambda\beta^{-1}} \int_{{\lfrf{t}}}^t H (\widetilde{\zeta}^{\lambda, n}_{\lfrf{s}})\,\rmd B_s^{\lambda} \right|^2\right]\\
&\quad +2 \E\left[\left| h(\overline{\Theta}^{\lambda}_t) - h (\overline{\Theta}^{\lambda}_{\lfrf{t}})  - \sqrt{2\lambda\beta^{-1}} \int_{{\lfrf{t}}}^t H (\overline{\Theta}_{\lfrf{s}}^{\lambda})\,\rmd B_s^{\lambda} \right|^2\right]\\
&\leq 6\E\left[\left| -\lambda  \int_{\lfrf{t}}^t H(\widetilde{\zeta}^{\lambda, n}_s) h ( \widetilde{\zeta}^{\lambda, n}_s)\,\rmd s   \right|^2\right]  +6\E\left[\left| \lambda\beta^{-1}\int_{\lfrf{t}}^t\Upsilon (\widetilde{\zeta}^{\lambda, n}_s)\rmd s  \right|^2\right] \\
&\quad +6\E\left[\left| \sqrt{2\lambda\beta^{-1}}\int_{\lfrf{t}}^t \left(H(\widetilde{\zeta}^{\lambda, n}_s) -  H(\widetilde{\zeta}^{\lambda, n}_{\lfrf{s}}) \right) \, \rmd B^{\lambda}_s
  \right|^2\right] \\ 
&\quad +12\E\left[\left| -\lambda  \int_{\lfrf{t}}^t H(\overline{\Theta}^{\lambda}_s) h (\overline{\Theta}^{\lambda}_{\lfrf{s}})\,\rmd s\right|^2\right] + 12\E\left[\left|\lambda\beta^{-1}\int_{\lfrf{t}}^t\Upsilon (\overline{\Theta}^{\lambda}_s)\rmd s\right|^2\right]\\
&\quad + 12\E\left[\left|\lambda^2\int_{\lfrf{t}}^t H(\overline{\Theta}^{\lambda}_s)\int_{\lfrf{s}}^s H ( \overline{\Theta}^{\lambda}_{\lfrf{r}})h ( \overline{\Theta}^{\lambda}_{\lfrf{r}}) \,\rmd r\,\rmd s \right|^2\right] \\
&\quad + 12\E\left[\left|-\lambda^2\beta^{-1}\int_{\lfrf{t}}^t H(\overline{\Theta}^{\lambda}_s)\int_{\lfrf{s}}^s  \Upsilon (\overline{\Theta}^{\lambda}_{\lfrf{r}}) \,\rmd r\,\rmd s\right|^2\right] \\
&\quad + 12\E\left[\left|-\lambda\sqrt{2\lambda\beta^{-1}} \int_{\lfrf{t}}^t  H(\overline{\Theta}^{\lambda}_s) \int_{{\lfrf{s}}}^s H (\overline{\Theta}_{\lfrf{r}}^{\lambda})\,\rmd B_r^{\lambda}\,\rmd s \right|^2\right] \\
&\quad + 12\E\left[\left|\sqrt{2\lambda\beta^{-1}}\int_{\lfrf{t}}^t \left(H(\overline{\Theta}^{\lambda}_s) -  H(\overline{\Theta}_{\lfrf{s}}^{\lambda}) \right) \, \rmd B^{\lambda}_s \right|^2\right]   \\
&\leq  12\lambda^2\left(e^{- \overline{a} n/4} \overline{\cC}_{\mathbf{S}2}\E[|\theta_0|^4] +\widetilde{\cC}_{\mathbf{S}2} \right)+ 60 \lambda^2\left(e^{- \overline{a} n/4} \overline{\cC}_{\mathbf{S}2}\E[|\theta_0|^4] +\widetilde{\cC}_{\mathbf{S}2}\right)\\
&\leq \lambda^2\left(e^{- \overline{a} n/4} 72\overline{\cC}_{\mathbf{S}2}\E[|\theta_0|^4] +72\widetilde{\cC}_{\mathbf{S}2}\right),
\end{align*}
where the second last inequality holds by using Lemma \ref{lem:graditoestlip}.
\end{enumerate}
This completes the proof.
\end{proof}

\begin{definition}\label{def:Mdeflip} Define $\mathfrak{M}=(\mathfrak{M}^{(i,j)})_{i,j=1,\dots,d}:\R^d \times \R^d \rightarrow \R^{d\times d}$ by setting, for every $i,j = 1, \dots, d$,
\[
\mathfrak{M}^{(i,j)}(\theta, \overline{\theta}) = \langle\nabla H^{(i,j)}(\overline{\theta}), \theta-\overline{\theta} \rangle, \qquad \theta, \overline{\theta} \in \R^d.
\]
\end{definition}
\begin{lemma}\label{lem:Mestlip} Let Assumptions \ref{asm:AIlip}, \ref{asm:ALLlip}, and \ref{asm:ADlip} hold. Then, for any $0<\lambda\leq \overline{\lambda}_{\max}$, $n \in \N_0$, $, t \geq nT$, we obtain the following inequalities:
\begin{align} 
\begin{split}
&\E\left[\left| \sqrt{2\lambda\beta^{-1}} \int_{\lfrf{t}}^t   \left(H(\overline{\Theta}^{\lambda}_s) -  H ( \overline{\Theta}^{\lambda}_{\lfrf{s}})  -\mathfrak{M}(\overline{\Theta}^{\lambda}_s, \overline{\Theta}^{\lambda}_{\lfrf{s}})\right)  \,\rmd B_s^{\lambda}\right|^2\right] \\
& \leq  \lambda^{2+q}\left(e^{- \overline{a} n/2}  \overline{\cC}_{\mathbf{S}2}\E[|\theta_0|^4] + \widetilde{\cC}_{\mathbf{S}2} \right),\label{lem:Mestlipineq1}
\end{split}\\
&\E\left[\left| \sqrt{2\lambda\beta^{-1}} \int_{\lfrf{t}}^t   \ \mathfrak{M}(\overline{\Theta}^{\lambda}_s, \overline{\Theta}^{\lambda}_{\lfrf{s}})   \,\rmd B_s^{\lambda}\right|^2\right] 
\leq  \lambda^2\left(e^{- \overline{a} n/2}  \overline{\cC}_{\mathbf{S}2}\E[|\theta_0|^4] + \widetilde{\cC}_{\mathbf{S}2} \right),\label{lem:Mestlipineq2}\\
\begin{split}
&\E\left[2\lambda\beta^{-1} \left\langle  \int_{{\lfrf{t}}}^t  \int_{{\lfrf{s}}}^s \left(H (\widetilde{\zeta}^{\lambda, n}_{\lfrf{r}}) - H (\overline{\Theta}_{\lfrf{r}}^{\lambda})\right) \,\rmd B_r^{\lambda} \,\rmd s ,  \int_{\lfrf{t}}^t   \ \mathfrak{M}(\overline{\Theta}^{\lambda}_s, \overline{\Theta}^{\lambda}_{\lfrf{s}})   \,\rmd B_s^{\lambda}\right\rangle\right]\\
&\leq \lambda^2\left(e^{- \overline{a} n/4}5  \overline{\cC}_{\mathbf{S}2}\E[|\theta_0|^4] +5 \widetilde{\cC}_{\mathbf{S}2} \right),\label{lem:Mestlipineq3}
\end{split}
\end{align}
where $\overline{\cC}_{\mathbf{S}2}$ and $\widetilde{\cC}_{\mathbf{S}2}$ are given in \eqref{eq:graditoestlipconst}.
\end{lemma}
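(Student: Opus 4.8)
The plan is to establish the three inequalities \eqref{lem:Mestlipineq1}--\eqref{lem:Mestlipineq3} by following exactly the same strategy used in the super-linear case in Lemma~\ref{lem:Mest}, but with the simplifications afforded by the global conditions in Assumption~\ref{asm:ALLlip} (so no taming-factor corrections appear and the polynomial growth exponents collapse). The three estimates correspond one-to-one to the three parts of Lemma~\ref{lem:Mest}, so I would mirror that proof structure and cite the linear-case moment and one-step estimates (Lemma~\ref{lem:2ndpthmmtlip}, Lemma~\ref{lem:zetaprocmelip}, and Lemma~\ref{lem:oserroralglip}) in place of their super-linear analogues.

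First I would prove \eqref{lem:Mestlipineq1}. The key pointwise ingredient is a mean-value-type bound on the Hessian increment minus its linearization: by Definition~\ref{def:Mdeflip} and the fundamental theorem of calculus, for fixed $\theta,\overline{\theta}\in\R^d$,
\[
|H(\theta)-H(\overline{\theta})-\mathfrak{M}(\theta,\overline{\theta})|_{\cF}^2
\leq \int_0^1\sum_{i=1}^d d\,|\nabla^2 h^{(i)}(\nu\theta+(1-\nu)\overline{\theta})-\nabla^2 h^{(i)}(\overline{\theta})|^2\,\rmd\nu\,|\theta-\overline{\theta}|^2
\leq d^2\overline{L}_1^2|\theta-\overline{\theta}|^{2+2q},
\]
where the last step uses the global $q$-H\"older condition on $\nabla^2 h^{(i)}$ from Assumption~\ref{asm:ALLlip} (this is the linear counterpart of \eqref{eq:multidmvt}, with the growth prefactor $(1+|\theta|+|\overline{\theta}|)^{2(\rho-2)}$ now absent). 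Applying the It\^o isometry to the stochastic integral, then this bound, and finally Lemma~\ref{lem:oserroralglip} to control $\E[|\overline{\Theta}^{\lambda}_s-\overline{\Theta}^{\lambda}_{\lfrf{s}}|^{4+4q}]$ by $\lambda^{2+2q}$, yields a factor $\lambda\cdot\lambda^{1+q}=\lambda^{2+q}$ and the claimed exponential decay in $n$ via $\lambda\lfrf{t}\geq n/2$.

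For \eqref{lem:Mestlipineq2} I would again use the It\^o isometry, bound $|\mathfrak{M}^{(i,j)}(\overline{\Theta}^{\lambda}_s,\overline{\Theta}^{\lambda}_{\lfrf{s}})|=|\langle\nabla H^{(i,j)}(\overline{\Theta}^{\lambda}_{\lfrf{s}}),\overline{\Theta}^{\lambda}_s-\overline{\Theta}^{\lambda}_{\lfrf{s}}\rangle|\leq \cKl_0(1+|\overline{\Theta}^{\lambda}_{\lfrf{s}}|^q)|\overline{\Theta}^{\lambda}_s-\overline{\Theta}^{\lambda}_{\lfrf{s}}|$ using Remark~\ref{rmk:growthclip}, and then Cauchy--Schwarz together with Lemma~\ref{lem:2ndpthmmtlip} and Lemma~\ref{lem:oserroralglip} (giving $(\E|\overline{\Theta}^{\lambda}_s-\overline{\Theta}^{\lambda}_{\lfrf{s}}|^4)^{1/2}\leq\lambda$). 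This produces the order $\lambda^2$ with the stated constants. The estimate \eqref{lem:Mestlipineq3} is the main obstacle: it requires the conditional-expectation/martingale argument from part~(iii) of Lemma~\ref{lem:Mest}, whereby one expands $\overline{\Theta}^{\lambda}_s-\overline{\Theta}^{\lambda}_{\lfrf{s}}$ inside $\mathfrak{M}$ into its five drift and diffusion pieces from \eqref{eq:aholahoproclip}, and shows that the cross term pairing the innermost Brownian increment against the outer stochastic integral vanishes after conditioning on $\mathcal{F}^{\lambda}_{\lfrf{s}}$ (exploiting $\E[\int_{\lfrf{s}}^s\rmd(B^{\lambda}_r)^{(k)}\mid\mathcal{F}^{\lambda}_{\lfrf{s}}]=0$ and the independence structure). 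The remaining terms are handled by Young's and Cauchy--Schwarz inequalities plus the three moment lemmas; care is needed because the $\widetilde{\zeta}^{\lambda,n}$-factor forces the slower decay rate $e^{-\overline{a}n/4}$ (rather than $e^{-\overline{a}n/2}$) via Lemma~\ref{lem:zetaprocmelip}, which is why the final bound carries $n/4$ in the exponent. Collecting the three estimates completes the proof.
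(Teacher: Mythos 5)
Your proposal is correct and follows essentially the same route as the paper: the pointwise bound $|H(\theta)-H(\overline{\theta})-\mathfrak{M}(\theta,\overline{\theta})|_{\cF}^2\le d^2\overline{L}_1^2|\theta-\overline{\theta}|^{2+2q}$ (the linear analogue of \eqref{eq:multidmvt}), It\^{o} isometry combined with Lemmas \ref{lem:2ndpthmmtlip} and \ref{lem:oserroralglip} for the first two inequalities, and for \eqref{lem:Mestlipineq3} the expansion of $\mathfrak{M}$ via the five pieces of \eqref{eq:aholahoproclip} together with the conditional-expectation argument that annihilates the Brownian cross term, the remaining terms being handled by Cauchy--Schwarz and the moment lemmas with the $e^{-\overline{a}n/4}$ decay coming from Lemma \ref{lem:zetaprocmelip}. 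The only cosmetic deviation is in \eqref{lem:Mestlipineq1}, where the paper applies Lemma \ref{lem:oserroralglip} directly at moment order $2+2q$ (bringing in $\overline{\cC}_{\mathbf{S}0,1+q}$) rather than at order $4+4q$ followed by Jensen's inequality as you suggest; both yield the same rate $\lambda^{2+q}$.
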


\begin{proof}To show the inequalities hold, we follow the arguments below:
\begin{enumerate}
\item To establish the first inequality \eqref{lem:Mestlipineq1}, by using Definition \ref{def:Mdeflip}, we observe that, for fixed $\theta, \overline{\theta} \in \R^d$, 
\begin{align}\label{eq:multidmvtlip}
&|H(\theta) - H(\overline{\theta}) - \mathfrak{M} (\theta, \overline{\theta}) |_{\cF}^2 \nonumber\\
&=\sum_{i,j= 1}^d|H^{(i,j)}(\theta) - H^{(i,j)}(\overline{\theta}) - \mathfrak{M}^{(i,j)}(\theta, \overline{\theta}) |^2 \nonumber\\
& = \sum_{i,j= 1}^d\left|\int_0^1 \langle \nabla H^{(i,j)}(\nu\theta+(1-\nu)\overline{\theta}), \theta-\overline{\theta} \rangle\, \rmd \nu - \langle\nabla H^{(i,j)}(\overline{\theta}), \theta-\overline{\theta} \rangle\right|^2 \nonumber\\
&\leq \int_0^1\sum_{i,j= 1}^d |\nabla H^{(i,j)}(\nu\theta+(1-\nu)\overline{\theta}) - \nabla H^{(i,j)}(\overline{\theta})|^2\, \rmd \nu |\theta-\overline{\theta} |^2 \nonumber\\
&\leq \int_0^1\sum_{i= 1}^d d |\nabla^2 h^{(i)}(\nu\theta+(1-\nu)\overline{\theta}) - \nabla^2 h^{(i)}(\overline{\theta})| ^2\, \rmd \nu |\theta-\overline{\theta} |^2 \nonumber\\
&\leq  d^2\overline{L}_1^2 |\theta-\overline{\theta}|^{2+2q},
\end{align}
where the last inequality holds due to Assumption \ref{asm:ALLlip}. By using \eqref{eq:multidmvtlip}, we obtain that
\begin{align*}
&\E\left[\left| \sqrt{2\lambda\beta^{-1}} \int_{\lfrf{t}}^t   \left(H(\overline{\Theta}^{\lambda}_s) -  H ( \overline{\Theta}^{\lambda}_{\lfrf{s}})  -\mathfrak{M}(\overline{\Theta}^{\lambda}_s, \overline{\Theta}^{\lambda}_{\lfrf{s}})\right)  \,\rmd B_s^{\lambda}\right|^2\right] \\
& =2\lambda\beta^{-1}\int_{\lfrf{t}}^t  \E\left[\left| H(\overline{\Theta}^{\lambda}_s) -  H ( \overline{\Theta}^{\lambda}_{\lfrf{s}})  -\mathfrak{M}(\overline{\Theta}^{\lambda}_s, \overline{\Theta}^{\lambda}_{\lfrf{s}}) \right|_{\cF}^2\right]  \,\rmd s\\
&\leq  \lambda\beta^{-1} 2 d^2\overline{L}_1^2 \int_{\lfrf{t}}^t  \E\left[|\overline{\Theta}^{\lambda}_s-\overline{\Theta}^{\lambda}_{\lfrf{s}}|^{2+2q}\right]  \,\rmd s\\
&\leq   \lambda^{2+q}\beta^{-1} 2 d^2\overline{L}_1^2 \left(e^{-\lambda\overline{a} \lfrf{t}}\overline{\cC}_{\mathbf{S}0,1+q}\E[|\theta_0|^4]+\widetilde{\cC}_{\mathbf{S}0,1+q}  \right)  \\
&\leq \lambda^{2+q} \left(e^{- \overline{a} n/2}   \overline{\cC}_{\mathbf{S}2}\E[|\theta_0|^4] + \widetilde{\cC}_{\mathbf{S}2} \right),
\end{align*}
where the third inequality holds due to Lemma \ref{lem:2ndpthmmtlip} and \ref{lem:oserroralglip}, and $\overline{\cC}_{\mathbf{S}2}$, $\widetilde{\cC}_{\mathbf{S}2}$ are given in \eqref{eq:graditoestlipconst}.
\item To establish the second inequality \eqref{lem:Mestlipineq2}, we recall Definition \ref{def:Mdeflip} and use Remark \ref{rmk:growthclip} to obtain
\begin{align*}
&\E\left[\left| \sqrt{2\lambda\beta^{-1}} \int_{\lfrf{t}}^t    \mathfrak{M}(\overline{\Theta}^{\lambda}_s, \overline{\Theta}^{\lambda}_{\lfrf{s}})   \,\rmd B_s^{\lambda}\right|^2\right] \\
& = 2\lambda\beta^{-1}\int_{\lfrf{t}}^t  \E\left[\sum_{i,j=1}^d\left|   \left\langle\nabla H^{(i,j)}( \overline{\Theta}^{\lambda}_{\lfrf{s}}) , \overline{\Theta}^{\lambda}_s- \overline{\Theta}^{\lambda}_{\lfrf{s}} \right\rangle\right|^2\right]  \,\rmd s\\
&\leq 2^{3-2q}\lambda\beta^{-1}\int_{\lfrf{t}}^t  \E\left[d^2\overline{\cK}_0^2(1+|\overline{\Theta}^{\lambda}_{\lfrf{s}}|)^{2q} |\overline{\Theta}^{\lambda}_s- \overline{\Theta}^{\lambda}_{\lfrf{s}} |^2\right]  \,\rmd s\\
&\leq  \lambda\beta^{-1}2^5d^2\overline{\cK}_0^2\int_{\lfrf{t}}^t   \left(\E\left[1+|\overline{\Theta}^{\lambda}_{\lfrf{s}}|^4  \right] \right)^{1/2} \left(\E\left[ |\overline{\Theta}^{\lambda}_s- \overline{\Theta}^{\lambda}_{\lfrf{s}} |^4\right] \right)^{1/2}\,\rmd s\\
&\leq  \lambda\beta^{-1}2^5d^2\overline{\cK}_0^2 \left(e^{-\lambda \overline{a}  \lfrf{t}}\E\left[ |\theta_0|^4\right]+  \ccl_2\left(1+1/\overline{a}\right)+1\right)^{1/2}\\
&\qquad \times   \lambda \left(e^{-\lambda\overline{a} \lfrf{t}}\overline{\cC}_{\mathbf{S}0,2}\E[|\theta_0|^4]+\widetilde{\cC}_{\mathbf{S}0,2} \right)^{1/2}  \\
&\leq \lambda^2 \left(e^{- \overline{a}  n/2} \overline{\cC}_{\mathbf{S}2}\E[|\theta_0|^4] +\widetilde{\cC}_{\mathbf{S}2} \right),
\end{align*}
where the third inequality holds due to Lemma \ref{lem:2ndpthmmtlip} and \ref{lem:oserroralglip}, and $\overline{\cC}_{\mathbf{S}2}$, $\widetilde{\cC}_{\mathbf{S}2}$ are given in \eqref{eq:graditoestlipconst}.

\item To obtain the third inequality \eqref{lem:Mestlipineq3}, we use Definition \ref{def:Mdeflip} and write the following
\begin{align}
&\E\left[2\lambda\beta^{-1} \left\langle  \int_{{\lfrf{t}}}^t  \int_{{\lfrf{s}}}^s \left(H (\widetilde{\zeta}^{\lambda, n}_{\lfrf{r}}) - H (\overline{\Theta}_{\lfrf{r}}^{\lambda})\right) \,\rmd B_r^{\lambda} \,\rmd s ,  \int_{\lfrf{t}}^t    \mathfrak{M}(\overline{\Theta}^{\lambda}_s, \overline{\Theta}^{\lambda}_{\lfrf{s}})   \,\rmd B_s^{\lambda}\right\rangle\right]\nonumber\\
\begin{split}\label{eq:Mestthdlipineqexpsn} 
& = 2\lambda\beta^{-1} \E\left[\left\langle  \int_{{\lfrf{t}}}^t  \int_{{\lfrf{s}}}^s \left(H (\widetilde{\zeta}^{\lambda, n}_{\lfrf{r}}) - H (\overline{\Theta}_{\lfrf{r}}^{\lambda})\right) \,\rmd B_r^{\lambda} \,\rmd s ,  \right.\right.\\
&\qquad \left.\left. \int_{\lfrf{t}}^t   \left\langle\nabla H(\overline{\Theta}^{\lambda}_{\lfrf{s}}),    
\int_{\lfrf{s}}^s - \lambda h (\overline{\Theta}^{\lambda}_{\lfrf{r}})   \,\rmd r
\right\rangle \,\rmd B_s^{\lambda}\right\rangle\right] \\
& \quad+ 2\lambda\beta^{-1} \E\left[\left\langle  \int_{{\lfrf{t}}}^t  \int_{{\lfrf{s}}}^s \left(H (\widetilde{\zeta}^{\lambda, n}_{\lfrf{r}}) - H (\overline{\Theta}_{\lfrf{r}}^{\lambda})\right) \,\rmd B_r^{\lambda} \,\rmd s ,   \right.\right.\\
&\qquad \left.\left.  \int_{\lfrf{t}}^t   \left\langle    \nabla H(\overline{\Theta}^{\lambda}_{\lfrf{s}}),    
\int_{\lfrf{s}}^s     \int_{\lfrf{r}}^r \lambda^2 H ( \overline{\Theta}^{\lambda}_{\lfrf{\nu}})h ( \overline{\Theta}^{\lambda}_{\lfrf{\nu}}) \,\rmd \nu   \,\rmd r
\right\rangle \,\rmd B_s^{\lambda}\right\rangle\right] \\
& \quad+ 2\lambda\beta^{-1} \E\left[\left\langle  \int_{{\lfrf{t}}}^t  \int_{{\lfrf{s}}}^s \left(H (\widetilde{\zeta}^{\lambda, n}_{\lfrf{r}}) - H (\overline{\Theta}_{\lfrf{r}}^{\lambda})\right) \,\rmd B_r^{\lambda} \,\rmd s ,   \right.\right.\\
&\qquad \left.\left.  \int_{\lfrf{t}}^t   \left\langle    \nabla H(\overline{\Theta}^{\lambda}_{\lfrf{s}}),    
\int_{\lfrf{s}}^s    \int_{\lfrf{r}}^r -\lambda^2 \beta^{-1}\Upsilon (\overline{\Theta}^{\lambda}_{\lfrf{\nu}}) \,\rmd \nu   \,\rmd r
\right\rangle \,\rmd B_s^{\lambda}\right\rangle\right] \\
& \quad+ 2\lambda\beta^{-1} \E\left[\left\langle  \int_{{\lfrf{t}}}^t  \int_{{\lfrf{s}}}^s \left(H (\widetilde{\zeta}^{\lambda, n}_{\lfrf{r}}) - H (\overline{\Theta}_{\lfrf{r}}^{\lambda})\right) \,\rmd B_r^{\lambda} \,\rmd s ,   \right.\right.\\
&\qquad \left.\left. \int_{\lfrf{t}}^t   \left\langle   \nabla H(\overline{\Theta}^{\lambda}_{\lfrf{s}}),    
\int_{\lfrf{s}}^s -\lambda\sqrt{2\lambda\beta^{-1}} \int_{{\lfrf{r}}}^r H (\overline{\Theta}_{\lfrf{\nu}}^{\lambda})\,\rmd B_{\nu}^{\lambda}    \,\rmd r
\right\rangle \,\rmd B_s^{\lambda}\right\rangle\right] \\
& \quad+ 2\lambda\beta^{-1} \E\left[\left\langle  \int_{{\lfrf{t}}}^t  \int_{{\lfrf{s}}}^s \left(H (\widetilde{\zeta}^{\lambda, n}_{\lfrf{r}}) - H (\overline{\Theta}_{\lfrf{r}}^{\lambda})\right) \,\rmd B_r^{\lambda} \,\rmd s ,   \right.\right.\\
&\qquad \left.\left. \int_{\lfrf{t}}^t   \left\langle\nabla H(\overline{\Theta}^{\lambda}_{\lfrf{s}}),    
\sqrt{2\lambda\beta^{-1}}\int_{\lfrf{s}}^s \, \rmd B^{\lambda}_r
\right\rangle \,\rmd B_s^{\lambda}\right\rangle\right]  .
\end{split}
\end{align}
Recall that $(\mathcal{F}^{\lambda}_t)_{t \geq 0}$ is the completed natural filtration of $(B^{\lambda}_t)_{t\geq0}$. Then, we note that, for any $i,j,k=1, \dots, d$, it holds that
\begin{align*}
&\E\left[ \left( \int_{{\lfrf{t}}}^t   \int_{{\lfrf{s}}}^s \left(H^{(i,j)} (\widetilde{\zeta}^{\lambda, n}_{\lfrf{r}}) - H^{(i,j)} (\overline{\Theta}_{\lfrf{r}}^{\lambda})\right) \,\rmd (B_r^{\lambda})^{(j)} \,\rmd s   \right)\right.\\
&\qquad \times \left. \left(  \int_{\lfrf{t}}^t   \left(  \partial_{\theta^{(k)}} H^{(i,j)}(\overline{\Theta}^{\lambda}_{\lfrf{s}})   
\int_{\lfrf{s}}^s \, \rmd (B^{\lambda}_r)^{(k)}\right)
\,\rmd (B_s^{\lambda})^{(j)}\right) \right]\\
& = \E\Bigg[ \left(H^{(i,j)} (\widetilde{\zeta}^{\lambda, n}_{\lfrf{t}}) - H^{(i,j)} (\overline{\Theta}_{\lfrf{t}}^{\lambda})\right)\partial_{\theta^{(k)}} H^{(i,j)}(\overline{\Theta}^{\lambda}_{\lfrf{t}})    \Bigg.\\
&\qquad \times \left. \E\left[  \left.\int_{{\lfrf{t}}}^t   \int_{{\lfrf{s}}}^s \,\rmd (B_r^{\lambda})^{(j)} \,\rmd s     \int_{\lfrf{t}}^t    
\int_{\lfrf{s}}^s \, \rmd (B^{\lambda}_r)^{(k)}  \,\rmd (B_s^{\lambda})^{(j)} 
\right|\mathcal{F}^{\lambda}_{\lfrf{t}}\right]\right]\\
& = \E\Bigg[ \left(H^{(i,j)} (\widetilde{\zeta}^{\lambda, n}_{\lfrf{t}}) - H^{(i,j)} (\overline{\Theta}_{\lfrf{t}}^{\lambda})\right)\partial_{\theta^{(k)}} H^{(i,j)}(\overline{\Theta}^{\lambda}_{\lfrf{t}})    \Bigg.\\
&\qquad \times \left. \E\left[  \left. \int_{{\lfrf{t}}}^t (t-s) \,\rmd (B_s^{\lambda})^{(j)}  \int_{\lfrf{t}}^t    
\int_{\lfrf{s}}^s \, \rmd (B^{\lambda}_r)^{(k)}  \,\rmd (B_s^{\lambda})^{(j)} 
\right|\mathcal{F}^{\lambda}_{\lfrf{t}}\right]\right]\\
& = \E\Bigg[ \left(H^{(i,j)} (\widetilde{\zeta}^{\lambda, n}_{\lfrf{t}}) - H^{(i,j)} (\overline{\Theta}_{\lfrf{t}}^{\lambda})\right)\partial_{\theta^{(k)}} H^{(i,j)}(\overline{\Theta}^{\lambda}_{\lfrf{t}})    \Bigg.\\
&\qquad \times \left.\left( t\int_{{\lfrf{t}}}^t   \E\left[  \left.  \int_{\lfrf{t}}^s \, \rmd (B^{\lambda}_r)^{(k)}  
\right|\mathcal{F}^{\lambda}_{\lfrf{t}}\right]\,\rmd s  -  \int_{{\lfrf{t}}}^t s  \E\left[  \left.  \int_{\lfrf{t}}^s \, \rmd (B^{\lambda}_r)^{(k)}  
\right|\mathcal{F}^{\lambda}_{\lfrf{t}}\right]\,\rmd s  \right)\right]\\
& = 0.
\end{align*}
This implies that the last term in \eqref{eq:Mestthdlipineqexpsn} is zero. Indeed, we have that
\begin{align*}&2\lambda\beta^{-1} \E\left[\left\langle  \int_{{\lfrf{t}}}^t  \int_{{\lfrf{s}}}^s \left(H (\widetilde{\zeta}^{\lambda, n}_{\lfrf{r}}) - H (\overline{\Theta}_{\lfrf{r}}^{\lambda})\right) \,\rmd B_r^{\lambda} \,\rmd s ,   \right.\right.\\
&\qquad \left.\left. \int_{\lfrf{t}}^t   \left\langle\nabla H(\overline{\Theta}^{\lambda}_{\lfrf{s}}),    
\sqrt{2\lambda\beta^{-1}}\int_{\lfrf{s}}^s \, \rmd B^{\lambda}_r
\right\rangle \,\rmd B_s^{\lambda}\right\rangle\right]\\
& = (2\lambda\beta^{-1})^{3/2} \E\left[\sum_{i=1}^d\left( \int_{{\lfrf{t}}}^t \sum_{j=1}^d  \int_{{\lfrf{s}}}^s \left(H^{(i,j)} (\widetilde{\zeta}^{\lambda, n}_{\lfrf{r}}) - H^{(i,j)} (\overline{\Theta}_{\lfrf{r}}^{\lambda})\right) \,\rmd (B_r^{\lambda})^{(j)} \,\rmd s   \right)\right.\\
&\qquad \times \left. \left( \sum_{j = 1}^d \int_{\lfrf{t}}^t   \left(\sum_{k=1}^d \partial_{\theta^{(k)}} H^{(i,j)}(\overline{\Theta}^{\lambda}_{\lfrf{s}})   
\int_{\lfrf{s}}^s \, \rmd (B^{\lambda}_r)^{(k)}\right)
 \,\rmd (B_s^{\lambda})^{(j)}\right) \right]=0.
\end{align*}
Then, by using Remark \ref{rmk:growthclip} and \eqref{eq:Mestthdlipineqexpsn} with the result above, we obtain that
\begin{align*}
&\E\left[2\lambda\beta^{-1} \left\langle  \int_{{\lfrf{t}}}^t  \int_{{\lfrf{s}}}^s \left(H (\widetilde{\zeta}^{\lambda, n}_{\lfrf{r}}) - H (\overline{\Theta}_{\lfrf{r}}^{\lambda})\right) \,\rmd B_r^{\lambda} \,\rmd s ,  \int_{\lfrf{t}}^t    \mathfrak{M}(\overline{\Theta}^{\lambda}_s, \overline{\Theta}^{\lambda}_{\lfrf{s}})   \,\rmd B_s^{\lambda}\right\rangle\right] \\ 
&\leq 4\lambda^2\beta^{-2} \E\left[\left| \int_{{\lfrf{t}}}^t  \int_{{\lfrf{s}}}^s \left(H (\widetilde{\zeta}^{\lambda, n}_{\lfrf{r}}) - H (\overline{\Theta}_{\lfrf{r}}^{\lambda})\right) \,\rmd B_r^{\lambda} \,\rmd s  \right|^2\right]\\
&\quad + \lambda^2\E\left[\left|\int_{\lfrf{t}}^t   \left\langle\nabla H(\overline{\Theta}^{\lambda}_{\lfrf{s}}),    
\int_{\lfrf{s}}^s -   h (\overline{\Theta}^{\lambda}_{\lfrf{r}})   \,\rmd r      \right\rangle \,\rmd B_s^{\lambda}  \right|^2\right]\\ 
&\quad + \lambda^4\E\left[\left|\int_{\lfrf{t}}^t   \left\langle    \nabla H(\overline{\Theta}^{\lambda}_{\lfrf{s}}),    
\int_{\lfrf{s}}^s     \int_{\lfrf{r}}^r  H ( \overline{\Theta}^{\lambda}_{\lfrf{\nu}})h ( \overline{\Theta}^{\lambda}_{\lfrf{\nu}}) \,\rmd \nu   \,\rmd r   \right\rangle \,\rmd B_s^{\lambda} \right|^2\right]\\ 
&\quad + \lambda^4\beta^{-2}\E\left[\left| \int_{\lfrf{t}}^t   \left\langle    \nabla H(\overline{\Theta}^{\lambda}_{\lfrf{s}}),    
\int_{\lfrf{s}}^s    \int_{\lfrf{r}}^r - \Upsilon (\overline{\Theta}^{\lambda}_{\lfrf{\nu}}) \,\rmd \nu   \,\rmd r     \right\rangle \,\rmd B_s^{\lambda} \right|^2\right]\\ 
&\quad +2\lambda^3\beta^{-1}\E\left[\left|  \int_{\lfrf{t}}^t   \left\langle   \nabla H(\overline{\Theta}^{\lambda}_{\lfrf{s}}),    
\int_{\lfrf{s}}^s -\int_{{\lfrf{r}}}^r H (\overline{\Theta}_{\lfrf{\nu}}^{\lambda})\,\rmd B_{\nu}^{\lambda}    \,\rmd r \right\rangle \,\rmd B_s^{\lambda} \right|^2\right]\\ 
&\leq 4\lambda^2\beta^{-2} \int_{{\lfrf{t}}}^t  \int_{{\lfrf{s}}}^s \E\left[|H (\widetilde{\zeta}^{\lambda, n}_{\lfrf{r}}) - H (\overline{\Theta}_{\lfrf{r}}^{\lambda})|_{\cF}^2\right]\,\rmd r \,\rmd s \\
&\quad + \lambda^2\int_{\lfrf{t}}^t \E\left[\sum_{i,j=1}^d\left|  \left\langle\nabla H^{(i,j)}(\overline{\Theta}^{\lambda}_{\lfrf{s}}),    h (\overline{\Theta}^{\lambda}_{\lfrf{s}})   \right\rangle  \right|^2\right]\, \rmd s \\ 
&\quad + \lambda^4\int_{\lfrf{t}}^t \E\left[\sum_{i,j=1}^d\left|  \left\langle\nabla H^{(i,j)}(\overline{\Theta}^{\lambda}_{\lfrf{s}}),    H ( \overline{\Theta}^{\lambda}_{\lfrf{s}})h ( \overline{\Theta}^{\lambda}_{\lfrf{s}})   \right\rangle  \right|^2\right]\, \rmd s \\ 
&\quad + \lambda^4\beta^{-2}\int_{\lfrf{t}}^t \E\left[\sum_{i,j=1}^d\left|  \left\langle\nabla H^{(i,j)}(\overline{\Theta}^{\lambda}_{\lfrf{s}}),   \Upsilon (\overline{\Theta}^{\lambda}_{\lfrf{s}})  \right\rangle  \right|^2\right]\, \rmd s \\ 
&\quad +2\lambda^3\beta^{-1} \int_{\lfrf{t}}^t \E\left[\sum_{i,j=1}^d\left|  \left\langle\nabla H^{(i,j)}(\overline{\Theta}^{\lambda}_{\lfrf{s}}),  H (\overline{\Theta}_{\lfrf{s}}^{\lambda}) \int_{\lfrf{s}}^s \int_{{\lfrf{r}}}^r \,\rmd B_{\nu}^{\lambda}    \,\rmd r  \right\rangle  \right|^2\right]\, \rmd s \\ 
&\leq 8\lambda^2\beta^{-2} d\overline{L}_2^2\int_{{\lfrf{t}}}^t  \E\left[|\widetilde{\zeta}^{\lambda, n}_{\lfrf{s}}|^2+|\overline{\Theta}_{\lfrf{s}}^{\lambda}|^2  \right] \,\rmd s \\
&\quad +  \lambda^2\int_{\lfrf{t}}^t \E\left[d^2\cKl_0^2(1+|\overline{\Theta}^{\lambda}_{\lfrf{s}}|^q)^2\cKl_1^2(1+|\overline{\Theta}^{\lambda}_{\lfrf{s}}|)^2  \right]\, \rmd s \\ 
&\quad +  \lambda^4\int_{\lfrf{t}}^t \E\left[d^2\cKl_0^2(1+|\overline{\Theta}^{\lambda}_{\lfrf{s}}|^q)^2\overline{L}_3^2\cKl_1^2(1+|\overline{\Theta}^{\lambda}_{\lfrf{s}}|)^2  \right]\, \rmd s \\ 
&\quad +  \lambda^4\beta^{-2}\int_{\lfrf{t}}^t \E\left[d^2\cKl_0^2(1+|\overline{\Theta}^{\lambda}_{\lfrf{s}}|^q)^2d^2\overline{L}_2^2 \right]\, \rmd s \\ 
&\quad + 2\lambda^3\beta^{-1}  \int_{\lfrf{t}}^t \E\left[d^2\cKl_0^2(1+|\overline{\Theta}^{\lambda}_{\lfrf{s}}|^q)^2\overline{L}_3^2 \left| \int_{\lfrf{s}}^s \int_{{\lfrf{r}}}^r \,\rmd B_{\nu}^{\lambda}    \,\rmd r  \right|^2   \right]\, \rmd s \\ 
&\leq 8\lambda^2\beta^{-2} d\overline{L}_2^2 \int_{{\lfrf{t}}}^t  \E\left[  V_4(\widetilde{\zeta}^{\lambda, n}_{\lfrf{s}})+1+|\overline{\Theta}_{\lfrf{s}}^{\lambda}|^4   \right] \,\rmd s \\
&\quad + 4 \lambda^2(  \cKl_1^2+\overline{L}_3^2\cKl_1^2+d^2\beta^{-2} \overline{L}_2^2)d^2\cKl_0^2\int_{\lfrf{t}}^t \E\left[1+|\overline{\Theta}^{\lambda}_{\lfrf{s}}|^4 \right]\, \rmd s \\ 
&\quad + 8\lambda^2 \beta^{-1}d^2\cKl_0^2 \overline{L}_3^2\int_{\lfrf{t}}^t \left(\E\left[1+|\overline{\Theta}^{\lambda}_{\lfrf{s}}|^4\right]\right)^{1/2}(3(d+4))\, \rmd s \\ 
&\leq 24\lambda^2\beta^{-2} d\overline{L}_2^2 \left( e^{-\lambda\overline{a}\lfrf{t}/2}\E[|\theta_0|^4] + \left( \ccl_2\left(1+1/\overline{a}\right)+1\right) +\mathrm{v}_4(\cMl_V(4)\right)\\
&\quad + 2^7 \lambda^2(  \cKl_1^2+\overline{L}_3^2\cKl_1^2+\beta^{-2} \overline{L}_2^2+\beta^{-1}\overline{L}_3^2)d^4\cKl_0^2 \\
&\qquad \times \left( e^{-\lambda\overline{a}\lfrf{t}/2}\E[|\theta_0|^4] +\ccl_2\left(1+1/\overline{a}\right)+1\right)\\
&\leq \lambda^2\left(e^{- \overline{a} n/4} 5\overline{\cC}_{\mathbf{S}2}\E[|\theta_0|^4] +5\widetilde{\cC}_{\mathbf{S}2} \right),
\end{align*}
where the fifth inequality holds due to Lemma \ref{lem:2ndpthmmtlip} and \ref{lem:zetaprocmelip}, and where $\overline{\cC}_{\mathbf{S}2}$, $\widetilde{\cC}_{\mathbf{S}2}$ are given in \eqref{eq:graditoestlipconst}.
\end{enumerate}
This completes the proof.
\end{proof}


\begin{proof}[\textbf{Proof of Lemma \ref{lem:w1converlipp1} }]
\label{lem:w1converlipp1proof}
By using the definitions of  $\overline{\Theta}^{\lambda}_t$ in \eqref{eq:aholahoproclip} and $\widetilde{\zeta}^{\lambda, n}_t $ in Definition \ref{def:auxzetalip}, and by applying It\^o's formula, we obtain, for any $n \in \N_0$, $t \in (nT, (n+1)T]$,
\begin{align}
&W_2^2(\mathcal{L}(\overline{\Theta}^{\lambda}_t),\mathcal{L}(\widetilde{\zeta}^{\lambda, n}_t)) \nonumber\\
&\leq \E\left[\left|\overline{\Theta}^{\lambda}_t - \widetilde{\zeta}^{\lambda, n}_t\right|^2\right] \nonumber\\
&=-2\lambda \E\left[\int_{nT}^t \left\langle\overline{\Theta}^{\lambda}_s - \widetilde{\zeta}^{\lambda, n}_s, h (\overline{\Theta}^{\lambda}_{\lfrf{s}}) -h(\widetilde{\zeta}^{\lambda, n}_s)  \right\rangle\, \rmd s\right]\nonumber\\
&\quad +2\lambda \E\left[\int_{nT}^t \left\langle\overline{\Theta}^{\lambda}_s - \widetilde{\zeta}^{\lambda, n}_s,  \lambda \int_{\lfrf{s}}^s\left(H ( \overline{\Theta}^{\lambda}_{\lfrf{r}})h ( \overline{\Theta}^{\lambda}_{\lfrf{r}})-\beta^{-1}\Upsilon (\overline{\Theta}^{\lambda}_{\lfrf{r}}) \right)\,\rmd r  \right\rangle\, \rmd s\right]\nonumber\\
&\quad -2\lambda \E\left[\int_{nT}^t \left\langle\overline{\Theta}^{\lambda}_s - \widetilde{\zeta}^{\lambda, n}_s,     \sqrt{2\lambda\beta^{-1}} \int_{{\lfrf{s}}}^s H (\overline{\Theta}_{\lfrf{r}}^{\lambda}) \,\rmd B_r^{\lambda}  \right\rangle\, \rmd s\right]\nonumber\\
\begin{split}\label{eq:L2convspltinglip}
& =-2\lambda \E\left[\int_{nT}^t \left\langle\overline{\Theta}^{\lambda}_s - \widetilde{\zeta}^{\lambda, n}_s, h(\overline{\Theta}^{\lambda}_s) - h(\widetilde{\zeta}^{\lambda, n}_s)  \right\rangle\, \rmd s\right] \\
& \quad -2\lambda \E\left[\int_{nT}^t \left\langle\overline{\Theta}^{\lambda}_s - \widetilde{\zeta}^{\lambda, n}_s, h (\overline{\Theta}^{\lambda}_{\lfrf{s}})  - h(\overline{\Theta}^{\lambda}_s)  \right\rangle\, \rmd s\right] \\
&\quad +2\lambda \E\left[\int_{nT}^t \left\langle\overline{\Theta}^{\lambda}_s - \widetilde{\zeta}^{\lambda, n}_s,  \lambda \int_{\lfrf{s}}^s\left(H ( \overline{\Theta}^{\lambda}_{\lfrf{r}})h ( \overline{\Theta}^{\lambda}_{\lfrf{r}})-\beta^{-1}\Upsilon (\overline{\Theta}^{\lambda}_{\lfrf{r}}) \right)\,\rmd r  \right\rangle\, \rmd s\right] \\
&\quad - 2\lambda \E\left[\int_{nT}^t \left\langle\overline{\Theta}^{\lambda}_s - \widetilde{\zeta}^{\lambda, n}_s, \sqrt{2\lambda\beta^{-1}} \int_{{\lfrf{s}}}^s H (\overline{\Theta}_{\lfrf{r}}^{\lambda})\,\rmd B_r^{\lambda}  \right\rangle\, \rmd s\right].
\end{split}
\end{align}
By applying It\^o's formula to $ h(\overline{\Theta}^{\lambda}_s)$, we obtain \eqref{eq:holaproclipito}. Substituting \eqref{eq:holaproclipito} into \eqref{eq:L2convspltinglip}, using Assumption \ref{asm:ALLlip} and Young's inequality yield
\begin{align}
&\E\left[\left|\overline{\Theta}^{\lambda}_t - \widetilde{\zeta}^{\lambda, n}_t\right|^2\right] \nonumber\\
&\leq 2\lambda \overline{L}_3\int_{nT}^t  \E\left[|\overline{\Theta}^{\lambda}_s - \widetilde{\zeta}^{\lambda, n}_s|^2 \right] \, \rmd s   \nonumber\\
& \quad +2\lambda \E\left[\int_{nT}^t \left\langle\overline{\Theta}^{\lambda}_s - \widetilde{\zeta}^{\lambda, n}_s, 
-\lambda  \int_{\lfrf{s}}^s \left(H(\overline{\Theta}^{\lambda}_r) - H ( \overline{\Theta}^{\lambda}_{\lfrf{r}}) \right)h (\overline{\Theta}^{\lambda}_{\lfrf{r}})\,\rmd r  \right\rangle\, \rmd s\right]   \nonumber\\
& \quad +2\lambda \E\left[\int_{nT}^t \left\langle\overline{\Theta}^{\lambda}_s - \widetilde{\zeta}^{\lambda, n}_s, 
\lambda^2\int_{\lfrf{s}}^s H(\overline{\Theta}^{\lambda}_r)\int_{\lfrf{r}}^r H ( \overline{\Theta}^{\lambda}_{\lfrf{\nu}})h ( \overline{\Theta}^{\lambda}_{\lfrf{\nu}}) \,\rmd \nu\,\rmd r   \right\rangle\, \rmd s\right]  \nonumber \\
& \quad +2\lambda \E\left[\int_{nT}^t \left\langle\overline{\Theta}^{\lambda}_s - \widetilde{\zeta}^{\lambda, n}_s, 
-\lambda^2\beta^{-1}\int_{\lfrf{s}}^s H(\overline{\Theta}^{\lambda}_r)\int_{\lfrf{r}}^r  \Upsilon (\overline{\Theta}^{\lambda}_{\lfrf{\nu}}) \,\rmd \nu\,\rmd r  \right\rangle\, \rmd s\right] \nonumber\\
& \quad +2\lambda \E\left[\int_{nT}^t \left\langle\overline{\Theta}^{\lambda}_s - \widetilde{\zeta}^{\lambda, n}_s, 
-\lambda\sqrt{2\lambda\beta^{-1}} \int_{\lfrf{s}}^s  H(\overline{\Theta}^{\lambda}_r) \int_{{\lfrf{r}}}^r H (\overline{\Theta}_{\lfrf{\nu}}^{\lambda})\,\rmd B_{\nu}^{\lambda}\,\rmd r    \right\rangle\, \rmd s\right]  \nonumber \\
& \quad +2\lambda \E\left[\int_{nT}^t \left\langle\overline{\Theta}^{\lambda}_s - \widetilde{\zeta}^{\lambda, n}_s, 
\sqrt{2\lambda\beta^{-1}}\int_{\lfrf{s}}^s \left( H(\overline{\Theta}^{\lambda}_r) - H (\overline{\Theta}_{\lfrf{r}}^{\lambda})\right) \, \rmd B^{\lambda}_r   \right\rangle\, \rmd s\right]  \nonumber \\
& \quad +2\lambda \E\left[\int_{nT}^t \left\langle\overline{\Theta}^{\lambda}_s - \widetilde{\zeta}^{\lambda, n}_s, 
\lambda\beta^{-1}\int_{\lfrf{s}}^s\left(\Upsilon (\overline{\Theta}^{\lambda}_r) - \Upsilon (\overline{\Theta}^{\lambda}_{\lfrf{r}}) \right)\rmd r   \right\rangle\, \rmd s\right]  \nonumber\\
\begin{split}\label{eq:L2convspltingitoYlip}
&\leq  \lambda (2\overline{L}_3+5)\int_{nT}^t  \E\left[|\overline{\Theta}^{\lambda}_s - \widetilde{\zeta}^{\lambda, n}_s|^2 \right] \, \rmd s\\
& \quad +\lambda\int_{nT}^t \E\left[\left|-\lambda  \int_{\lfrf{s}}^s \left(H(\overline{\Theta}^{\lambda}_r) - H ( \overline{\Theta}^{\lambda}_{\lfrf{r}}) \right)h (\overline{\Theta}^{\lambda}_{\lfrf{r}})\,\rmd r \right|^2 \right] \, \rmd s \\
& \quad +\lambda\int_{nT}^t \E\left[\left|\lambda^2\int_{\lfrf{s}}^s H(\overline{\Theta}^{\lambda}_r)\int_{\lfrf{r}}^r H ( \overline{\Theta}^{\lambda}_{\lfrf{\nu}})h ( \overline{\Theta}^{\lambda}_{\lfrf{\nu}}) \,\rmd \nu\,\rmd r   \right|^2 \right] \, \rmd s \\ 
& \quad +\lambda\int_{nT}^t \E\left[\left|-\lambda^2\beta^{-1}\int_{\lfrf{s}}^s H(\overline{\Theta}^{\lambda}_r)\int_{\lfrf{r}}^r  \Upsilon (\overline{\Theta}^{\lambda}_{\lfrf{\nu}}) \,\rmd \nu\,\rmd r  \right|^2 \right] \, \rmd s \\ 
& \quad +\lambda\int_{nT}^t \E\left[\left|-\lambda\sqrt{2\lambda\beta^{-1}} \int_{\lfrf{s}}^s  H(\overline{\Theta}^{\lambda}_r) \int_{{\lfrf{r}}}^r H (\overline{\Theta}_{\lfrf{\nu}}^{\lambda})\,\rmd B_{\nu}^{\lambda}\,\rmd r   \right|^2 \right] \, \rmd s \\ 
& \quad +\lambda\int_{nT}^t \E\left[\left|\lambda\beta^{-1}\int_{\lfrf{s}}^s\left(\Upsilon (\overline{\Theta}^{\lambda}_r) - \Upsilon (\overline{\Theta}^{\lambda}_{\lfrf{r}}) \right)\rmd r   \right|^2 \right] \, \rmd s \\  
& \quad +2\lambda \E\left[\int_{nT}^t \left\langle\overline{\Theta}^{\lambda}_s - \widetilde{\zeta}^{\lambda, n}_s, 
\sqrt{2\lambda\beta^{-1}}\int_{\lfrf{s}}^s \left( H(\overline{\Theta}^{\lambda}_r) - H (\overline{\Theta}_{\lfrf{r}}^{\lambda})\right) \, \rmd B^{\lambda}_r   \right\rangle\, \rmd s\right] .
\end{split} 
\end{align}
By using Lemma \ref{lem:graditoestlip} and \eqref{eq:L2convspltingitoYlip} becomes
\begin{align}
\begin{split}\label{eq:L2convspltinglipMterm}
\E\left[\left|\overline{\Theta}^{\lambda}_t - \widetilde{\zeta}^{\lambda, n}_t\right|^2\right]
&\leq  \lambda (2\overline{L}_3+5)\int_{nT}^t  \E\left[|\overline{\Theta}^{\lambda}_s - \widetilde{\zeta}^{\lambda, n}_s|^2 \right] \, \rmd s\\
&\quad + 5\lambda^{2+q} \left(e^{- \overline{a} n/2} \overline{\cC}_{\mathbf{S}2}\E[|\theta_0|^4] +\widetilde{\cC}_{\mathbf{S}2} \right)\\
&\quad + \Jl_1+\Jl_2,
\end{split}
\end{align}
where
\begin{align*}
\Jl_1^{\lambda}(t)&:= 2\lambda \E\left[\int_{nT}^t \left\langle\overline{\Theta}^{\lambda}_s - \widetilde{\zeta}^{\lambda, n}_s, 
\sqrt{2\lambda\beta^{-1}}\int_{\lfrf{s}}^s \left( H(\overline{\Theta}^{\lambda}_r) - H_{\lambda}(\overline{\Theta}_{\lfrf{r}}^{\lambda}) - \mathfrak{M}(\overline{\Theta}^{\lambda}_r, \overline{\Theta}^{\lambda}_{\lfrf{r}})     \right)   \, \rmd B^{\lambda}_r   \right\rangle\, \rmd s\right],\\
\Jl_2^{\lambda}(t)&:= 2\lambda \E\left[\int_{nT}^t \left\langle\overline{\Theta}^{\lambda}_s - \widetilde{\zeta}^{\lambda, n}_s, 
\sqrt{2\lambda\beta^{-1}}\int_{\lfrf{s}}^s \mathfrak{M}(\overline{\Theta}^{\lambda}_r, \overline{\Theta}^{\lambda}_{\lfrf{r}}) \, \rmd B^{\lambda}_r   \right\rangle\, \rmd s\right]
\end{align*}
with $\mathfrak{M}$ defined in Definition \ref{def:Mdeflip}. By using Young's inequality and Lemma \ref{lem:Mestlip}, we have that
\begin{align}\label{eq:L2convspltinglipMtermJ1}
\Jl_1^{\lambda}(t) \leq \lambda \int_{nT}^t  \E\left[|\overline{\Theta}^{\lambda}_s - \widetilde{\zeta}^{\lambda, n}_s|^2 \right] \, \rmd s +  \lambda^{2+q}\left(e^{- \overline{a} n/2}  \overline{\cC}_{\mathbf{S}2}\E[|\theta_0|^4] + \widetilde{\cC}_{\mathbf{S}2} \right).
\end{align}
To establish an upper bound for $\Jl_2^{\lambda}(t)$, we recall the definitions of $(\overline{\Theta}^{\lambda}_t)_{t\geq 0}$ and $(\widetilde{\zeta}^{\lambda, n}_t)_{t \geq 0}$ given in \eqref{eq:aholahoproclip} and Definition \ref{def:auxzetalip}, respectively, and consider the following splitting:
\begin{align*}
\Jl_2^{\lambda}(t)
&= 2\lambda \int_{nT}^t\E\left[ \left\langle \overline{\Theta}^{\lambda}_s - \overline{\Theta}^{\lambda}_{\lfrf{s}} -( \widetilde{\zeta}^{\lambda, n}_s- \widetilde{\zeta}^{\lambda, n}_{\lfrf{s}}), 
\sqrt{2\lambda\beta^{-1}}\int_{\lfrf{s}}^s \mathfrak{M}(\overline{\Theta}^{\lambda}_r, \overline{\Theta}^{\lambda}_{\lfrf{r}}) \, \rmd B^{\lambda}_r   \right\rangle\right] \, \rmd s\\
&= 2\lambda \int_{nT}^t\E\left[ \left\langle \int_{\lfrf{s}}^s \left(\lambda (h(\widetilde{\zeta}^{\lambda, n}_r) - h (\overline{\Theta}^{\lambda}_{\lfrf{r}})) +\lambda^2\int_{\lfrf{r}}^r\left(H ( \overline{\Theta}^{\lambda}_{\lfrf{\nu}})h ( \overline{\Theta}^{\lambda}_{\lfrf{\nu}})-\beta^{-1}\Upsilon (\overline{\Theta}^{\lambda}_{\lfrf{\nu}}) \right)\,\rmd \nu \right.\right. \right.\\
&\qquad \left.\left.\left.  - \lambda\sqrt{2\lambda\beta^{-1}} \int_{{\lfrf{r}}}^r H (\overline{\Theta}_{\lfrf{\nu}}^{\lambda})\,\rmd B_{\nu}^{\lambda}\right)\,\rmd r, \sqrt{2\lambda\beta^{-1}}\int_{\lfrf{s}}^s \mathfrak{M}(\overline{\Theta}^{\lambda}_r, \overline{\Theta}^{\lambda}_{\lfrf{r}}) \, \rmd B^{\lambda}_r   \right\rangle\right] \, \rmd s\\
&= 2\lambda \int_{nT}^t\E\left[ \left\langle \int_{\lfrf{s}}^s \left(\lambda (h(\overline{\Theta}^{\lambda}_r)- h(\overline{\Theta}^{\lambda}_{\lfrf{r}})) +\lambda^2\int_{\lfrf{r}}^r\left(H ( \overline{\Theta}^{\lambda}_{\lfrf{\nu}})h ( \overline{\Theta}^{\lambda}_{\lfrf{\nu}})-\beta^{-1}\Upsilon (\overline{\Theta}^{\lambda}_{\lfrf{\nu}}) \right)\,\rmd \nu \right.\right. \right.\\
&\qquad \left.\left.\left.  - \lambda\sqrt{2\lambda\beta^{-1}} \int_{{\lfrf{r}}}^r H (\overline{\Theta}_{\lfrf{\nu}}^{\lambda})\,\rmd B_{\nu}^{\lambda}\right)\,\rmd r, \sqrt{2\lambda\beta^{-1}}\int_{\lfrf{s}}^s \mathfrak{M}(\overline{\Theta}^{\lambda}_r, \overline{\Theta}^{\lambda}_{\lfrf{r}}) \, \rmd B^{\lambda}_r   \right\rangle\right] \, \rmd s +\Jl_{2,1}^{\lambda}(t),
\end{align*}
where the first equality holds due to the following:
\begin{align}\label{eq:gridptmeasmartingalelip}
\begin{split}
&\E\left[ \left\langle  \overline{\Theta}^{\lambda}_{\lfrf{s}} +\widetilde{\zeta}^{\lambda, n}_{\lfrf{s}}, 
\sqrt{2\lambda\beta^{-1}}\int_{\lfrf{s}}^s \mathfrak{M}(\overline{\Theta}^{\lambda}_r, \overline{\Theta}^{\lambda}_{\lfrf{r}}) \, \rmd B^{\lambda}_r   \right\rangle\right] \\
&=\E\left[ \left\langle  \overline{\Theta}^{\lambda}_{\lfrf{s}} +\widetilde{\zeta}^{\lambda, n}_{\lfrf{s}}, \E\left[\left.
\sqrt{2\lambda\beta^{-1}}\int_{\lfrf{s}}^s \mathfrak{M}(\overline{\Theta}^{\lambda}_r, \overline{\Theta}^{\lambda}_{\lfrf{r}}) \, \rmd B^{\lambda}_r\right|\mathcal{F}^{\lambda}_{\lfrf{s}}\right]   \right\rangle\right]  = 0,
\end{split}
\end{align}
and where
\[
\Jl_{2,1}^{\lambda}(t) =  2\lambda \int_{nT}^t\E\left[ \left\langle \int_{\lfrf{s}}^s \lambda (h(\widetilde{\zeta}^{\lambda, n}_r) - h(\overline{\Theta}^{\lambda}_r)) \,\rmd r,
\sqrt{2\lambda\beta^{-1}}\int_{\lfrf{s}}^s \mathfrak{M}(\overline{\Theta}^{\lambda}_r, \overline{\Theta}^{\lambda}_{\lfrf{r}}) \, \rmd B^{\lambda}_r   \right\rangle\right] \, \rmd s.
\]
By applying Cauchy-Schwarz inequality, Corollary \ref{cor:graditoublip} and Lemma \ref{lem:Mestlip}, we further obtain that
\begin{align}\label{eq:L2convspltinglipMtermJ2}
\Jl_2^{\lambda}(t)
&\leq 2\lambda \int_{nT}^t\left(\E\left[ \left| \int_{\lfrf{s}}^s \lambda \left( h(\overline{\Theta}^{\lambda}_r)- h(\overline{\Theta}^{\lambda}_{\lfrf{r}}) +\lambda \int_{\lfrf{r}}^r\left(H ( \overline{\Theta}^{\lambda}_{\lfrf{\nu}})h ( \overline{\Theta}^{\lambda}_{\lfrf{\nu}})-\beta^{-1}\Upsilon (\overline{\Theta}^{\lambda}_{\lfrf{\nu}}) \right)\,\rmd \nu \right.\right. \right.\right. \nonumber\\
&\qquad \left.\left.\left.\left.  -  \sqrt{2\lambda\beta^{-1}} \int_{{\lfrf{r}}}^r H (\overline{\Theta}_{\lfrf{\nu}}^{\lambda})\,\rmd B_{\nu}^{\lambda}\right)\,\rmd r  \right|^2\right]\right)^{1/2} \nonumber \\
&\qquad \times \left(\E\left[\left| \sqrt{2\lambda\beta^{-1}}\int_{\lfrf{s}}^s \mathfrak{M}(\overline{\Theta}^{\lambda}_r, \overline{\Theta}^{\lambda}_{\lfrf{r}}) \, \rmd B^{\lambda}_r\right|^2\right]\right)^{1/2} \, \rmd s +\Jl_{2,1}^{\lambda}(t)\nonumber\\
&\leq  2\lambda \int_{nT}^t\left(\lambda^4 \left(e^{- \overline{a}n/2}36 \overline{\cC}_{\mathbf{S}2}\E[|\theta_0|^4] +36\widetilde{\cC}_{\mathbf{S}2} \right)\right)^{1/2} \nonumber\\
&\qquad \times \left(\lambda^2\left(e^{- \overline{a} n/2}  \overline{\cC}_{\mathbf{S}2}\E[|\theta_0|^4] + \widetilde{\cC}_{\mathbf{S}2} \right)\right)^{1/2} \, \rmd s+\Jl_{2,1}^{\lambda}(t) \nonumber\\
&\leq 12\lambda^3\left(e^{- \overline{a} n/2}  \overline{\cC}_{\mathbf{S}2}\E[|\theta_0|^4] + \widetilde{\cC}_{\mathbf{S}2} \right)+\Jl_{2,1}^{\lambda}(t).
\end{align}
At this stage, the task reduces to upper bound $\Jl_{2,1}^{\lambda}(t)$. To achieve this, we apply Cauchy-Schwarz inequality, Corollary \ref{cor:graditoublip} and Lemma \ref{lem:Mestlip} to obtain
\begin{align}\label{eq:L2convspltinglipMtermJ21}
\Jl_{2,1}^{\lambda}(t)
& =  2\lambda \int_{nT}^t\E\left[ \left\langle \int_{\lfrf{s}}^s \lambda \left(h(\widetilde{\zeta}^{\lambda, n}_r) - h ( \widetilde{\zeta}^{\lambda, n}_{\lfrf{r}})  - \sqrt{2\lambda\beta^{-1}} \int_{{\lfrf{r}}}^r H (\widetilde{\zeta}^{\lambda, n}_{\lfrf{\nu}})\,\rmd B_{\nu}^{\lambda}  
  - \Big(h(\overline{\Theta}^{\lambda}_r) \Big.\right.\right.\right. \nonumber\\
&\qquad \left.\left.\left.\Big. - h (\overline{\Theta}^{\lambda}_{\lfrf{r}}) - \sqrt{2\lambda\beta^{-1}} \int_{{\lfrf{r}}}^r H (\overline{\Theta}_{\lfrf{\nu}}^{\lambda})\,\rmd B_{\nu}^{\lambda} \Big)\right) \,\rmd r, 
\sqrt{2\lambda\beta^{-1}}\int_{\lfrf{s}}^s \mathfrak{M}(\overline{\Theta}^{\lambda}_r, \overline{\Theta}^{\lambda}_{\lfrf{r}}) \, \rmd B^{\lambda}_r   \right\rangle\right] \, \rmd s \nonumber\\
&\quad +2\lambda \int_{nT}^t\E\left[ \left\langle \int_{\lfrf{s}}^s \lambda\sqrt{2\lambda\beta^{-1}} \int_{{\lfrf{r}}}^r (H (\widetilde{\zeta}^{\lambda, n}_{\lfrf{\nu}}) -  H (\overline{\Theta}_{\lfrf{\nu}}^{\lambda}))\,\rmd B_{\nu}^{\lambda}  \,\rmd r,\right.\right. \nonumber\\
&\qquad \left.\left. \sqrt{2\lambda\beta^{-1}}\int_{\lfrf{s}}^s \mathfrak{M}(\overline{\Theta}^{\lambda}_r, \overline{\Theta}^{\lambda}_{\lfrf{r}}) \, \rmd B^{\lambda}_r   \right\rangle\right] \, \rmd s\nonumber\\
&\leq 2\lambda \int_{nT}^t\left(\E\left[ \left| \int_{\lfrf{s}}^s \lambda \left(h(\widetilde{\zeta}^{\lambda, n}_r) - h ( \widetilde{\zeta}^{\lambda, n}_{\lfrf{r}})  - \sqrt{2\lambda\beta^{-1}} \int_{{\lfrf{r}}}^r H (\widetilde{\zeta}^{\lambda, n}_{\lfrf{\nu}})\,\rmd B_{\nu}^{\lambda}  
  - \Big(h(\overline{\Theta}^{\lambda}_r) \Big.\right.\right.\right.\right.\nonumber\\
&\qquad \left.\left.\left.\left.\Big. - h (\overline{\Theta}^{\lambda}_{\lfrf{r}}) - \sqrt{2\lambda\beta^{-1}} \int_{{\lfrf{r}}}^r H (\overline{\Theta}_{\lfrf{\nu}}^{\lambda})\,\rmd B_{\nu}^{\lambda} \Big)\right) \,\rmd r
   \right|^2\right]\right)^{1/2}  \nonumber\\
&\qquad \times  \left(\E\left[\left|\sqrt{2\lambda\beta^{-1}}\int_{\lfrf{s}}^s \mathfrak{M}(\overline{\Theta}^{\lambda}_r, \overline{\Theta}^{\lambda}_{\lfrf{r}}) \, \rmd B^{\lambda}_r\right|^2\right]\right)^{1/2}\, \rmd s\nonumber\\
&\quad +2\lambda^3 \left(e^{- \overline{a} n/4}5  \overline{\cC}_{\mathbf{S}2}\E[|\theta_0|^4] +5 \widetilde{\cC}_{\mathbf{S}2} \right)\nonumber\\
&\leq 2\lambda \int_{nT}^t\left( \lambda^4\left(e^{- \overline{a}n/4}72 \overline{\cC}_{\mathbf{S}2}\E[|\theta_0|^4] +72\widetilde{\cC}_{\mathbf{S}2} \right)\right)^{1/2}  \left(  \lambda^2\left(e^{- \overline{a} n/2}  \overline{\cC}_{\mathbf{S}2}\E[|\theta_0|^4] + \widetilde{\cC}_{\mathbf{S}2} \right)\right)^{1/2}\, \rmd s\nonumber\\
&\quad +10\lambda^3\left(e^{- \overline{a} n/4}  \overline{\cC}_{\mathbf{S}2}\E[|\theta_0|^4] + \widetilde{\cC}_{\mathbf{S}2} \right)\nonumber\\
&\leq 34\lambda^3\left(e^{- \overline{a} n/4}  \overline{\cC}_{\mathbf{S}2}\E[|\theta_0|^4] + \widetilde{\cC}_{\mathbf{S}2} \right).
\end{align}
Substituting \eqref{eq:L2convspltinglipMtermJ21} into \eqref{eq:L2convspltinglipMtermJ2} yields
\begin{equation}\label{eq:L2convspltinglipMtermJ2ub}
\Jl_2^{\lambda}(t) \leq 46\lambda^3\left(e^{- \overline{a} n/4}  \overline{\cC}_{\mathbf{S}2}\E[|\theta_0|^4] + \widetilde{\cC}_{\mathbf{S}2} \right).
\end{equation}
By applying \eqref{eq:L2convspltinglipMtermJ1} and \eqref{eq:L2convspltinglipMtermJ2ub} to \eqref{eq:L2convspltinglipMterm}, we obtain that
\begin{align*}
\E\left[\left|\overline{\Theta}^{\lambda}_t - \widetilde{\zeta}^{\lambda, n}_t\right|^2\right]
&\leq  \lambda (2\overline{L}_3+6)\int_{nT}^t  \E\left[|\overline{\Theta}^{\lambda}_s - \widetilde{\zeta}^{\lambda, n}_s|^2 \right] \, \rmd s +   52\lambda^{2+q}\left(e^{- \overline{a} n/4}  \overline{\cC}_{\mathbf{S}2}\E[|\theta_0|^4] + \widetilde{\cC}_{\mathbf{S}2} \right),
\end{align*}
which, by applying Gr\"{o}nwall's lemma, yields
\[
W_2^2(\mathcal{L}(\overline{\Theta}^{\lambda}_t),\mathcal{L}(\widetilde{\zeta}^{\lambda, n}_t))
\leq \E\left[\left|\overline{\Theta}^{\lambda}_t - \widetilde{\zeta}^{\lambda, n}_t\right|^2\right]
\leq \lambda^{2+q}\left(e^{- \overline{a} n/4}  \cC_{\Lin,0}\E[|\theta_0|^4] + \cC_{\Lin,1} \right),
\]
where
\begin{equation}\label{eq:w1converlipp1const}
 \cC_{\Lin,0}:=52e^{2\overline{L}_3+6} \overline{\cC}_{\mathbf{S}2}, \quad   \cC_{\Lin,1}: = 52e^{2\overline{L}_3+6}\widetilde{\cC}_{\mathbf{S}2}
\end{equation}
with $\overline{\cC}_{\mathbf{S}2}$, $\widetilde{\cC}_{\mathbf{S}2}$ given in \eqref{eq:graditoestlipconst}.
\end{proof}



\end{document}